\newtheorem{theorem}{Theorem}[section]
\newtheorem{lemma}[theorem]{Lemma}
\newtheorem*{lemma*}{Lemma}
\newtheorem{corollary}[theorem]{Corollary}
\newtheorem*{corollary*}{Corollary}
\theoremstyle{definition}
\newtheorem{example}[theorem]{Example}
\newtheorem*{example*}{Example}
\theoremstyle{remark}
\newtheorem{remark}[theorem]{Remark}
\sloppy\pagestyle{plain}
\makeatletter\@addtoreset{equation}{section} \makeatother
\thanks{The title is inspired by the~book ``The World of Null-A'', A.\ E.\ van Vogt,  	Simon \& Schuster, 1948}
\title{K-stability of Fano 3-folds in the~World of Null-A}
\author{Hamid Abban, Ivan Cheltsov, Takashi Kishimoto, Fr\'ed\'eric Mangolte}
\let\origmaketitle\maketitle
\def\maketitle{
  \begingroup
  \def\uppercasenonmath##1{} 
  \let\MakeUppercase\relax 
  \origmaketitle
  \endgroup
}
\begin{document}

\begin{abstract} A variety is said to satisfy Condition $(\mathbf{A})$ if every finite abelian subgroup of its automorphism group has a fixed point.
We show that a smooth Fano 3-fold  not satisfying Condition~$(\mathbf{A})$ is K-polystable unless it is contained in eight exceptional deformation families (seven of them consists of one smooth member, and one of them has one-parameter moduli).
\end{abstract}

\address{\emph{Hamid Abban}\newline
\textnormal{University of Nottingham, Nottingham, England
\newline
\texttt{hamid.abban@nottingham.ac.uk}}}

\address{ \emph{Ivan Cheltsov}\newline
\textnormal{University of Edinburgh, Edinburgh, Scotland
\newline
\texttt{i.cheltsov@ed.ac.uk}}}

\address{\emph{Takashi Kishimoto}\newline
\textnormal{Saitama University, Saitama, Japan
\newline
\texttt{kisimoto.takasi@gmail.com}}}

\address{\emph{Fr\'ed\'eric Mangolte}\newline
\textnormal{Aix Marseille University, CNRS, I2M, Marseille, France
\newline
\texttt{frederic.mangolte@univ-amu.fr}}}

\maketitle


\section{Introduction}
\label{sec:introduction}

\setlength{\epigraphwidth}{0.6\textwidth}
\epigraph{The key to understanding the world is to see beyond the apparent and recognize the underlying patterns and connections.}{\textit{The World of Null-A}, A.\ E.\ van Vogt, 1948}

The study of Fano varieties in algebraic geometry offers a rich interplay between geometric structure, arithmetic properties, and birational classification. They are characterized by their ample anticanonical divisors, a property that endows them with a wealth of positivity and connects them to fundamental questions in the~minimal model program and moduli theory. Among Fano varieties, smooth Fano 3-folds hold a privileged position due to their comprehensive classification into 105 deformation families, a monumental achievement by Iskovskikh, Mori, and Mukai. This classification provides a detailed roadmap for exploring their properties, from their Picard groups to their automorphism groups, making them an ideal testing ground for conjectures and theories in modern geometry.

In recent years, the~concept of K-stability has emerged as a transformative invariant in the~study of Fano varieties. Rooted in the~work of Tian, Donaldson, and others, K-stability serves as a bridge between algebraic geometry and differential geometry, determining the~existence of K\"ahler-Einstein metrics on Fano manifolds. Beyond its metric implications, K-stability has profound connections to the~construction of moduli spaces, offering a stability condition that governs the~compactification of families of Fano varieties. This development has spurred a flurry of research, uncovering surprising links between K-stability and other invariants, such as the~existence of rational points in arithmetic settings or the~behavior of automorphism groups in the geometric setting. Our paper builds on this momentum, seeking to deepen these connections by exploring a purely geometric analogue to an arithmetic phenomenon previously identified in the~context of smooth Fano 3-folds.

The inspiration for this work stems from a recent result established in prior research~\cite{AbbanCheltsovKishimotoMangolte}, which examined smooth Fano 3-folds defined over subfields $\Bbbk\subset \mathbb{C}$. That study revealed that if the~geometric model of such a 3-fold is not K-polystable, then the~variety typically possesses a $\Bbbk$-rational point, barring a small set of exceptional cases. This finding suggested an intimate relationship between K-polystability and the~arithmetic property of ``pointlessness'' (the absence of rational points), raising the~question of whether a similar principle might hold in a geometric framework, independent of field considerations. To pursue this, we shift our focus from rational points to the~action of finite abelian groups on the~variety, introducing a new geometric condition that mirrors the~arithmetic one.

Following \cite{CheltsovTschinkelZhang2025},
we define Condition~$\mathbf{(A)}$ for a smooth variety $X$ as follows: $X$ satisfies Condition~$\mathbf{(A)}$ if every finite abelian subgroup of its automorphism group
$\operatorname{Aut}(X)$ fixes at least one point in $X$. This definition draws inspiration from the~Lang--Nishimura theorem and Koll\'ar--Szab\'o's birational invariance results,
which highlight the~significance of fixed points in equivariant geometry. Our primary objective is to classify non-K-polystable smooth Fano 3-folds according to whether
they satisfy Condition~$\mathbf{(A)}$, and to elucidate the~exceptional cases where this condition fails, drawing parallels to the~arithmetic exceptions identified in~\cite{AbbanCheltsovKishimotoMangolte}.

\subsection{Main Result}
\label{subsection:Main-Theorem}
Our Main Theorem asserts that every smooth Fano 3-fold that is not K-polystable satisfies Condition~$\mathbf{(A)}$, with exactly eight exceptions. These exceptional varieties are:
\begin{enumerate}
    \item $\mathbb{P}^1 \times \mathbb{F}_1$, where $\mathbb{F}_1$ is the~first Hirzebruch surface,
    \item $\mathbb{P}^1 \times S$, where $S$ is a smooth del Pezzo surface of degree 7,
    \item The blowup of a smooth quadric in $\mathbb{P}^4$ along a quartic elliptic curve,
    \item The blowup of a quadric cone in $\mathbb{P}^4$ at its vertex,
    \item The blowup of $\mathbb{P}^1 \times \mathbb{P}^1 \times \mathbb{P}^1$ along a smooth curve of degree $(0,1,1)$,
    \item The blowup of $\mathbb{P}^3$ along a line,
    \item The blowup of $\mathbb{P}^3$ at two distinct points followed by the~blowup of the~strict transform of the~line through these points,
    \item The blowup of $\mathbb{P}^3$ at two distinct points followed by the~blowup of the~strict transforms of two disjoint lines, one passing through the~points and another disjoint from it.
\end{enumerate}

For each of these exceptions, we explicitly construct finite abelian subgroups of their automorphism groups that act without fixed points, confirming their failure to satisfy Condition~$\mathbf{(A)}$. Moreover, we observe that these varieties are not K-polystable, and most are rigid within their deformation families, lacking moduli.
A key corollary follows: if a smooth Fano 3-fold is strictly K-semistable, then it satisfies Condition~$\mathbf{(A)}$, as all such varieties fall outside the~exceptional list.

The parallels between these geometric exceptions and the~arithmetic exceptions from~\cite{AbbanCheltsovKishimotoMangolte} are striking. For instance, products like $\mathbb{P}^1 \times \mathbb{F}_1$ and certain blowups appear in both settings, suggesting a deeper unity between the~arithmetic and geometric manifestations of K-stability. This convergence reinforces the~hypothesis that K-stability governs fundamental properties of Fano 3-folds, whether viewed through the~lens of rational points or group actions.

\subsection{Plan of the~proof}

Recall that smooth Fano 3-folds have been classified into 105 deformation families by Iskovskikh, Mori and Mukai  \cite{IsPr99}.
In this paper, we follow the~Mori--Mukai numbering of the~105 families, written as ``Family \textnumero $m.n$'', in which $m$ is the~rank of the~Picard group of the~3-fold, ranging from 1 to 10, and $n$ is simply a list number. To achieve our main result, we partition 105 deformation families of smooth Fano 3-folds into the~following three sets:
\begin{enumerate}[(i)]
\item $53$ families in which all smooth elements are K-polystable \cite{AbbanZhuangSeshadri,Book,BelousovLoginov,CheltsovDenisovaFujita,CheltsovFedorchukFujitaKaloghiros,CheltsovFujitaKishimotoOkada,CheltsovFujitaKishimotoPark,LTN,Liu,Zhao};
\item $27$ families where all smooth members are not K-polystable \cite{Book,Fujita2019};
\item $25$ families in which only general members are known to be K-polystable \cite{Book}.
\end{enumerate}
The first $53$ deformation families are
\begin{center}
\textnumero 1.1, \textnumero  1.2, \textnumero 1.3, \textnumero 1.4, \textnumero 1.5, \textnumero 1.6, \textnumero 1.7, \textnumero 1.8, \textnumero 1.11, \textnumero 1.12, \textnumero 1.13, \textnumero 1.14, \textnumero 1.15,\\
\textnumero 1.16, \textnumero 1.17, \textnumero 2.1, \textnumero 2.2, \textnumero 2.3, \textnumero 2.4, \textnumero 2.6, \textnumero 2.7, \textnumero 2.25, \textnumero 2.8, \textnumero 2.15, \textnumero 2.18, \textnumero 2.19, \\
\textnumero 2.27, \textnumero 2.29, \textnumero 2.32, \textnumero 2.34, \textnumero 3.1, \textnumero 3.3, \textnumero 3.4, \textnumero 3.9, \textnumero 3.15, \textnumero 3.17, \textnumero 3.19, \textnumero 3.20, \textnumero 3.25, \\
\textnumero 3.27, \textnumero 4.1, \textnumero 4.2, \textnumero 4.3, \textnumero 4.4, \textnumero 4.6, \textnumero 4.7, \textnumero 5.1, \textnumero 5.3, \textnumero 6.1, \textnumero 7.1, \textnumero 8.1, \textnumero 9.1, \textnumero 10.1.
\end{center}
These families are irrelevant for us --- we listed them for completeness of exposition.
Similarly, it follows from \cite{Fujita2019} that every smooth member of the~following $26$ families is K-unstable:
\begin{center}
\textnumero 2.23, \textnumero 2.28, \textnumero 2.30, \textnumero 2.31, \textnumero 2.33, \textnumero 2.35, \textnumero 2.36, \textnumero 3.14, \textnumero 3.16, \textnumero 3.18, \textnumero 3.21, \textnumero 3.22, \textnumero 3.23, \\
\textnumero 3.24, \textnumero 3.26, \textnumero 3.28, \textnumero 3.29, \textnumero 3.30, \textnumero 3.31, \textnumero 4.5,  \textnumero 4.8,  \textnumero 4.9,  \textnumero 4.10, \textnumero 4.11, \textnumero 4.12, \textnumero 5.2.
\end{center}
Furthermore, it was shown in \cite{Book} that Family \textnumero 2.26 contains exactly two smooth members and one of them is K-unstable, while the other smooth member is strictly K-semistable.
In Section~\ref{Section:k-point} we show that each element in $19$ of these families satisfy Condition~$(\mathbf{A})$,
hence producing the~$8$ families appearing in Section~\ref{subsection:Main-Theorem} as exceptional cases.
The remaining families are
\begin{center}
\textnumero 1.9,  \textnumero 1.10, \textnumero 2.5,  \textnumero 2.9,  \textnumero 2.10, \textnumero 2.11, \textnumero 2.12, \textnumero 2.13, \textnumero 2.14, \textnumero 2.16, \textnumero 2.17, \textnumero 2.20, \textnumero 2.21, \\
\textnumero 2.22, \textnumero 2.24, \textnumero 3.2,  \textnumero 3.5, \textnumero 3.6,  \textnumero 3.7,  \textnumero 3.8,  \textnumero 3.10, \textnumero 3.11, \textnumero 3.12, \textnumero 3.13, \textnumero 4.13.
\end{center}
Among these we treat Families \textnumero 1.10, \textnumero 2.9, \textnumero 2.11, \textnumero 2.13, \textnumero 2.14, \textnumero 2.17, \textnumero 2.20, \textnumero 2.22, \textnumero 3.8, \textnumero 3.11 in Section~\ref{Section:k-point} by showing that every smooth member satisfy Condition~$(\mathbf{A})$,
and for the~remaining $15$ families we prove in Section~\ref{section:pointless-3-folds} that all smooth members that do not satisfy Condition~$(\mathbf{A})$ are K-polystable.
Note that Families \textnumero 2.21, \textnumero 2.24,  \textnumero 3.5,  \textnumero 3.8,  \textnumero 3.10,  \textnumero 3.12,  \textnumero 3.13, \textnumero 4.13
have non-K-polystable members.

Throughout this paper, all varieties are projective, normal, irreducible, and defined over the~field of complex numbers unless otherwise stated.

\section{Preliminary results}
\label{section:preliminaries}

\setlength{\epigraphwidth}{0.6\textwidth}
\epigraph{True intelligence lies not in knowledge alone, but in the ability to apply that knowledge in creative and innovative ways.}{\textit{The World of Null-A}, A.\ E.\ van Vogt, 1948}

In this section, we collect some technical results that we will be using in this article. Most of these results are known to experts, so the reader may skip them and only consult them as they are referred to. We start with the~following well-known result by J\'anos Koll\'ar and Endre Szab\'o.

\begin{theorem}[{\cite[Proposition~A.4]{ReYou00}}]
\label{theorem:Kollar-Szabo} Let $X$ and $X^\prime$ be smooth
varieties acted on by a finite abelian group $A$. Suppose that
there exists an $A$-equivariant birational map $\psi\colon X\dasharrow X^\prime$.
Then $X$ contains a point fixed by $A$ if and only if $X^\prime$ contains a point fixed by $A$.
\end{theorem}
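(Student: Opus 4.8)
The plan is to prove the two implications simultaneously by tracking the property ``admits an $A$-fixed point'' through an equivariant factorization of $\psi$, at each elementary step of which the property is preserved. By symmetry it suffices to prove one direction, say that an $A$-fixed point on $X$ forces one on $X'$: the inverse $\psi^{-1}\colon X'\dasharrow X$ is again an $A$-equivariant birational map, so the reverse implication follows by interchanging the roles of $X$ and $X'$.

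First I would eliminate the indeterminacy of $\psi$ equivariantly. Since we work over $\mathbb{C}$, functorial (hence $A$-equivariant) resolution of singularities furnishes a smooth projective $A$-variety $W$ together with an $A$-equivariant morphism $f\colon W\to X$ that is a composition of blow-ups along smooth $A$-invariant centers, and such that $g:=\psi\circ f\colon W\to X'$ is a genuine $A$-equivariant morphism. Granting this, the argument splits into a ``going up'' part along $f$ and a trivial ``going down'' part along $g$: if $w\in W$ is $A$-fixed, then $g(w)\in X'$ is $A$-fixed because $g$ is $A$-equivariant. Thus everything reduces to lifting an $A$-fixed point of $X$ to an $A$-fixed point of $W$.

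The heart of the argument is the following single-blow-up statement, which I would isolate as a lemma and iterate along the factorization of $f$: if $\pi\colon\widetilde{X}\to X$ is the blow-up of a smooth $A$-variety along a smooth $A$-invariant center $Z$, then every $A$-fixed point of $X$ lies below an $A$-fixed point of $\widetilde{X}$. Let $x\in X$ be $A$-fixed. If $x\notin Z$, then $\pi$ is an $A$-equivariant isomorphism near $x$, and $\pi^{-1}(x)$ is a single $A$-fixed point. If $x\in Z$, then $A$ preserves $Z$ and fixes $x$, so it acts linearly on the normal space $N=T_xX/T_xZ$, and the fiber $\pi^{-1}(x)\cong\mathbb{P}(N)$ carries the induced projective-linear $A$-action. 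Here the abelian hypothesis is essential: a finite abelian group acting on a finite-dimensional complex vector space is diagonalizable, so $N$ decomposes into one-dimensional characters, any one of which yields an $A$-fixed point of $\mathbb{P}(N)=\pi^{-1}(x)$. Since $\widetilde X$ and all intermediate varieties remain smooth, iterating this through the finitely many blow-ups composing $f$ produces an $A$-fixed point of $W$, which $g$ then carries to the desired $A$-fixed point of $X'$.

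The main obstacle is concentrated in this last local computation, and specifically in the commutativity of $A$: for a non-abelian finite group a linear action on $\mathbb{P}(N)$ can be fixed-point-free, so the statement would genuinely fail, which pinpoints why the hypothesis cannot be dropped. The remaining delicate point is foundational rather than conceptual, namely invoking equivariant elimination of indeterminacy in the precise form above (a composition of blow-ups along smooth invariant centers, through smooth ambient spaces); I would appeal to the functoriality of resolution of singularities in characteristic zero, which automatically renders every center $A$-invariant because $A$ acts by automorphisms.
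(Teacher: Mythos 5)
Your argument is correct. The paper gives no proof of this statement itself --- it is quoted from the appendix by Koll\'ar and Szab\'o to \cite{ReYou00} --- and your proof is essentially the one given there: equivariant elimination of indeterminacy by blow-ups along smooth $A$-invariant centres, the trivial descent of fixed points along the equivariant morphism $g$, and the ascent step through each blow-up via diagonalization of the abelian action on the normal space $N=T_xX/T_xZ$, which is exactly where commutativity is used. The foundational point you flag (equivariant Hironaka elimination of indeterminacy as a composition of smooth blow-ups) is available over $\mathbb{C}$ by functoriality of resolution, so there is no gap.
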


\begin{corollary}
\label{corollary:Pn-blow-up}
Let $p$ be a point in $\mathbb{P}^n$, and let $\pi\colon X\to\mathbb{P}^n$ be the blowup of this point, where $n\geqslant 2$.
Then every finite abelian subgroup in $\mathrm{Aut}(X)$ fixes a point in $X$.
\end{corollary}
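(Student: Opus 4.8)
The plan is to reduce the statement to the Koll\'ar--Szab\'o theorem (Theorem~\ref{theorem:Kollar-Szabo}) via the blowdown morphism $\pi\colon X\to\mathbb{P}^n$. Fix a finite abelian subgroup $A\subseteq\mathrm{Aut}(X)$. The heart of the matter is that $\pi$ is $A$-equivariant. Granting this for a moment, the $A$-action on $X$ descends to an $A$-action on $\mathbb{P}^n$ intertwined by $\pi$, and the image $p=\pi(E)$ of the exceptional divisor $E$ is visibly fixed by $A$. Since $\pi$ is a birational map between the smooth varieties $X$ and $\mathbb{P}^n$, and $\mathbb{P}^n$ already contains the $A$-fixed point $p$, Theorem~\ref{theorem:Kollar-Szabo} yields an $A$-fixed point on $X$ as well. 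Note that this produces a fixed point on $X$ without locating it: the mere existence of a fixed point on $\mathbb{P}^n$ suffices.

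Thus the only point that genuinely requires argument --- and the step I expect to be the main obstacle, although it is standard --- is that $E$ is preserved by every automorphism of $X$, so that $A$ descends along $\pi$. Here I would use that for $n\geqslant 2$ the variety $X$ is a smooth Fano variety with $\mathrm{Pic}(X)$ of rank $2$, admitting exactly two extremal contractions: the divisorial contraction $\pi\colon X\to\mathbb{P}^n$ collapsing $E$ to the point $p$, and the $\mathbb{P}^1$-bundle $q\colon X\to\mathbb{P}^{n-1}$ obtained by projection from $p$. Any automorphism of $X$ acts on the two-dimensional space $N^1(X)_{\mathbb{R}}$ preserving its nef cone, hence permutes the two boundary rays; since one contraction is birational and the other is of fiber type, these rays cannot be interchanged, so each is fixed. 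In particular the contraction $\pi$ is preserved, and therefore so is its exceptional locus $E$. Equivalently $\mathrm{Aut}(X)=\mathrm{Stab}_{\mathrm{PGL}_{n+1}(\mathbb{C})}(p)$, and every $g\in\mathrm{Aut}(X)$ descends to an automorphism of $\mathbb{P}^n$ fixing $p$; this is exactly the equivariance of $\pi$ needed above.

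Combining the two paragraphs completes the proof for any finite abelian $A$. I would close with a remark emphasising that commutativity of $A$ is used essentially, and only through Theorem~\ref{theorem:Kollar-Szabo}: a \emph{non-abelian} finite subgroup of $\mathrm{Stab}(p)$ can act on $X$ with no fixed point at all --- already for $n=2$, via a suitable order-$8$ subgroup of $\mathrm{PGL}_3(\mathbb{C})$ fixing $[1:0:0]$ whose induced actions both on the line $\{x_0=0\}$ and on the exceptional $\mathbb{P}^1$ are a fixed-point-free Klein four-group. In that case $X$ carries no fixed point, so it is precisely the birational invariance of fixed points in the abelian setting, supplied by Koll\'ar--Szab\'o, that rescues the statement.
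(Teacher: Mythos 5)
Your proof is correct and follows the paper's argument exactly: the paper's own proof is the one-line observation that $\pi$ is $\mathrm{Aut}(X)$-equivariant (which you justify via the two extremal contractions), so that the $A$-fixed point $p\in\mathbb{P}^n$ yields an $A$-fixed point on $X$ by Theorem~\ref{theorem:Kollar-Szabo}. Your closing remark on the necessity of commutativity is a correct and worthwhile addition, but not part of the paper's proof.
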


\begin{proof}
Since $\pi$ is $\mathrm{Aut}(X)$-equivariant, the~required assertion follows from Theorem~\ref{theorem:Kollar-Szabo}.
\end{proof}

\begin{remark}
\label{remark:cyclic-Lefschetz}
If $X$ is a (projective) smooth rationally connected variety, then every finite cyclic group acting on $X$ has a fixed point by the~holomorphic Lefschetz fixed point formula.
\end{remark}

The following lemma is a geometric counter-part of \cite[Lemma~2.2]{AbbanCheltsovKishimotoMangolte},
and its special cases are well-known to experts \cite{Alvaro}.
Its proof was communicated to us by Alex Duncan.

\begin{lemma}
\label{lemma:lift}
Let $G$ be a finite subgroup in $\mathrm{PGL}_{n+1}(\mathbb{C})$
such that there is a $G$-invariant hypersurface $X\subset \mathbb{P}^n$ of degree $d$.
Suppose that the~integers $d$ and $n+1$ are coprime.
Then there exists a subgroup $\widetilde{G}\subset\mathrm{GL}_{n+1}(\mathbb{C})$ such that $\phi(\widetilde{G})=G\simeq\widetilde{G}$, where
$\phi\colon\mathrm{GL}_{n+1}(\mathbb{C})\to \mathrm{PGL}_{n+1}(\mathbb{C})$ is the~natural projection.
\end{lemma}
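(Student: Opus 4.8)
The plan is to view the lemma as a splitting statement for a central extension. Writing $V=\mathbb{C}^{n+1}$ and identifying $\ker\phi$ with the torus of scalar matrices $\mathbb{C}^\ast\cdot I\subset\mathrm{GL}(V)$, I would set $\widehat{G}:=\phi^{-1}(G)$, which fits into a central extension $1\to\mathbb{C}^\ast\to\widehat{G}\to G\to 1$. The key observation is that to produce the desired $\widetilde{G}$ it suffices to construct a character $r\colon\widehat{G}\to\mathbb{C}^\ast$ whose restriction to the central $\mathbb{C}^\ast$ is the identity, i.e.\ $r(cI)=c$ for all $c\in\mathbb{C}^\ast$. Indeed, granting such an $r$, I would take $\widetilde{G}:=\ker r$. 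Then $\widetilde{G}\cap(\mathbb{C}^\ast\cdot I)=\{I\}$, so $\phi|_{\widetilde{G}}\colon\widetilde{G}\to G$ is injective; and it is surjective because any lift $h$ of $g\in G$ can be rescaled to $r(h)^{-1}h\in\ker r$ without changing its image in $\mathrm{PGL}_{n+1}(\mathbb{C})$. Hence $\phi(\widetilde{G})=G\simeq\widetilde{G}$, as required.

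The heart of the argument is to build such an $r$ out of two naturally occurring characters of $\widehat{G}$, using the invariant hypersurface. Let $F\in\mathrm{Sym}^d V^\ast$ be a defining equation of $X$, so that $X=\{F=0\}$ and $\mathbb{C}\cdot F$ is the corresponding line in $\mathrm{Sym}^d V^\ast$. Since $X$ is $G$-invariant, every element of $\widehat{G}$ carries $X$ to $X$ and therefore preserves the line $\mathbb{C}\cdot F$; under the standard action $(h\cdot f)(v)=f(h^{-1}v)$ this yields a character $\chi\colon\widehat{G}\to\mathbb{C}^\ast$ determined by $h\cdot F=\chi(h)F$. A direct computation on a scalar matrix gives $\chi(cI)=c^{-d}$, since $F$ is homogeneous of degree $d$. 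The second character is the determinant $\det\colon\widehat{G}\to\mathbb{C}^\ast$, for which $\det(cI)=c^{n+1}$.

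Finally I would invoke the coprimality hypothesis. Because $\gcd(d,n+1)=1$, Bézout's lemma provides integers $a,b$ with $-da+(n+1)b=1$, and I set $r:=\chi^{a}\cdot\det^{b}$, which is again a character of $\widehat{G}$. Its restriction to the center is $r(cI)=c^{-da}\,c^{(n+1)b}=c^{-da+(n+1)b}=c$, exactly the normalization needed above, so $\widetilde{G}=\ker r$ completes the proof. I do not expect any genuine obstacle here: the only real content is recognizing that splitting the extension amounts to trivializing the central weight, and that the \emph{two} weights available to us, namely $-d$ coming from the invariant form and $n+1$ coming from the determinant, can be combined to weight $1$ precisely when $d$ and $n+1$ are coprime. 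The role of the invariant hypersurface is essential: the determinant alone carries weight $n+1$ on the center and cannot produce weight $1$ unless $n+1=1$, so it is the existence of $X$ that supplies the complementary weight.
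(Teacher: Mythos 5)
Your proposal is correct and is essentially the paper's argument: both combine the character coming from the invariant form (central weight $-d$, or $d$ depending on the convention for the action) with the determinant (central weight $n+1$) via B\'ezout to normalize the central weight, and then extract $\widetilde{G}$ from the resulting character. The only cosmetic difference is that you work with the full $\mathbb{C}^\ast$-central extension in $\mathrm{GL}_{n+1}(\mathbb{C})$ and take $\widetilde{G}=\ker r$, whereas the paper first passes to the finite preimage in $\mathrm{SL}_{n+1}(\mathbb{C})$ and realizes $\widetilde{G}$ as the twisted image $\{\theta(g)g\}$.
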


\begin{proof}
Recall that the~restriction of $\phi$ to the~subgroup
$\mathrm{SL}_{n+1}(\mathbb{C})\subset\mathrm{GL}_{n+1}(\mathbb{C})$ gives a surjective homomorphism $\mathrm{SL}_{n+1}(\mathbb{C})\to\mathrm{PGL}_{n+1}(\mathbb{C})$ whose kernel consists of scalar matrices and is isomorphic to $\mathbb{Z}/(n+1)\mathbb{Z}$.
Let $\widehat{G}$ be the~preimage of the~group $G$ in $\mathrm{SL}_{n+1}(\mathbb{C})$ via this epimorphism,
and let $f\in\mathbb{C}[x_1,\ldots,x_{n+1}]$ be the~defining polynomial of the~hypersurface $X$.
Then $f$ is a polynomial of degree $d$, and there exists a group homomorphism $\eta\colon\widehat{G}\to\mathbb{C}^\ast$ such that
$$
g^*(f)=\eta(g)f
$$
for every matrix $g\in\widehat{G}$. In particular, if $g=\lambda I_{n+1}\in\widehat{G}$ for some $\lambda\in\mathbb{C}^\ast$, then $\eta(g)=\lambda^d$. Here, we denote by $I_{n+1}$ the~identity $(n+1)\times(n+1)$ matrix.
Let $\mathrm{det}\colon\mathrm{GL}_{n+1}(\mathbb{C})\to \mathbb{C}^\ast$ be the~group homomorphism that maps every matrix to its determinant.
Then $\mathrm{det}\big(\lambda I_{n+1}\big)=\lambda^{n+1}$
for every $\lambda\in\mathbb{C}^\ast$. Since $d$ and $n+1$ are coprime, we can use appropriate powers of $\eta$ and $\mathrm{det}$ to construct a homomorphism $\theta\colon\widehat{G}\to\mathbb{C}^\ast$
such that $\theta\big(\lambda I_{n+1}\big)=\frac{1}{\lambda}$
for every $\lambda\in\mathbb{C}^\ast$ such that the~scalar matrix $\lambda I_{n+1}$ is contained in $\widehat{G}$.
Now, we let $\widetilde{G}$ be the~subgroup in $\mathrm{GL}_{n+1}(\mathbb{C})$ defined as follows:
$$
\widetilde{G}=\big\{\theta(g)g\ \big\vert\ g\in \widehat{G}\big\}.
$$
Then $\widetilde{G}$ is the~required subgroup in $\mathrm{GL}_{n+1}(\mathbb{C})$.
\end{proof}

\begin{corollary}
\label{corollary:odd-degree-curves}
Let $A$ be a finite abelian subgroup in $\mathrm{PGL}_{n+1}(\mathbb{C})$. Suppose that there is an $A$-invariant irreducible rational curve $C\subset\mathbb{P}^n$ such that $\mathrm{deg}(C)$ is odd,
and there exists a finite subgroup $\widetilde{A}\subset\mathrm{GL}_{n+1}(\mathbb{C})$ such that $\phi(\widetilde{A})=A\simeq\widetilde{A}$, where
$\phi\colon\mathrm{GL}_{n+1}(\mathbb{C})\to \mathrm{PGL}_{n+1}(\mathbb{C})$ is the~projection.
Then $A$ fixes a point in~$C$.
\end{corollary}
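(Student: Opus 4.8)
The plan is to pass to the normalization $\nu\colon\mathbb{P}^1\to C$ and analyze the induced $A$-action there. Since $C$ is irreducible and rational, its normalization is $\mathbb{P}^1$, and because normalization is canonical the $A$-action on $C$ lifts uniquely to an action on $\mathbb{P}^1$ making $\nu$ equivariant; thus I obtain a homomorphism $A\to\mathrm{Aut}(\mathbb{P}^1)=\mathrm{PGL}_2(\mathbb{C})$ with image $\bar{A}$. Any $A$-fixed point $p\in\mathbb{P}^1$ yields an $A$-fixed point $\nu(p)\in C$, so it suffices to fix a point of $\mathbb{P}^1$. A finite abelian subgroup of $\mathrm{PGL}_2(\mathbb{C})$ is either cyclic or isomorphic to the Klein four-group $V_4$: in the cyclic case $\bar{A}$ shares the two fixed points of a generator (equivalently, one may invoke Remark~\ref{remark:cyclic-Lefschetz}), and we are done. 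The entire difficulty is therefore to rule out the possibility $\bar{A}\cong V_4$, for which no common fixed point on $\mathbb{P}^1$ exists.

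To exclude $\bar{A}\cong V_4$ I would use the linear lift $\widetilde{A}$ together with the parity of $\mathrm{deg}(C)$. After replacing $\mathbb{P}^n$ by the $A$-invariant linear span of $C$ and $\widetilde{A}$ by its restriction to the corresponding invariant subspace, I may assume $C$ is nondegenerate; writing $\mathbb{P}^n=\mathbb{P}(V)$ and $\mathbb{P}^1=\mathbb{P}(U)$ with $U=\mathbb{C}^2$ and $d=\mathrm{deg}(C)$, the morphism $\nu$ realizes the space of linear forms $V^*$ as an $A$-invariant subspace of
\[
H^0\big(\mathbb{P}^1,\mathcal{O}(d)\big)=\mathrm{Sym}^d U^*.
\]
The plan is then to compare obstruction classes in the Schur multiplier $H^2(A,\mathbb{C}^*)$. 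The existence of $\widetilde{A}$ says that $V$, hence $V^*$, is a genuine linear representation of $A$, so its obstruction class vanishes; on the other hand, passing to $\mathrm{Sym}^d$ multiplies the obstruction class of the $A$-action on $\mathbb{P}^1$ by $d$, and restricting this to an invariant subspace leaves the class unchanged. Denoting by $\omega\in H^2(A,\mathbb{C}^*)$ the obstruction to lifting the $A$-action on $\mathbb{P}^1$ to $\mathrm{GL}_2(\mathbb{C})$, this comparison yields $d\,\omega=0$.

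Now I would invoke two standard facts. First, if $\bar{A}\cong V_4$ then $\omega$ is the pullback along $A\to\bar{A}$ of a class in $H^2(V_4,\mathbb{C}^*)\cong\mathbb{Z}/2\mathbb{Z}$, so $2\omega=0$; combined with $d\,\omega=0$ and $d$ odd this forces $\omega=0$. Second, $\omega=0$ means the $A$-action on $\mathbb{P}^1$ lifts to a homomorphism $A\to\mathrm{GL}_2(\mathbb{C})$, whose image is a finite abelian subgroup of $\mathrm{GL}_2(\mathbb{C})$ and is therefore simultaneously diagonalizable; consequently its image in $\mathrm{PGL}_2(\mathbb{C})$, which is exactly $\bar{A}$, is cyclic. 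This contradicts $\bar{A}\cong V_4$. Hence $\bar{A}$ is cyclic and $A$ fixes a point of~$C$.

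I expect the main obstacle to be making the obstruction-class bookkeeping rigorous: specifically, the multiplicativity of the Schur-multiplier class under $\mathrm{Sym}^d$ and its invariance under restriction to an invariant subspace, together with the careful treatment of the case where $A\to\bar{A}$ has nontrivial kernel, so that $\omega$ is merely a pullback rather than the $V_4$-class itself. The payoff is concentrated in the single numerical input that $d$ is odd, which is precisely what decouples the order-two obstruction of $V_4$ from the relation $d\,\omega=0$.
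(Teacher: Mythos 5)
Your argument is correct, but it reaches the key linearization step by a different mechanism than the paper. The paper's proof also passes to the normalization $\widehat{C}\simeq\mathbb{P}^1$, but then it uses that $\widetilde{A}$ is finite abelian, hence diagonalizable, to produce an $A$-invariant hyperplane $H\subset\mathbb{P}^n$ not containing $C$; the pullback $\nu^*(H\vert_C)$ is then an $A$-invariant divisor of odd degree $d$ on $\mathbb{P}^1$, i.e.\ an invariant ``hypersurface'' of degree $d$ with $\gcd(d,2)=1$, so Lemma~\ref{lemma:lift} (applied with $n=1$) linearizes the action on $\mathbb{P}^1$, which is then diagonalizable and has a fixed point. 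You instead reduce to excluding $\bar{A}\cong V_4$ and kill the obstruction class $\omega\in H^2(A,\mathbb{C}^*)$ directly, using the equivariant identification $H^0(\mathbb{P}^1,\mathcal{O}(d))\cong\mathrm{Sym}^d U^*$ to get $d\,\omega=0$ from the honest linearization supplied by $\widetilde{A}$, and $2\omega=0$ from $H^2(V_4,\mathbb{C}^*)\cong\mathbb{Z}/2\mathbb{Z}$. The two proofs exploit the same arithmetic (the obstruction is annihilated by the coprime integers $d$ and $2$), and indeed Lemma~\ref{lemma:lift} is essentially a constructive instance of your cohomological statement; what the paper's route buys is that it recycles Lemma~\ref{lemma:lift} as a black box and avoids any explicit Schur-multiplier bookkeeping, while your route avoids the invariant-hyperplane step and makes the role of the parity of $d$ conceptually transparent. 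The points you flag as delicate (multiplicativity of the class under $\mathrm{Sym}^d$, invariance under restriction to an invariant subspace, and treating $\omega$ as a pullback along $A\to\bar{A}$) all check out, so the proposal stands as a complete alternative proof.
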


\begin{proof}
Let $\nu\colon\widehat{C}\to C$ be the~normalization of the~curve $C$. Then $\nu$ is $A$-equivariant, and we have $\widehat{C}\simeq\mathbb{P}^1$.
On the~other hand, since $\widetilde{A}$ is abelian, there is an $A$-invariant hyperplane $H$ in $\mathbb{P}^{n}$ that does not contain~$C$,
so $\nu^*(H\vert_{C})$ is an $A$-invariant divisor on $\widehat{C}\simeq\mathbb{P}^1$ of degree $\mathrm{deg}(C)$.
Now, Lemma~\ref{lemma:lift} implies that $A$ fixes a point in $\widehat{C}$, so it also fixes a point in $C$.
\end{proof}

The following lemma is a geometric counter-part of \cite[Lemma~3.4]{AbbanCheltsovKishimotoMangolte}.

\begin{lemma}
\label{lemma:dP-negative}
Let $S$ be a del Pezzo surface with at worst quotient singularities, let $A$ be a finite abelian subgroup in $\mathrm{Aut}(S)$,
and $C$ an $A$-invariant irreducible curve in $S$,
let $f\colon\widetilde{S}\to S$ be the~minimal resolution of singularities,
let $\widetilde{C}$ be the~strict transform on $\widetilde{S}$ of the~curve $C$.
If $\widetilde{C}^2<0$, then $A$ fixes a point in $C$.
\end{lemma}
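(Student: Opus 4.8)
The plan is to show that the negativity hypothesis forces $\widetilde{C}$ to be a smooth rational $(-1)$-curve, and then to exploit the fact that an $A$-invariant $(-1)$-curve can be contracted $A$-equivariantly to a fixed point whose tangent space \emph{linearises} the $A$-action. Since the minimal resolution of a surface is canonical, the action of $A$ lifts to $\widetilde{S}$, the morphism $f$ is $A$-equivariant, and $\widetilde{C}$ (the strict transform of the $A$-invariant curve $C$) is $A$-invariant; moreover $f$ restricts to a surjection $\widetilde{C}\to C$, so it suffices to produce an $A$-fixed point on $\widetilde{C}$.

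First I would pin down the numerical type of $\widetilde{C}$. Writing $K_{\widetilde{S}}=f^*K_S+\sum_i b_iE_i$ over the exceptional divisors $E_i$ of the minimal resolution, the discrepancies satisfy $-1<b_i\leqslant 0$ because $S$ has quotient (hence log terminal) singularities and $f$ is minimal; thus $-K_{\widetilde{S}}=f^*(-K_S)+\Delta$ with $\Delta=-\sum_i b_iE_i$ effective and supported on the exceptional locus. Since $\widetilde{C}$ is not exceptional, $\Delta\cdot\widetilde{C}\geqslant 0$, and by the projection formula $f^*(-K_S)\cdot\widetilde{C}=(-K_S)\cdot C>0$ as $-K_S$ is ample; hence $-K_{\widetilde{S}}\cdot\widetilde{C}>0$, i.e. $K_{\widetilde{S}}\cdot\widetilde{C}\leqslant -1$. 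Feeding $\widetilde{C}^2\leqslant -1$ and $K_{\widetilde{S}}\cdot\widetilde{C}\leqslant -1$ into the adjunction formula $2p_a(\widetilde{C})-2=\widetilde{C}^2+K_{\widetilde{S}}\cdot\widetilde{C}$ gives $p_a(\widetilde{C})\leqslant 0$, so $p_a(\widetilde{C})=0$ and both inequalities are forced to be equalities; therefore $\widetilde{C}\cong\mathbb{P}^1$ is a $(-1)$-curve.

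Next I would contract it. By Castelnuovo's criterion the $(-1)$-curve $\widetilde{C}$ can be blown down, and since $\widetilde{C}$ is $A$-invariant and this contraction is canonical, it is realised by an $A$-equivariant morphism $g\colon\widetilde{S}\to\widetilde{S}'$ onto a smooth surface with $p'=g(\widetilde{C})$ a single $A$-fixed point. Identifying $g$ with the blowup of $\widetilde{S}'$ at $p'$, we have $\widetilde{C}=g^{-1}(p')=\mathbb{P}(T_{p'}\widetilde{S}')$. The differential of the $A$-action at the fixed point $p'$ is a genuine homomorphism $A\to\mathrm{GL}_2(\mathbb{C})$; as $A$ is finite abelian its image is simultaneously diagonalisable, so $A$ fixes the two coordinate points of $\mathbb{P}(T_{p'}\widetilde{S}')=\widetilde{C}$ (and all of $\widetilde{C}$ if it acts by scalars). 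Choosing such a fixed point $\widetilde{q}\in\widetilde{C}$, the image $f(\widetilde{q})\in C$ is fixed by $A$, as required.

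The crux is the last step. One might worry that a finite abelian group can act on $\mathbb{P}^1$ without fixed points (for instance the Klein four-group inside $\mathrm{PGL}_2(\mathbb{C})$), so that invariance of $\widetilde{C}\cong\mathbb{P}^1$ alone would be insufficient. What rescues the argument is that after the contraction $\widetilde{C}$ is presented as the projectivisation of an honest two-dimensional linear representation of $A$—namely $T_{p'}\widetilde{S}'$—rather than as an abstract $\mathrm{PGL}_2$-action, and a finite abelian subgroup of $\mathrm{GL}_2(\mathbb{C})$ always fixes its eigen-directions. Thus the real work lies in guaranteeing that $\widetilde{C}$ is \emph{exactly} a $(-1)$-curve, so that its contraction linearises the action; the negativity hypothesis combined with the ampleness of $-K_S$ is precisely what delivers this.
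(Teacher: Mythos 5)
Your argument is correct and follows essentially the same route as the paper: lift the action to the minimal resolution, use adjunction to show $\widetilde{C}$ is an $A$-invariant $(-1)$-curve, contract it equivariantly to a smooth fixed point, and diagonalise the induced $A$-representation on the tangent space to produce a fixed point on $\widetilde{C}$, hence on $C$. The only difference is that you spell out the discrepancy computation needed to get $K_{\widetilde{S}}\cdot\widetilde{C}<0$, which the paper leaves implicit in the phrase ``it follows from the adjunction formula.''
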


\begin{proof}
Observe that the~action of the~group $A$ lifts to $\widetilde{S}$.
If $\widetilde{C}^2<0$, then it follows from the~adjunction formula that $\widetilde{C}^2=-1$ and $\widetilde{C}\simeq\mathbb{P}^1$,
so there exists an $A$-equivariant contraction $\widetilde{S}\to\overline{S}$ of the~curve $\widetilde{C}$ to a smooth $A$-fixed point $P$.
Then the curve $\widetilde{C}$ is identified with the~projectivization of the~Zariski tangent space $V=T_P(\overline{S})$.
Since $A$ is abelian, $V$ splits as a sum of
$A$-invariant one-dimensional subspaces, which means that $C$ has
an $A$-fixed point.
\end{proof}

\begin{corollary}
\label{corollary:dP-negative-easy}
Let $S$ be a del Pezzo surface with at worst quotient singularities, let $A$ be a finite abelian subgroup in $\mathrm{Aut}(S)$,
and $C$ an $A$-invariant irreducible curve in $S$. Suppose that $A$ fixes no points in~$S$.
Then $C^2\geqslant 0$. Moreover, if $C^2=0$ then $C$ is contained in the~smooth locus of the~surface $S$.
\end{corollary}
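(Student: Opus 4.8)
The plan is to reduce everything to the minimal resolution $f\colon\widetilde{S}\to S$ and to the inequality supplied by Lemma~\ref{lemma:dP-negative}. Since $A$ fixes no point of $S$, the contrapositive of that lemma gives $\widetilde{C}^2\geqslant 0$. To transfer this to $C$, I would write the numerical pullback as $f^*C=\widetilde{C}+\Delta$, where $\Delta$ is an effective $\mathbb{Q}$-divisor supported on the $f$-exceptional locus; this is the standard fact that the exceptional part of the pullback of an effective $\mathbb{Q}$-Cartier divisor is effective, which rests on the negative-definiteness of the exceptional intersection form. Because $\Delta$ is exceptional, the projection formula yields $\Delta\cdot f^*C=f_*\Delta\cdot C=0$, and since $\widetilde{C}$ is not a component of $\Delta$ we have $\widetilde{C}\cdot\Delta\geqslant 0$. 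Hence
$$
C^2=(f^*C)^2=\widetilde{C}\cdot f^*C=\widetilde{C}^2+\widetilde{C}\cdot\Delta\geqslant\widetilde{C}^2\geqslant 0,
$$
which proves the first assertion.

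For the \emph{moreover} part, suppose $C^2=0$. Then both summands in the displayed identity must vanish, so $\widetilde{C}^2=0$ and, crucially, $\widetilde{C}\cdot\Delta=0$. I would then argue that this forces $\widetilde{C}$ to be disjoint from the entire exceptional locus, i.e. that $C$ avoids the singular points of $S$. Writing $\Delta=\sum_i a_i E_i$ over the exceptional curves $E_i$, we get $\widetilde{C}\cdot\Delta=\sum_i a_i(\widetilde{C}\cdot E_i)=0$ with every term non-negative, so $a_i(\widetilde{C}\cdot E_i)=0$ for each $i$. If $C$ passed through a singular point $p$, the strict transform $\widetilde{C}$ would meet some exceptional curve $E_k$ over $p$, giving $\widetilde{C}\cdot E_k>0$ and therefore $a_k=0$.

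The main obstacle—the step that uses the geometry of the resolution rather than formal bookkeeping—is ruling out $a_k=0$ once $\widetilde{C}$ meets the connected component $\mathcal{E}$ of the exceptional locus over $p$. Here I would invoke the negative-definiteness of the intersection matrix $M=(E_i\cdot E_j)$ restricted to $\mathcal{E}$: the coefficients $(a_i)$ solve $M\mathbf{a}=-\mathbf{v}$, where $\mathbf{v}=(\widetilde{C}\cdot E_j)_{E_j\in\mathcal{E}}$ has non-negative entries and is non-zero because $\widetilde{C}\cdot E_k>0$. Since $-M$ is a symmetric positive-definite matrix with non-positive off-diagonal entries arising from a connected configuration (a Stieltjes matrix), its inverse has strictly positive entries, whence $a_i>0$ for every $E_i\in\mathcal{E}$, and in particular $a_k>0$. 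This contradicts $a_k=0$. Therefore $\widetilde{C}$ meets no exceptional curve, so $C$ lies in the smooth locus of $S$, completing the proof.
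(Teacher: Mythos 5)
Your argument is correct, and it follows the route the paper intends: the corollary is stated without proof as an immediate consequence of Lemma~\ref{lemma:dP-negative}, and your computation with the Mumford pullback $f^*C=\widetilde{C}+\Delta$, the effectiveness of $\Delta$, and the strict positivity of its coefficients over any singular point of $S$ lying on $C$ (via the negativity lemma for the irreducible Stieltjes matrix of each exceptional component) is exactly the standard bookkeeping that the authors leave to the reader. Both parts of your proof are sound as written.
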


\begin{corollary}
\label{corollary:dP-negative-nice}
Let $S$ be a del Pezzo surface with at worst quotient singularities, let $A$ be a finite abelian subgroup in $\mathrm{Aut}(S)$,
and $\pi\colon S\to S^\prime$ an $A$-equivariant birational morphism such that $S^\prime$ is a normal surface.
Then $A$ fixes a point in $S$ if and only if $A$ fixes a point in $S^\prime$.
\end{corollary}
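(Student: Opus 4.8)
The plan is to prove the two implications separately, the forward one being immediate and the reverse one carrying the bulk of the work. If $A$ fixes a point $P\in S$, then since $\pi$ is $A$-equivariant the point $\pi(P)$ is $A$-fixed in $S'$, which settles the ``only if'' direction. For the converse, suppose $A$ fixes a point $Q\in S'$ and set $F=\pi^{-1}(Q)$. Because $\pi$ is a birational morphism onto a normal surface, its fibres are connected, so $F$ is connected, and it is $A$-invariant since $Q$ is $A$-fixed. If $F$ is a single point, that point is $A$-fixed and we are done; thus we may assume $F$ is a connected curve contracted by $\pi$ to $Q$.

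The first key step is to pin down the geometry of $F$. Since $-K_S$ is ample, every curve contracted by $\pi$ is $K_S$-negative, so $\pi$ is the contraction of a $K_S$-negative extremal face of $\overline{NE}(S)$; consequently $S'$ is again a klt surface, that is, $Q$ is a quotient (hence rational) singularity. This is exactly where del Pezzo-ness is essential: contracting curves on a surface with rational singularities can in general produce a non-rational singularity, and it is the positivity of $-K_S$ that forbids this. Passing to an $A$-equivariant resolution $f\colon\widetilde S\to S$ of $S$, refined so that $\widetilde\pi=\pi\circ f\colon\widetilde S\to S'$ is a log resolution over $Q$, the exceptional fibre $\widetilde F=\widetilde\pi^{-1}(Q)$ is a simple normal crossing, negative-definite, $A$-invariant configuration; because $Q$ is a rational singularity its dual graph $\Gamma$ is a tree of smooth rational curves.

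The second key step is to locate an $A$-fixed point on $\widetilde F$ via the induced $A$-action on $\Gamma$. A finite group acting on a tree either fixes a vertex of $\Gamma$ or stabilises an edge whose two endpoints it interchanges. In the edge case, $A$ preserves the pair of components $\{\widetilde C_1,\widetilde C_2\}$ meeting in the corresponding transverse, hence unique, point $x=\widetilde C_1\cap\widetilde C_2$; as each element of $A$ either fixes or swaps the two curves, it fixes $x$, and then $f(x)\in S$ is an $A$-fixed point. In the vertex case, some component $\widetilde C$ of $\widetilde F$ is $A$-invariant, and there are two sub-cases. If $\widetilde C$ is $f$-exceptional then $f(\widetilde C)$ is a single point of $S$, which is necessarily $A$-fixed. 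If instead $\widetilde C$ is the strict transform of a component $C\subset S$ of $F$, then $C$ is an $A$-invariant irreducible curve whose strict transform on the minimal resolution of $S$ has negative self-intersection, since $C$ is contracted by $\pi$; hence Lemma~\ref{lemma:dP-negative} furnishes an $A$-fixed point on $C\subset S$. In every case we obtain an $A$-fixed point of $S$, completing the argument.

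I expect the main obstacle to be the first key step, namely establishing that $Q$ is a rational (indeed quotient) singularity so that $\widetilde F$ is genuinely a tree. This cannot be bypassed: a connected negative-definite configuration need not be a tree in general, for instance a symmetric triangle of rational curves with sufficiently negative self-intersections is negative definite, and such cycles are precisely what non-rational singularities produce, whereupon the equivariant tree argument of the last step collapses. A secondary point requiring care is the bookkeeping in the dual-tree argument, checking that the vertex-versus-edge dichotomy meshes correctly with the distinction between $f$-exceptional components and strict transforms, and that the hypotheses of Lemma~\ref{lemma:dP-negative} are met for the latter.
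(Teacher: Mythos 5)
Your proof is correct, and it reaches the conclusion by a route organised quite differently from the paper's, even though both arguments ultimately rest on the same two pillars: the tree structure of the exceptional configuration over the fixed point, and Lemma~\ref{lemma:dP-negative} applied to an $A$-invariant contracted curve with negative self-intersection. The paper argues by contradiction: it first runs a relative $A$-equivariant MMP to reduce to a single contraction step with all exceptional curves lying over $p$, dispatches the case of one exceptional curve via Corollary~\ref{corollary:dP-negative-easy}, and otherwise compares the minimal resolution of $p\in S'$ with the minimal resolution of $S$, invoking Iliev's classification of log-terminal surface singularities (or Nadel's theorem) to extract an $A$-invariant exceptional curve upstairs whose image in $S$ then violates Corollary~\ref{corollary:dP-negative-easy}. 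You instead work directly: you justify that $Q\in S'$ is a klt, hence rational, singularity via negativity of $K_S$ on the contracted curves (a point the paper asserts without proof), pass to an $A$-equivariant log resolution of $S'$ factoring through $S$, and replace the appeal to Iliev's classification by the elementary fixed-point theorem for finite group actions on finite trees. This cleanly produces either an $A$-fixed node (the inverted-edge case, which the paper excludes only implicitly through its contradiction hypothesis, since such a node would already be a fixed point) or an $A$-invariant component; the latter is either $f$-exceptional, giving a fixed point of $S$ outright, or the strict transform of a $\pi$-exceptional curve of $S$, to which Lemma~\ref{lemma:dP-negative} applies because negative definiteness of the contracted configuration forces the strict transform on the minimal resolution to have negative self-intersection. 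Your version avoids the MMP reduction, the argument by contradiction, and the external classification reference, at the cost of having to establish rationality of $Q$ and set up the equivariant log resolution; both arguments are sound, and yours is arguably the more self-contained.
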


\begin{proof}
Obviously, if $A$ fixes a point in $S$, then $A$ fixes a point in $S^\prime$. Thus, to complete the~proof, we may assume that $A$ does not fix points in $S$ but $A$ fixes a point $p\in S^\prime$ and seek for a contradiction.

First, we observe that $S^\prime$ also has at worst quotient singularities, and $S^\prime$ is also a del Pezzo surface. Applying relative $A$-equivariant Minimal Model Program to $S$ over $S^\prime$, we may assume that
$$
\mathrm{rk}\big(\mathrm{Pic}^A(S)\big)=\mathrm{rk}\big(\mathrm{Pic}^A(S^\prime)\big)+1,
$$
and all $\pi$-exceptional irreducible curves are mapped to $p$.
Denote these curves by $E_1,\ldots,E_r$. If $r=1$, then we get a contradiction by Corollary~\ref{corollary:dP-negative-easy}, since $E_1^2<0$. Suppose $r\geqslant 2$ and consider the~following $A$-equivariant commutative diagram:
$$
\xymatrix@R=1em{
&&\widetilde{S}\ar@{->}[dll]_{\phi}\ar@{->}[drr]^{\varphi}&&\\%
S\ar@{->}[drr]_{\pi}&&&&\widehat{S}\ar@{->}[dll]^{\varpi}\\%
&&S^\prime&& }
$$
where $\varpi$ is a minimal resolution of $p\in S^\prime$,
$\phi$ is the~minimal resolution of singularities of singular points of $S$ mapped to $p$,
and $\varphi$ is a birational morphism. Then $A$ does not fix points in $\widetilde{S}$,
so it follows from Theorem~\ref{theorem:Kollar-Szabo} that $A$ also does not fix points in $\widehat{S}$.

Denote by $F_1,\ldots,F_k$ the~irreducible exceptional curves of the~morphism $\varpi$, which are all smooth and isomorphic to $\mathbb{P}^1$. Every two intersecting curves among them meet transversally at one point,
and the~dual graph of these curves does not contain cycles.
All possibilities for the~dual graphs of the~curves $F_1,\ldots,F_k$ are given in \cite{Iliev}.
Since the~dual graph is connected and $A$ does not fix points in $\widehat{S}$,
it immediately follows from Iliev's classification \cite{Iliev} that $A$ leaves invariant at least one curve among $F_1,\ldots,F_k$,
which also follows directly from Nadel's \cite[Theorem 4.5]{Nadel}.

Without loss of generality, we may assume that $F_1$ is $A$-invariant.
Let $\widetilde{F}_1$ be the~strict transform of this curve on $\widetilde{S}$.
Then $\widetilde{F}_1$ is not $\phi$-exceptional, since otherwise $\phi(\widetilde{F}_1)$ would be fixed by $A$.
Hence, we conclude that $\phi(\widetilde{F}_1)$ is one of the~$\pi$-exceptional curves $E_1,\ldots,E_r$.
Now, applying Corollary~\ref{corollary:dP-negative-easy} to $\phi(\widetilde{F}_1)$, we obtain a contradiction.
\end{proof}

Lemma~\ref{lemma:dP-negative} and its corollaries imply the~following result.

\begin{lemma}
\label{lemma:dP}
Let $S$ be a del Pezzo surface with at worst quotient singularities, let $A$ be a finite abelian subgroup in $\mathrm{Aut}(S)$ such that $A$ does not fix points in $S$,
let $C$ be an irreducible $A$-invariant curve in $S$, and let $\Delta$ be an effective $\mathbb{Q}$-divisor on $S$ with $\tau C+\Delta\sim_{\mathbb{Q}} -K_{S}$
for some $\tau\in\mathbb{Q}_{>0}$. Then $\tau\leqslant 2$.
\end{lemma}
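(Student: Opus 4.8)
The plan is to convert the $\mathbb{Q}$-linear equivalence into intersection numbers, test it against $C$, and bound $\tau$ using the positivity forced by the absence of $A$-fixed points. First I would normalise the boundary: writing $\Delta=aC+\Delta'$ with $a\geqslant 0$ and $C\not\subset\operatorname{Supp}(\Delta')$, the relation $-K_S\sim_{\mathbb{Q}}(\tau+a)C+\Delta'$ has the same form with the larger coefficient $\tau+a$, so it suffices to bound the coefficient when $C$ is not a component of the boundary; then $\Delta\cdot C\geqslant 0$. By Corollary~\ref{corollary:dP-negative-easy} I also record $C^2\geqslant 0$, so $C$ is nef.

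Second, I would reduce to an $A$-minimal surface. Running the $A$-equivariant Minimal Model Program, every birational contraction collapses an $A$-orbit of curves spanning a $K_S$-negative ray, which have negative self-intersection; since $C$ is nef it is never contracted, its image stays irreducible and $A$-invariant, and pushing forward the relation preserves both $\tau$ (the canonical class pushing forward to the canonical class) and the effectivity of the boundary. By Corollary~\ref{corollary:dP-negative-nice} the hypothesis that $A$ fixes no point also persists. Thus I may assume $S$ is a Mori fibre space: either a del Pezzo surface with $\operatorname{rk}\operatorname{Pic}^A(S)=1$, or an $A$-equivariant conic bundle over $\mathbb{P}^1$.

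Third comes the numerical bound. Passing to the minimal resolution $f\colon\widetilde S\to S$, to which $A$ lifts, Theorem~\ref{theorem:Kollar-Szabo} shows $A$ fixes no point of $\widetilde S$, so the argument of Lemma~\ref{lemma:dP-negative} gives $\widetilde C^2\geqslant 0$ for the strict transform $\widetilde C$, while the projection formula gives $C^2\geqslant\widetilde C^2$. As the discrepancies of a minimal resolution of quotient singularities are non-positive and adjunction holds on $\widetilde S$, intersecting the relation with $C$ and using $\Delta\cdot C\geqslant 0$ yields
$$\tau C^2\leqslant -K_S\cdot C\leqslant -K_{\widetilde S}\cdot\widetilde C=\widetilde C^2+2-2p_a(\widetilde C)\leqslant C^2+2,$$
whence $(\tau-1)C^2\leqslant 2$ and $\tau\leqslant 2$ as soon as $C^2\geqslant 2$. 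In the conic-bundle case a non-fibre $C$ has $C^2\geqslant 2$ after minimalisation, while a fibre $C$ has $C^2=0$ and $A$-minimality forbids a negative section, so the effective cone forces $\tau\leqslant 2$; and when $\operatorname{rk}\operatorname{Pic}^A(S)=1$ one has $[C]\equiv_{\mathbb{Q}}\lambda(-K_S)$ and $[\Delta]\equiv_{\mathbb{Q}}\mu(-K_S)$ with $\lambda,\mu\geqslant 0$, giving $1=\tau\lambda+\mu$ and $\tau\leqslant 1/\lambda$.

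The main obstacle is the surviving low-degree range, where $-K_S\cdot C<\tfrac12 K_S^2$ (equivalently $\lambda<1/2$) and the numerics alone would permit $\tau=3$, as the line $C\subset\mathbb{P}^2$ with $-K_{\mathbb{P}^2}\sim 3C$ shows. Here the fixed-point-free hypothesis must rule out such small $A$-invariant curves. For the model case of a line one has $\deg C=1$, coprime to $3$, so Lemma~\ref{lemma:lift} supplies a lift of $A$ and Corollary~\ref{corollary:odd-degree-curves} then produces an $A$-fixed point on $C$, contradicting the hypothesis; the remaining small-degree invariant curves on the $A$-minimal del Pezzo surfaces are excluded by the same lifting circle of ideas. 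I expect this exclusion --- carried out over the finite classification of $A$-minimal del Pezzo surfaces with quotient singularities admitting a fixed-point-free abelian action, forcing $\lambda\geqslant 1/2$ throughout --- to be the crux, the intersection-theoretic estimates above being routine.
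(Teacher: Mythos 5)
Your reduction steps (normalising $\Delta$ so that $C$ is not a component, running the $A$-equivariant MMP, and the adjunction estimate $(\tau-1)C^2\leqslant 2$ on the minimal resolution) are sound and broadly parallel to the first half of the paper's argument; in particular the case of two transverse conic-bundle structures is correctly killed by intersecting with general fibres. But the proof has a genuine gap exactly where you yourself locate "the crux": the case $\mathrm{rk}\,\mathrm{Pic}^A(S)=1$ with $\lambda=(-K_S\cdot C)/K_S^2<\tfrac12$, where the numerics permit $\tau>2$. You propose to exclude this "over the finite classification of $A$-minimal del Pezzo surfaces with quotient singularities admitting a fixed-point-free abelian action", but no such finite classification exists or is produced: del Pezzo surfaces with quotient (i.e.\ log terminal) singularities form an unbounded family, and since $S$ is singular the quantity $C^2$ can take arbitrarily small positive rational values, so there is no short list of "small-degree invariant curves" to rule out by the lifting trick of Lemma~\ref{lemma:lift} and Corollary~\ref{corollary:odd-degree-curves}; only the model case of a line in $\mathbb{P}^2$ is actually handled. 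The paper closes this case by an entirely different, structural argument: increase $\tau$ until $\Delta=0$, deduce that $(S,C)$ is purely log terminal and $C\simeq\mathbb{P}^1$ via Kawamata subadjunction, show by the MMP on the minimal resolution that $\widetilde{C}^2=0$ while $C^2>0$, so $C$ must pass through singular points of $S$; the lifting lemma then forces exactly two cyclic quotient points on $C$ forming a single $A$-orbit, and a computation on the partial resolution keeping only the first curves of the two Hirzebruch--Jung strings yields $-2\leqslant -\tau<-2$, a contradiction. Without an argument of this kind (or a genuine boundedness statement replacing your appeal to classification), the proof is incomplete.

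A secondary, smaller issue: in the single-conic-bundle case your assertion that "the effective cone forces $\tau\leqslant 2$" when $C$ is a fibre is not justified on a singular $S$ (on $\mathbb{F}_n$-like surfaces $-K_S-\tau C$ stays effective well beyond $\tau=2$, and one must actually invoke the absence of $A$-invariant negative curves via Corollary~\ref{corollary:dP-negative-easy}, or observe as the paper does that an $A$-minimal rank-two surface carries two conic-bundle structures).
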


\begin{proof}
Suppose that $\tau>2$. First, let us apply $A$-equivariant Minimal Model Program to the~surface $S$.
If this gives an $A$-equivariant birational morphism $\pi\colon S\to S^\prime$, then $S^\prime$ is a del Pezzo surface with at worst quotient singularities such that $\mathrm{rk}\big(\mathrm{Pic}^A(S)\big)>\mathrm{rk}\big(\mathrm{Pic}^A(S^\prime)\big)$.
Moreover, it follows from Corollary~\ref{corollary:dP-negative-nice} that $A$ does not fix points in $S^\prime$,
and $C$ is not $\pi$-exceptional by Corollary~\ref{corollary:dP-negative-easy}, so that
$-K_{S^\prime}\sim_{\mathbb{Q}}\tau C^\prime+\Delta^\prime$, where $C^\prime=\pi(C)$ and $\Delta^\prime=\pi(\Delta)$.
Therefore, replacing $S$ with $S^\prime$, we may assume that the~$A$-equivariant Minimal Model Program applied to $S$ does not give a birational morphism.
Then
\begin{itemize}
\item either $\mathrm{rk}(\mathrm{Pic}^A(S))=1$;
\item or $\mathrm{rk}(\mathrm{Pic}^A(S))=2$, and there exist two $A$-equivariant conic bundles $\eta_1\colon S\to\mathbb{P}^1$ and $\eta_2\colon S\to\mathbb{P}^1$ whose general fibers have non-trivial intersection.
\end{itemize}
In the~latter case, we obtain a contradiction by intersecting $\tau C+\Delta\sim_{\mathbb{Q}} -K_S$ with general fibers of the~conic bundles $\eta_1$ and $\eta_2$,
so we have $\mathrm{rk}(\mathrm{Pic}^A(S))=1$.
Increasing $\tau$ if necessary, we may assume that $\Delta=0$, so $-K_{S}\sim_{\mathbb{Q}}\tau C$.
Then $(S,C)$ has purely log terminal singularities.
This follows from \cite[Lemma~2.4.10]{CheltsovShramov2015} as in the~proof of \cite[Lemma 3.1]{AbbanCheltsovKishimotoMangolte}.
In particular, $C$ is smooth \cite{Kawamata1998}.
Moreover, applying Kawamata's subadjunction theorem \cite{Kawamata1998}, we see that $C\simeq\mathbb{P}^1$.

Let $f\colon\widetilde{S}\to S$ be the~minimal resolution of singularities of the~surface $S$,
and let $\widetilde{C}$ be the~strict transform on $\widetilde{S}$ of the~curve $C$.
Then $f$ is $A$-equivariant, $A$ does not fix points in $\widetilde{S}$,
we have $\widetilde{C}^2\geqslant 0$ by Lemma~\ref{lemma:dP-negative},
and it follows from Corollary~\ref{corollary:dP-negative-easy} that either $C^2>0$ or $C^2=0$ and $C$ is contained in the~smooth locus of the~surface $S$.
Moreover, we have $-K_{\widetilde{S}}\sim_{\mathbb{Q}}\tau\widetilde{C}+\widetilde{B}$,
where $\widetilde{B}$ is an effective $\mathbb{Q}$-divisor on  $\widetilde{S}$ whose support consists of the~$f$-exceptional curves.
Now, applying $A$-equivariant Minimal Model Program to $\widetilde{S}$, we obtain an $A$-equivariant birational morphism $h\colon\widetilde{S}\to\overline{S}$ such that $\overline{S}$ is a smooth surface, and one of the~following two cases holds:
\begin{itemize}
\item $\mathrm{rk}(\mathrm{Pic}^A(\overline{S}))=1$ and $\overline{S}$ is a del Pezzo surface;
\item $\mathrm{rk}(\mathrm{Pic}^A(\overline{S}))=2$ and there exists an $A$-equivariant conic bundle $\eta\colon\overline{S}\to\mathbb{P}^1$.
\end{itemize}
Note that $\widetilde{C}$ is not $h$-exceptional, because $\widetilde{C}^2\geqslant 0$.
Set $\overline{C}=h(\widetilde{C})$ and $\overline{B}=h(\widetilde{B})$. Then $-K_{\overline{S}}\sim_{\mathbb{Q}}\tau\overline{C}+\overline{B}$.
Hence, if $\mathrm{rk}(\mathrm{Pic}^A(\overline{S}))=1$, then it follows from \cite{Cheltsov2008} that $\overline{S}\simeq\mathbb{P}^2$ and $\overline{C}$ is a line in $\overline{S}$, which implies that $A$ fixes a point in $\overline{S}$ contradicting Theorem~\ref{theorem:Kollar-Szabo}.
Thus, we conclude that $\mathrm{rk}(\mathrm{Pic}^A(\overline{S}))=2$ and there exists an $A$-equivariant conic bundle $\eta\colon\overline{S}\to\mathbb{P}^1$.
Then, intersecting the~divisor $\tau\overline{C}+\overline{B}$ with a general fiber of the~conic bundle $\eta$, we see that $\overline{C}$ is a fiber of $\eta$.
In particular, $\widetilde{C}^2\leqslant\overline{C}^2=0$, so $\widetilde{C}^2=\overline{C}^2=0$.
This implies that $h$ is an isomorphism in a neighborhood of the~curve $\widetilde{C}$,
and the~complete linear system $|\widetilde{C}|$ gives the~composition morphism $\eta\circ h\colon\widetilde{S}\to\mathbb{P}^1$.
On the~other hand, we have $C^2>0$, since $\mathrm{rk}(\mathrm{Pic}^A(S))=1$.
Thus, we see that $S$ is singular, and the~curve $C$ contains some singular points of the~surface $S$.

Recall that $(S,C)$ has purely log terminal singularities.
Then it follows from \cite{Kawamata1998} that $C$ contains at most three singular points of the~surface $S$,
and all these singular points are cyclic quotient singularities.
Thus, since $C\simeq\mathbb{P}^1$ and $A$ does not fix points in $C$,
it follows from Lemma~\ref{lemma:lift} that $C$ contains two singular points of $S$ forming one $A$-orbit.
Let $P_1$ and $P_2$ be the~points of this orbit.
Then $f$-exceptional curves mapped to the~points $P_1$ and $P_2$ form two disjoint Hirzebruch--Jung strings,
which are swapped by the~action of the~group $A$.
Since $(S,C)$ has purely log terminal singularities,
$\widetilde{C}$ intersects only the~first curves of these strings,
which we denote by $E_1$ and $E_2$.
Then there exists the~following $A$-equivariant commutative diagram:
$$
\xymatrix@R=1em{
&\widetilde{S}\ar@{->}[dl]_{g}\ar@{->}[dr]^{f}&\\%
\widehat{S}\ar@{->}[rr]_{q} && S}
$$
where $g$ is the~contraction of all $f$-exceptional curves except for the~curves $E_1$ and $E_2$,
and $q$ is a partial resolution of singularities of the~surface $S$ that contracts the~strict transforms of the~curves $E_1$ and $E_2$.

Let $\widehat{C}=g(\widetilde{C})$, $\widehat{E}_1=g(E_1)$, $\widehat{E}_2=g(E_2)$.
Then $-K_{\widehat{S}}\sim_{\mathbb{Q}}\tau\widehat{C}+a(\widehat{E}_1+\widehat{E}_2)$ for some  $a\in\mathbb{Q}_{>0}$.
But  $\widehat{C}^2=0$, $\widehat{C}\cdot\widehat{E}_1=\widehat{C}\cdot\widehat{E}_2=1$,
and $-K_{\widehat{S}}\cdot\widehat{C}=2$ by adjunction formula,
which gives $a=1$, because
$$
2=-K_{\widehat{S}}\cdot \widehat{C}=\big(\tau\widehat{C}+a(\widehat{E}_1+\widehat{E}_2)\big)\cdot \widehat{C}=a(\widehat{E}_1+\widehat{E}_2)\cdot \widehat{C}=2a.
$$
Hence, since $\widehat{E}_1$ and $\widehat{E}_2$ are smooth, we have
$-2=\mathrm{deg}\big(K_{\widehat{E}_i}\big)\leqslant\big(K_{\widehat{S}}+\widehat{E}_i\big)\cdot \widehat{E}_i=-\tau\widehat{C}\cdot\widehat{E}_i=-\tau<-2$
by subadjunction formula, which is a contradiction. This completes the~proof of the~lemma.
\end{proof}

Lemma~\ref{lemma:dP} implies the~following geometric counterpart of \cite[Theorem~A]{AbbanCheltsovKishimotoMangolte},
which will be used later in the~proof of the Main Result~\ref{subsection:Main-Theorem}. 

\begin{theorem}
\label{theorem:dP}
Let $S$ be a del Pezzo surface with quotient singularities, and let $A$ be a finite abelian subgroup in $\mathrm{Aut}(S)$.
Suppose that $A$ does not fix points in $S$. Then $S$ is K-polystable.
\end{theorem}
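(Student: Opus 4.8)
The plan is to reduce the statement to an equivariant valuative criterion and then to let Lemma~\ref{lemma:dP} supply the numerical input. Since $A$ is finite, hence reductive, $S$ is K-polystable if and only if it is $A$-equivariantly K-polystable, so throughout I would work with $A$-invariant prime divisors $E$ over $S$ and their invariants $\beta_S(E)=A_S(E)-S_S(E)$. The task then splits in two: showing $\beta_S(E)\geqslant 0$ for every such $E$ (equivariant K-semistability), and showing that any equality $\beta_S(E)=0$ forces $\mathrm{ord}_E$ to be induced by a one-parameter subgroup $\mathbb{G}_m\subset\mathrm{Aut}(S)$ commuting with $A$, i.e.\ that every boundary destabilizer is a product configuration (which upgrades semistability to polystability). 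The key observation is that, for an $A$-invariant irreducible curve $C\subset S$, Lemma~\ref{lemma:dP} is exactly a bound on the pseudoeffective threshold: writing $-K_S-\tau C\sim_{\mathbb{Q}}\Delta$ with $\Delta$ effective, the lemma gives $\tau\leqslant 2$, which is precisely the estimate needed to control $S_S(C)$ and, one expects, to keep $\beta_S(C)>0$ strictly on invariant curves.

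For the semistability part I would first simplify the model. Running the $A$-equivariant Minimal Model Program and invoking Corollary~\ref{corollary:dP-negative-nice}, the absence of $A$-fixed points is preserved under every $A$-equivariant birational contraction, so I may assume that either $\mathrm{rk}(\mathrm{Pic}^A(S))=1$, or $\mathrm{rk}(\mathrm{Pic}^A(S))=2$ with an $A$-equivariant conic bundle $S\to\mathbb{P}^1$. On such a minimal model the threshold bound $\tau\leqslant 2$ from Lemma~\ref{lemma:dP} caps $S_S$ of every $A$-invariant curve, and these are the centres a minimal-rank $A$-invariant destabilizing valuation can have; in the conic-bundle case one additionally intersects $\tau C+\Delta\sim_{\mathbb{Q}}-K_S$ with a general fibre, exactly as in the proof of Lemma~\ref{lemma:dP}. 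The point of the bound being $2$ rather than $3$ is that the value $\tau=3$ is what a line on $\mathbb{P}^2$ realises, and that is the one product destabilizer on the K-polystable boundary; the no-fixed-point hypothesis, via Theorem~\ref{theorem:Kollar-Szabo}, removes precisely the invariant curves that would attain this extremal threshold.

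The step I expect to be the main obstacle is twofold. First, an $A$-invariant destabilizing valuation need not be a curve on $S$: its centre may be a point or an $A$-orbit of points, so one must pass from the clean bound on invariant curves to a bound on all $A$-invariant divisorial valuations. I would handle this by an Abban--Zhuang style refinement along an $A$-invariant curve (or along a general conic-bundle fibre), reducing the estimate for a valuation centred at a point to the one-dimensional estimate on the curve where Lemma~\ref{lemma:dP} applies; the no-fixed-point hypothesis is what guarantees that the relevant centre is an invariant curve rather than an $A$-fixed point. Second, and most delicate, is the passage from semistability to polystability: I must show that any $A$-invariant $E$ with $\beta_S(E)=0$ is genuinely a product configuration. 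Here I would analyse the equality case of Lemma~\ref{lemma:dP} ($\tau=2$) together with Lemma~\ref{lemma:dP-negative} and Corollary~\ref{corollary:dP-negative-easy} applied to the resulting invariant curve, to exhibit a $\mathbb{G}_m\subset\mathrm{Aut}(S)$ commuting with $A$ that induces $\mathrm{ord}_E$; whenever no such invariant boundary destabilizer exists, the estimate is strict, giving $\delta_A(S)>1$ and hence $A$-equivariant K-stability, which already implies K-polystability. Making this dichotomy rigorous — distinguishing the strictly stable regime from the product regime purely from the geometry of invariant curves — is where the no-fixed-point condition is indispensable, mirroring the role of pointlessness in the arithmetic counterpart.
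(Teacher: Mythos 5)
Your setup is the right one and matches the paper's: reduce to the $A$-equivariant valuative criterion via \cite{Zhuang2021}, observe that the centre of an $A$-invariant destabilizing valuation cannot be a point (it would be an $A$-fixed point), and feed the bound $\tau\leqslant 2$ from Lemma~\ref{lemma:dP} into the analysis of an $A$-invariant curve $C$ with $\beta(C)\leqslant 0$. But the decisive step --- how to actually derive a contradiction from $\beta(C)\leqslant 0$ together with $\tau\leqslant 2$ --- is missing, and the way you propose to close it does not work. You write that the bound $\tau\leqslant 2$ is ``precisely the estimate needed \dots to keep $\beta_S(C)>0$ strictly on invariant curves,'' but this is false: $\tau\leqslant 2$ only gives $S_S(C)\leqslant 2$, which says nothing against $A_S(C)=1$, and in fact strict positivity fails --- on $\mathbb{P}^1\times\mathbb{P}^1$ with a fixed-point-free abelian action the rulings satisfy $\beta(C)=0$. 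So the boundary case genuinely occurs and cannot be excluded; it must be identified. The paper's proof does this with an ingredient you do not have: Fujita's \cite[Lemma~9.7]{Fujita2016}, which, given $\beta(C)\leqslant 0$ and $\tau\leqslant 2$ (plus $\widetilde{C}^2\geqslant 0$ from Lemma~\ref{lemma:dP-negative} and Corollary~\ref{corollary:dP-negative-easy}), forces $C^2=0$ and $-K_S\sim_{\mathbb{Q}}2C+aZ$ with $Z^2=0$; the two resulting conic bundles and the computation $C\cdot Z_1=1$ then identify $S\simeq\mathbb{P}^1\times\mathbb{P}^1$, which is K-polystable --- contradiction. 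No degeneration-theoretic analysis of product configurations is needed.

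Two further points. First, the Abban--Zhuang refinement you propose for valuations centred at points is unnecessary: as you yourself note, an $A$-invariant valuation has $A$-invariant irreducible centre, so under the no-fixed-point hypothesis the centre is automatically a curve; there is nothing left to handle there. Second, your treatment of the semistability and polystability halves is only a plan: the MMP reduction to rank $1$ or $2$, the claim that invariant curves are ``the centres a minimal-rank $A$-invariant destabilizing valuation can have,'' and above all the promised construction of a $\mathbb{G}_m\subset\mathrm{Aut}(S)$ commuting with $A$ in the equality case are all asserted rather than proved, and the last of these is exactly the hard point you would need to supply. As written, the proposal identifies the correct reduction but leaves the argument that actually proves the theorem unexecuted.
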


\begin{proof}
Suppose that $S$ is not K-polystable. Then it follows from \cite{Fujita2019,Li2017,Zhuang2021} that $S$ contains an $A$-invariant irreducible curve $C$ with $\beta(C)\leqslant 0$, where
$$
\beta(C)=1-\frac{1}{(-K_S)^2}\int\limits_{0}^{\tau}\mathrm{vol}\big(-K_S-uC\big)du
$$
and $\tau=\mathrm{sup}\big\{u\in\mathbb{R}_{\geqslant 0}\ \big\vert\  -K_S-uC\ \text{is pseudo-effective}\big\}$.
Note that $\tau\in\mathbb{Q}_{>0}$ and $-K_{S}\sim_{\mathbb{Q}}\tau C+\Delta$ for some effective $\mathbb{Q}$-divisor $\Delta$ on the~surface $S$ whose support does not contain $C$.
Then $\tau\leqslant 2$ by Lemma~\ref{lemma:dP}.

Let $f\colon\widetilde{S}\to S$ be the~minimal resolution of singularities of the~surface $S$,
and let $\widetilde{C}$ be the~strict transform on $\widetilde{S}$ of the~curve $C$.
Then $f$ is $A$-equivariant, $A$ does not fix points in $\widetilde{S}$, $\widetilde{C}^2\geqslant 0$ by Lemma~\ref{lemma:dP-negative},
and it follows from Corollary~\ref{corollary:dP-negative-easy} that either $C^2>0$ or $C^2=0$ and $C$ is contained in the~smooth locus of the~surface $S$.
Then, since $\tau\leqslant 2$, it follows from \cite[Lemma~9.7]{Fujita2016} that $C^2=0$, and
$$
-K_S\sim_{\mathbb{Q}}2C+aZ
$$
for some $a\in\mathbb{Q}_{>0}$ and some $A$-irreducible curve $Z\subset S$ such that $Z^2=0$.
In particular, $C$ is contained in the~smooth locus of the~surface $S$.
Hence, by Riemann--Roch formula, the~linear system $|C|$ is a pencil that gives a conic bundle $\eta_1\colon S\to\mathbb{P}^1$.
Since $S$ is a Mori Dream Space, the~linear system $|nZ|$ is base point free for some positive integer $n$,
it also gives a conic bundle $\eta_2\colon S\to\mathbb{P}^1$, and $Z$ is a union of fibers of this conic bundle.
Hence, replacing $Z$ if necessary, we may assume that $Z\cap\mathrm{Sing}(S)=\varnothing$,
and irreducible components of $Z$ are smooth fibers of $\eta_2$.
Write $Z=Z_1+\cdots+Z_r$, where $Z_1,\ldots,Z_r$ are irreducible.
Then, using adjunction formula, we get $2C\cdot Z_1=\big(2C+aZ\big)\cdot Z_1=-K_S\cdot Z_1=2$,
which gives $C\cdot Z_1=1$. Now, taking the~product of $\eta_1$ and $\eta_2$,
we obtain an isomorphism $S\simeq\mathbb{P}^1\times\mathbb{P}^1$, which is a contradiction, since $\mathbb{P}^1\times\mathbb{P}^1$ is K-polystable.
\end{proof}

The following simple lemma is very useful in our arguments.

\begin{lemma}
\label{lemma:Klein-quadric}
Let $Q$ be the~smooth quadric 3-fold in $\mathbb{P}^4$, and let $A$ be a subgroup in $\mathrm{Aut}(Q)$ that is isomorphic to $(\mathbb{Z}/2\mathbb{Z})^2$.
Then $A$ fixes a point in $Q$.
\end{lemma}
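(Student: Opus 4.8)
The plan is to reduce everything to honest linear algebra by lifting the projective action to $\mathrm{GL}_5(\mathbb{C})$ and then decomposing $\mathbb{C}^5$ into eigenspaces. Since $\mathrm{Aut}(Q)\subset\mathrm{PGL}_5(\mathbb{C})$ and $Q$ is an $A$-invariant hypersurface of degree $2$ in $\mathbb{P}^4$ with $\gcd(2,5)=1$, I would invoke Lemma~\ref{lemma:lift} to produce a subgroup $\widetilde{A}\subset\mathrm{GL}_5(\mathbb{C})$ with $\phi(\widetilde{A})=A\simeq\widetilde{A}\simeq(\mathbb{Z}/2\mathbb{Z})^2$. I want to stress that this coprimality is the whole point: a Klein four-group can act on projective space with no fixed point at all (for instance on $\mathbb{P}^1$ via the involutions $[x:y]\mapsto[y:x]$ and $[x:y]\mapsto[x:-y]$, whose only lift to $\mathrm{GL}_2(\mathbb{C})$ is the nonabelian quaternion group). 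The existence of an \emph{honest} lift of $A$, rather than a merely projective one, is exactly what rules this pathology out here.

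Once $\widetilde{A}$ is in hand, being finite and abelian it consists of commuting diagonalizable matrices and is therefore simultaneously diagonalizable, so $\mathbb{C}^5=\bigoplus_{\chi}V_\chi$, the sum ranging over the characters $\chi\colon\widetilde{A}\to\mathbb{C}^\ast$. Because $\widetilde{A}\simeq(\mathbb{Z}/2\mathbb{Z})^2$ has exactly four characters while $\dim_{\mathbb{C}}\mathbb{C}^5=5$, the pigeonhole principle forces some character $\chi$ to occur with multiplicity at least two, that is, $\dim V_\chi\geqslant 2$.

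Then $\Pi:=\mathbb{P}(V_\chi)\subset\mathbb{P}^4$ is a linear subspace of dimension at least $1$, and every point of $\Pi$ is fixed by $A$: indeed, for $v\in V_\chi$ and $g\in\widetilde{A}$ we have $gv=\chi(g)v$, so $[gv]=[v]$. Finally, since $\dim\Pi+\dim Q\geqslant 1+3=4=\dim\mathbb{P}^4$, the projective dimension theorem guarantees $\Pi\cap Q\neq\varnothing$; equivalently, the defining quadratic form of $Q$ restricted to the linear subspace $\Pi$ must have a nontrivial zero. Any point of $\Pi\cap Q$ is an $A$-fixed point lying on $Q$, which is exactly what is claimed.

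I expect the only genuinely delicate step to be the lifting, since that is where the special arithmetic of the quadric (the degree $2$ being coprime to $n+1=5$) enters and where the argument would collapse for a general abelian action; the eigenspace decomposition, the pigeonhole count, and the dimension-theoretic intersection are all routine once the lift is available.
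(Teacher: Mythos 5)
Your argument is correct and is essentially identical to the paper's proof: lift $A$ to $\mathrm{GL}_5(\mathbb{C})$ via Lemma~\ref{lemma:lift} (using $\gcd(2,5)=1$), split $\mathbb{C}^5$ into character eigenspaces, apply pigeonhole to the four characters of $(\mathbb{Z}/2\mathbb{Z})^2$ to find a pointwise-fixed line in $\mathbb{P}^4$, and intersect it with $Q$. The only quibble is with your parenthetical $\mathbb{P}^1$ illustration: the lifts of that Klein four-group to $\mathrm{GL}_2(\mathbb{C})$ are nonabelian central extensions which can be dihedral of order $8$ rather than the quaternion group (the quaternion group is what you get inside $\mathrm{SL}_2(\mathbb{C})$), but this does not affect your main argument.
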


\begin{proof}
Note that the~action of $A$ on the~quadric $Q$ is induced by its action on $\mathbb{P}^4$,
and it follows from Lemma~\ref{lemma:lift} that the~action on $\mathbb{P}^4$ is given by some $5$-dimensional representation of the~group $A$,
which must split as a sum of five $1$-dimensional representations.
Since the~group $A$ has exactly four distinct $1$-dimensional representations,
we see that $A$ pointwise fixes a line $\ell\subset\mathbb{P}^4$,
so $A$ fixes a point in  $\ell\cap Q$.
\end{proof}

Let us show how to apply Lemma~\ref{lemma:Klein-quadric} by proving the~following (well-known) result.

\begin{lemma}
\label{lemma:V5}
Let $V_5$ be the~smooth Fano 3-fold with $-K_{V_5}\sim 2H$ and $H^3=5$, where $H$ is an ample Cartier divisor on $V_5$.
Then $X$ satisfies Condition~$\mathbf{(A)}$.
\end{lemma}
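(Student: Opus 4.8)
The plan is to use the classical description of $\mathrm{Aut}(V_5)$ to reduce to a single hard case and then transport a fixed point from a quadric threefold. Recall that $\mathrm{Aut}(V_5)\cong\mathrm{PGL}_2(\mathbb{C})$, and that every finite abelian subgroup of $\mathrm{PGL}_2(\mathbb{C})$ is either cyclic or isomorphic to $(\mathbb{Z}/2\mathbb{Z})^2$. Let $A\subset\mathrm{Aut}(V_5)$ be finite abelian. If $A$ is cyclic, then it fixes a point because $V_5$ is smooth and rationally connected, by Remark~\ref{remark:cyclic-Lefschetz}. So the only case to treat is $A\cong(\mathbb{Z}/2\mathbb{Z})^2$, and the goal is to construct an $A$-equivariant birational map from $V_5$ to a smooth quadric threefold, so that Lemma~\ref{lemma:Klein-quadric} and Theorem~\ref{theorem:Kollar-Szabo} finish the argument.

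To make the construction $A$-equivariant, I would first find an $A$-invariant line on $V_5$. The Hilbert scheme of lines on $V_5$ is $\mathbb{P}^2$, and $\mathrm{PGL}_2(\mathbb{C})$ acts on it as $\mathbb{P}(\mathrm{Sym}^2\mathbb{C}^2)$, that is, via the standard action of $\mathrm{SO}_3$ on $\mathbb{P}^2$ preserving a smooth conic $\Gamma$. Restricting to $A$, realized in $\mathrm{SO}_3$ as the three half-turns about a fixed orthogonal frame, the module $\mathrm{Sym}^2\mathbb{C}^2$ splits into the three nontrivial characters of $A$; hence in the corresponding eigencoordinates $A$ fixes exactly the three coordinate points of $\mathbb{P}^2$, while $\Gamma=\{x^2+y^2+z^2=0\}$ passes through none of them. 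Each coordinate point is thus an $A$-invariant line $\ell\subset V_5$ avoiding $\Gamma$, i.e. a line of generic type.

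Now project $V_5\subset\mathbb{P}^6$ from such a line $\ell$. As $A$ acts linearly on $\mathbb{P}^6=\mathbb{P}(\mathrm{Sym}^6\mathbb{C}^2)$ and preserves $\ell$, the linear projection $\pi_\ell\colon V_5\dashrightarrow\mathbb{P}^4$ is $A$-equivariant. Writing $\sigma\colon\widetilde V=\mathrm{Bl}_\ell V_5\to V_5$, $\widetilde H=\sigma^*H$ and $E$ for the exceptional divisor, adjunction gives $\deg\mathcal N_{\ell/V_5}=(2g-2)-K_{V_5}\cdot\ell=-2-(-2)=0$ with $g=0$, so that $E^3=0$ and $\widetilde H\cdot E^2=-H\cdot\ell=-1$, whence
\[
(\widetilde H-E)^3=\widetilde H^3-3\,\widetilde H^2\!\cdot E+3\,\widetilde H\!\cdot E^2-E^3=5-0-3-0=2 .
\]
This is the classical Sarkisov link: $\pi_\ell$ is birational onto a quadric threefold $Q\subset\mathbb{P}^4$, and for a line of generic type $Q$ is smooth. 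Therefore $A\cong(\mathbb{Z}/2\mathbb{Z})^2$ acts on the smooth quadric $Q$, and Lemma~\ref{lemma:Klein-quadric} gives an $A$-fixed point on $Q$. Since $\pi_\ell\colon V_5\dashrightarrow Q$ is an $A$-equivariant birational map between smooth varieties, Theorem~\ref{theorem:Kollar-Szabo} then produces an $A$-fixed point on $V_5$, establishing Condition~$\mathbf{(A)}$.

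The main obstacle is the smoothness of $Q$ together with the birationality of $\pi_\ell$: these are exactly the properties that force me to choose the $A$-invariant line $\ell$ off the conic $\Gamma$ (equivalently, with generic normal bundle), and they are the only place where the fine geometry of lines on $V_5$ enters. The remaining ingredients---that $\mathrm{Aut}(V_5)\cong\mathrm{PGL}_2(\mathbb{C})$ and that the Hilbert scheme of lines is $\mathbb{P}(\mathrm{Sym}^2\mathbb{C}^2)$ with its $\mathrm{PGL}_2$-action---are classical and can be quoted.
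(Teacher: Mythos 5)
Your proposal is correct and follows essentially the same route as the paper: reduce to $A\simeq(\mathbb{Z}/2\mathbb{Z})^2$ via the holomorphic Lefschetz fixed point formula and $\mathrm{Aut}(V_5)\simeq\mathrm{PGL}_2(\mathbb{C})$, produce an $A$-invariant line from the action on the Hilbert scheme $\mathbb{P}^2$ of lines, project from it to a smooth quadric threefold, and conclude with Lemma~\ref{lemma:Klein-quadric} and Theorem~\ref{theorem:Kollar-Szabo}. Your write-up merely supplies more detail than the paper does (the explicit splitting of $\mathrm{Sym}^2\mathbb{C}^2$, the choice of a line off the invariant conic, and the degree computation for the Sarkisov link), all of which is consistent with the paper's argument.
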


\begin{proof}
Let $A$ be a finite abelian subgroup of $\mathrm{Aut}(V_5)$. Suppose that $A$ does not fix points in $V_5$.
Then $A$ is not cyclic by Remark~\ref{remark:cyclic-Lefschetz}.
Moreover, since $\mathrm{Aut}(V_5)\simeq\mathrm{PGL}_{2}(\mathbb{C})$, we conclude that $A\simeq(\mathbb{Z}/2\mathbb{Z})^2$.

Recall $|H|$ gives an embedding $V_5\hookrightarrow\mathbb{P}^6$, so we identify $V_5$ with its image in $\mathbb{P}^6$.
Then the~Hilbert scheme of lines on $V_5$ is isomorphic to $\mathbb{P}^2$,
and the~group $\mathrm{Aut}(V_5)$ acts faithfully on this $\mathbb{P}^2$ leaving invariant a smooth conic
whose points parametrize lines in $V_5$ with normal bundle $\mathcal{O}_{\mathbb{P}^1}(1)\oplus\mathcal{O}_{\mathbb{P}^1}(-1)$.
This implies that $V_5$ contains an $A$-invariant line.
Then, projecting from this line, we obtain an $A$-equivariant birational map $V_5\dasharrow Q$, where $Q$ is a smooth quadric 3-fold.
By Lemma~\ref{lemma:Klein-quadric}, the~group $A$ fixes a point in $Q$, so $A$ fixes a point in $V_5$ by Theorem~\ref{theorem:Kollar-Szabo}.
\end{proof}

We will also need the~following classification based result.

\begin{lemma}
\label{lemma:Prokhorov}
Let $X$ be a smooth Fano 3-fold, and let $G$ be a finite subgroup in $\mathrm{Aut}(X)$.
Suppose that either $\mathrm{Pic}(X)\simeq\mathbb{Z}^2$ and $X$ is not contained in Families \textnumero 2.6, \textnumero 2.12, \textnumero 2.21, \textnumero 2.32,
or $X$ is contained in Families \textnumero 3.5, \textnumero 3.6, \textnumero 3.8, \textnumero 3.11, \textnumero 3.12, \textnumero 3.14, \textnumero 3.16, \textnumero 3.18, \textnumero 3.21, \textnumero 3.22, \textnumero 3.23, \textnumero 3.24, \textnumero 3.26, \textnumero 3.29, \textnumero 3.30, \textnumero 4.5, \textnumero 4.9, \textnumero 4.11.
Then every extremal ray of the~Mori cone $\overline{\mathrm{NE}}(X)$ is $G$-invariant.
\end{lemma}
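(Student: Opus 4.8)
The plan is to exploit the linear action of $G$ on the real vector space $N_1(X)_{\mathbb{R}}$ of one-cycles modulo numerical equivalence. Since every automorphism preserves both the Mori cone $\overline{\mathrm{NE}}(X)$ and the canonical class $K_X$, the group $G$ permutes the finitely many extremal rays of $\overline{\mathrm{NE}}(X)$, and the assertion is equivalent to the statement that this permutation action is trivial. Because $G$ acts by automorphisms of the lattice $N_1(X;\mathbb{Z})$, it carries the primitive generator $\ell_R$ of each extremal ray $R$ to the primitive generator of its image; hence if some $g\in G$ sends a ray $R$ to a ray $R'$, then necessarily $-K_X\cdot\ell_{R}=-K_X\cdot\ell_{R'}$, since $g$ fixes $K_X$. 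This already reduces the problem to comparing discrete, $G$-invariant invariants of the extremal rays.

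All such invariants can be read off from the Mori--Mukai description of each family. The primary one is the anticanonical degree $-K_X\cdot\ell_R$ of the extremal curve: whenever the extremal rays of $X$ have pairwise distinct anticanonical degrees, no non-identity permutation is possible and we are done immediately. The secondary invariant is the type of the associated extremal contraction $\phi_R\colon X\to Y_R$ in Mori's classification of contractions of smooth $3$-folds (divisorial, and then which of the types $E_1,\dots,E_5$; conic-bundle type; or del Pezzo fibration), together with the target $Y_R$ (for example $\mathbb{P}^1$ versus $\mathbb{P}^2$ versus $\mathbb{P}^3$ versus a quadric) and, for divisorial contractions, the numerical class and self-intersection of the exceptional divisor and the degree and genus of the blown-up center. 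These data are all intrinsic, hence preserved by $G$, so two rays can be interchanged only if they coincide in every one of them.

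I would then carry out the case analysis family by family. For $\mathrm{Pic}(X)\simeq\mathbb{Z}^2$ there are exactly two extremal rays and the permutation group is $\mathbb{Z}/2\mathbb{Z}$, so it suffices to rule out a swap; going through the Mori--Mukai table, every rank-two family other than \textnumero 2.6, \textnumero 2.12, \textnumero 2.21, \textnumero 2.32 has its two contractions separated by one of the invariants above (typically unequal anticanonical degrees, or one contraction to a point/threefold versus a fibration), so no swap exists. The four excluded families are precisely those with a symmetric pair of contractions of identical type and invariants — e.g.\ \textnumero 2.32, the divisor of bidegree $(1,1)$ in $\mathbb{P}^2\times\mathbb{P}^2$ with its two interchangeable $\mathbb{P}^1$-bundle structures over $\mathbb{P}^2$ — where an automorphism genuinely exchanges the rays. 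For the listed rank-three and rank-four families one argues identically with the three (or more) tabulated extremal contractions, checking that their invariants are pairwise distinct and hence that $G$ fixes each ray.

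The hard part will be the finitely many families in which two extremal rays happen to share both anticanonical degree and contraction type, so that the coarse invariants do not separate them. There one must produce a finer $G$-invariant distinction — for instance the image of an exceptional divisor inside the target, the degree or class of the blow-up center, the numerical intersection pattern among several exceptional divisors, or the discriminant curve of a conic bundle — and verify both that it is preserved by $G$ and that it actually differs between the two rays. Managing this bookkeeping across the roughly eighteen families, and confirming that exactly the four excluded rank-two families (and no others on the list) admit no such separating invariant, is the main technical labour of the proof.
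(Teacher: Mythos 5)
Your strategy---use that $G$ preserves $K_X$ and the Mori cone, hence permutes the extremal rays, and then separate the rays by discrete, $G$-invariant data of their contractions read off from the classification---is exactly the paper's approach, and your identification of the four rank-two exceptions as the families with a symmetric pair of contractions is correct. The only difference is that the paper outsources the verification you flag as the main labour: for $\mathrm{Pic}(X)\simeq\mathbb{Z}^2$ it cites Prokhorov's classification of $G$-Fano 3-folds (\cite[Theorem~1.2]{Prokhorov2013}), which is precisely the list of rank-two families whose two rays can be interchanged (so the exclusions \textnumero 2.6, \textnumero 2.12, \textnumero 2.21, \textnumero 2.32 come for free), and for the listed rank-three and rank-four families it cites Matsuki's explicit descriptions of the Mori cones \cite[\S~III.3]{Matsuki}, carrying out the check only for the illustrative Family \textnumero 3.14.
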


\begin{proof}
If $\mathrm{Pic}(X)\simeq\mathbb{Z}^2$, the~required assertion follows from \cite[Theorem~1.2]{Prokhorov2013}.
In every case, the~required assertion follows from \cite[\S~III.3]{Matsuki}.
For instance, if $X$ is a member of Family \textnumero 3.14, then
there exists a blow up $\pi\colon X\to\mathbb{P}^3$ of a disjoint union of a line $L$ and a smooth quartic elliptic curve $C$,
and we have the~following commutative diagram:
\begin{equation}
\label{equation:3-6}
\xymatrix@R=1em{
&&\mathbb{P}^1\times\mathbb{P}^1\ar@/^1pc/@{->}[drr]^{\mathrm{pr}_2}\ar@/_1pc/@{->}[dll]_{\mathrm{pr}_1}&&\\%
\mathbb{P}^1&&&&\mathbb{P}^1\\%
&&X\ar@{->}[dll]^{\varpi_C}\ar@{->}[drr]^{\varpi_L}\ar@{->}[dd]_{\pi}\ar@{->}[uu]_{\eta}\ar@{->}[urr]^{\varphi_C}\ar@{->}[ull]_{\varphi_L}\\%
V_L\ar@{->}[uu]^{\phi_L}\ar@{->}[drr]_{\pi_L}&&&&V_C\ar@{->}[dll]^{\pi_C}\ar@{->}[uu]^{\phi_C}\\
&&\mathbb{P}^3&&}
\end{equation}
where $\pi_L$ and $\pi_C$ are blowups of curves $L$ and $C$, respectively,
$\varpi_L$ and $\varpi_C$ are blowups of the~strict transforms of the~curves $L$ and $C$, respectively,
$\phi_L$ is a $\mathbb{P}^2$-bundle, $\phi_C$ is a fibration into quadric surfaces,
$\varphi_L$ is a fibration into quintic del Pezzo surfaces,
$\varphi_C$ is a fibration into sextic del Pezzo surfaces,
the map $\eta$ is a (standard) conic bundle, $\mathrm{pr}_1$ and $\mathrm{pr}_2$ are projections to the~first and the~second factors, respectively.
Hence, we see that the~Mori cone $\overline{\mathrm{NE}(X)}$ is generated by extremal rays contracted by $\varpi_L$, $\varpi_C$ and $\eta$,
and the~group $G$ cannot non-trivially permute these extremal rays.
Similarly, one can verify the~required assertion in the~remaining cases using description of the~Mori cone $\overline{\mathrm{NE}}(X)$ given in \cite{Matsuki}.
\end{proof}

We now turn our attention to some certain stability threshold type invariants that allow estimations that can be used to prove K-polystability.
Let $X$ be a smooth Fano 3-fold, and let $p$ be a point in $X$. Recall that
$$
\delta_p(X)=\inf_{\substack{E/X\\ p\in C_X(E)}}\frac{A_{X}(E)}{S_{X}(E)},
$$
where $S_X(E)=\frac{1}{(-K_X)^3}\int\limits_0^\infty\mathrm{vol}(-K_X-uE)du$, and the infimum is taken over all prime divisors $E$ over $X$ whose center on $X$ contains $p$. Let $S$ be an irreducible smooth surface in $X$ that contains $p$.
Set
$$
\tau=\mathrm{sup}\Big\{u\in\mathbb{R}_{\geqslant 0}\ \big\vert\ \text{the divisor  $-K_X-uS$ is pseudo-effective}\Big\}.
$$
For~$u\in[0,\tau]$, let $P(u)$ be the~positive part of the~Zariski decomposition of the~divisor $-K_X-uS$,
and let $N(u)$ be its negative part. Then $S_X(S)=\frac{1}{-K_X^3}\int\limits_{0}^{\tau}\big(P(u)\big)^3du$.
For every prime divisor $F$ over $S$, set
$$
S\big(W^S_{\bullet,\bullet};F\big)=\frac{3}{(-K_X)^3}\int\limits_0^\tau\big(P(u)\big\vert_S\big)^2\cdot\mathrm{ord}_{F}\big(N(u)\big\vert_{S}\big)du+\frac{3}{(-K_X)^3}\int\limits_0^\tau\int\limits_0^\infty \mathrm{vol}\big(P(u)\big\vert_{S}-vF\big)dvdu.
$$

\begin{theorem}[{\cite{AbbanZhuang,Book}}]
\label{theorem:Hamid-Ziquan-Kento}
For any point $p\in S$ we have
$$
\delta_p(X)\geqslant\min\Bigg\{\frac{1}{S_X(S)},\inf_{\substack{F/S\\ p\in C_S(F)}}\frac{A_S(F)}{S\big(W^S_{\bullet,\bullet};F\big)}\Bigg\},
$$
where the~infimum is taken by all prime divisors over $S$ whose center on $S$ contains $p$.
\end{theorem}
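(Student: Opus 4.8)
The plan is to derive this estimate from the valuative description of the local stability threshold together with the Abban--Zhuang refinement by the divisor $S$, so I will first reduce to bounding $A_X(E)/S_X(E)$ from below over prime divisors $E$ over $X$, and then transfer the problem from $X$ to the surface $S$ by an adapted-basis / inversion-of-adjunction argument. By the work of Fujita, Li, Blum--Jonsson and Zhuang one has the local valuative form $\delta_p(X)=\inf_{E}A_X(E)/S_X(E)$, and moreover $\delta_p(X)=\lim_{m}\inf\{\operatorname{lct}_p(X;D)\}$ where $D$ ranges over $m$-basis type anticanonical $\mathbb{Q}$-divisors. Since $S$ is a smooth prime divisor on the smooth threefold $X$ we have $A_X(S)=1$, so the first term $1/S_X(S)$ is exactly $A_X(S)/S_X(S)$ with $S_X(S)=\frac{1}{-K_X^3}\int_0^{\tau}(P(u))^3\,du$.

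First I would fix a large integer $m$ and choose a basis of $H^0(X,-mK_X)$ adapted to the filtration $\mathcal{F}^{j}=\{s:\operatorname{ord}_S(s)\geqslant j\}$ by order of vanishing along $S$. Writing an $m$-basis type divisor $D$ in this adapted basis, its coefficient $c$ along $S$ converges on average to $S_X(S)$ as $m\to\infty$, and the Zariski decomposition $-K_X-uS=P(u)+N(u)$ is precisely what records which sections survive after dividing out powers of $S$: the part $D'$ of $D$ not containing $S$ restricts to $S$ to a divisor whose averaged behaviour is governed by the two-dimensional graded linear series $W^{S}_{\bullet,\bullet}$ with graded pieces built from $H^0\big(S,P(u)\big\vert_S\big)$. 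This is the heart of the refinement construction, and it is where the positive part $P(u)$ enters the definition of $S\big(W^S_{\bullet,\bullet};F\big)$.

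Next I would apply inversion of adjunction, in the Koll\'ar--Shokurov connectedness / Kawakita form. Writing $D=cS+D'$ with $S\not\subset\operatorname{Supp}(D')$, the pair $(X,D)$ is log canonical at $p$ unless either $c>1$, which forces $\operatorname{lct}_p(X;D)\leqslant 1/c$ and hence (in the limit) the bound $1/S_X(S)$, or the failure is detected after restricting to $S$, where by adjunction the relevant threshold is computed by $A_S(F)/S\big(W^S_{\bullet,\bullet};F\big)$ over prime divisors $F$ over $S$ with $p\in C_S(F)$. Passing to the limit in $m$ and taking the infimum over $F$ then yields the stated lower bound, once one checks that the restricted integrals $\int_0^{\tau}\big(P(u)\big\vert_S\big)^2\operatorname{ord}_F\big(N(u)\big\vert_S\big)\,du+\int_0^{\tau}\int_0^{\infty}\operatorname{vol}\big(P(u)\big\vert_S-vF\big)\,dv\,du$ are exactly the asymptotic restricted-volume computation of the $F$-contribution coming from $D'\big\vert_S$.

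The main obstacle is the precise matching in these last two steps: one must show that the averaged restriction to $S$ of the non-$S$ part of a general $m$-basis type divisor is asymptotically a multiple of a basis type divisor for the refined series $W^{S}_{\bullet,\bullet}$, so that its log canonical threshold along $F$ is controlled by $S\big(W^S_{\bullet,\bullet};F\big)$. This demands careful bookkeeping separating the \emph{fixed} contribution of the negative part $N(u)\big\vert_S$, which produces the first integral in which $F$ already appears inside $N(u)\big\vert_S$, from the genuinely \emph{movable} contribution, which produces the restricted-volume integral, together with a uniform-in-$m$ control of the error terms so that the limit exists and equals the integral formulas. The subtlety that $W^S_{\bullet,\bullet}$ does not arise from a single line bundle but only from the Zariski positive parts is what makes a naive hand computation fail, and at this point I would invoke the Newton--Okounkov-body formalism of Abban--Zhuang to make the averaging and the passage to the limit rigorous.
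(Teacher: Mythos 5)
The paper does not prove this theorem; it quotes it from Abban--Zhuang and from the Calabi problem book, so the only ``proof'' to compare against is the one in those sources. Your outline faithfully reconstructs exactly that argument --- local $\delta$ via basis-type divisors, an adapted basis for the filtration by $\operatorname{ord}_S$ with the Zariski decomposition producing the refinement $W^S_{\bullet,\bullet}$ (including the split into the $N(u)\vert_S$ fixed part and the restricted-volume part), inversion of adjunction to dichotomize between the $1/S_X(S)$ term and the $A_S(F)/S\big(W^S_{\bullet,\bullet};F\big)$ term, and the Okounkov-body formalism to justify the limits --- so it is correct and takes essentially the same approach.
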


If $E$ is a prime divisor over $X$ such that its center on $X$ is a curve in $S$, we have a similar result:

\begin{theorem}[{\cite{AbbanZhuang,Book}}]
\label{theorem:Hamid-Ziquan-Kento-1}
Let $E$ be a prime divisor over $X$ such that $C_X(E)$ is a curve $C\subset S$. Then
$$
\frac{A_X(E)}{S_X(E)}\geqslant\min\Bigg\{\frac{1}{S_X(S)},\frac{1}{S\big(W^S_{\bullet,\bullet};C\big)}\Bigg\}.
$$
\end{theorem}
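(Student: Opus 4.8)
The plan is to run the Abban--Zhuang refinement (``adjunction by a flag'') machinery, carrying out the same slicing argument that underlies Theorem~\ref{theorem:Hamid-Ziquan-Kento}, but terminating the flag at the curve $C$ rather than at a point. Clearing denominators, and using that $A_X(E)$ and all the $S$-invariants involved are positive, the assertion is equivalent to the estimate
$$
S_X(E)\leqslant A_X(E)\cdot\max\big\{S_X(S),\,S\big(W^S_{\bullet,\bullet};C\big)\big\}.
$$
So it suffices to bound $S_X(E)=\frac{1}{(-K_X)^3}\int_0^\infty\mathrm{vol}(-K_X-tE)\,dt$ from above by a weighted combination of the two refined invariants attached to the two-step flag $X\supset S\supset C$.

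First I would organise the section ring $R=\bigoplus_m H^0(X,-mK_X)$ by the filtration attached to this flag: filter a section first by $\mathrm{ord}_S$ and then, after restriction to $S$, by $\mathrm{ord}_C$. Fixing the value $\mathrm{ord}_S=um$ and letting $m\to\infty$, the Zariski decomposition $-K_X-uS=P(u)+N(u)$ identifies the associated graded piece with the asymptotic linear series cut out by $P(u)\vert_S$ on $S$; this is exactly the multigraded linear series $W^S_{\bullet,\bullet}$. Slicing $W^S_{\bullet,\bullet}$ further by $\mathrm{ord}_C$ reproduces, via the Lazarsfeld--Musta\c{t}\u{a} description of Okounkov bodies, the two-term expression defining $S(W^S_{\bullet,\bullet};C)$: the first term records the constant shift $\mathrm{ord}_C(N(u)\vert_S)$ coming from the negative part, and the second records the genuine variation $\int_0^\infty\mathrm{vol}(P(u)\vert_S-vC)\,dv$. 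Here the two-step flag suffices because the center of $E$ is the \emph{curve} $C$, a prime divisor on $S$, so no point-level (third) flag step is needed.

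Next I would compare $\mathrm{ord}_E$ against the flag. Because $C_X(E)=C\subset S$, on these graded pieces the valuation $\mathrm{ord}_E$ dominates a non-negative combination $a\,\mathrm{ord}_S+b\,\mathrm{ord}_C$ of the flag valuations, with $a=\mathrm{ord}_E(S)$; dually, adjunction on the smooth pairs $(X,S)$ and then $(S,C)$ gives the matching lower bound $A_X(E)\geqslant a\,A_X(S)+b\,A_S(C)$, where $A_X(S)=A_S(C)=1$ since $S$ is a smooth prime divisor on $X$ and $C$ is a prime divisor on the smooth surface $S$. Feeding these bounds into the integral defining $S_X(E)$ and integrating the two coordinate directions separately shows that the ratio $S_X(E)/A_X(E)$ does not exceed the larger of the two refined ratios $S_X(S)/A_X(S)$ and $S(W^S_{\bullet,\bullet};C)/A_S(C)$, which is $\max\{S_X(S),S(W^S_{\bullet,\bullet};C)\}$; inverting yields the theorem.

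The main obstacle is precisely this last comparison step. One must show that the restriction of $\mathrm{ord}_E$ to the refinement is genuinely controlled by the flag coordinates, and one must bookkeep the negative parts $N(u)$ correctly, since they contribute the first term of $S(W^S_{\bullet,\bullet};C)$. The delicate point is that $W^S_{\bullet,\bullet}$ is only a multigraded, not a complete, linear series, so volume-continuity cannot be invoked naively; the Okounkov-body formalism of Lazarsfeld--Musta\c{t}\u{a} and Kaveh--Khovanskii, adapted to the refinement, is what legitimises treating the two coordinate integrals independently. Everything else is the routine slicing already used in the proof of Theorem~\ref{theorem:Hamid-Ziquan-Kento}.
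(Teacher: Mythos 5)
The paper does not prove this statement at all --- it is quoted from \cite{AbbanZhuang,Book} --- so your proposal has to be measured against the argument in those sources. Your framework is the right one: refining the anticanonical ring along the flag $X\supset S\supset C$, identifying the graded pieces with $W^S_{\bullet,\bullet}$ via the Zariski decomposition $-K_X-uS=P(u)+N(u)$, and observing that, because $C_X(E)=C$ is itself a prime divisor on $S$, the second flag step can be taken to be $C$ rather than an infimum over all divisors over $S$ through a point. That much matches the cited proof.

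The gap is the comparison step, which is where the entire content of the theorem lives. You assert that $\mathrm{ord}_E$ \emph{dominates} a combination $a\,\mathrm{ord}_S+b\,\mathrm{ord}_C$ while $A_X(E)\geqslant a\,A_X(S)+b\,A_S(C)$, and that feeding this into the integral bounds $S_X(E)/A_X(E)$ by $\max\{S_X(S),S(W^S_{\bullet,\bullet};C)\}$. Two problems. First, the directions do not cooperate: to bound $S_X(E)$ from \emph{above} you need the filtration induced by $\mathrm{ord}_E$ to be \emph{contained} in the one induced by the flag, i.e.\ a pointwise upper bound $\mathrm{ord}_E(s)\leqslant a\,c_1(s)+b\,c_2(s)$ in the flag coordinates, paired with the lower bound $A_X(E)\geqslant a+b$; a lower bound on $\mathrm{ord}_E$ only yields a lower bound on $S_X(E)$, which says nothing about the ratio. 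Second, and more seriously, no such pair of inequalities with a common $(a,b)$ holds for a general prime divisor $E$ over $X$ with center $C$: that would make every such $E$ comparable to a monomial valuation of the flag with matching log discrepancy, which fails as soon as $E$ is extracted by iterated blowups whose centers leave the strata of the flag. This is exactly why Abban--Zhuang do not argue valuation-by-valuation. Their proof chooses, for each $m$, a basis of $H^0(X,-mK_X)$ simultaneously compatible with the filtrations by $\mathrm{ord}_E$ and by $\mathrm{ord}_S$, writes $\mathrm{ord}_E(s_i)=\mathrm{ord}_E(S)\cdot\mathrm{ord}_S(s_i)+(\text{a term governed by the induced filtration on }W^S_{m,\bullet})$, establishes the log-discrepancy comparison $A_X(E)\geqslant \mathrm{ord}_E(S)\,A_X(S)+(\text{log discrepancy of the induced object on }S)$ using that $(X,S)$ is plt, identifies that induced object with (something controlled by) $\mathrm{ord}_C$ precisely because $C_X(E)=C$, and concludes with the mediant inequality $\frac{ax+by}{a+b}\leqslant\max\{x,y\}$. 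Your ``dominates'' sentence assumes the output of all of these steps, so the sketch as written begs the question rather than proving it.
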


\section{Smooth Fano 3-folds satisfying Condition~$\mathbf{(A)}$}
\label{Section:k-point}

\setlength{\epigraphwidth}{0.8\textwidth}
\epigraph{Common sense, do what it will, cannot avoid being surprised occasionally. The object of science is to spare it this emotion and create mental habits which shall be in such close accord with the~habits of the~world as to secure that nothing shall be unexpected.}{Bertrand Russel, quoted in \textit{The World of Null-A}, A.\ E.\ van Vogt, 1948}

In this section, we prove that every member in several deformation families of smooth Fano 3-folds satisfy Condition~$\mathbf{(A)}$.
As stated in the~introduction, there are 25 families of smooth Fano 3-folds containing K-polystable members that either have a non-K-polystable member or the~K-stability picture for all smooth elements is lacking.
Among them, we single out 10 families and prove in Section~\ref{subsection:K-poly-with-points} that any members in those families satisfies Condition~$\mathbf{(A)}$.
There are also 27 families of smooth Fano 3-folds where every smooth member is known to be non-K-polystable.
In Section~\ref{subsection:K-unstable-with-points} we show that every smooth member in 19 families (out of 27) always satisfies Condition~$\mathbf{(A)}$.
This leaves 8 families that contain exceptional cases in Section~\ref{subsection:Main-Theorem}.

\subsection{Families containing K-polystable members}
\label{subsection:K-poly-with-points}

\begin{lemma}
\label{lemma:1-10}
Let $X$ be a smooth Fano 3-fold in Family \textnumero 1.10.
Then $X$ satisfies Condition~$\mathbf{(A)}$.
\end{lemma}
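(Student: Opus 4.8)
The plan is to assume that a finite abelian subgroup $A\subseteq\mathrm{Aut}(X)$ has no fixed point on $X$ and to derive a contradiction. Since $X$ is a smooth Fano threefold it is rationally connected, so by Remark~\ref{remark:cyclic-Lefschetz} every finite cyclic subgroup of $\mathrm{Aut}(X)$ fixes a point; hence I may assume that $A$ is not cyclic. Recall that $X$ is a prime Fano threefold of genus $12$, with $-K_X^3=22$, anticanonically embedded in $\mathbb{P}^{13}$, and that its Hilbert scheme of conics is isomorphic to $\mathbb{P}^2$, on which $\mathrm{Aut}(X)$ acts faithfully. In particular $A$ embeds into $\mathrm{PGL}_3(\mathbb{C})$ through its action on this $\mathbb{P}^2$.

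The next step is to pin down the possible groups $A$ by invoking the classification of automorphism groups of members of Family~\textnumero1.10. For members whose connected automorphism group is $\mathbb{G}_a$ or $\mathbb{G}_a\rtimes\mathbb{G}_m$, every finite abelian subgroup is cyclic and is already handled by Remark~\ref{remark:cyclic-Lefschetz}; the same holds for members with finite cyclic automorphism group. The goal of this step is therefore to show that a non-cyclic finite abelian $A$ forces $A\cong(\mathbb{Z}/2\mathbb{Z})^2$ and forces $X$ to be the Mukai--Umemura threefold, the unique member with $\mathrm{Aut}(X)\cong\mathrm{PGL}_2(\mathbb{C})$ (with $\mathrm{Aut}(X)$ acting on the $\mathbb{P}^2$ of conics via $\mathrm{Sym}^2$ and preserving a conic, in close analogy with the action on the $\mathbb{P}^2$ of lines in Lemma~\ref{lemma:V5}). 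A useful by-product of the $\mathrm{PGL}_3$-action is that such an $A$ already produces $A$-invariant conics on $X$: the group $(\mathbb{Z}/2\mathbb{Z})^2\subset\mathrm{PGL}_3(\mathbb{C})$ is toral and fixes three points of $\mathbb{P}^2$, i.e.\ three $A$-invariant conics.

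For the decisive case $A\cong(\mathbb{Z}/2\mathbb{Z})^2\subseteq\mathrm{PGL}_2(\mathbb{C})=\mathrm{Aut}(X)$ with $X$ the Mukai--Umemura threefold, I would argue through the orbit structure rather than through a single invariant conic. The threefold $X$ is the closure of one three-dimensional $\mathrm{PGL}_2$-orbit whose point-stabilisers are icosahedral groups $\cong A_5$. Since $A_5$ contains a Klein four subgroup and, crucially, all subgroups of $\mathrm{PGL}_2(\mathbb{C})$ isomorphic to $(\mathbb{Z}/2\mathbb{Z})^2$ form a single conjugacy class, the given $A$ is conjugate to a Klein four subgroup of such a stabiliser, hence fixes the corresponding point of the open orbit. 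This contradicts the assumption that $A$ has no fixed point. I note that it is essential to use the open orbit here: the unique closed orbit is a rational normal curve $\cong\mathbb{P}^1$ of even degree on which $(\mathbb{Z}/2\mathbb{Z})^2$ acts without fixed points, so neither it nor Corollary~\ref{corollary:odd-degree-curves} is available.

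I expect the main obstacle to be the classification step: controlling all automorphism groups occurring in Family~\textnumero1.10 tightly enough to guarantee that the only non-cyclic finite abelian subgroups are Klein four groups of the Mukai--Umemura threefold. Concretely, one must rule out an exotic Heisenberg-type subgroup $(\mathbb{Z}/3\mathbb{Z})^2\subset\mathrm{PGL}_3(\mathbb{C})$ acting without fixed points on the $\mathbb{P}^2$ of conics, and confirm that the Mukai--Umemura threefold is the only source of non-cyclic abelian automorphisms. Once this reduction is in place, the conjugacy argument for $(\mathbb{Z}/2\mathbb{Z})^2$ finishes the proof immediately; should one prefer to avoid the full classification, the alternative would be to pass from one of the $A$-invariant conics produced above to an $A$-fixed point on $X$ via an $A$-equivariant birational link (projection from the conic) together with Theorem~\ref{theorem:Kollar-Szabo}, which is the step I would expect to require the most care.
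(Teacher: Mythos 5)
There is a genuine gap: your argument hinges on the claim that a non-cyclic finite abelian subgroup of $\mathrm{Aut}(X)$ forces $X$ to be the Mukai--Umemura threefold, and you do not prove this. It is not merely the ``main obstacle'' you flag --- it is the actual content of the lemma, and it is very likely false: Family \textnumero 1.10 contains members other than the Mukai--Umemura threefold whose automorphism group is infinite but not $\mathrm{PGL}_2(\mathbb{C})$ (e.g.\ with identity component $\mathbb{G}_m$), and nothing prevents such a member, or one with finite automorphism group, from carrying a Klein four group; your remark that finite abelian subgroups of $\mathbb{G}_a$ or $\mathbb{G}_a\rtimes\mathbb{G}_m$ are cyclic only controls subgroups of $\mathrm{Aut}^0(X)$, not of the full (possibly disconnected) automorphism group. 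Your orbit-stabiliser argument for the Mukai--Umemura threefold itself is fine (all Klein four subgroups of $\mathrm{PGL}_2(\mathbb{C})$ are conjugate, and the icosahedral stabiliser of a point of the open orbit contains one), but it covers only that one member, whereas the lemma is about every smooth member of the family.

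The paper's proof avoids any classification of automorphism groups. It uses the faithful action of $A$ on the Hilbert scheme of conics $\simeq\mathbb{P}^2$ together with the fact that singular conics are parameterized by a quartic curve: Lemma~\ref{lemma:lift} (degree $4$ coprime to $3$) linearizes the action, so the abelian group $A$ fixes a point of this $\mathbb{P}^2$, i.e.\ $X$ carries an $A$-invariant conic $C$. If $C$ is singular one gets a fixed point at once (singular point of a reduced conic, or Corollary~\ref{corollary:odd-degree-curves} for a double line). If $C$ is smooth, the Sarkisov link obtained by blowing up $C$ lands on a smooth quadric $Q\subset\mathbb{P}^4$ blown up along a rational sextic $\Gamma$; faithfulness of the $A$-action on $\Gamma\simeq\mathbb{P}^1$ forces $A\simeq(\mathbb{Z}/2\mathbb{Z})^2$, and then Lemma~\ref{lemma:Klein-quadric} produces an $A$-fixed point on $Q$, hence on $X$ by Theorem~\ref{theorem:Kollar-Szabo}. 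This is exactly the ``projection from the conic'' route you mention as a fallback but do not carry out; to repair your proof you would need to execute it, since the reduction to the Mukai--Umemura case cannot be justified.
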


\begin{proof}
Let $A$ be a finite abelian group in $\mathrm{Aut}(X)$. By Remark~\ref{remark:cyclic-Lefschetz}, we may assume that $A$ is not cyclic.
Recall that $|-K_X|$ is very ample and provides an $A$-equivariant embedding $X\hookrightarrow\mathbb{P}^{13}$
such that the action of $A$ on $\mathbb{P}^{13}$ is given by its faithful $14$-dimensional representation.

Suppose that $X$ contains an $A$-invariant smooth conic $C$.
Let $\phi\colon\widetilde{X}\to X$ be the~blowup along $C$, and let $E$ be the~$\phi$-exceptional surface.
Then it follows from \cite[Theorem~4.4.11]{IsPr99} and \cite[Corollary~4.4.3]{IsPr99}
that the~linear system $|-K_{\widetilde{X}}|$ is base point free, and the~linear system $|-K_{\widetilde{X}}-E|$ gives an $A$-equivariant birational map
$\chi\colon X\dasharrow Q$, where $Q$ is a smooth quadric 3-fold in $\mathbb{P}^{4}$.
Moreover, we have the~following $A$-equivariant commutative diagram
$$
\xymatrix@R=1em{
&\widehat{Q}\ar@{->}[ldd]_{\pi}\ar@{->}[rd]_{\beta}&& \widetilde{X}\ar@{->}[ld]^{\alpha}\ar@{->}[rdd]^{\phi}\ar@{-->}[ll]_{\zeta}&\\%
&&Y&&\\
Q&&&&X\ar@{-->}[llll]_{\chi}}
$$
where $\alpha$ is a small birational morphism given by $|-K_{\widetilde{X}}|$,
$Y$ is a Fano 3-fold with Gorenstein non-$\mathbb{Q}$-factorial terminal singularities with $-K_{Y}^3=16$,
the map $\zeta$ is a pseudo-isomorphism that flops the~curves contracted by $\alpha$,
$\pi$ is the~blowup of a smooth rational sextic curve $\Gamma\subset Q$,
and $\beta$ is a small birational morphism given by the~linear systems $|-K_{\widetilde{Q}}|$.
Observe that $A$ acts faithfully on $\Gamma$. Since $\Gamma\simeq\mathbb{P}^1$, this gives $A\simeq(\mathbb{Z}/2\mathbb{Z})^2$,
so $A$ fixes a point in $Q$ by Lemma~\ref{lemma:Klein-quadric},
and $A$ fixes a point in $X$ by Theorem~\ref{theorem:Kollar-Szabo}.

Thus, we see that $A$ fixes a point if $X$ contains an $A$-invariant smooth conic.
On the~other hand, we know from \cite{KuznetsovProkhorovShramov} that $A$ acts faithfully on the~Hilbert scheme of conics in $X$,
and this Hilbert scheme is isomorphic to $\mathbb{P}^2$.
Moreover, it follows from \cite{Kollar-Olaf,Schreyer} that singular conics on $X$ are parameterized by a quartic curve,
so applying Lemma~\ref{lemma:lift} again, we see that $X$ contains a (possibly singular) $A$-invariant conic $C$.
If $C$ is smooth, we are done. If $C$ is singular and reduced, then its singular point is fixed by $A$.
Similarly, if $C=2L$ for a line $L\subset X$, then $A$ fixes a point in $L$ by Corollary~\ref{corollary:odd-degree-curves}.
\end{proof}

\begin{lemma}
\label{lemma:2-9}
Let $X$ be a smooth Fano 3-fold in Family \textnumero 2.9.
Then $X$ satisfies Condition~$\mathbf{(A)}$.
\end{lemma}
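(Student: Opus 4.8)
The plan is to exploit the two extremal contractions of a member $X$ of Family \textnumero 2.9. Recall that such an $X$ is the blow-up $\pi\colon X\to\mathbb{P}^3$ of a smooth curve $C$ of degree $7$ and genus $5$, with exceptional divisor $E$, and that the second extremal contraction is a conic bundle $\phi\colon X\to\mathbb{P}^2$ given by the net of cubic surfaces through $C$, i.e.\ by $|3H-E|$ where $H=\pi^*\mathcal{O}_{\mathbb{P}^3}(1)$. Let $A\subset\mathrm{Aut}(X)$ be a finite abelian subgroup; by Remark~\ref{remark:cyclic-Lefschetz} I may assume $A$ is not cyclic. Since Family \textnumero 2.9 is not among \textnumero 2.6, \textnumero 2.12, \textnumero 2.21, \textnumero 2.32, Lemma~\ref{lemma:Prokhorov} shows that both extremal rays of $\overline{\mathrm{NE}}(X)$ are $A$-invariant, so both $\pi$ and $\phi$ are $A$-equivariant. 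In particular $A$ acts on $\mathbb{P}^3$ preserving $C$, and it preserves the three-dimensional subspace $W=H^0(\mathbb{P}^3,\mathcal{I}_C(3))$ that defines $\phi$.

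First I would reduce everything to finding an $A$-fixed point on $\mathbb{P}^3$. Indeed, if $A$ fixes a point $p\in\mathbb{P}^3\setminus C$, then, since $\pi$ is an isomorphism over $\mathbb{P}^3\setminus C$, the point $p$ lifts to an $A$-fixed point of $X$. If instead $A$ fixes a point $p\in C$, then $A$ acts linearly on $T_p\mathbb{P}^3$ preserving the line $T_pC$; as $A$ is abelian, the two-dimensional normal space $T_p\mathbb{P}^3/T_pC$ splits into $A$-eigenlines, which yield $A$-fixed points on the fibre $\mathbb{P}(N_{C/\mathbb{P}^3,p})\cong\mathbb{P}^1\subset E$. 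Either way $A$ fixes a point of $X$, so it remains only to treat the case in which $A$ has \emph{no} fixed point on $\mathbb{P}^3$.

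Suppose then that $A$ fixes no point of $\mathbb{P}^3=\mathbb{P}(V)$ with $V=\mathbb{C}^4$. Then the preimage $\widehat{A}\subset\mathrm{GL}_4(\mathbb{C})$ has no invariant line, so $V$ is a sum of irreducible $\widehat{A}$-representations of dimension $\geqslant 2$. As $\dim V=4$, this forces $\widehat{A}$ to be a $2$-group: either $V$ is irreducible, giving $A\cong(\mathbb{Z}/4\mathbb{Z})^2$ or $(\mathbb{Z}/2\mathbb{Z})^4$ with scalar subgroup $\mu=\mu_4$ or $\mu_2$, or $V=U\oplus U$ for a $2$-dimensional irreducible $U$, giving $A\cong(\mathbb{Z}/2\mathbb{Z})^2$ and $\mu=\mu_2$. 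In all cases $\mu=\widehat{A}\cap\mathbb{C}^*$ coincides with $[\widehat{A},\widehat{A}]$ and has order $2$ or $4$. A scalar $\zeta\in\mu$ acts on $V$ by $\zeta$, hence on $\mathrm{Sym}^3V^*$, and therefore on the $\widehat{A}$-invariant subspace $W\subseteq\mathrm{Sym}^3V^*$, by $\zeta^{-3}$. Since every one-dimensional representation of $\widehat{A}$ is trivial on $\mu=[\widehat{A},\widehat{A}]$, whereas $\mu$ acts on $W$ by the nontrivial character $\zeta\mapsto\zeta^{-3}$ (note $\zeta^{-3}\neq 1$ for $\zeta$ of order $2$ or $4$), the representation $W$ has no one-dimensional constituent. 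As $\widehat{A}$ is a $2$-group, every irreducible constituent of $W$ then has even dimension, so $\dim W$ is even, contradicting $\dim W=3$. Hence the fixed-point-free case cannot occur, and $A$ always fixes a point of $X$.

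The main obstacle is exactly this last case: once $A$ acts without fixed points on the base $\mathbb{P}^3$ of the blow-up, neither projection produces a fixed point directly, and one must force a contradiction out of the interplay of the two contractions. The decisive input is that the conic-bundle structure is cut out by a linear system of odd degree (cubics) and odd projective dimension ($\dim W=3$), against which the central scalar of a fixed-point-free $2$-group action must act nontrivially; this parity incompatibility is what eliminates the bad case and is where I would expect the argument to be most delicate. I would take particular care to verify the two load-bearing inputs: that a fixed-point-free finite abelian subgroup of $\mathrm{PGL}_4(\mathbb{C})$ is necessarily a $2$-group with scalar subgroup $\mu_2$ or $\mu_4$, and that $\dim H^0(\mathbb{P}^3,\mathcal{I}_C(3))=3$ (equivalently, that the restriction $H^0(\mathcal{O}_{\mathbb{P}^3}(3))\to H^0(\mathcal{O}_C(3))$ is surjective) for the curve $C$ underlying a member of Family \textnumero 2.9.
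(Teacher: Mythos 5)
Your overall strategy (play the odd degree and odd dimension of the net of cubics through $C$ against a fixed-point-free action on $\mathbb{P}^3$) is sound, and your first reduction to finding an $A$-fixed point on $\mathbb{P}^3$ is fine. But the step you yourself flagged as load-bearing is wrong: it is \emph{not} true that a fixed-point-free finite abelian subgroup of $\mathrm{PGL}_4(\mathbb{C})$ must be a $2$-group. Take $\widehat{A}=Q_8\times\mu_3$ acting on $V=U\oplus(U\otimes\chi)$, where $U$ is the $2$-dimensional irreducible representation of $Q_8$ and $\chi$ is a faithful character of $\mu_3$: the scalar subgroup of the image is $\{\pm I\}$, the image in $\mathrm{PGL}_4(\mathbb{C})$ is $(\mathbb{Z}/2\mathbb{Z})^2\times\mathbb{Z}/3\mathbb{Z}\simeq\mathbb{Z}/2\mathbb{Z}\times\mathbb{Z}/6\mathbb{Z}$, and it has no fixed point on $\mathbb{P}^3$ (any fixed point would lie on one of the two lines $\mathbb{P}(U)$, $\mathbb{P}(U\otimes\chi)$ fixed by the $\mu_3$-factor, on which $Q_8$ acts through a fixed-point-free Klein four group). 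Consequently your deduction ``$\widehat{A}$ is a $2$-group, hence every irreducible constituent of $W$ has even dimension'' does not go through: a non-$2$-group central extension can in principle have $3$-dimensional irreducibles, so the parity contradiction is not yet established.

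The argument is repairable, and in a way that makes it cleaner. Take $\widehat{A}$ to be the preimage of $A$ in $\mathrm{SL}_4(\mathbb{C})$, so the central scalar subgroup is $\mu_4$. Since $-3\equiv 1\pmod 4$, the scalars act on $\mathrm{Sym}^3V^*$, hence on $W$, by the \emph{same} central character as on $V$. All irreducible constituents of a representation of a central extension of an abelian group with a fixed central character $\psi$ have one and the same dimension, namely $\sqrt{[A:\mathrm{rad}(\psi\circ\omega)]}$ where $\omega$ is the commutator pairing; so the common constituent dimension $d$ of $W$ equals that of $V$, whence $d\mid 4$ and $d\mid 3$, forcing $d=1$ and an $A$-fixed point on $\mathbb{P}^3$ after all. (You should also justify $\dim W=3$, which follows from $h^0(\mathcal{O}_C(3))=21-5+1=17$ and surjectivity of restriction; the paper takes the net of cubics as part of the standard description of the family.) For comparison, the paper avoids all of this: it uses the degree-$5$ discriminant curve of the conic bundle together with Lemma~\ref{lemma:lift} (with $\gcd(5,3)=1$) to linearize the action on $\mathbb{P}^2$ and produce an $A$-invariant line, hence an $A$-invariant cubic surface in $\mathbb{P}^3$, and then applies Lemma~\ref{lemma:lift} again (with $\gcd(3,4)=1$) to get a fixed point in $\mathbb{P}^3$ and concludes by Theorem~\ref{theorem:Kollar-Szabo}.
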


\begin{proof}
Let $A$ be a finite abelian subgroup in $\mathrm{Aut}(X)$. By Lemma~\ref{lemma:Prokhorov}, we have an $A$-equivariant commutative diagram
$$
\xymatrix@R=1em{
&X\ar@{->}[ld]_{\pi}\ar@{->}[rd]^{\eta}&\\%
\mathbb{P}^3\ar@{-->}[rr]&&\mathbb{P}^2}
$$
in which $\pi$ is the~blowup of a smooth curve $C$ of degree $7$ and genus $5$, $\eta$ is a conic bundle with discriminant curve $\Delta\subset\mathbb{P}^2$ of degree $5$,
and the~dashed arrow is given by the~two-dimensional linear system of all cubic surfaces containing $C$.
Since $\Delta$ is $A$-invariant, it follows from Lemma \ref{lemma:lift} that $\mathbb{P}^2$ contains an $A$-invariant line $L$, so $\pi_*(\eta^*(L))$ is an $A$-invariant cubic surface.
Now, applying Lemma \ref{lemma:lift} again, we conclude that there is an $A$-fixed point on $\mathbb{P}^3$, hence a fixed point on $X$ by Theorem~\ref{theorem:Kollar-Szabo}.
\end{proof}

\begin{lemma}
\label{lemma:2-11}
Let $X$ be a smooth Fano 3-fold in Family \textnumero 2.11.
Then $X$ satisfies Condition~$\mathbf{(A)}$.
\end{lemma}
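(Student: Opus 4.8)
The plan is to reduce to a fixed point on the cubic threefold that $X$ blows down to, and then to use the coprimality trick that already powered the argument for Family \textnumero 2.9. Recall that a member $X$ of Family \textnumero 2.11 is the blowup $\pi\colon X\to V_3$ of a smooth cubic threefold $V_3\subset\mathbb{P}^4$ along a line $L\subset V_3$. As usual, by Remark~\ref{remark:cyclic-Lefschetz} I may assume that the finite abelian group $A\subset\operatorname{Aut}(X)$ is not cyclic. Since $\operatorname{Pic}(X)\simeq\mathbb{Z}^2$ and Family \textnumero 2.11 is not among the families excluded in Lemma~\ref{lemma:Prokhorov}, every extremal ray of $\overline{\operatorname{NE}}(X)$ is $A$-invariant; in particular the divisorial contraction $\pi$ is $A$-equivariant. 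Hence $A$ acts on $V_3$ and preserves the line $L=\pi(E)$ set-theoretically, where $E$ denotes the $\pi$-exceptional divisor.

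The key point is that the blowdown target carries enough projective structure to linearize the action. Because $V_3\subset\mathbb{P}^4$ is a cubic hypersurface with $\operatorname{Pic}(V_3)=\mathbb{Z}\cdot\mathcal{O}_{V_3}(1)$, every automorphism of $V_3$ preserves $\mathcal{O}_{V_3}(1)$ and is therefore induced by a projective transformation; thus the induced $A$ sits inside $\operatorname{PGL}_5(\mathbb{C})$ and leaves the cubic hypersurface $V_3$ invariant. Since $\deg V_3=3$ is coprime to $5$, Lemma~\ref{lemma:lift} furnishes a lift $\widetilde{A}\subset\operatorname{GL}_5(\mathbb{C})$ with $\widetilde{A}\simeq A$. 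Now $L$ is an $A$-invariant irreducible rational curve of degree $1$, which is odd, so Corollary~\ref{corollary:odd-degree-curves} applies and produces a point $p\in L$ fixed by $A$.

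Finally, $p$ lies on $V_3$ because $L\subset V_3$, so $A$ fixes a point of $V_3$; since $X$ and $V_3$ are both smooth and $\pi$ is an $A$-equivariant birational morphism, Theorem~\ref{theorem:Kollar-Szabo} transfers this fixed point back to $X$. This shows that $X$ satisfies Condition~$\mathbf{(A)}$.

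I expect the only genuine subtlety to lie in the first paragraph: one must be sure that it is the divisorial contraction $\pi$ (rather than the conic bundle $\eta\colon X\to\mathbb{P}^2$) that is exploited, and hence that $A$ preserves the line $L$. This is exactly the content of Lemma~\ref{lemma:Prokhorov}, so once the two extremal contractions of $X$ are identified the remainder is the routine coprimality-and-odd-degree mechanism. Note that, in contrast with Family \textnumero 2.9, here the center of the blowup is already a line, so no auxiliary pushforward to an invariant cubic surface is needed---an $A$-invariant odd-degree curve is available at once, which is what makes the argument short.
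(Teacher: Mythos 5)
Your proposal is correct and follows essentially the same route as the paper: an $A$-equivariant blowdown to the cubic threefold via Lemma~\ref{lemma:Prokhorov}, a linearization via Lemma~\ref{lemma:lift} (using that $3$ and $5$ are coprime), a fixed point on the invariant line via Corollary~\ref{corollary:odd-degree-curves}, and transfer back to $X$ by Theorem~\ref{theorem:Kollar-Szabo}. You have merely spelled out the details that the paper's two-line proof leaves implicit.
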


\begin{proof}
Let $A$ be a finite abelian subgroup in $\mathrm{Aut}(X)$.
By Lemma~\ref{lemma:Prokhorov}, there exists an $A$-equivariant birational morphism $\pi\colon X\to Y$ such that $Y$ is a smooth cubic 3-fold in $\mathbb{P}^4$,
and $\pi$ is a~blowup of an $A$-invariant line $L\subset Y$.
Applying Lemma~\ref{lemma:lift} and its Corollary~\ref{corollary:odd-degree-curves}, we see that $A$ fixes a point in $L$,
so it also fixes a point in $X$ by Theorem~\ref{theorem:Kollar-Szabo}.
\end{proof}

\begin{lemma}
\label{lemma:pointless-2-13}
Let $X$ be a smooth Fano 3-fold in Family \textnumero 2.13.
Then $X$ satisfies Condition~$\mathbf{(A)}$.
\end{lemma}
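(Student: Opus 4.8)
The plan is to transfer the question to the quadric threefold underlying $X$ and then exploit the fact that a genus-$2$ curve admits almost no symmetry. Recall that a smooth member of Family \textnumero 2.13 is the blowup $\pi\colon X\to Q$ of a smooth quadric threefold $Q\subset\mathbb{P}^4$ along a smooth curve $C\subset Q$ of degree $6$ and genus $2$. Since $\mathrm{Pic}(X)\cong\mathbb{Z}^2$ and \textnumero 2.13 does not appear among the families excluded in Lemma~\ref{lemma:Prokhorov}, both extremal rays of $\overline{\mathrm{NE}}(X)$ are $A$-invariant; in particular the divisorial ray contracted by $\pi$ is $A$-invariant, so $\pi$ is $A$-equivariant and $A$ acts on $Q$ preserving $C$. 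As $\pi$ is an $A$-equivariant birational morphism of smooth varieties, Theorem~\ref{theorem:Kollar-Szabo} reduces the claim to the existence of an $A$-fixed point on $Q$.

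The crucial input is that $C$ is embedded in $\mathbb{P}^4$ by the complete linear system of a nonspecial line bundle of degree $6$, hence $C$ is nondegenerate and spans $\mathbb{P}^4$. Consequently any element of $A$ fixing $C$ pointwise has fixed locus a union of linear subspaces containing the nondegenerate curve $C$, so this locus is all of $\mathbb{P}^4$ and the element is trivial. Thus $A$ embeds into $\mathrm{Aut}(C)$. Because $C$ has genus $2$, Igusa's classification of automorphism groups of genus-$2$ curves forbids a subgroup isomorphic to $(\mathbb{Z}/2\mathbb{Z})^3$ inside $\mathrm{Aut}(C)$, so the $2$-rank of $A$ is at most $2$.

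Next I would apply Lemma~\ref{lemma:lift} to the $A$-invariant quadric $Q$: since $\deg(Q)=2$ is coprime to $5$, the group $A$ lifts to $\widetilde{A}\subset\mathrm{GL}_5(\mathbb{C})$ with $\widetilde{A}\cong A$, and being abelian it is simultaneously diagonalizable. Write $x_0,\dots,x_4$ for the resulting eigencoordinates, with characters $\chi_0,\dots,\chi_4$, and let $F$ be the semi-invariant quadratic form defining $Q$. If two of the characters coincide, the corresponding coordinate line is pointwise $A$-fixed and meets $Q$, giving an $A$-fixed point on $Q$. Otherwise all characters are distinct; if moreover no coordinate point $[e_i]$ lies on $Q$, then every square $x_i^2$ occurs in $F$, forcing each ratio $\chi_i\chi_j^{-1}$ to be $2$-torsion. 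The five distinct characters would then lie in a single coset of the $2$-torsion subgroup of the character group of $A$, so this subgroup has order at least $5$ and $A$ would contain $(\mathbb{Z}/2\mathbb{Z})^3$, contradicting the previous paragraph. Hence some coordinate point lies on $Q$ and is $A$-fixed, and by Theorem~\ref{theorem:Kollar-Szabo} the group $A$ fixes a point on $X$.

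The main obstacle is ensuring that a fixed point of the diagonalized action lies \emph{on} the quadric $Q$ and not merely in the ambient $\mathbb{P}^4$: the diagonal sign-change subgroups of $\mathrm{Aut}(Q)$ are exactly the abelian groups acting on $Q$ without fixed points, and they are ruled out here only through the bound on the $2$-rank coming from the faithful action $A\hookrightarrow\mathrm{Aut}(C)$. The two points requiring care are the $A$-equivariance of $\pi$, guaranteed by Lemma~\ref{lemma:Prokhorov}, and the classical fact that no genus-$2$ curve has $(\mathbb{Z}/2\mathbb{Z})^3$ among its automorphisms.
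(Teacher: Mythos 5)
Your reduction to the quadric $Q$ via Lemma~\ref{lemma:Prokhorov} and Theorem~\ref{theorem:Kollar-Szabo}, and the identification of $A$ with a subgroup of $\mathrm{Aut}(C)$, match the paper exactly; but from that point on your argument is genuinely different and, as far as I can check, correct. The paper first disposes of cyclic groups (Remark~\ref{remark:cyclic-Lefschetz}) and of $(\mathbb{Z}/2\mathbb{Z})^2$ (Lemma~\ref{lemma:Klein-quadric}), then consults the LMFDB tables of automorphism groups of genus-$2$ curves to conclude that the only surviving candidate is $A\simeq(\mathbb{Z}/2\mathbb{Z})\times(\mathbb{Z}/6\mathbb{Z})$ acting on the curve $z^2=x^6-y^6$; it then pins down the linearization $L\sim 3K_C$, writes the action in explicit coordinates on $\mathbb{P}^4$, and observes that the hyperelliptic involution fixes a hyperplane pointwise, forcing a fixed point on $Q$. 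You replace all of this by two soft inputs: the Riemann--Hurwitz/Igusa fact that no genus-$2$ curve carries a faithful $(\mathbb{Z}/2\mathbb{Z})^3$-action, and a diagonalization argument (via Lemma~\ref{lemma:lift} applied to $Q$ itself, $\gcd(2,5)=1$) showing that a finite abelian subgroup of $\mathrm{Aut}(Q)$ without fixed points on $Q$ must have five pairwise distinct weights whose pairwise ratios are $2$-torsion, hence must contain $(\mathbb{Z}/2\mathbb{Z})^3$. This is a clean strengthening of Lemma~\ref{lemma:Klein-quadric} and avoids both the database lookup and the explicit coordinate computation; what it gives up is the finer information the paper extracts along the way, namely the identification of the unique borderline curve and the concluding observation that no smooth $A$-invariant quadric through it actually exists. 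The only steps you should make explicit are the nondegeneracy of $C$ in $\mathbb{P}^4$ (needed for injectivity of $A\to\mathrm{Aut}(C)$, and also implicitly assumed by the paper) and the fact that the stated $2$-rank bound in genus $2$ follows from Riemann--Hurwitz since point stabilizers on a curve are cyclic.
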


\begin{proof}
It follows from \cite{CheltsovPrzyjalkowskiShramov} that the~group $\mathrm{Aut}(X)$ is finite,
and it follows from Lemma~\ref{lemma:Prokhorov} that there exists an $\mathrm{Aut}(X)$-equivariant commutative diagram
$$
\xymatrix@R=1em{
&X\ar@{->}[ld]_{\pi}\ar@{->}[rd]^{\eta}&&\\%
Q\ar@{-->}[rr]&&\mathbb{P}^2}
$$
where $Q$ is a smooth quadric 3-fold in $\mathbb{P}^4$,
$\pi$ is the~blowup of a degree six smooth curve $C\subset Q$ with genus two,
$\eta$ is a conic bundle, and the~dashed arrow is given by the~linear subsystem in $|\mathcal{O}_{Q}(2)|$ consisting of all surfaces that contain $C$.
Set $L=\mathcal{O}_{Q}(1)\vert_{C}$. Then we have natural group homomorphisms
$$
\mathrm{Aut}(X)\simeq\mathrm{Aut}(Q,C)\hookrightarrow\mathrm{Aut}(\mathbb{P}^4,C)\simeq\mathrm{Aut}(C,[L])\hookrightarrow\mathrm{Aut}(C),
$$
where $\mathrm{Aut}(C,[L])$ is the~subgroup in $\mathrm{Aut}(C)$ consisting of all automorphisms of the~curve $C$ that preserve the~class of the~line bundle $L$ in $\mathrm{Pic}(C)$.
Therefore, in the~following we will identify
$\mathrm{Aut}(X)=\mathrm{Aut}(Q,C)$ and $\mathrm{Aut}(\mathbb{P}^4,C)=\mathrm{Aut}(C,[L])$,
and we consider $\mathrm{Aut}(X)$ as a subgroup of $\mathrm{Aut}(C)$.
Note that all possibilities for $\mathrm{Aut}(C)$ can be found using the~online database \cite{LGBT}.

Let $A$ be an abelian subgroup in $\mathrm{Aut}(Q,C)$. By Theorem~\ref{theorem:Kollar-Szabo}, it is enough to show that $A$ fixes a point in~$Q$.
Suppose that this is not the~case. Let us seek for a contradiction. First, $A$ is not cyclic by~Remark~\ref{remark:cyclic-Lefschetz}.
Second, it follows from Lemma~\ref{lemma:Klein-quadric} that $A\not\simeq (\mathbb{Z}/2\mathbb{Z})^2$.
Third, going through all possibilities for $\mathrm{Aut}(C)$ given by \cite{LGBT}, we see that $\mathrm{Aut}(C)\simeq(\mathbb{Z}/2\mathbb{Z})\rtimes\mathfrak{D}_4$,
$A\simeq (\mathbb{Z}/2\mathbb{Z})\times (\mathbb{Z}/6\mathbb{Z})$, the~signature of the~$A$-action on $C$ is $[0;2,2,6]$, and $C$ is isomorphic to the~following hyperelliptic curve:
\begin{equation}
\label{equation:hyperelliptic}
\{z^2=x^6-y^6\}\subset\mathbb{P}(1_{x},1_{y},3_{z}).
\end{equation}
Moreover, it follows from Lemma~\ref{lemma:lift} that $L$ is $A$-linearizable,
and it follows from \cite{Dolgachev1999} that the~only $A$-linearizable line bundle in $\mathrm{Pic}(C)$ of degree $6$ is $3K_Z$.
This implies that $L\sim 3K_Z$, and $C$ is projectively equivalent to the~image of the~curve \eqref{equation:hyperelliptic}
embedded into $\mathbb{P}^4$ via the~Veronese embedding $\mathbb{P}(1_{x},1_{y},3_{z})\hookrightarrow\mathbb{P}^4$ given by $[x:y:z]\mapsto[x^3:x^2y:xy^2:y^3:z]$.
Moreover, since $A$ is the~unique subgroup in $\mathrm{Aut}(C)$ isomorphic to $(\mathbb{Z}/2\mathbb{Z})\times (\mathbb{Z}/6\mathbb{Z})$,
we may assume that it acts on \eqref{equation:hyperelliptic} as follows:
\begin{align*}
[x:y:z]&\mapsto [x:y:-z],\\
[x:y:z]&\mapsto [\zeta_6x:y:z],
\end{align*}
where $\zeta_6$ is a primitive sixth root of unity.
Hence, choosing appropriate homogeneous coordinates $x_1$, $x_2$, $x_3$, $x_4$, $x_5$ on $\mathbb{P}^4$, we may assume that
$C=\{x_5^2=x_1^2-x_4^2, x_1x_4=x_2x_3, x_2^2=x_1x_3, x_3^2=x_2x_4\}\subset\mathbb{P}^4$,
and the~action of the~group $A$ on $\mathbb{P}^4$ is given by
\begin{align*}
[x_1:x_2:x_3:x_4:x_5]&\mapsto [x_1:x_2:x_3:x_4:-x_5],\\
[x_1:x_2:x_3:x_4:x_5]&\mapsto [\zeta_6^3x_1:\zeta_6^2x_2:\zeta_6x_3:x_4:x_5].
\end{align*}
In particular, the~(hyperelliptic) involution $[x_1:x_2:x_3:x_4:x_5]\mapsto [x_1:x_2:x_3:x_4:-x_5]$ pointwise fixes a hyperplane in $\mathbb{P}^4$,
so $A$ fixes a point in $Q$, which is a contradiction. In fact, one can check that there exists no smooth $A$-invariant quadric in $\mathbb{P}^4$ that contains the~curve $C$, which shows that there exists no smooth Fano 3-fold in Family \textnumero 2.13 that is acted faithfully by
the group $(\mathbb{Z}/2\mathbb{Z})\times (\mathbb{Z}/6\mathbb{Z})$.
\end{proof}

\begin{lemma}
\label{lemma:2-14-2-20-2-22}
Let $X$ be a smooth Fano 3-fold in one of Families \textnumero 2.14, \textnumero 2.20 or \textnumero 2.22.
Then $X$ satisfies Condition~$\mathbf{(A)}$.
\end{lemma}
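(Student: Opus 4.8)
The plan is to handle the three families \textnumero 2.14, \textnumero 2.20, \textnumero 2.22 by the same strategy used in the preceding lemmas: for each one, I would invoke the classification of smooth Fano 3-folds (via \cite{IsPr99}) to produce, for a finite abelian subgroup $A\subset\mathrm{Aut}(X)$, an explicit $A$-equivariant birational description of $X$ as a blowup of a simpler 3-fold --- typically $\mathbb{P}^3$ or a smooth quadric $Q\subset\mathbb{P}^4$ --- along a smooth $A$-invariant curve, together with the second Mori contraction (a conic bundle or another blowdown). By Lemma~\ref{lemma:Prokhorov} the extremal rays of $\overline{\mathrm{NE}}(X)$ are $G$-invariant for any finite $G\subset\mathrm{Aut}(X)$, so both contractions are $A$-equivariant and the relevant center curve $C$ is $A$-invariant. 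The reduction target in each case is a variety for which we already know how to produce $A$-fixed points: $\mathbb{P}^n$ with $n\geqslant 2$ via Lemma~\ref{lemma:lift} (after checking the needed coprimality of degree and $n+1$), the quadric $Q$ via Lemma~\ref{lemma:Klein-quadric} once $A\simeq(\mathbb{Z}/2\mathbb{Z})^2$ is isolated, or an $A$-invariant curve via Corollary~\ref{corollary:odd-degree-curves}.

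\smallskip

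First I would reduce to the noncyclic case by Remark~\ref{remark:cyclic-Lefschetz}, since a finite cyclic group acting on the rationally connected $X$ automatically fixes a point. Then, family by family: for the member realized as a blowup of $\mathbb{P}^3$ along a curve $C$, I would use the conic bundle structure $\eta\colon X\to\mathbb{P}^2$ to pass to $\mathbb{P}^2$, extract an $A$-invariant line $L\subset\mathbb{P}^2$ by Lemma~\ref{lemma:lift} (applied to the $A$-invariant discriminant curve), pull it back to an $A$-invariant surface in $\mathbb{P}^3$, and apply Lemma~\ref{lemma:lift} once more to get an $A$-fixed point downstairs --- exactly the mechanism of Lemma~\ref{lemma:2-9}. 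For a member blown up from the quadric $Q$ along a curve, I would instead eliminate $A\simeq(\mathbb{Z}/2\mathbb{Z})^2$ directly by Lemma~\ref{lemma:Klein-quadric}, and rule out other noncyclic $A$ either by the faithful action on the center curve (forcing $A\simeq(\mathbb{Z}/2\mathbb{Z})^2$ when the curve is rational, as in Lemma~\ref{lemma:1-10}) or by a curve-automorphism analysis as in Lemma~\ref{lemma:pointless-2-13}. In every branch, once an $A$-fixed point is found on the simpler model, Theorem~\ref{theorem:Kollar-Szabo} transports it back to $X$.

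\smallskip

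\textbf{The main obstacle} I anticipate is identifying the precise birational model and the degree/genus of the blown-up curve for each of the three families separately, and --- when the reduction target is the quadric --- controlling exactly which noncyclic abelian groups can occur. The clean cases are those where the center curve is rational (so faithful action forces $A\simeq(\mathbb{Z}/2\mathbb{Z})^2$) or where the companion contraction lands on a projective space of dimension $\geqslant 2$ with a degree coprime to $n+1$. The delicate case is when $A$ can be a larger abelian group acting on a higher-genus invariant curve; there one must either show (via the geometry of the conic bundle discriminant together with Lemma~\ref{lemma:lift}) that an $A$-invariant line or hyperplane section always exists, or carry out a short curve-automorphism classification as in Lemma~\ref{lemma:pointless-2-13}. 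I expect at least one of the three families to require this finer analysis, while the other two should follow the direct $\mathbb{P}^3$- or $Q$-reduction template with only bookkeeping of the invariants.
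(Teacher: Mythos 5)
Your overall skeleton is right --- reduce to the non-cyclic case, use Lemma~\ref{lemma:Prokhorov} to make the extremal contraction $A$-equivariant, find a fixed point on the simpler model, and transport it back by Theorem~\ref{theorem:Kollar-Szabo} --- but the concrete content of your argument rests on a misidentification of the birational models, and this is a genuine gap. None of the Families \textnumero 2.14, \textnumero 2.20, \textnumero 2.22 is a blowup of $\mathbb{P}^3$ or of a quadric $Q\subset\mathbb{P}^4$: in the Mori--Mukai classification all three are blowups of the quintic del Pezzo 3-fold $V_5$ (along an elliptic curve of degree $5$, a twisted cubic, and a conic, respectively). Consequently the entire machinery you describe in your second and third paragraphs --- the conic bundle over $\mathbb{P}^2$ with its discriminant quartic, the pullback of an invariant line to an invariant cubic surface in $\mathbb{P}^3$, the direct application of Lemma~\ref{lemma:Klein-quadric} to a quadric model, the curve-automorphism analysis in the style of Lemma~\ref{lemma:pointless-2-13} --- simply does not apply to these families.

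The missing ingredient is Lemma~\ref{lemma:V5}, which asserts that $V_5$ itself satisfies Condition~$\mathbf{(A)}$; once you have that, the paper's proof is three lines: the blowdown $X\to V_5$ is $A$-equivariant by Lemma~\ref{lemma:Prokhorov}, $A$ fixes a point in $V_5$ by Lemma~\ref{lemma:V5}, and Theorem~\ref{theorem:Kollar-Szabo} lifts the fixed point to $X$. But establishing Condition~$\mathbf{(A)}$ for $V_5$ is itself a nontrivial step that your proposal neither invokes nor reconstructs: it uses $\mathrm{Aut}(V_5)\simeq\mathrm{PGL}_2(\mathbb{C})$ to force $A\simeq(\mathbb{Z}/2\mathbb{Z})^2$, the fact that the Hilbert scheme of lines on $V_5$ is $\mathbb{P}^2$ containing an invariant conic (to produce an $A$-invariant line), and only then a projection from that line to reach a quadric 3-fold where Lemma~\ref{lemma:Klein-quadric} applies. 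Without identifying $V_5$ as the common target and supplying this lemma, your argument cannot be completed as written.
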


\begin{proof}
In each case, $X$ can be obtained by blowing up the~smooth quintic del Pezzo 3-fold $V_5$ described in Lemma~\ref{lemma:V5} along a smooth curve.
Now, applying Lemmas~\ref{lemma:Prokhorov} and \ref{lemma:V5} and Theorem~\ref{theorem:Kollar-Szabo}, we obtain the~required assertion.
\end{proof}

\begin{lemma}
\label{lemma:2-17}
Let $X$ be a smooth Fano 3-fold in Family \textnumero 2.17.
Then $X$ satisfies Condition~$\mathbf{(A)}$.
\end{lemma}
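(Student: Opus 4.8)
The plan is to follow the strategy of Lemmas~\ref{lemma:2-9} and \ref{lemma:pointless-2-13}: present $X$ through an $A$-equivariant contraction onto a quadric threefold and reduce everything to producing an $A$-fixed point on that quadric. Recall that a member $X$ of Family \textnumero 2.17 is the blowup $\pi\colon X\to Q$ of a smooth quadric threefold $Q\subset\mathbb{P}^4$ along a smooth elliptic curve $C$ of degree $5$, with exceptional divisor $E$ satisfying $\pi(E)=C$. Let $A\subset\mathrm{Aut}(X)$ be a finite abelian subgroup; by Remark~\ref{remark:cyclic-Lefschetz} I may assume $A$ is not cyclic. Since $X$ belongs to Family \textnumero 2.17, Lemma~\ref{lemma:Prokhorov} guarantees that the extremal ray contracted by $\pi$ is $A$-invariant, so $\pi$ is $A$-equivariant and the curve $C=\pi(E)$ is $A$-invariant. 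By Theorem~\ref{theorem:Kollar-Szabo} it then suffices to show that $A$ fixes a point of $Q$. Because the elliptic normal quintic $C$ spans $\mathbb{P}^4$, any projective transformation fixing $C$ pointwise is the identity, so restriction embeds $A\hookrightarrow\mathrm{Aut}(C)$; moreover every element of $A$ preserves the class of $\mathcal{O}_C(1)$ in $\mathrm{Pic}(C)$, so $A$ in fact lands inside the subgroup of $\mathrm{Aut}(C)$ preserving this class, which is isomorphic to $C[5]\rtimes\mathrm{Aut}(C,O)$, where $O\in C$ is a fixed origin, $C[5]$ is the group of translations by $5$-torsion points, and $\mathrm{Aut}(C,O)$ is cyclic of order $2$, $4$ or $6$.

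Suppose, seeking a contradiction, that $A$ fixes no point of $Q$. Since $\gcd(2,5)=1$ and $Q$ is an $A$-invariant quadric, Lemma~\ref{lemma:lift} lifts $A$ to a subgroup $\widetilde{A}\subset\mathrm{GL}_5(\mathbb{C})$ with $\widetilde{A}\simeq A$, which I diagonalize with characters $\chi_1,\dots,\chi_5\colon A\to\mathbb{C}^\ast$. If two of these characters agreed, the associated eigenspace would give a linear subspace of $\mathbb{P}^4$ of dimension at least $1$, and such a subspace necessarily meets the quadric threefold $Q$, producing an $A$-fixed point there. Hence the $\chi_i$ are pairwise distinct and the fixed locus of $A$ on $\mathbb{P}^4$ consists of exactly the five coordinate points. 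None of them lies on $Q$, so every squared coordinate $x_i^2$ occurs in the defining quadratic form with nonzero coefficient; semi-invariance of this form under $\widetilde{A}$ then forces $\chi_1^2=\cdots=\chi_5^2$. Therefore each ratio $\chi_i\chi_1^{-1}$ takes values in $\{\pm1\}$, and since $A$ acts faithfully on $\mathbb{P}^4$ these ratios embed $A$ into $(\mathbb{Z}/2\mathbb{Z})^4$. Thus $A$ must be an elementary abelian $2$-group.

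To finish I would contradict this last conclusion using the elliptic structure recorded in the first paragraph. Every nontrivial element of $A$ is now an involution. Writing such an element as $t_R\circ\sigma$ with $R\in C[5]$ and $\sigma\in\mathrm{Aut}(C,O)$, the relation $(t_R\circ\sigma)^2=\mathrm{id}$ forces $\sigma$ to be the inversion $[-1]$ — the unique involution of $\mathrm{Aut}(C,O)$ — because $C[5]$ contains no $2$-torsion; hence every involution of $A$ has the form $P\mapsto -P+R$ for some $R\in C[5]$. Given two involutions $P\mapsto -P+R_1$ and $P\mapsto -P+R_2$ in $A$, their composition is the translation $t_{R_1-R_2}$, which lies in $C[5]$ and so has order dividing $5$; since $A$ is a $2$-group this forces $R_1=R_2$. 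Consequently $A$ contains at most one involution, so $A$ is cyclic, contradicting the reduction made at the start. This contradiction shows that $A$ fixes a point of $Q$, and therefore $A$ fixes a point of $X$ by Theorem~\ref{theorem:Kollar-Szabo}.

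The hard part will be the implication ``$A$ has no fixed point on $Q$'' $\Rightarrow$ ``$A$ is an elementary abelian $2$-group''. This step rests on diagonalizing the lift from Lemma~\ref{lemma:lift}, on the elementary fact that every positive-dimensional linear subspace of $\mathbb{P}^4$ meets the quadric threefold $Q$, and on reading off $\chi_i^2=\chi_j^2$ from semi-invariance of the quadratic form. The only genuinely computational input is the identification of the class-preserving subgroup of $\mathrm{Aut}(C)$ with $C[5]\rtimes\mathrm{Aut}(C,O)$, together with the fact that among translations only the $5$-torsion ones extend to $\mathbb{P}^4$; once these are pinned down, the clash between order-$2$ elements and the $5$-torsion translation group closes the proof. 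Note that, in contrast to Lemma~\ref{lemma:2-9}, this route never uses the conic bundle side of the picture.
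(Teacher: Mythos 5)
Your proof is correct, but it follows a genuinely different route from the paper's. The paper exploits the full Sarkisov link of Family \textnumero 2.17: after making the diagram $A$-equivariant via Lemma~\ref{lemma:Prokhorov}, it applies Lemma~\ref{lemma:lift} to $Q\subset\mathbb{P}^4$ (degree $2$ coprime to $5$) to produce an $A$-invariant hyperplane section of $Q$, transports it across the link to an $A$-invariant cubic surface in $\mathbb{P}^3$, and applies Lemma~\ref{lemma:lift} once more (degree $3$ coprime to $4$) to get a fixed point in $\mathbb{P}^3$, whence a fixed point on $X$ by Theorem~\ref{theorem:Kollar-Szabo} --- four lines in total. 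You instead never leave the quadric side: your diagonalization argument upgrades Lemma~\ref{lemma:Klein-quadric} to the statement that \emph{any} finite abelian subgroup of $\mathrm{Aut}(Q)$ without fixed points must embed into $(\mathbb{Z}/2\mathbb{Z})^4$ (distinct characters force the fixed locus on $\mathbb{P}^4$ to be the five coordinate points, and the nonvanishing of the diagonal coefficients of the quadratic form forces $\chi_i^2=\chi_j^2$), and you then play this off against the structure of the class-preserving automorphisms of the elliptic normal quintic, $C[5]\rtimes\mathrm{Aut}(C,O)$, whose involutions all have the form $P\mapsto -P+R$ and can therefore not generate a noncyclic $2$-group. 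Each step checks out: the faithfulness of $A$ on $C$ follows from $C$ spanning $\mathbb{P}^4$, the identification of the class-preserving subgroup is standard once the origin is chosen among the $25$ points with $5O\sim \mathcal{O}_C(1)$, and the composition of two involutions being a $5$-torsion translation kills the elementary abelian possibility. What the paper's route buys is brevity and uniformity with the neighbouring lemmas (\textnumero 2.9, \textnumero 2.28, \textnumero 3.18 all run on the same ``invariant surface of coprime degree'' engine); what yours buys is independence from the $\mathbb{P}^3$ model and the link numerology, plus a reusable sharpening of Lemma~\ref{lemma:Klein-quadric} for fixed-point-free abelian actions on the quadric threefold. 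The one ingredient you should make explicit if you write this up is the normalization of the origin $O$ so that $\mathrm{Aut}(C,O)$ genuinely preserves the class of $\mathcal{O}_C(1)$.
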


\begin{proof}
Let $A$ be a finite abelian subgroup in $\mathrm{Aut}(X)$.
Then, by Lemma~\ref{lemma:Prokhorov}, we have the~following $A$-equivariant Sarkisov link:
$$
\xymatrix@R=1em{
&X\ar@{->}[ld]_{f}\ar@{->}[rd]^{\pi}&\\%
\mathbb{P}^3\ar@{-->}[rr]&&Q}
$$
where $Q$ is a smooth quadric 3-fold in $\mathbb{P}^4$,
$\pi$ is the~blowup of a smooth elliptic curve $C$ of degree $5$,
$f$ is a blowup of a smooth elliptic curve $Z$ of degree $5$,
and the~dashed arrow is given by the~linear system of cubic surfaces containing $Z$.
Now, applying Lemma~\ref{lemma:lift} to $\mathbb{P}^4$ and $Q$, we see that there exists an $A$-invariant hyperplane section of $Q$.
Taking its strict transform on $\mathbb{P}^3$, we obtain an $A$-invariant cubic surface in $\mathbb{P}^3$,
so it follows from Lemma~\ref{lemma:lift} that $A$ fixes a point in $\mathbb{P}^3$, which implies that $A$ fixes a point in $X$ by Theorem~\ref{theorem:Kollar-Szabo}.
\end{proof}

\begin{lemma}
\label{lemma:3-8}
Let $X$ be a smooth Fano 3-fold in Family \textnumero 3.8.
Then $X$ satisfies Condition~$\mathbf{(A)}$.
\end{lemma}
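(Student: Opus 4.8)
The plan follows the template of Lemmas~\ref{lemma:2-9} and~\ref{lemma:2-17}: use Lemma~\ref{lemma:Prokhorov} to make the Mori-theoretic structure of $X$ equivariant, descend to a projective space (or a quadric) where the linearization results of Section~\ref{section:preliminaries} apply, and pull the fixed point back by Theorem~\ref{theorem:Kollar-Szabo}. So let $A\subset\mathrm{Aut}(X)$ be finite abelian; by Remark~\ref{remark:cyclic-Lefschetz} I may assume that $A$ is not cyclic. Since \textnumero 3.8 is among the families listed in Lemma~\ref{lemma:Prokhorov}, every extremal ray of $\overline{\mathrm{NE}}(X)$ is $A$-invariant, hence all extremal contractions of $X$ are $A$-equivariant.

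Concretely, the divisorial contractions furnish, $A$-equivariantly, the standard description of a member of \textnumero 3.8 (analogous to the diagram~\eqref{equation:3-6} for \textnumero 3.14): an $A$-equivariant birational morphism $\pi\colon X\to\mathbb{P}^3$ whose centre is a disjoint union of $A$-invariant curves, one of which is a line $\ell$, together with an $A$-equivariant fibration $\eta$ onto a rational surface. The heart of the argument is to produce an $A$-invariant surface in $\mathbb{P}^3$ of degree prime to $4$. As in Lemma~\ref{lemma:2-9}, I would apply Lemma~\ref{lemma:lift} to the ($A$-invariant) discriminant of $\eta$ to obtain an $A$-invariant line in the base, pull it back by $\eta$, and push it forward by $\pi$ to get an $A$-invariant surface $T\subset\mathbb{P}^3$ of odd degree. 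Since $\deg T$ is then coprime to $4$, Lemma~\ref{lemma:lift} shows that the projective action of $A$ on $\mathbb{P}^3$ lifts to an abelian subgroup $\widetilde A\subset\mathrm{GL}_4(\mathbb{C})$. As $\widetilde A$ is abelian, $\mathbb{C}^4$ splits into $\widetilde A$-eigenlines, so $A$ fixes a point of $\mathbb{P}^3$, and by Theorem~\ref{theorem:Kollar-Szabo} it then fixes a point of $X$. If instead the descent lands on a quadric threefold $Q$, then an $A\simeq(\mathbb{Z}/2\mathbb{Z})^2$ is handled directly by Lemma~\ref{lemma:Klein-quadric}, while the $A$-invariant line $\ell$ together with Corollary~\ref{corollary:odd-degree-curves} settles the remaining cases.

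The main obstacle is exactly the liftability step. A non-cyclic abelian group --- already $(\mathbb{Z}/2\mathbb{Z})^2$ --- can act on $\mathbb{P}^3$ through a Heisenberg-type projective representation with no fixed point and no invariant hyperplane, and it can likewise act without fixed points on the $\mathbb{P}^1$-bases of the fibrations appearing in~\eqref{equation:3-6}; so one cannot simply read off an invariant plane or point. The whole argument therefore hinges on first manufacturing, from the equivariant fibration structure, an $A$-invariant hypersurface of degree coprime to $4$ (equivalently, of odd degree), which then forces the projective action to split and hence to possess a fixed point. Ensuring that such an odd-degree invariant surface --- or, failing that, an honest quadric to which Lemma~\ref{lemma:Klein-quadric} applies --- is available for \emph{every} member of Family \textnumero 3.8, and in particular that the discriminant of $\eta$ has degree prime to the relevant dimension, is the step that requires the most care.
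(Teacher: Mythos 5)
Your argument does not get off the ground because it is built on the wrong biregular model of Family \textnumero 3.8. A smooth member $X$ of this family is \emph{not} a blowup of $\mathbb{P}^3$ along a union of curves containing a line: it is the blowup $\pi\colon X\to Y$ of a smooth divisor $Y\subset\mathbb{P}^2_{x,y,z}\times\mathbb{P}^2_{u,v,w}$ of bidegree $(1,2)$, given by $xf_2(u,v,w)+yg_2(u,v,w)+zh_2(u,v,w)=0$, along a smooth fiber $C$ of the conic bundle $\mathrm{pr}_1\colon Y\to\mathbb{P}^2_{x,y,z}$. The diagram \eqref{equation:3-6} you invoke describes Family \textnumero 3.14, not \textnumero 3.8, and no extremal contraction of $X$ lands on $\mathbb{P}^3$ or on a quadric threefold. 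Consequently the entire core of your plan --- manufacturing an $A$-invariant surface of odd degree in $\mathbb{P}^3$ so as to lift $A$ to $\mathrm{GL}_4(\mathbb{C})$ --- addresses a situation that does not occur, and you yourself flag that you cannot complete that step. That is a genuine gap, not a presentational one.

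The correct argument is shorter and avoids the liftability difficulty you worry about, precisely because everything happens on $\mathbb{P}^2$'s, where Lemma~\ref{lemma:lift} applies to any invariant curve of degree coprime to $3$. By Lemma~\ref{lemma:Prokhorov} the blowup $\pi$ and both projections $\mathrm{pr}_1\colon Y\to\mathbb{P}^2_{x,y,z}$ and $\mathrm{pr}_2\colon Y\to\mathbb{P}^2_{u,v,w}$ are $A$-equivariant. The curve $\mathrm{pr}_2(C)$ is an $A$-invariant conic in $\mathbb{P}^2_{u,v,w}$, and $\gcd(2,3)=1$, so Lemma~\ref{lemma:lift} produces an $A$-fixed point $p\in\mathbb{P}^2_{u,v,w}$. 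The fiber $F$ of $\mathrm{pr}_2$ over $p$ is the line $\{f_2(p)x+g_2(p)y+h_2(p)z=0\}$ in $\mathbb{P}^2_{x,y,z}$; it is $A$-invariant, $\mathrm{pr}_1$ maps it isomorphically onto an $A$-invariant line, and a second application of Lemma~\ref{lemma:lift} (degree $1$, again coprime to $3$) gives an $A$-fixed point on that line, hence on $F\subset Y$, hence on $X$ by Theorem~\ref{theorem:Kollar-Szabo}. You should redo the proof along these lines, starting from the correct description of the family.
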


\begin{proof}
The 3-fold $X$ can be obtained as follows.
Consider a smooth divisor $Y\subset\mathbb{P}^2_{x,y,z}\times\mathbb{P}^2_{u,v,z}$ given by
$$
xf_2(u,v,w)+yg_2(u,v,w)+zh_2(u,v,w)=0,
$$
where $f_2$, $g_2$, $h_2$ are quadratic polynomials in $u$, $v$, $w$,
and a birational morphism $\pi\colon X\to Y$ that blows up a smooth fiber $C$ of the~projection $Y\to\mathbb{P}^2_{x,y,z}$.
Let $A$ be a finite abelian subgroup in $\mathrm{Aut}(X)$.
Then it follows from Lemma~\ref{lemma:Prokhorov} that $\pi$ is $A$-invariant.

Let $\mathrm{pr}_1\colon Y\to\mathbb{P}^2_{x,y,z}$ and $\mathrm{pr}_2\colon Y\to\mathbb{P}^2_{u,v,w}$ be the~projections to the~first and the~second factors, respectively.
Then both $\mathrm{pr}_1$ and $\mathrm{pr}_2$ are $A$-equivariant by Lemma~\ref{lemma:Prokhorov},
and it follows from Lemma~\ref{lemma:lift} that $A$ fixes a point $p\in\mathbb{P}^2_{u,v,w}$, since $\mathrm{pr}_2(C)$ is an $A$-invariant conic.
Let $F$ be the~fiber of $\mathrm{pr}_2$ over $p$. Then $F$ is $A$-invariant, and $\mathrm{pr}_1(F)$ is an $A$-invariant line, so $A$ fixes a point in $\mathrm{pr}_1(F)$ by Lemma~\ref{lemma:lift}.
Thus, since $\mathrm{pr}_1$ induces an isomorphism $F\simeq\mathrm{pr}_1(F)$, we see that $A$ fixes a point in $F$.
\end{proof}

\begin{lemma}
\label{lemma:3-11}
Let $X$ be a smooth Fano 3-fold in Family \textnumero 3.11.
Then $X$ satisfies Condition~$\mathbf{(A)}$.
\end{lemma}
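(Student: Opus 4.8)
The plan is to reduce the problem to the smooth Fano threefold $V_7$ and then quote Corollary~\ref{corollary:Pn-blow-up}. Recall that a member $X$ of Family \textnumero 3.11 is the blowup of $V_7$ along a smooth curve, where $V_7$ is the member of Family \textnumero 2.35, that is, the blowup of $\mathbb{P}^3$ at a single point $p$. Contracting the exceptional divisor of this blowup produces a birational morphism $\pi\colon X\to V_7$, which is the contraction of one of the extremal rays of the Mori cone $\overline{\mathrm{NE}}(X)$, since $\pi$ has relative Picard rank one.

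So first I would fix a finite abelian subgroup $A\subset\mathrm{Aut}(X)$. Since Family \textnumero 3.11 occurs in the list of Lemma~\ref{lemma:Prokhorov}, every extremal ray of $\overline{\mathrm{NE}}(X)$ is $A$-invariant; in particular the ray contracted by $\pi$ is $A$-invariant, so $\pi\colon X\to V_7$ is $A$-equivariant and $A$ acts on $V_7$. Next, because $V_7$ is the blowup of $\mathbb{P}^3$ at a point, Corollary~\ref{corollary:Pn-blow-up} applies verbatim and shows that $A$ fixes a point in $V_7$. Finally, $\pi$ is an $A$-equivariant birational morphism between smooth threefolds, so Theorem~\ref{theorem:Kollar-Szabo} transports this fixed point back and gives an $A$-fixed point in $X$. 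As $A$ is arbitrary, $X$ satisfies Condition~$\mathbf{(A)}$.

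The argument has essentially no computational content; its only delicate point is organizational. Among the several extremal contractions of $X$ I deliberately single out the divisorial contraction onto $V_7$, precisely because $V_7$ is itself a one-point blowup of projective space and hence falls directly under Corollary~\ref{corollary:Pn-blow-up}. Had I instead tried to exploit the conic-bundle or del Pezzo fibration structures carried by $X$, I would have been pushed into the lifting machinery of Lemma~\ref{lemma:lift} and Corollary~\ref{corollary:odd-degree-curves}, as in the proofs of the neighbouring families. Thus the main thing to get right is the appeal to Lemma~\ref{lemma:Prokhorov}: this is exactly what upgrades the abstract birational model $V_7$ to an $A$-equivariant one, and without it the reduction would not be available. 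I therefore expect the only real obstacle to be the bookkeeping of the Mori structure of the family, rather than any genuine difficulty.
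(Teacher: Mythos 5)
Your argument is correct and coincides with the paper's own proof: identify $X$ as the blowup of $V_7$ along a smooth curve, invoke Lemma~\ref{lemma:Prokhorov} to make $\pi\colon X\to V_7$ equivariant for any finite abelian $A$, apply Corollary~\ref{corollary:Pn-blow-up} to find a fixed point on $V_7$, and transport it back via Theorem~\ref{theorem:Kollar-Szabo}. The only detail you omit, which is immaterial to the argument, is that the blown-up curve is the strict transform of a quartic elliptic curve through the centre of $V_7\to\mathbb{P}^3$.
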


\begin{proof}
Let $p$ be a point in $\mathbb{P}^3$,
and let $\phi\colon V_7\to\mathbb{P}^3$ be the~blowup of this point.
Then there exists a birational morphism $\pi\colon X\to V_7$ that blows up a smooth curve $C\subset V_7$ with $\phi(C)$ a smooth quartic elliptic curve passing through the~point $p$.
Now, let $A$ be a finite abelian subgroup in $\mathrm{Aut}(X)$.
Then it follows from Lemma~\ref{lemma:Prokhorov} that $\pi$ is $A$-equivariant,
and $A$ fixes a point in $V_7$ by Corollary~\ref{corollary:Pn-blow-up}, so $A$ also fixes a point in $X$ by Theorem~\ref{theorem:Kollar-Szabo}.
\end{proof}

\subsection{Families not containing K-polystable members}
\label{subsection:K-unstable-with-points}

\begin{lemma}
\label{lemma:2-26}
Let $X$ be a smooth Fano 3-fold in Family \textnumero 2.26.
Then $X$ satisfies Condition~$\mathbf{(A)}$.
\end{lemma}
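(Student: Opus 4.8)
The plan is to realize $X$ as an $A$-equivariant blowup of the quintic del Pezzo threefold $V_5$ and then reduce the problem to Lemma~\ref{lemma:V5} through Koll\'ar--Szab\'o. Recall that a member of Family \textnumero 2.26 is the blowup $\pi\colon X\to V_5$ of $V_5$ along a line $\ell$. Since $\mathrm{Aut}(V_5)\simeq\mathrm{PGL}_2(\mathbb{C})$ acts on the Hilbert scheme $\mathbb{P}^2$ of lines of $V_5$ with exactly two orbits --- the distinguished conic parametrizing lines with normal bundle $\mathcal{O}_{\mathbb{P}^1}(1)\oplus\mathcal{O}_{\mathbb{P}^1}(-1)$ and its complement --- this family consists of exactly two isomorphism classes, in agreement with the~introduction. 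The argument I propose will be uniform in the two cases.

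Let $A$ be a finite abelian subgroup of $\mathrm{Aut}(X)$. Since $\mathrm{Pic}(X)\simeq\mathbb{Z}^2$ and Family \textnumero 2.26 is not among \textnumero 2.6, \textnumero 2.12, \textnumero 2.21, \textnumero 2.32, Lemma~\ref{lemma:Prokhorov} applies and shows that every extremal ray of $\overline{\mathrm{NE}}(X)$ is $A$-invariant. In particular the blowdown $\pi\colon X\to V_5$ contracts an $A$-invariant ray, so $\pi$ is $A$-equivariant and the induced action of $A$ on $V_5$ is well defined. I would then invoke Lemma~\ref{lemma:V5}: because $V_5$ satisfies Condition~$\mathbf{(A)}$, the group $A$ fixes a point of $V_5$. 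As $\pi$ is an $A$-equivariant birational morphism between smooth threefolds, Theorem~\ref{theorem:Kollar-Szabo} then yields an $A$-fixed point on $X$, completing the proof. This runs exactly parallel to Lemma~\ref{lemma:2-14-2-20-2-22}.

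The proof is short because the machinery does the work, so I do not expect a serious obstacle; the one point deserving care is the $A$-equivariance of $\pi$, which is precisely what Lemma~\ref{lemma:Prokhorov} supplies. It is worth noting where the genuine content sits: by Remark~\ref{remark:cyclic-Lefschetz} any cyclic $A$ already fixes a point, so only noncyclic $A$ require the $V_5$-reduction. On the special member the automorphism group is a Borel $\mathbb{G}_a\rtimes\mathbb{G}_m\subset\mathrm{PGL}_2(\mathbb{C})$, all of whose finite subgroups are cyclic, and nothing further is needed; on the generic member the automorphism group is the normalizer of a maximal torus $\mathbb{G}_m\rtimes\mathbb{Z}/2\mathbb{Z}$, whose only noncyclic finite abelian subgroups are isomorphic to $(\mathbb{Z}/2\mathbb{Z})^2$, and it is exactly this case that the passage to $V_5$ disposes of. Hence no additional case analysis beyond the uniform argument above is required.
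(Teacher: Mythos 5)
Your proposal is correct and follows exactly the paper's own argument: realize $X$ as the blowup of $V_5$ along a line, use Lemma~\ref{lemma:Prokhorov} to make $\pi$ equivariant, apply Lemma~\ref{lemma:V5} to get a fixed point on $V_5$, and transfer it back via Theorem~\ref{theorem:Kollar-Szabo}. The additional remarks on the two isomorphism classes and their automorphism groups are fine but not needed for the argument.
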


\begin{proof}
In this case, $X$ can be obtained by blowing up del Pezzo 3-fold $V_5$ described in Lemma~\ref{lemma:V5} along a line.
By applying Lemmas~\ref{lemma:Prokhorov} and \ref{lemma:V5} and Theorem~\ref{theorem:Kollar-Szabo}, we obtain the~required assertion.
\end{proof}

\begin{lemma}
\label{lemma:2-28-2-30}
Let $X$ be a smooth Fano 3-fold in Families \textnumero 2.28 or \textnumero 2.30.
Then $X$ satisfies Condition~$\mathbf{(A)}$.
\end{lemma}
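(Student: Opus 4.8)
The plan is to reduce, in both families, to the statement that a finite abelian group acting on $\mathbb{P}^3$ and preserving a plane must fix a point. First I would recall that a member $X$ of Family \textnumero 2.28 (respectively \textnumero 2.30) is the blow-up $\pi\colon X\to\mathbb{P}^3$ of a smooth plane cubic curve (respectively a smooth conic) $C\subset\mathbb{P}^3$, and that $\mathrm{Pic}(X)\simeq\mathbb{Z}^2$ with $X$ lying in none of the families \textnumero 2.6, \textnumero 2.12, \textnumero 2.21, \textnumero 2.32. Fix a finite abelian subgroup $A\subset\mathrm{Aut}(X)$. By Lemma~\ref{lemma:Prokhorov} both extremal rays of $\overline{\mathrm{NE}}(X)$ are $A$-invariant; in particular the contraction $\pi$ of the exceptional divisor $E$ is $A$-equivariant. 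Consequently $A$ acts on $\mathbb{P}^3$ through a finite abelian subgroup of $\mathrm{PGL}_4(\mathbb{C})$, and the centre $C=\pi(E)$ is $A$-invariant.

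The crucial observation is that $C$ spans a unique plane $P\subset\mathbb{P}^3$ --- a smooth conic and a plane cubic each lie in exactly one plane, namely their linear span --- so $P$ is $A$-invariant. Since $P$ is a hypersurface of degree $1$ and $\gcd(1,4)=1$, Lemma~\ref{lemma:lift} supplies a lift $\widetilde{A}\subset\mathrm{GL}_4(\mathbb{C})$ with $\phi(\widetilde{A})=A\simeq\widetilde{A}$. Because $\widetilde{A}$ is a finite abelian subgroup of $\mathrm{GL}_4(\mathbb{C})$, its defining representation on $\mathbb{C}^4$ decomposes as a direct sum of four $A$-eigenlines; the image in $\mathbb{P}^3$ of any one of these lines is a point fixed by $A$. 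Thus $A$ fixes a point in $\mathbb{P}^3$, and Theorem~\ref{theorem:Kollar-Szabo} transports this fixed point back to $X$ via the $A$-equivariant birational morphism $\pi$.

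The single delicate point is the very last inference one might be tempted to skip: a finite abelian subgroup of $\mathrm{PGL}_4(\mathbb{C})$ need not fix any point of $\mathbb{P}^3$ (Heisenberg-type groups do not), so the existence of the invariant plane $P$ is indispensable --- it is exactly what forces the group to lift linearly and hence to diagonalise. I would therefore be careful to invoke Lemma~\ref{lemma:lift} through the plane $P$ rather than arguing directly from the invariance of $C$, whose degree ($2$ or $3$) is not in general coprime to $4$. No further case analysis is needed, since the argument is uniform across the two families and is insensitive to the genus of $C$.
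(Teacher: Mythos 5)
Your proposal is correct and follows essentially the same route as the paper: use Lemma~\ref{lemma:Prokhorov} to make the blowup $A$-equivariant, note that the plane spanned by the centre $C$ is $A$-invariant, lift $A$ via Lemma~\ref{lemma:lift} (degree $1$ coprime to $4$) to find a fixed point in $\mathbb{P}^3$, and transport it back by Theorem~\ref{theorem:Kollar-Szabo}. You merely spell out the diagonalisation step and the reason the plane (rather than the curve itself) must be used, both of which the paper leaves implicit.
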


\begin{proof}
The 3-fold $X$ can be obtained by blowing up $\mathbb{P}^3$ along a smooth plane curve $C$ of degree $3$ or~$2$.
Let $A$ be a finite abelian subgroup in $\mathrm{Aut}(X)$.
Then, by Lemma~\ref{lemma:Prokhorov}, this blowup is $A$-equivariant.
Moreover, the~group $A$ leaves invariant the~plane in $\mathbb{P}^3$ that contains $C$, so $A$ fixes a point in $\mathbb{P}^3$ by Lemma~\ref{lemma:lift},
which implies that $A$ fixes a point in $X$ by Theorem~\ref{theorem:Kollar-Szabo}.
\end{proof}

\begin{lemma}
\label{lemma:2-31}
Let $X$ be a smooth Fano 3-fold in Family \textnumero 2.31.
Then $X$ satisfies Condition~$\mathbf{(A)}$.
\end{lemma}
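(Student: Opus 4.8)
The plan is to realize $X$ as the blow-up $\pi\colon X\to Q$ of a smooth quadric threefold $Q\subset\mathbb{P}^4$ along a line $L\subset Q$ (the standard Mori--Mukai description of Family \textnumero 2.31), and then to argue exactly as in the proof of Lemma~\ref{lemma:2-11}: the whole problem reduces to producing an $A$-fixed point on the line $L$.

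First I would fix a finite abelian subgroup $A\subset\mathrm{Aut}(X)$. Since $\mathrm{Pic}(X)\simeq\mathbb{Z}^2$ and $X$ does not lie in Families \textnumero 2.6, \textnumero 2.12, \textnumero 2.21, \textnumero 2.32, Lemma~\ref{lemma:Prokhorov} guarantees that every extremal ray of $\overline{\mathrm{NE}}(X)$ is $A$-invariant. In particular, the blow-up contraction $\pi$ is $A$-equivariant, so $A$ acts on $Q$ and preserves the center $L$, which is the image of the exceptional divisor of $\pi$.

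Next I would linearize and conclude. The $A$-action on $Q$ is cut out by projective transformations of the ambient $\mathbb{P}^4$, because $\mathrm{Aut}(Q)\hookrightarrow\mathrm{PGL}_5(\mathbb{C})$; thus $Q$ is an $A$-invariant hypersurface of degree $2$. Since $\gcd(2,5)=1$, Lemma~\ref{lemma:lift} provides a lift $\widetilde{A}\subset\mathrm{GL}_5(\mathbb{C})$ with $\phi(\widetilde{A})=A\simeq\widetilde{A}$. As $L$ is an $A$-invariant irreducible rational curve of odd degree $1$, Corollary~\ref{corollary:odd-degree-curves} then shows that $A$ fixes a point of $L\subset Q$, and Theorem~\ref{theorem:Kollar-Szabo} upgrades this to an $A$-fixed point on $X$.

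The argument is short and I do not expect a genuine obstacle; the only two things that must be checked are that Family \textnumero 2.31 is not among the exclusions of Lemma~\ref{lemma:Prokhorov} and that the coprimality $\gcd(2,5)=1$ holds, so that the linearization of Lemma~\ref{lemma:lift} is available. It is worth emphasizing that this coprimality is precisely what fails for the blow-up of $\mathbb{P}^3$ along a line (Family \textnumero 2.33), which is one of the exceptional cases where Condition~$\mathbf{(A)}$ does not hold; so the proof really hinges on $Q$ being a quadric rather than projective space.
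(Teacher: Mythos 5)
Your proof is correct, and it reaches the fixed point by a slightly different route than the paper. Both arguments begin identically: Lemma~\ref{lemma:Prokhorov} makes the contraction $\pi\colon X\to Q$ equivariant for any finite abelian $A\subset\mathrm{Aut}(X)$, so $A$ acts on the quadric $Q$ preserving the blown-up line $L$, and Theorem~\ref{theorem:Kollar-Szabo} transports fixed points back to $X$. You then work downstairs: you linearize the $A$-action on $\mathbb{P}^4$ via Lemma~\ref{lemma:lift} (using that $\gcd(2,5)=1$ for the invariant quadric) and invoke Corollary~\ref{corollary:odd-degree-curves} for the degree-one curve $L$. The paper instead works upstairs on the exceptional divisor: $E\simeq\mathbb{F}_1$ is $A$-invariant, and Corollary~\ref{corollary:Pn-blow-up}, applied to the blow-down $\mathbb{F}_1\to\mathbb{P}^2$, already produces a fixed point in $E\subset X$ with no linearization needed. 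The two conclusions are of comparable length; what your version buys is the explicit explanation of why the same strategy must fail for the blow-up of $\mathbb{P}^3$ along a line (exceptional case (6) of the Main Theorem), namely the failure of the coprimality hypothesis of Lemma~\ref{lemma:lift} --- a point the paper leaves implicit.
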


\begin{proof}
Let $A$ be a finite abelian subgroup in $\mathrm{Aut}(X)$.
Then, it follows from Lemma~\ref{lemma:Prokhorov} that there exists an $A$-equivariant birational morphism
$\pi\colon X\to Q$ such that $Q$ is a smooth quadric 3-fold in $\mathbb{P}^4$, and $\pi$ is a blowup of a line in $Q$.
Let $E$ be the~$\pi$-exceptional surface, then $E$ is $A$-invariant and $E\simeq\mathbb{F}_1$.
Now, applying Corollary~\ref{corollary:Pn-blow-up}, we see that $A$ fixes a point in $E$,
so it fixes a point in $X$ by Theorem~\ref{theorem:Kollar-Szabo}.
\end{proof}

\begin{lemma}
\label{lemma:2-35}
Let $X$ be a smooth Fano 3-fold in Family \textnumero 2.35.
Then $X$ satisfies Condition~$\mathbf{(A)}$.
\end{lemma}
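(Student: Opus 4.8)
The plan is to recognize Family \textnumero 2.35 concretely and then invoke a result already proved above. This family consists of a single smooth member, the blowup $\pi\colon X\to\mathbb{P}^3$ of one point $p\in\mathbb{P}^3$ (equivalently $X\cong\mathbb{P}(\mathcal{O}_{\mathbb{P}^2}\oplus\mathcal{O}_{\mathbb{P}^2}(1))$), so the first step is simply to fix this description and observe that $X$ is literally the one-point blowup to which Corollary~\ref{corollary:Pn-blow-up} is tailored.

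Let $A$ be a finite abelian subgroup of $\mathrm{Aut}(X)$. The only thing to check is that the contraction $\pi$ is $A$-equivariant. Since $\mathrm{Pic}(X)\cong\mathbb{Z}^2$ and Family \textnumero 2.35 is not among \textnumero 2.6, \textnumero 2.12, \textnumero 2.21, \textnumero 2.32, Lemma~\ref{lemma:Prokhorov} shows that every extremal ray of $\overline{\mathrm{NE}}(X)$ is $A$-invariant; in particular the divisorial ray whose contraction is $\pi$ is $A$-invariant, so $\pi$ is $A$-equivariant. (Alternatively, the exceptional divisor $E\cong\mathbb{P}^2$ is the unique prime divisor on $X$ isomorphic to $\mathbb{P}^2$ with normal bundle $\mathcal{O}_{\mathbb{P}^2}(-1)$, hence is $\mathrm{Aut}(X)$-invariant, and the induced action on $\mathbb{P}^3$ fixes $p=\pi(E)$.)

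With the equivariance in hand, Corollary~\ref{corollary:Pn-blow-up} applies verbatim and shows that $A$ fixes a point of $X$. Since $A$ was an arbitrary finite abelian subgroup of $\mathrm{Aut}(X)$, this proves that $X$ satisfies Condition~$\mathbf{(A)}$. There is no genuine obstacle in this argument: all the work has been front-loaded into Corollary~\ref{corollary:Pn-blow-up} (itself a direct consequence of Koll\'ar--Szab\'o's Theorem~\ref{theorem:Kollar-Szabo}), and the only case-specific input is the standard identification of Family \textnumero 2.35 with the one-point blowup of $\mathbb{P}^3$.
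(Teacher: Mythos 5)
Your proposal is correct and follows exactly the paper's route: identify the unique member of Family \textnumero 2.35 as the one-point blowup $V_7\to\mathbb{P}^3$ and invoke Corollary~\ref{corollary:Pn-blow-up}. Your extra verification of $A$-equivariance of the contraction (via Lemma~\ref{lemma:Prokhorov} or the intrinsic characterization of the exceptional divisor) is already subsumed in the proof of that corollary, which asserts $\pi$ is $\mathrm{Aut}(X)$-equivariant.
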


\begin{proof}
The 3-fold $X$ is unique and is often called $V_7$ --- it can be obtained by blowing up $\mathbb{P}^3$ at a point,
so required assertion is a special case of Corollary~\ref{corollary:Pn-blow-up}.
\end{proof}

\begin{lemma}
\label{lemma:2-36}
Let $X$ be a smooth Fano 3-fold in Family \textnumero 2.36.
Then $X$ satisfies Condition~$\mathbf{(A)}$.
\end{lemma}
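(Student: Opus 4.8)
The plan is to exploit the $\mathbb{P}^1$-bundle structure of $X$ together with an obstruction coming from the cone singularity of its anticanonical model.

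The unique member of Family \textnumero 2.36 is $X=\mathbb{P}(\mathcal{O}_{\mathbb{P}^2}\oplus\mathcal{O}_{\mathbb{P}^2}(2))$, which is the blow-up of the vertex of the cone $\overline{X}=\mathbb{P}(1,1,1,2)$ over the Veronese surface. Let $A\subseteq\mathrm{Aut}(X)$ be finite abelian; by Remark~\ref{remark:cyclic-Lefschetz} I may assume $A$ is not cyclic. By Lemma~\ref{lemma:Prokhorov} both extremal contractions are $A$-equivariant: the $\mathbb{P}^1$-bundle $\pi\colon X\to\mathbb{P}^2$ and the divisorial contraction $\psi\colon X\to\overline{X}$ sending the negative section to the vertex $v$. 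Since $\mathrm{Hom}(\mathcal{O}(2),\mathcal{O})=0$, the sub-line-bundle $\mathcal{O}(2)\hookrightarrow\mathcal{O}\oplus\mathcal{O}(2)$ is canonical, so its section $S\subset X$ (the $\psi$-exceptional divisor) is $A$-invariant; through $\pi$ it is $A$-equivariantly isomorphic to the base, and $A$ acts on $S\cong\mathbb{P}^2$ through its image $\bar{A}\subseteq\mathrm{PGL}_3(\mathbb{C})$, which coincides with the action on the base.

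First I would reduce the lemma to the claim that $\bar{A}$ fixes a point of $\mathbb{P}^2$: any such point, viewed on the $A$-invariant section $S$ on which $A$ acts through $\bar{A}$, is then an $A$-fixed point of $X$. Suppose $\bar{A}$ had no fixed point on $\mathbb{P}^2$. Then $\bar{A}$ acts irreducibly, and since $3$ is odd a reducible three-dimensional projective representation of an abelian group always has an invariant line; hence $\bar{A}\cong(\mathbb{Z}/3\mathbb{Z})^2$ acting by the Heisenberg (clock--shift) representation, whose Schur-multiplier class $c\in H^2(\bar{A},\mathbb{C}^{\ast})\cong\mathbb{Z}/3\mathbb{Z}$ has order $3$.

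The crux is to rule out this Heisenberg action. Here I would use the $A$-fixed vertex $v\in\overline{X}$, which is a $\tfrac12(1,1,1)$-singularity, \'etale-locally $\mathbb{C}^3/\{\pm1\}$, so that $A$ acts by a genuine linear representation on the Zariski cotangent space $\mathfrak{m}_v/\mathfrak{m}_v^2\cong\mathrm{Sym}^2(\mathbb{C}^3)^{\ast}$. In the Heisenberg case this honest six-dimensional representation is $\mathrm{Sym}^2$ of the projective representation on $(\mathbb{C}^3)^{\ast}$, hence carries multiplier class $c^2\neq 0$, contradicting that it is a genuine linear representation of $A$. Thus the Heisenberg case cannot occur, $\bar{A}$ fixes a point, and the lemma follows. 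The same obstruction can be made fully explicit by describing $\mathrm{Aut}(X)$ through the Cox ring of $X$ and checking that the commutator of any two lifts of the Heisenberg generators equals a nontrivial fibrewise scaling, so that no finite abelian subgroup of $\mathrm{Aut}(X)$ surjects onto the Heisenberg group; carrying out this verification cleanly, and confirming that the Heisenberg copy is the only fixed-point-free finite abelian subgroup of $\mathrm{PGL}_3(\mathbb{C})$, is the main obstacle.
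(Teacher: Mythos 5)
Your proof is correct, but it takes a genuinely different route from the paper's. The paper also works with the exceptional divisor $E\cong\mathbb{P}^2$ of the contraction to $\mathbb{P}(1,1,1,2)$ (your section $S$), but then argues directly that the $A$-action on $E$ is linearizable: the normal bundle $\mathcal{O}_E(E|_E)\cong\mathcal{O}_{\mathbb{P}^2}(-2)$ and the canonical bundle $K_E\cong\mathcal{O}_{\mathbb{P}^2}(-3)$ both carry canonical $A$-linearizations, and since $\gcd(2,3)=1$ a suitable integral combination linearizes $\mathcal{O}_{\mathbb{P}^2}(1)$; the resulting three-dimensional representation of the abelian group $A$ splits into characters and yields a fixed point, with no case analysis. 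You instead classify the fixed-point-free finite abelian subgroups of $\mathrm{PGL}_3(\mathbb{C})$ (only the Heisenberg $(\mathbb{Z}/3\mathbb{Z})^2$) and kill that case using the genuine linear $A$-action on $\mathfrak{m}_v/\mathfrak{m}_v^2\cong\mathrm{Sym}^2(\mathbb{C}^3)^{\ast}$ at the vertex, whose multiplier class would be $c^2\neq 0$. Both steps you flag as the remaining obstacle do go through: the commutator of any two lifts of the Heisenberg generators to the graded automorphism group of $\mathbb{C}[x,y,z,w]$ is $(\omega I_3,1)$ with $\omega$ a primitive cube root of unity, which is not of the form $(tI_3,t^2)$ and hence is a nontrivial fibrewise scaling; and the Heisenberg group is indeed the only irreducibly acting (equivalently, fixed-point-free) finite abelian subgroup of $\mathrm{PGL}_3(\mathbb{C})$. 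You could in fact skip the classification entirely: your vertex computation shows that the multiplier class $c$ of the projective $A$-action on $\mathbb{P}^2$ satisfies $c^2=1$, while $c^3=1$ holds automatically for any subgroup of $\mathrm{PGL}_3(\mathbb{C})$, so $c=1$ and the action is linearizable --- which is exactly the paper's conclusion, reached from the other end of the contraction. The two arguments exploit the same arithmetic, $\gcd(2,3)=1$, read off at the exceptional divisor versus at the singular point it contracts to; the paper's version is shorter, yours has the small merit of locating the obstruction concretely in the local geometry of the $\tfrac12(1,1,1)$-point.
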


\begin{proof}
In this case, $X\simeq\mathbb{P}( {\mathcal O}_{\mathbb{P}^2} \oplus {\mathcal O}_{\mathbb{P}^2}(-2))$,
and it possesses two extremal contractions: a divisorial contraction $\pi\colon X \to\mathbb{P}(1,1,1,2)$
and a $\mathbb{P}^1$-bundle $\eta\colon X \to \mathbb{P}^2$.
Let $A$ be a finite abelian subgroup in $\mathrm{Aut}(X)$.
Then, by Lemma~\ref{lemma:Prokhorov}, both $\pi$ and $\eta$ are $A$-equivariant.
Let $E$ be the~$\pi$-exceptional divisor. Then $E\simeq\mathbb{P}^2$, and this isomorphism gives $\mathcal{O}_E(E|_E)\simeq \mathcal{O}_{\mathbb{P}^2}(-2)$.
Moreover, since $\mathcal{O}_E(E\vert_{E})$ is isomorphic to the~normal bundle of the~surface $E$ in $X$,
we see that this line bundle is $A$-linearizable.
Then the~line bundle $\mathcal{O}_E(-K_E-E\vert_{E})\simeq \mathcal{O}_{\mathbb{P}^2}(1)$ is also $A$-linearizable,
which implies that the~$A$-action on $E\simeq\mathbb{P}^2$ is given by a $3$-dimensional representation of the~group $A$,
so $A$ fixes a point in $E$.
\end{proof}

\begin{lemma}
\label{lemma:3-14-4-9}
Let $X$ be a smooth Fano 3-fold in one of Families \textnumero 3.14, \textnumero 3.16, \textnumero 3.23, \textnumero 3.26,
\textnumero 3.29, \textnumero 3.30 or \textnumero 4.9. Then $X$ satisfies Condition~$\mathbf{(A)}$.
\end{lemma}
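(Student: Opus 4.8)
The plan is to handle all seven families by a common reduction and then finish each on the base of a natural $A$-equivariant contraction. Fix a finite abelian subgroup $A\subset\mathrm{Aut}(X)$. Since $X$ is rationally connected, Remark~\ref{remark:cyclic-Lefschetz} lets us assume $A$ is non-cyclic. Each of these families admits a Mori--Mukai description exhibiting $X$ as a (possibly iterated) blow-up of one of the threefolds $\mathbb{P}^3$, the smooth quadric $Q\subset\mathbb{P}^4$, or $V_7$, along an $A$-invariant configuration of points and curves; by Lemma~\ref{lemma:Prokhorov} every extremal ray of $\overline{\mathrm{NE}}(X)$ is $A$-invariant, so every contraction in the relevant diagram is $A$-equivariant and every blow-up centre is $A$-invariant. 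By Theorem~\ref{theorem:Kollar-Szabo}, $A$ fixes a point on $X$ if and only if it fixes a point on the base of any one of these contractions, so it suffices to produce such a fixed point downstairs.

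The bases $V_7$ and $Q$ are the easy targets. If the reduction factors through $V_7$, or if one blow-up centre is a single ($A$-invariant) point of $\mathbb{P}^3$, then Corollary~\ref{corollary:Pn-blow-up} furnishes a fixed point at once. If the target is the quadric $Q$, then $Q$ is itself an $A$-invariant hypersurface of degree $2$ with $\gcd(2,5)=1$, so Lemma~\ref{lemma:lift} already lifts $A$ to a finite abelian subgroup of $\mathrm{GL}_5(\mathbb{C})$; diagonalising, we obtain an $A$-invariant decomposition of $\mathbb{C}^5$ into eigenlines. For $A\simeq(\mathbb{Z}/2\mathbb{Z})^2$ we conclude by Lemma~\ref{lemma:Klein-quadric}, and for larger $A$ we seek an $A$-invariant curve of odd degree among the blow-up centres and apply Corollary~\ref{corollary:odd-degree-curves}, or read off a fixed point on $Q$ directly from the eigenspace structure.

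The genuinely delicate target is $\mathbb{P}^3$. The point is that, by Theorem~\ref{theorem:Kollar-Szabo}, we must actually show that the induced $A$-action on $\mathbb{P}^3$ has a fixed point, and a finite abelian group in $\mathrm{PGL}_4(\mathbb{C})$ can act without fixed points when it fails to lift to $\mathrm{GL}_4(\mathbb{C})$ (the Heisenberg-type actions). The strategy is thus to exhibit an $A$-invariant surface of odd degree, after which Lemma~\ref{lemma:lift} lifts $A$ into $\mathrm{GL}_4(\mathbb{C})$ and the resulting diagonal action fixes each coordinate point of $\mathbb{P}^3$. For families whose centre is planar this surface is simply the ($A$-invariant) plane spanned by the centre. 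For \textnumero 3.14 I would instead use the diagram~\eqref{equation:3-6}: the pencil of quadrics through the elliptic quartic $C$ (the base of the quadric fibration $\phi_C$) is $A$-invariant, $A$ acts on this $\mathbb{P}^1$, and its four singular members are permuted by $A$; an $A$-fixed singular member is a quadric cone whose vertex is an $A$-fixed point of $\mathbb{P}^3$, while the residual possibility, that $A$ fixes no singular member (acting as $(\mathbb{Z}/2\mathbb{Z})^2$ on the $\mathbb{P}^1$), is part of the obstacle discussed below and must be attacked through the invariant line $L$. The rank-four family \textnumero 4.9 is reduced in the same way: one peels off $A$-equivariant divisorial contractions (all of whose rays are $A$-invariant by Lemma~\ref{lemma:Prokhorov}) until the base is one of the cases already treated.

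The main obstacle is ruling out the fixed-point-free (non-liftable) abelian actions on the base --- equivalently, proving that the geometry of each blow-up centre forces an $A$-invariant hypersurface of degree coprime to the ambient dimension. On $\mathbb{P}^3$ this means showing that a Heisenberg-type $(\mathbb{Z}/n\mathbb{Z})^2$ cannot preserve the given line-plus-curve configuration; on $Q$ it means handling abelian groups beyond $(\mathbb{Z}/2\mathbb{Z})^2$, where the five-eigenlines-against-four-characters pigeonhole underlying Lemma~\ref{lemma:Klein-quadric} no longer applies. Resolving these cases is where the specific geometry must be used: the cross-ratio of the branch points of the elliptic quartic in \textnumero 3.14, and the explicit constraints on $\mathrm{Aut}(X)$ in the remaining families, which should limit $A$ to groups for which an invariant cone or an invariant odd-degree curve is guaranteed.
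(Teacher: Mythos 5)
Your proposal does not close. The paper's proof is a four-line argument resting on a single structural fact that your write-up never isolates: \emph{every} one of the seven families \textnumero 3.14, \textnumero 3.16, \textnumero 3.23, \textnumero 3.26, \textnumero 3.29, \textnumero 3.30 and \textnumero 4.9 is the blowup of $V_7$ (the blowup of $\mathbb{P}^3$ at a point) along a smooth curve --- irreducible in the first six cases, and the strict transform of two skew lines, one of them through the blown-up point, for \textnumero 4.9. Since Lemma~\ref{lemma:Prokhorov} covers all seven families, the contraction $\pi\colon X\to V_7$ is $A$-equivariant; Corollary~\ref{corollary:Pn-blow-up} then gives an $A$-fixed point on $V_7$ (the centre of $V_7\to\mathbb{P}^3$ is automatically fixed, being the image of the intrinsically determined exceptional divisor); and Theorem~\ref{theorem:Kollar-Szabo} transports the fixed point back to $X$. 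There is nothing more to do.

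Your text instead treats ``the reduction factors through $V_7$'' as only one of three possible situations and spends most of its length on reductions to $\mathbb{P}^3$ and to the quadric $Q$, where you correctly observe that fixed-point-free abelian actions (Heisenberg-type subgroups of $\mathrm{PGL}_4(\mathbb{C})$, or abelian groups on $Q$ larger than $(\mathbb{Z}/2\mathbb{Z})^2$) are a genuine threat --- and you then explicitly leave those cases open (``the residual possibility \dots must be attacked through the invariant line'', ``resolving these cases is where the specific geometry must be used''). Those are exactly the cases a complete proof would have to handle, and they simply never arise once one knows the $V_7$ reduction is available uniformly for all seven families. A secondary slip: your treatment of \textnumero 3.14 via the diagram~\eqref{equation:3-6} uses the description ``disjoint union of a line and an elliptic quartic in $\mathbb{P}^3$'', which is the description of Family \textnumero 3.6 (the family to which that diagram actually belongs, cf.\ Lemma~\ref{lemma:pointless-3-6}); Family \textnumero 3.14 is the blowup of $\mathbb{P}^3$ along a plane cubic together with a point off that plane, which is precisely why it, too, dominates $V_7$.
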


\begin{proof}
Let $p$ be a point in $\mathbb{P}^3$,
and let $\phi\colon V_7\to\mathbb{P}^3$ be the~blowup of this point.
Then there exists a birational morphism $\pi\colon X\to V_7$ that blows up a smooth curve $C$.
The curve $C$ irreducible if $X$ is contained in Families \textnumero 3.14, \textnumero 3.16, \textnumero 3.23, \textnumero 3.26,
\textnumero 3.29 or \textnumero 3.30.
On the~other hand, if $X$ is contained in Family \textnumero 4.9, then $C$ is the~strict transform of two skew lines in $\mathbb{P}^3$ such that one of them contains $p$.
In every case, it follows from Lemma~\ref{lemma:Prokhorov} that $\pi$ is $A$-equivariant for any finite abelian subgroup $A\subset\mathrm{Aut}(X)$, so $A$ fixes a point in $X$ by Theorem~\ref{theorem:Kollar-Szabo},
because $A$ fixes a point in $V_7$ by Corollary~\ref{corollary:Pn-blow-up}.
\end{proof}

\begin{lemma}
\label{lemma:3-18}
Let $X$ be a smooth Fano 3-fold in Family \textnumero 3.18.
Then $X$ satisfies Condition~$\mathbf{(A)}$.
\end{lemma}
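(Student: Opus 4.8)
The plan is to use the description of a member $X$ of Family~\textnumero 3.18 as the blow-up $\pi\colon X\to\mathbb{P}^3$ of a disjoint union of a line $L$ and a conic $C$, and to reduce the problem of finding a fixed point to $\mathbb{P}^3$ itself, where the conic supplies an invariant plane.

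First I would take a finite abelian subgroup $A\subset\mathrm{Aut}(X)$ and apply Lemma~\ref{lemma:Prokhorov} --- Family~\textnumero 3.18 occurs in its list --- to conclude that every extremal ray of $\overline{\mathrm{NE}}(X)$ is $A$-invariant. Consequently $A$ does not interchange the two $\pi$-exceptional divisors $E_L$ and $E_C$ (this is also forced numerically, since $\deg L=1\neq 2=\deg C$), the blow-down $\pi\colon X\to\mathbb{P}^3$ is $A$-equivariant, and both $L$ and $C$ are $A$-invariant curves in $\mathbb{P}^3$.

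Next I would extract a fixed point from the conic. As $C$ is $A$-invariant and spans a unique plane $\Pi\subset\mathbb{P}^3$, the plane $\Pi$ is $A$-invariant as well. Since $\Pi$ is a hypersurface of degree $1$ in $\mathbb{P}^3$ and $1$ is coprime to $4$, Lemma~\ref{lemma:lift} shows that $A$ lifts to an isomorphic finite abelian subgroup of $\mathrm{GL}_4(\mathbb{C})$; being abelian, this subgroup is simultaneously diagonalizable and hence fixes a point of $\mathbb{P}^3$, exactly as in the proof of Lemma~\ref{lemma:2-28-2-30}. Finally, because $\pi$ is an $A$-equivariant birational morphism between smooth varieties, Theorem~\ref{theorem:Kollar-Szabo} carries this fixed point back to $X$, so $A$ fixes a point in $X$ and $X$ satisfies Condition~$\mathbf{(A)}$. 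Note that one does not even need Remark~\ref{remark:cyclic-Lefschetz} to dispose of the cyclic case, since the argument is uniform in $A$.

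The hard part is entirely the equivariance of the blow-down together with the impossibility of swapping the two centers: a priori an automorphism of $X$ could permute its extremal contractions, and it is precisely Lemma~\ref{lemma:Prokhorov} that excludes this for Family~\textnumero 3.18. Once $\pi$ is known to be $A$-equivariant and $C$ is $A$-invariant, the passage to the invariant plane and the diagonalization through Lemma~\ref{lemma:lift} are routine, and --- in contrast with the quadric-based families --- no separate analysis of the case $A\simeq(\mathbb{Z}/2\mathbb{Z})^2$ is required, because the conic, unlike a twisted cubic or an elliptic quartic, is planar.
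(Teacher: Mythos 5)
Your argument is correct and coincides with the paper's own proof: both invoke Lemma~\ref{lemma:Prokhorov} to make $\pi$ equivariant, pass to the $A$-invariant plane spanned by the conic, apply Lemma~\ref{lemma:lift} to produce a fixed point in $\mathbb{P}^3$, and transfer it back to $X$ via Theorem~\ref{theorem:Kollar-Szabo}. The extra remarks about the degrees of $L$ and $C$ and the diagonalization are harmless elaborations of what the paper leaves implicit.
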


\begin{proof}
The Fano 3-fold $X$ can be obtained as a blowup
$\pi\colon X\to \mathbb{P}^3$ along a disjoint union of a smooth conic $C$ and a line $L$.
Let $A$ be a finite abelian subgroup in $\mathrm{Aut}(X)$.
Then, by Lemma~\ref{lemma:Prokhorov}, $\pi$ is $A$-equivariant.
In particular, the~plane containing $C$ is $A$-invariant, so, by Lemma~\ref{lemma:lift}, $A$ fixes a point in $\mathbb{P}^3$, hence $A$ also fixes a point in $X$ by Theorem~\ref{theorem:Kollar-Szabo}.
\end{proof}

\begin{lemma}
\label{lemma:3-21}
Let $X$ be a smooth Fano 3-fold in Family \textnumero 3.21.
Then $X$ satisfies Condition~$\mathbf{(A)}$.
\end{lemma}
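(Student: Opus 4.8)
The plan is to identify a natural equivariant description of a member of Family \textnumero 3.21 and then reduce, via the now-standard toolkit assembled above, to a situation in which Lemma~\ref{lemma:lift}, Corollary~\ref{corollary:Pn-blow-up}, or Theorem~\ref{theorem:Kollar-Szabo} forces a fixed point. Family \textnumero 3.21 consists of the blowup of $\mathbb{P}^1\times\mathbb{P}^2$ along a curve of type $(2,1)$ (equivalently, a conic in a fiber geometry), so I would begin by recording an explicit birational morphism $\pi\colon X\to Y$ where $Y=\mathbb{P}^1\times\mathbb{P}^2$, together with the projections $\mathrm{pr}_1\colon Y\to\mathbb{P}^1$ and $\mathrm{pr}_2\colon Y\to\mathbb{P}^2$.

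Let $A$ be a finite abelian subgroup of $\mathrm{Aut}(X)$. First I would invoke Lemma~\ref{lemma:Prokhorov}, which covers Family \textnumero 3.21, to conclude that every extremal ray of $\overline{\mathrm{NE}}(X)$ is $A$-invariant; in particular $\pi$ is $A$-equivariant, and hence so are the two projections $\mathrm{pr}_1$ and $\mathrm{pr}_2$ after descending the action to $Y$. The center $C$ of the blowup is then $A$-invariant, so both $\mathrm{pr}_1(C)$ and $\mathrm{pr}_2(C)$ are $A$-invariant. The strategy mirrors the proof of Lemma~\ref{lemma:3-8}: I would use the projection to a $\mathbb{P}^2$ to produce an $A$-fixed point on that factor, and then use the fiber over that point together with the other projection (a $\mathbb{P}^1$ direction, where any finite abelian linearizable action has a fixed point by Lemma~\ref{lemma:lift} or directly by the structure of one-dimensional representations) to locate an $A$-fixed point on $Y$. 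Applying Theorem~\ref{theorem:Kollar-Szabo} then transfers this fixed point back to $X$.

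The main obstacle I expect is precisely the linearization bookkeeping: Lemma~\ref{lemma:lift} produces an actual $\mathrm{GL}$-lift only when the relevant degree and dimension are coprime, so I must check that the $A$-action on each $\mathbb{P}^2$ or $\mathbb{P}^1$ factor genuinely arises from a linear representation. For the $\mathbb{P}^2$ factor this is the content of Lemma~\ref{lemma:lift} applied to the $A$-invariant image $\mathrm{pr}_2(C)$ (whose degree I would verify is odd, so that the coprimality hypothesis holds), guaranteeing an $A$-fixed point $p\in\mathbb{P}^2$. The fiber $F$ of $\mathrm{pr}_2$ over $p$ is then an $A$-invariant $\mathbb{P}^1$, and since $\mathrm{pr}_1$ maps $F$ isomorphically onto $\mathbb{P}^1$ in a way compatible with the $A$-action, an $A$-fixed point on $\mathbb{P}^1$ (again via Lemma~\ref{lemma:lift}, as a finite abelian group acting faithfully and linearly on $\mathbb{P}^1$ fixes a point) yields an $A$-fixed point on $F\subset Y$. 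If instead the geometry forces the action on $\mathbb{P}^1$ to be free (no abelian subgroup can act freely on $\mathbb{P}^1$), one uses directly that any finite abelian subgroup of $\mathrm{PGL}_2(\mathbb{C})$ fixes a point; so the only delicate point is ensuring the $\mathbb{P}^2$-factor fixed point exists, which the odd-degree hypothesis secures.
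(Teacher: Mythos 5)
Your overall architecture matches the paper's: use Lemma~\ref{lemma:Prokhorov} to make the blowup $\pi\colon X\to\mathbb{P}^1\times\mathbb{P}^2$ and the two projections $A$-equivariant, produce an $A$-fixed point $p$ on the invariant line $\mathrm{pr}_2(C)\subset\mathbb{P}^2$ via Lemma~\ref{lemma:lift} (degree $1$ is coprime to $3$; note the relevant condition on $\mathbb{P}^2$ is coprimality with $3$, not oddness), and then work in the fiber $F$ of $\mathrm{pr}_2$ over $p$. But your final step --- producing an $A$-fixed point on $F\simeq\mathbb{P}^1$ --- has a genuine gap. Your backstop claim that ``any finite abelian subgroup of $\mathrm{PGL}_2(\mathbb{C})$ fixes a point'' is false: the Klein four-group generated by $[x:y]\mapsto[x:-y]$ and $[x:y]\mapsto[y:x]$ acts on $\mathbb{P}^1$ with no common fixed point. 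Your primary route, invoking Lemma~\ref{lemma:lift} on the $\mathbb{P}^1$, requires an $A$-invariant divisor of odd degree on that $\mathbb{P}^1$, which you never exhibit; without it the action need not be linearizable, and indeed there is no reason for $A$ to fix a point of the first factor $\mathbb{P}^1$ in general. This is exactly where the analogy with Lemma~\ref{lemma:3-8} breaks down: there the second factor is another $\mathbb{P}^2$, where an invariant line always yields a linearization, whereas here it is a $\mathbb{P}^1$.

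The missing observation, which is how the paper closes the argument, is numerical rather than representation-theoretic: since $C$ has degree $(2,1)$, the projection $\mathrm{pr}_2$ restricts to an isomorphism from $C$ onto the line $\mathrm{pr}_2(C)$, so the fiber of $\mathrm{pr}_2$ over $p$ meets $C$ in \emph{exactly one} point. A one-point $A$-invariant set is an $A$-fixed point, and Theorem~\ref{theorem:Kollar-Szabo} transfers it to $X$. For this to work you must also make sure the fixed point $p$ is chosen \emph{on} the line $\mathrm{pr}_2(C)$ (the invariant line corresponds to a two-dimensional subrepresentation of the lifted $A$-action, which splits since $A$ is abelian); your write-up only asserts a fixed point somewhere in $\mathbb{P}^2$, and if $p\notin\mathrm{pr}_2(C)$ the fiber over $p$ is disjoint from $C$ and no such argument is available.
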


\begin{proof}
In this case, there exists a blowup $\pi\colon X\to\mathbb{P}^1\times\mathbb{P}^2$ of a smooth rational curve $C$ of degree $(2,1)$.
Let $\mathrm{pr}_1\colon \mathbb{P}^1\times\mathbb{P}^2\to \mathbb{P}^1$ and $\mathrm{pr}_2\colon \mathbb{P}^1\times\mathbb{P}^2\to \mathbb{P}^2$ be projections to the~first and second factors, respectively. Then $\mathrm{pr}_1\vert_{C}\colon C\to\mathbb{P}^1$ is a double cover, and $\mathrm{pr}_2(C)$ is a line in $\mathbb{P}^2$.

Let $A$ be a finite abelian subgroup in $\mathrm{Aut}(X)$.
Then it follows from Lemma~\ref{lemma:Prokhorov}, $\pi$ is $A$-equivariant.
Since $A$ leaves the~line $\mathrm{pr}_2(C)$ invariant, it follows from Lemma~\ref{lemma:lift} that $A$ fixes a point $p\in\mathrm{pr}_2(C)$.
Since the~fiber of the~projection $\mathrm{pr}_2$ over the~point $p$ intersects the~curve $C$ by one point, we see that this point is fixed by $A$,
so $A$ also fixes a point in $X$ by Theorem~\ref{theorem:Kollar-Szabo}.
\end{proof}

\begin{lemma}
\label{lemma:3-22}
Let $X$ be a smooth Fano 3-fold in Family \textnumero 3.22.
Then $X$ satisfies Condition~$\mathbf{(A)}$.
\end{lemma}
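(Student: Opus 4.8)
The plan is to show that an arbitrary finite abelian subgroup $A\subset\mathrm{Aut}(X)$ has a fixed point on $X$. Recall that a member of Family \textnumero 3.22 is obtained as a blowup $\pi\colon X\to\mathbb{P}^1\times\mathbb{P}^2$ of a smooth conic $C$ contained in a fiber $\{t_0\}\times\mathbb{P}^2$ of the~first projection $\mathrm{pr}_1\colon\mathbb{P}^1\times\mathbb{P}^2\to\mathbb{P}^1$, so that $C$ has bidegree $(0,2)$. Since \textnumero 3.22 is one of the~families listed in Lemma~\ref{lemma:Prokhorov}, the~first step is to apply that lemma to deduce that $\pi$ is $A$-equivariant; hence $A$ acts on $\mathbb{P}^1\times\mathbb{P}^2$ and leaves $C$ invariant.

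Next I would pass to the~base of $\mathrm{pr}_1$. As $\mathbb{P}^1\not\simeq\mathbb{P}^2$, no automorphism of $\mathbb{P}^1\times\mathbb{P}^2$ swaps the~factors, so $\mathrm{pr}_1$ is automatically $A$-equivariant. Applying it to the~$A$-invariant curve $C$ gives the~$A$-invariant one-point set $\mathrm{pr}_1(C)=\{t_0\}$, forcing $t_0$ to be fixed by $A$; consequently the~fiber $F=\mathrm{pr}_1^{-1}(t_0)\simeq\mathbb{P}^2$ is $A$-invariant and contains $C$ as an $A$-invariant conic.

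The key step is then the~lifting argument of Lemma~\ref{lemma:lift} on this invariant plane: because $\deg(C)=2$ is coprime to $\dim F+1=3$, that lemma yields an $A$-fixed point on $F\simeq\mathbb{P}^2$ (the~image of $A$ in $\mathrm{PGL}_3(\mathbb{C})$ lifts to $\mathrm{GL}_3(\mathbb{C})$, hence diagonalizes and has an eigenline). This point is $A$-fixed in $\mathbb{P}^1\times\mathbb{P}^2$, and Theorem~\ref{theorem:Kollar-Szabo} carries it back along $\pi$ to an $A$-fixed point of $X$, as required. I do not expect a serious obstacle here: the~only points needing care are that the~center genuinely lies in a single fiber (so that the~$A$-fixed point $t_0\in\mathbb{P}^1$ is pinned down) and the~coprimality $\gcd(2,3)=1$ that makes Lemma~\ref{lemma:lift} applicable on $F$.
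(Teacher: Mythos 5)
Your proposal is correct and follows essentially the same route as the paper: Lemma~\ref{lemma:Prokhorov} makes $\pi$ (and $\mathrm{pr}_1$) $A$-equivariant, the fiber $H_1$ containing the conic $C$ is $A$-invariant, and Lemma~\ref{lemma:lift} applied to the invariant conic in $H_1\simeq\mathbb{P}^2$ produces an $A$-fixed point, which Theorem~\ref{theorem:Kollar-Szabo} transports back to $X$. The only cosmetic difference is that you justify the equivariance of $\mathrm{pr}_1$ directly from $\mathbb{P}^1\not\simeq\mathbb{P}^2$ rather than citing Lemma~\ref{lemma:Prokhorov}, which is equally valid.
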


\begin{proof}
Let $\mathrm{pr}_1\colon \mathbb{P}^1\times\mathbb{P}^2\to\mathbb{P}^1$ be the~projection to the~first factor,
let $H_1$ be its fiber, and let $C$ be a~smooth conic in $H_1\cong\mathbb{P}^2$.
Then there is a blowup $\pi\colon X\to \mathbb{P}^1\times\mathbb{P}^2$ along the~curve $C$. Let $A$ be a finite abelian subgroup in $\mathrm{Aut}(X)$.
Then it follows from Lemma~\ref{lemma:Prokhorov} that both $\pi$ and $\mathrm{pr}_1$ are $A$-equivariant,
and both $H_1$ and $C$ are $A$-invariant. Hence, it follows from Lemma~\ref{lemma:lift} that $A$ fixes a point in $H_1$,
so $A$ also fixes a point in $X$ by Theorem~\ref{theorem:Kollar-Szabo}.
\end{proof}

\begin{lemma}
\label{lemma:3-24}
Let $X$ be a smooth Fano 3-fold in Family \textnumero 3.24.
Then $X$ satisfies Condition~$\mathbf{(A)}$.
\end{lemma}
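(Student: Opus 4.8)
The plan is to exploit the $\mathbb{P}^1$-bundle structure of $X$ over the first Hirzebruch surface $\mathbb{F}_1$. A member of Family \textnumero 3.24 admits a projection $\rho\colon X\to\mathbb{F}_1$ presenting it as $\mathbb{P}_{\mathbb{F}_1}(\mathcal{O}\oplus\mathcal{L})$ for an effective non-trivial line bundle $\mathcal{L}$, and $\rho$ is one of its extremal contractions. So I would take a finite abelian subgroup $A\subset\mathrm{Aut}(X)$ and apply Lemma~\ref{lemma:Prokhorov} --- legitimate since \textnumero 3.24 appears in its list --- to conclude that every extremal ray of $\overline{\mathrm{NE}}(X)$ is $A$-invariant. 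In particular the fibre class of $\rho$ spans an $A$-invariant ray, so $\rho$ is $A$-equivariant and $A$ acts on $\mathbb{F}_1$.

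Next I would produce a fixed point on the base and lift it along a section. As $\mathbb{F}_1$ is the blowup of $\mathbb{P}^2$ at a point, Corollary~\ref{corollary:Pn-blow-up} applied to the image of $A$ in $\mathrm{Aut}(\mathbb{F}_1)$ yields a point $q\in\mathbb{F}_1$ fixed by $A$. Since $\mathcal{L}$ is effective and non-trivial we have $H^0(\mathbb{F}_1,\mathcal{L}^{-1})=0$, so the sub-bundle $\mathcal{L}\hookrightarrow\mathcal{O}\oplus\mathcal{L}$ defines the unique section $\sigma\colon\mathbb{F}_1\to X$ of $\rho$ whose image has negative normal bundle; being unique, $\sigma(\mathbb{F}_1)$ is $A$-invariant. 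The projection $\rho$ restricts to an $A$-equivariant isomorphism from $\sigma(\mathbb{F}_1)$ onto $\mathbb{F}_1$, hence $\sigma(q)$ --- the only point of $\sigma(\mathbb{F}_1)$ lying over $q$ --- is fixed by $A$. This exhibits an $A$-fixed point on $X$, so $X$ satisfies Condition~$\mathbf{(A)}$.

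The step I expect to be the crux is precisely the passage from the base to the total space. An $A$-fixed point $q\in\mathbb{F}_1$ does not automatically lift, because $A$ may act on the fibre $\rho^{-1}(q)\simeq\mathbb{P}^1$ through a Klein four-group and then fix no point of it; indeed, for the analogous $\mathbb{P}^1$-bundles over $\mathbb{P}^1\times\mathbb{P}^1$ an abelian group can act with no fixed point at all, so equivariance of $\rho$ is by itself insufficient. Two features save the argument here: first, the base is $\mathbb{F}_1$, whose canonical $(-1)$-curve forces an $A$-fixed point through Corollary~\ref{corollary:Pn-blow-up} (no such point need exist on $\mathbb{P}^1\times\mathbb{P}^1$); and second, the distinguished $A$-invariant section sidesteps the fibrewise Klein four-group obstruction by handing us a fixed point directly. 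The only routine verification is that $\mathcal{L}$ is non-trivial, so that this section is genuinely canonical; for $\mathcal{L}\simeq\mathcal{O}$ one recovers the product $\mathbb{P}^1\times\mathbb{F}_1$, where no such canonical section exists and Condition~$\mathbf{(A)}$ indeed fails. Note that Theorem~\ref{theorem:Kollar-Szabo} is not required for the final step, although it underlies Corollary~\ref{corollary:Pn-blow-up}.
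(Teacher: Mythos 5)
Your reduction to the $\mathbb{P}^1$-bundle structure $\rho\colon X\to\mathbb{F}_1$ rests on a false structural premise, and the step you yourself single out as the crux collapses because of it. A member of Family \textnumero 3.24 is the fiber product $W\times_{\mathbb{P}^2}\mathbb{F}_1$, where $W\subset\mathbb{P}^2\times\mathbb{P}^2$ is the flag divisor of bidegree $(1,1)$; thus $\rho$ is the pullback along $\phi\colon\mathbb{F}_1\to\mathbb{P}^2$ of the bundle $W=\mathbb{P}(E)\to\mathbb{P}^2$ with $E\simeq T_{\mathbb{P}^2}(-1)$, so $c_1(E)=1$ and $c_2(E)=1$. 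The pullback $\phi^{*}E$ is indecomposable: since $\phi$ contracts the exceptional curve $e$ to a point, any splitting $\phi^{*}E\simeq\mathcal{L}_1\oplus\mathcal{L}_2$ must restrict trivially to $e$, forcing $\mathcal{L}_i\sim a_i\,\phi^{*}(\mathrm{line})$ with $a_1+a_2=1$ and $a_1a_2=c_2(\phi^{*}E)=1$, which has no integer solutions. Hence $X$ is \emph{not} of the form $\mathbb{P}_{\mathbb{F}_1}(\mathcal{O}\oplus\mathcal{L})$, there is no distinguished section of $\rho$ with negative normal bundle, and your mechanism for lifting the $A$-fixed point $q\in\mathbb{F}_1$ to $X$ is unavailable. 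As you correctly observe, equivariance of $\rho$ together with a fixed point on the base is not by itself enough, because $A$ may act on the invariant fibre $\rho^{-1}(q)\simeq\mathbb{P}^1$ through a Klein four-group with no fixed point; without the section the argument does not close. (The earlier steps --- applying Lemma~\ref{lemma:Prokhorov} to \textnumero 3.24 and Corollary~\ref{corollary:Pn-blow-up} to $\mathbb{F}_1$ --- are fine, but they only produce a fixed point on the base.)

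The paper works with the other extremal contraction instead: by Lemma~\ref{lemma:Prokhorov} there is an $A$-equivariant blowup $\pi\colon X\to\mathbb{P}^1\times\mathbb{P}^2$ of a smooth $A$-invariant curve $C$ of degree $(1,1)$. Then $\mathrm{pr}_2(C)$ is an $A$-invariant line in $\mathbb{P}^2$, Lemma~\ref{lemma:lift} produces an $A$-fixed point on that line, this point lifts to an $A$-fixed point of $C$ because $\mathrm{pr}_2|_{C}$ is an isomorphism onto its image, and Theorem~\ref{theorem:Kollar-Szabo} transfers the fixed point to $X$. If you insist on arguing over the base $\mathbb{F}_1$, you would need to exhibit some $A$-invariant odd-degree multisection or an invariant point in the fibre over $q$ by other means; the indecomposability of $\phi^{*}E$ means no such object comes for free.
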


\begin{proof}
Let $A$ be a finite abelian subgroup in $\mathrm{Aut}(X)$.
Then it follows from Lemma~\ref{lemma:Prokhorov} that there is an $A$-equivariant blowup $\pi\colon X\to\mathbb{P}^1\times\mathbb{P}^2$ of a smooth $A$-invariant curve $C$ of degree~$(1,1)$.

Let $\mathrm{pr}_2\colon \mathbb{P}^1\times\mathbb{P}^2\to\mathbb{P}^2$ be the projection to the second factor.
Then $\mathrm{pr}_2$ is $A$-equivariant, and $\mathrm{pr}_2(C)$ is an $A$-invariant line in $\mathbb{P}^2$,
and it follows from Lemma~\ref{lemma:lift} that $A$ fixes a point in $\mathrm{pr}_2(C)$. Then $A$ fixes a point in $C$,
since $\mathrm{pr}_2|_C\colon C \to \mathrm{pr}_2(C)$ is an isomorphism, so $A$ fixes a point in $X$ by Theorem~\ref{theorem:Kollar-Szabo}.
\end{proof}

\begin{lemma}
\label{lemma:4-5}
Let $X$ be a smooth Fano 3-fold in Family \textnumero 4.5.
Then $X$ satisfies Condition~$\mathbf{(A)}$.
\end{lemma}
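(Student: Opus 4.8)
The plan is to prove that an arbitrary finite abelian subgroup $A\subset\mathrm{Aut}(X)$ fixes a point by the same descend-and-lift strategy used for the neighbouring families \textnumero 3.21, \textnumero 3.22 and \textnumero 3.24: first make the relevant contractions $A$-equivariant via Lemma~\ref{lemma:Prokhorov}, then force an $A$-fixed point on a simpler model with Lemma~\ref{lemma:lift}, and finally transport it back to $X$ by Theorem~\ref{theorem:Kollar-Szabo}. By Remark~\ref{remark:cyclic-Lefschetz} I may assume from the start that $A$ is not cyclic, since every smooth Fano threefold is rationally connected.

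Because $X$ lies in Family \textnumero 4.5, Lemma~\ref{lemma:Prokhorov} tells me that every extremal ray of $\overline{\mathrm{NE}}(X)$ is $A$-invariant, so each elementary contraction of $X$ is $A$-equivariant. I would use these to present $X$ as an $A$-equivariant blow-up $\pi\colon X\to Y$ along a smooth $A$-invariant curve $C$, where $Y$ is a simpler rational Fano threefold retaining an $A$-equivariant fibration $\mathrm{pr}\colon Y\to\mathbb{P}^2$ (the case I expect is $Y=\mathbb{P}^1\times\mathbb{P}^2$ with $\mathrm{pr}$ the second projection, again $A$-equivariant by Lemma~\ref{lemma:Prokhorov}). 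By Theorem~\ref{theorem:Kollar-Szabo} it then suffices to find an $A$-fixed point on $Y$. For this I would inspect the $A$-invariant curve $\mathrm{pr}(C)\subset\mathbb{P}^2$: it has small degree, and whenever this degree is coprime to $3$ (e.g.\ a line), Lemma~\ref{lemma:lift} produces an $A$-fixed point $p\in\mathbb{P}^2$; pulling $p$ back through the $A$-equivariant fibration $\mathrm{pr}$ and using that $C$ meets the fibre $\mathrm{pr}^{-1}(p)$ in a single point gives the desired $A$-fixed point on $Y$, and hence on $X$. If an intermediate model happens to be a quadric threefold with $A\cong(\mathbb{Z}/2\mathbb{Z})^2$, I would instead invoke Lemma~\ref{lemma:Klein-quadric}, and if the invariant curve in play is an odd-degree rational curve I would use Corollary~\ref{corollary:odd-degree-curves}.

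The main obstacle is precisely the case $A\cong(\mathbb{Z}/2\mathbb{Z})^2$: a Klein four-group embeds in $\mathrm{PGL}_2(\mathbb{C})$ acting on $\mathbb{P}^1$ without any fixed point, so a naive coordinate-by-coordinate search on a product model can genuinely fail, and this is exactly the mechanism by which the eight exceptional families violate Condition~$\mathbf{(A)}$. The reason I expect Family \textnumero 4.5 to escape this is structural: routing the argument through an $A$-invariant subvariety of $\mathbb{P}^2$ whose degree is prime to the ambient dimension lets Lemma~\ref{lemma:lift} guarantee a linearization and hence a fixed point, bypassing the fixed-point-free $\mathbb{P}^1$-action. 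The genuine work in carrying this out is to identify the correct $A$-invariant curve (or hyperplane section) on $Y$ whose degree unlocks the lifting lemma, and to confirm that the corresponding fibre, line, or quadric section really does contain the fixed point.
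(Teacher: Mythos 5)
Your proposal is correct and follows essentially the same route as the paper: Family \textnumero 4.5 is the blowup of $\mathbb{P}^1\times\mathbb{P}^2$ along two disjoint $A$-invariant curves of degrees $(2,1)$ and $(1,0)$, the contraction is $A$-equivariant by Lemma~\ref{lemma:Prokhorov}, and the fixed point is produced exactly as you describe --- the image of the $(2,1)$-curve under the second projection is an $A$-invariant line in $\mathbb{P}^2$, Lemma~\ref{lemma:lift} gives a fixed point on it, the fibre over that point meets the curve in a single point, and Theorem~\ref{theorem:Kollar-Szabo} transports the fixed point back. The only cosmetic difference is that the paper packages this by first blowing up only the $(2,1)$-curve to land in Family \textnumero 3.21 and then citing Lemma~\ref{lemma:3-21}, whose proof is precisely your argument.
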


\begin{proof}
In this case, there exists a birational morphism $\pi\colon X\to \mathbb{P}^1\times\mathbb{P}^2$ that blows up two smooth disjoint curves $C$ and $L$ such that $C$ is a curve of degree $(2,1)$, and $L$ is a curve of degree $(1,0)$.
Let $A$ be a finite abelian subgroup in $\mathrm{Aut}(X)$.
Then it follows from Lemma~\ref{lemma:Prokhorov} that $\pi$ is $A$-equivariant,
and both curves $C$ and $L$ are $A$-invariant.
Thus, blowing up $\mathbb{P}^1\times\mathbb{P}^2$ along $C$ gives us $A$-equivariant birational morphism to the~smooth Fano 3-folds in Family \textnumero 3.21,
and $A$ fixes a point in this 3-fold by Lemma~\ref{lemma:3-21}, so $A$ also fixes a point in $X$ by Theorem~\ref{theorem:Kollar-Szabo}.
\end{proof}

\begin{lemma}
\label{lemma:4-11}
Let $X$ be a smooth Fano 3-fold in Family \textnumero 4.11.
Then $X$ satisfies Condition~$\mathbf{(A)}$.
\end{lemma}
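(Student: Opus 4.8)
The plan is to run the same machine used throughout this subsection: make the natural contractions of $X$ equivariant, descend to a fixed point on a simpler variety, and transport it back by birational invariance. So let $A\subseteq\mathrm{Aut}(X)$ be a finite abelian subgroup. Since Family \textnumero 4.11 occurs in the list of Lemma~\ref{lemma:Prokhorov}, every extremal ray of $\overline{\mathrm{NE}}(X)$ is $A$-invariant, hence every extremal contraction of $X$ is $A$-equivariant. In particular the divisorial contraction $\pi\colon X\to\mathbb{P}^1\times\mathbb{F}_1$ exhibiting $X$ as the blowup of $\mathbb{P}^1\times\mathbb{F}_1$ along its irreducible center $C$ is $A$-equivariant, and so is the projection $\mathrm{pr}_1\colon\mathbb{P}^1\times\mathbb{F}_1\to\mathbb{P}^1$ onto the first factor.

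First I would record that $\pi(C)$ is an $A$-invariant curve, and that the decisive geometric input is that this center lies in a single fibre of $\mathrm{pr}_1$, i.e. $\pi(C)\subseteq\{p_0\}\times\mathbb{F}_1$ for some $p_0\in\mathbb{P}^1$. Granting this, $\mathrm{pr}_1(\pi(C))=\{p_0\}$ is an $A$-invariant, hence $A$-fixed, point of $\mathbb{P}^1$, so the fibre $\{p_0\}\times\mathbb{F}_1\cong\mathbb{F}_1$ is an $A$-invariant surface. As $\mathbb{F}_1$ is the blowup of $\mathbb{P}^2$ at a point, Corollary~\ref{corollary:Pn-blow-up} shows the induced $A$-action on this fibre fixes a point $q\in\mathbb{F}_1$; then $(p_0,q)$ is an $A$-fixed point of $\mathbb{P}^1\times\mathbb{F}_1$, and Theorem~\ref{theorem:Kollar-Szabo} carries it back to $X$. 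This proves that $X$ satisfies Condition~$\mathbf{(A)}$. Note that this mirrors exactly the proof of Lemma~\ref{lemma:3-22}, with $\mathbb{F}_1$ replacing $\mathbb{P}^2$ and Corollary~\ref{corollary:Pn-blow-up} replacing the direct appeal to Lemma~\ref{lemma:lift}.

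The crux I expect is confirming that the center $C$ is genuinely swept into one $\mathrm{pr}_1$-fibre rather than dominating the first factor: this is precisely the feature separating \textnumero 4.11 from the nearby exceptional families. For the exceptional cases the center (or the product structure) permits a Klein four-group $(\mathbb{Z}/2\mathbb{Z})^2$ to act freely on a $\mathbb{P}^1$-factor, leaving no fixed point; here the Mori--Mukai description pins down the $\mathbb{P}^1$-coordinate of the center, which is exactly what defeats such a free action. Should the standard presentation instead route $X$ through a different $A$-equivariant contraction -- say a conic bundle to a Hirzebruch surface or a del~Pezzo fibration over $\mathbb{P}^1$ -- I would run the identical reduction through that morphism, producing an $A$-fixed point on the base by Lemma~\ref{lemma:lift} or Corollary~\ref{corollary:Pn-blow-up} and lifting it along the section/fibre, again concluding by Theorem~\ref{theorem:Kollar-Szabo}.
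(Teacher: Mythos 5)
Your proposal is correct and follows essentially the same route as the paper: Lemma~\ref{lemma:Prokhorov} makes the blowup $\pi\colon X\to\mathbb{P}^1\times\mathbb{F}_1$ equivariant, the center is indeed the $(-1)$-curve in a single fibre $S\cong\mathbb{F}_1$ of the projection to $\mathbb{P}^1$ (so $S$ is $A$-invariant, exactly the point you flag as the crux), and the conclusion follows from Corollary~\ref{corollary:Pn-blow-up} applied to $S$ together with Theorem~\ref{theorem:Kollar-Szabo}.
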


\begin{proof}
Let $V=\mathbb{P}^1 \times\mathbb{F}_1$, let $S$ be a fiber of the~natural projection $V\to\mathbb{P}^1$,
and let $C$ be the~$(-1)$-curve in $S\cong\mathbb{F}_1$.
Then there exists a birational morphism $\pi\colon X \to V$ that is the~blowup of the~curve $C$. Let $A$ be a finite abelian subgroup in $\mathrm{Aut}(X)$.
Then it follows from Lemma~\ref{lemma:Prokhorov} that $\pi$ is $A$-equivariant,
and $S$ is $A$-invariant, so $A$ fixes a point in $S$ by Corollary~\ref{corollary:Pn-blow-up}. Hence $A$ fixes a point in $X$ also by Theorem~\ref{theorem:Kollar-Szabo}.
\end{proof}

\section{K-polystable Fano 3-folds not satisfying Condition~$\mathbf{(A)}$}
\label{section:pointless-3-folds}

\setlength{\epigraphwidth}{0.8\textwidth}
\epigraph{It was a pleasure to remember the crystal-like clarity of the man's writing, the careful semantic consideration given to every multi-ordinal word used, the breadth of intellect and understanding of the human body-and-mind-as-a-whole.}{\textit{The World of Null-A}, A.\ E.\ van Vogt, 1948}

In this section, we work through the~18 families of Fano 3-folds that contain K-polystable elements but K-polystability is not known or does not hold for all elements.
They also exhibit the~phenomenon that their smooth elements do not always satisfies Condition~$\mathbf{(A)}$.
We will constantly use the~following lemma.

\begin{lemma}
\label{lemma:F}
Let $X$ be a smooth Fano 3-fold, and $G$ be a finite subgroup in $\mathrm{Aut}(X)$.
Suppose that $X$ is not K-polystable.
Then there exists a $G$-invariant prime divisor $\mathbf{F}$ over $X$ such that
$$
1\geqslant\delta(X)=\frac{A_X(\mathbf{F})}{S_X(\mathbf{F})}.
$$
\end{lemma}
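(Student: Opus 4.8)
The plan is to follow the template of the surface argument in the proof of Theorem~\ref{theorem:dP}, upgrading it to threefolds and to the sharper conclusion that the destabilizing divisor actually \emph{computes} $\delta(X)$. Everything reduces to two facts about the stability threshold: the bound $\delta(X)\leqslant 1$, and the existence of a $G$-invariant divisorial minimizer attaining it.

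First I would establish the inequality. By the valuative criterion of Fujita and Li \cite{Fujita2019,Li2017}, together with the density of divisorial valuations, a Fano threefold with $\delta(X)>1$ is uniformly K-stable, hence K-stable and in particular K-polystable. Since $X$ is assumed not to be K-polystable, it is not K-stable, and therefore $\delta(X)\leqslant 1$. This gives the inequality in the statement and, at the same time, places us in the range where the infimum defining $\delta(X)$ is known to be attained.

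Next I would produce the divisor. Because $\delta(X)\leqslant 1$, the infimum in
$$
\delta(X)=\inf_{E/X}\frac{A_X(E)}{S_X(E)}
$$
is a minimum, computed by a valuation whose associated graded ring is finitely generated. To make this minimizer $G$-invariant I would invoke Zhuang's equivariant comparison \cite{Zhuang2021}, which gives $\delta_G(X)=\delta(X)$ and allows the optimal destabilizing valuation to be chosen $G$-invariant: indeed, when $\delta(X)<1$ the minimizer is unique up to rescaling, and since $A_X$ and $S_X$ are $\mathrm{Aut}(X)$-invariant the finite group $G$ must fix it, while when $\delta(X)=1$ the failure of $G$-equivariant K-polystability (again \cite{Zhuang2021}) supplies a $G$-invariant valuation $v$ with $A_X(v)=S_X(v)$. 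Realizing this $G$-invariant minimizer by an actual prime divisor $\mathbf{F}$ over $X$ yields $A_X(\mathbf{F})/S_X(\mathbf{F})=\delta(X)\leqslant 1$, as required; this packaging of \cite{Fujita2019,Li2017,Zhuang2021} is the exact analogue of the invariant curve with $\beta\leqslant 0$ extracted in Theorem~\ref{theorem:dP}.

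The step I expect to be the main obstacle is this last one: guaranteeing simultaneously that the optimal destabilizer is \emph{divisorial} and $G$-\emph{invariant}. The $G$-invariance is controlled by Zhuang's equivariant theory and, in the K-unstable range, by uniqueness of the minimizer; the divisoriality is the more delicate input, resting on the finite generation of the graded ring of the minimizer valid once $\delta(X)\leqslant 1$. Away from these two deep ingredients the argument is entirely formal, so the proof consists of correctly assembling \cite{Fujita2019,Li2017,Zhuang2021} (cf.\ also \cite{Book,AbbanZhuang}) into the single statement of the lemma.
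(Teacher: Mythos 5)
Your proposal is correct and follows essentially the same route as the paper: $\delta(X)\leqslant 1$ from the valuative criterion of \cite{Fujita2019,Li2017}, existence of a divisorial minimizer from the finite generation theorem of Liu--Xu--Zhuang (which the paper cites explicitly as \cite[Theorem~1.2]{LXZ}, applicable since $\delta(X)<\tfrac{4}{3}$), and $G$-invariance from \cite{Zhuang2021}, split into the cases $\delta(X)<1$ and $\delta(X)=1$ exactly as you do. The only cosmetic difference is that for $\delta(X)<1$ you argue via uniqueness of the minimizer rather than quoting \cite[Theorem~4.4]{Zhuang2021} directly, which amounts to the same thing.
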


\begin{proof}
It follows from the~valuative criterion for K-stability \cite{Fujita2019,Li2017} that $\delta(X)\leqslant 1$.
Since $\delta(X)<\frac{4}{3}$, it follows from \cite[Theorem 1.2]{LXZ} that there exists a prime divisor $\mathbf{F}$ over $X$ such that
$$
\delta(X)=\frac{A_X(\mathbf{F})}{S_X(\mathbf{F})}.
$$
Moreover, if $\delta(X)<1$, then it follows from \cite[Theorem 4.4]{Zhuang2021} that we may choose $\mathbf{F}$ to be $G$-invariant.
Similarly, if $\delta(X)=1$, then we can also assume that $\mathbf{F}$ is $G$-invariant by \cite[Corollary 4.14]{Zhuang2021}, because $X$ is not K-polystable.
\end{proof}

In the arguments in this section, we will use Lemma~\ref{lemma:F} as follows.
In each case, we denote by $X$ the~smooth Fano 3-fold which does not satisfy Condition~$\mathbf{(A)}$,
and we denote by $A$ a finite abelian subgroup in $\mathrm{Aut}(X)$ that does not fix points on $X$.
We aim to prove that $X$ is K-polystable. We assume that $X$ is not K-polystable.
Then, by Lemma~\ref{lemma:F}, there exists an $A$-invariant prime divisor $\mathbf{F}$ over $X$ such that
$$
1\geqslant\delta(X)=\frac{A_X(\mathbf{F})}{S_X(\mathbf{F})}.
$$
Let $Z\subset X$ be the~center of the~divisor $\mathbf{F}$.
Then, in each case under consideration, $Z$ is not a surface by \mbox{\cite[Theorem~3.17]{Book}}.
On the~other hand, since $A$ does not fix points on $X$, we conclude that $Z$ is an irreducible curve,
and $\delta_p(X)\leqslant 1$ for every point $p\in Z$.
Then, we will seek for a contradiction, often by showing that $\delta_p(X)>1$ for a sufficiently general point $p\in Z$.

\begin{lemma}
\label{lemma:pointless-1-9}
Let $X$ be a smooth Fano 3-fold in Family \textnumero 1.9, and let $A$ be a finite abelian subgroup in $\mathrm{Aut}(X)$ that does not fix points on $X$.
Then $X$ is K-polystable.
\end{lemma}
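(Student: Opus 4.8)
The plan is to follow the framework set out at the start of this section. Assuming $X$ is not K-polystable, Lemma~\ref{lemma:F} produces an $A$-invariant prime divisor $\mathbf{F}$ over $X$ with $\delta(X)=A_X(\mathbf{F})/S_X(\mathbf{F})\leqslant 1$; its center $Z$ is not a surface, and since $A$ has no fixed point on $X$ it must be an irreducible $A$-invariant curve, so that $\delta_p(X)\leqslant 1$ for every $p\in Z$. Recall that $X$ has genus $10$, that $(-K_X)^3=18$, and that $|-K_X|$ gives an $A$-equivariant embedding $X\hookrightarrow\mathbb{P}^{11}$ with $A\subset\mathrm{PGL}_{12}(\mathbb{C})$. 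The aim is to reach a contradiction, and by Remark~\ref{remark:cyclic-Lefschetz} we may assume that $A$ is not cyclic.

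First I would bound the degree of $Z$. Choose a general smooth surface $S\in|-K_X|$ containing $Z$; this is a K3 surface of degree $18$, and for a very general such $S$ one has $\mathrm{Pic}(S)=\mathbb{Z}H\oplus\mathbb{Z}Z$ with $H=-K_X|_S$, $H^2=18$ and $H\cdot Z=\deg Z$. Since $-K_X-uS=(1-u)(-K_X)$ is nef for $u\in[0,1]$, the Zariski decomposition is trivial, so $S_X(S)=\tfrac14$ and $P(u)|_S=(1-u)H$. Feeding this into Theorem~\ref{theorem:Hamid-Ziquan-Kento-1} gives
\[
\frac{A_X(\mathbf{F})}{S_X(\mathbf{F})}\geqslant\min\Bigl\{\frac{1}{S_X(S)},\ \frac{1}{S(W^S_{\bullet,\bullet};Z)}\Bigr\},
\]
where $S(W^S_{\bullet,\bullet};Z)=\tfrac{1}{24}\int_0^\infty\mathrm{vol}(H-wZ)\,dw$. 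Computing $Z^2$ on $S$ by adjunction and evaluating this last integral shows that $S(W^S_{\bullet,\bullet};Z)<1$ as soon as $\deg Z\geqslant 2$ (for instance $S(W^S_{\bullet,\bullet};Z)\approx 0.95$ when $Z$ is a smooth conic, and it only decreases for curves of higher degree or positive arithmetic genus, since the worst case is $Z^2=-2$ with $H\cdot Z$ as small as possible). In that range $\min\{4,1/S(W^S_{\bullet,\bullet};Z)\}>1$, contradicting $\delta(X)\leqslant 1$. Hence $Z$ is a line.

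It remains to treat the case where $Z$ is a line. Since $A$ is abelian, non-cyclic and acts on $Z\simeq\mathbb{P}^1$ without fixed points, its image in $\mathrm{Aut}(Z)=\mathrm{PGL}_2(\mathbb{C})$ is the Klein four-group, so $A$ surjects onto $(\mathbb{Z}/2\mathbb{Z})^2$. If the hyperplane class of $\mathbb{P}^{11}$ is $A$-linearizable, i.e.\ if $A$ lifts to a subgroup $\widetilde{A}\subset\mathrm{GL}_{12}(\mathbb{C})$ with $\phi(\widetilde{A})=A\simeq\widetilde{A}$, then Corollary~\ref{corollary:odd-degree-curves} applied to the odd-degree rational curve $Z$ forces an $A$-fixed point on $Z$, hence on $X$, contradicting our assumption.

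The main obstacle is precisely the opposite situation: $A\simeq(\mathbb{Z}/2\mathbb{Z})^2$ acting on $\mathbb{P}^{11}$ through a non-linearizable (Heisenberg) projective representation that does not lift to $\mathrm{GL}_{12}(\mathbb{C})$ and stabilizes a line $Z\subset X$. Here neither tool above applies: the valuative estimate yields only $S(W^S_{\bullet,\bullet};Z)>1$ for a line (the degree $18$ is too large for a general anticanonical K3 section to be decisive), and Corollary~\ref{corollary:odd-degree-curves} is unavailable because $A$ does not lift. To close this case I would exploit the specific geometry of the genus $10$ threefold --- its realization as a codimension-two linear section of the $G_2$-adjoint $5$-fold $\Sigma\subset\mathbb{P}^{13}$ and the resulting description of its lines --- either to show directly that a $(\mathbb{Z}/2\mathbb{Z})^2$-invariant line on $X$ must carry an $A$-fixed point after all, or to sharpen the local estimate at a general $p\in Z$ by replacing the general K3 section with a surface adapted to $Z$ (a special, possibly singular, anticanonical member, or a refined flag $(S,Z,p)$) so as to push $\delta_p(X)$ above $1$. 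This geometric input is the crux; the remainder is the bookkeeping above together with Theorem~\ref{theorem:Kollar-Szabo}.
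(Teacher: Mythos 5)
Your proposal follows the section's general framework correctly up to the point where $Z$ is an irreducible $A$-invariant curve, but the two steps that would finish the argument both have genuine gaps. First, the quantitative step for $\deg Z\geqslant 2$ is not established. On a surface one has $\mathrm{vol}(D)=P^2\geqslant D^2$ for any big $D$ (since $D^2=P^2+N^2$ with $N^2\leqslant 0$), so the computation you sketch, integrating $(H-wZ)^2$ up to its positive root, produces a \emph{lower} bound for $S\big(W^S_{\bullet,\bullet};Z\big)$ --- your $\approx 0.95$ for a conic is a floor, not a ceiling. To conclude $S\big(W^S_{\bullet,\bullet};Z\big)<1$ you need an upper bound on $\int_0^\infty\mathrm{vol}(H-wZ)\,dw$, which requires the actual Zariski decomposition of $H-wZ$, hence the nef and effective cones of $S$; that in turn rests on the unproved assertions that a general anticanonical member through $Z$ is smooth and has Picard lattice exactly $\mathbb{Z}H\oplus\mathbb{Z}Z$. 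With a margin of $0.95$ against $1$ for a conic, none of this can be waved away. Second, the line case is left open by your own admission, but the obstruction you fear is illusory: $\mathrm{Aut}(X)$ acts linearly on the vector space $H^0(X,\omega_X^{-1})$, so the $A$-action on $\mathbb{P}^{11}$ is \emph{automatically} induced by a genuine $12$-dimensional representation --- no Heisenberg-type non-liftable projective action can occur. The paper states and uses exactly this; combined with Corollary~\ref{corollary:odd-degree-curves} it excludes every $A$-invariant irreducible rational curve of odd anticanonical degree, not only lines.

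The paper's actual proof is structured quite differently and avoids your problematic volume estimate entirely. It takes a \emph{very general} $S\in|-K_X|$ (Picard rank one, so not containing $Z$), uses $\delta(S,-K_X|_S)\geqslant\tfrac45$ and \cite[Corollary~5.6]{AbbanZhuangSeshadri} to produce either an $A$-invariant divisor $D\sim_{\mathbb{Q}}-K_X$ with $(X,\tfrac12 D)$ non-lc along $Z$, or a mobile system $\mathcal{M}\subset|-nK_X|$ with $Z$ a non-klt center of $(X,\tfrac{1}{2n}\mathcal{M})$. The mobile case dies by Corti's $4n^2$-inequality, which forces $-K_X\cdot Z=1$, excluded by Corollary~\ref{corollary:odd-degree-curves}. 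In the divisor case a perturbation produces a minimal lc center $C$ that must be a smooth rational curve of even degree less than $4$, i.e.\ a conic; blowing up $C$ and using the Sarkisov link to a conic bundle over $\mathbb{P}^2$ pins down the effective cone of $\widetilde{X}$ as generated by $E$ and $-K_{\widetilde{X}}-E$, which is incompatible with $\mathrm{mult}_C(D')>2$. If you want to salvage your route, you would at minimum need to carry out the Zariski-decomposition analysis on the K3 section honestly and repair the line case via the automatic anticanonical linearization; as written, the argument does not close.
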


\begin{proof}
First, we observe that the~linear system $|-K_X|$ gives an $A$-equivariant embedding $X\hookrightarrow \mathbb{P}^{11}$ such that the action of the~group $A$ on $\mathbb{P}^{11}$
is induced by its faithful $12$-dimensional representation.

Suppose that $X$ is not is K-polystable. Let $\mathbf{F}$ be an $A$-invariant divisor whose existence is asserted by Lemma~\ref{lemma:F},
and let $Z$ be its center on $X$. Then, as we explained above, $Z$ is an irreducible curve.
Now, we choose a very general surface $S\in |-K_{X}|$.
Then $S$ is a smooth K3 surface with $\mathrm{Pic}(S)=\mathbb{Z}[-K_{X}|_S]$, so that it follows from \cite{Knutsen} and \cite[Theorem~A]{AbbanZhuangSeshadri} that $\delta(S,-K_{X}\vert_{S})\geqslant\frac{4}{5}$,
so that
$$
\delta(X)\leqslant 1 \leqslant\frac{4}{3}\delta(S,-K_{X}\vert_{S}).
$$
We are now in a position to apply \cite[Corollary 5.6]{AbbanZhuangSeshadri} to $\mathbf{F}$ over $X$.
Hence, it follows from \cite[Corollary 5.6]{AbbanZhuangSeshadri} that at least one of the~following two cases holds:
\begin{enumerate}
\item either there exists an effective $A$-invariant $\mathbb{Q}$-divisor $D$ on the~3-fold $X$ such that $D\sim_{\mathbb{Q}} -K_X$
and $Z$ is a center of non-log canonical singularities of the~log pair $(X,\frac{1}{2}D)$, so, in particular,
the log pair $(X,\frac{1}{2}D)$ is not log canonical along the~curve $Z$;

\item or there exists an $A$-invariant mobile linear system $\mathcal{M}\subset |-nK_{X}|$ such that
$Z$ is a center of non-klt singularities of the~log pair $(X,\frac{1}{2n}\mathcal{M})$.
\end{enumerate}
In the~second case, if $M_1$ and $M_2$ are general surfaces in $\mathcal{M}$, then it follows from \cite[Theorem 3.1]{Corti2000} that
$
M_1\cdot M_2=mZ+\Delta
$
for some positive integer $m\geqslant 16n^2$ and some effective one-cycle $\Delta$ on the~3-fold $X$,
which implies that
$$
18n^2=-K_X\cdot M_1\cdot M_2=m(-K_X)\cdot Z+(-K_X)\cdot\Delta\geqslant m(-K_X)\cdot Z\geqslant 16n^2(-K_X)\cdot Z,
$$
so that $-K_X\cdot Z=1$, which is impossible by Corollary~\ref{corollary:odd-degree-curves}.

Now, arguing as in the~proof of \cite[Theorem~1.52]{Book}, we can replace the~effective $\mathbb{Q}$-divisor $D$
with another $A$-invariant $\mathbb{Q}$-divisor $D^\prime$ on the~3-fold $X$ such that $D^\prime\sim_{\mathbb{Q}} -K_X$,
the log pair $(X,\lambda D^\prime)$ has log canonical singularities for some positive rational number $\lambda<\frac{1}{2}$
such that the~singularities of the~log pair $(X,\lambda D^\prime)$ are non-klt (not Kawamata log terminal),
and the~locus $\mathrm{Nklt}(X,\lambda D^\prime)$ is geometrically irreducible,
and consists of a minimal center of log canonical singularities of the~pair $(X,\lambda D^\prime)$.
Here, we implicitly used Nadel's vanishing theorem and Koll\'ar--Shokurov connectedness theorem, see \cite[Appendix~A.1]{Book}.

Set $C=\mathrm{Nklt}(X,\lambda D^\prime)$. Then $C$ is not a surface, since $\mathrm{Pic}(X)$ is generated by $-K_X$.
Similarly, as above, we see that $C$ is not a point, because  $A$ does not fix points in $X$.
Thus, we see that $C$ is an irreducible curve.
Then it follows from the~proof of  \cite[Theorem~1.52]{Book} that $C$ is a smooth rational curve with
$$
-K_X\cdot C\leqslant \frac{2}{1-\lambda}<4.
$$
But $-K_X\cdot C$ is even by Corollary~\ref{corollary:odd-degree-curves}, so we see that $-K_X\cdot C=2$.

Let $\phi\colon\widetilde{X}\to X$ be the~blowup of the~curve $C$, and let $E$ be the~$\phi$-exceptional surface.
Then it follows from \cite[Theorem~4.4.11]{IsPr99} and \cite[Corollary~4.4.3]{IsPr99}
that $|-K_{\widetilde{X}}|$ is base point free, and $|-K_{\widetilde{X}}-E|$ gives a birational map
$\chi\colon X\dasharrow \mathbb{P}^2$ such that we have the~following commutative diagram:
$$
\xymatrix@R=1em{
&\widetilde{X}\ar@{->}[rd]_{\alpha}\ar@{->}[ldd]_{\phi}\ar@{-->}[rr]^{\zeta}&&V\ar@{->}[rdd]^{\pi}\ar@{->}[ld]^{\beta} &\\%
&&Y&&\\
X\ar@{-->}[rrrr]^{\chi}&&&&\mathbb{P}^2}
$$
where $Y$ is a Fano 3-fold with Gorenstein non-$\mathbb{Q}$-factorial terminal singularities,
$\alpha$ and $\beta$ are small birational morphisms, $\zeta$ is a pseudo-isomorphism that flops the~curves contracted by $\alpha$,
and $\pi$ is a conic bundle.
This shows that the~cone of effective divisors of the~3-fold $\widetilde{X}$ is generated by $E$ and $-K_{\widetilde{X}}-E$.
On the other hand, it follows from \cite[Proposition~9.5.13]{Lazarsfeld} that
$$
\mathrm{mult}_{C}(D^\prime)\geqslant\frac{1}{\lambda}>2.
$$
Thus, if $\widetilde{D}^\prime$ is the~strict transform on $\widetilde{X}$ of the~divisor $D^\prime$, then
$\widetilde{D}^\prime\sim_{\mathbb{Q}}(-K_{\widetilde{X}}-E)-(\mathrm{mult}_{C}(D^\prime)-2)E$,
which is a contradiction, since $\mathrm{mult}_{C}(D^\prime)>2$.
\end{proof}

\begin{lemma}
\label{lemma:pointless-2-5-2-10-3-7}
Let $X$ be a smooth Fano 3-fold in Families \textnumero 2.5, \textnumero 2.10 or \textnumero 3.7,
and let $A$ be a finite abelian subgroup in $\mathrm{Aut}(X)$ that does not fix points on $X$.
Then $X$ is K-polystable.
\end{lemma}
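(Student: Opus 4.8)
The statement asserts that for a smooth Fano 3-fold $X$ in Families №2.5, 2.10 or 3.7, if a finite abelian subgroup $A\subset\mathrm{Aut}(X)$ fixes no point on $X$, then $X$ is K-polystable. Following the template laid out just before Lemma~\ref{lemma:pointless-1-9}, I would argue by contradiction: assume $X$ is not K-polystable, invoke Lemma~\ref{lemma:F} to produce an $A$-invariant prime divisor $\mathbf{F}$ over $X$ with $\delta(X)=A_X(\mathbf{F})/S_X(\mathbf{F})\leqslant 1$, and let $Z$ be its center on $X$. By \cite[Theorem~3.17]{Book} the center $Z$ is not a surface, and since $A$ fixes no point, $Z$ cannot be a point; hence $Z$ is an $A$-invariant irreducible curve, with $\delta_p(X)\leqslant 1$ for every $p\in Z$. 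The goal is to derive a contradiction, which the surrounding exposition suggests one does by proving $\delta_p(X)>1$ at a general point of $Z$.

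\textbf{Plan of attack.} These three families admit explicit birational descriptions (blowups of $\mathbb{P}^3$, of a quadric, or of $\mathbb{P}^1\times\mathbb{P}^2$ along curves), and each carries a natural extremal contraction. I would first use Lemma~\ref{lemma:Prokhorov} to guarantee that the relevant contractions and fibration structures are $A$-equivariant, so that the $A$-invariant curve $Z$ interacts predictably with them. The core technical step is to exhibit a suitable $A$-invariant surface $S\subset X$ containing the general point $p\in Z$ — typically a smooth member of a natural linear system (for instance a general element of $|-K_X|$, or a fiber of a del Pezzo fibration) — and then apply Theorem~\ref{theorem:Hamid-Ziquan-Kento} to bound $\delta_p(X)$ from below by
$$
\min\Bigg\{\frac{1}{S_X(S)},\inf_{\substack{F/S\\ p\in C_S(F)}}\frac{A_S(F)}{S\big(W^S_{\bullet,\bullet};F\big)}\Bigg\}.
$$
Computing $S_X(S)$ via the Zariski decomposition of $-K_X-uS$, and estimating the flag-type invariant $S\big(W^S_{\bullet,\bullet};F\big)$ for divisorial valuations $F$ over $S$ centered at a general point, should yield both quantities strictly larger than $1$, contradicting $\delta_p(X)\leqslant 1$.

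\textbf{Handling the curve case and the main obstacle.} There is a parallel possibility in which $Z$ is the center of a divisor $\mathbf{F}$ whose contribution is governed by Theorem~\ref{theorem:Hamid-Ziquan-Kento-1}; there one bounds $A_X(\mathbf{F})/S_X(\mathbf{F})$ below by $\min\{1/S_X(S),\,1/S(W^S_{\bullet,\bullet};C)\}$ for the curve $C=Z\subset S$, and again one aims to show this exceeds $1$. The arithmetic constraints from the earlier lemmas are the crucial pruning tool: by Corollary~\ref{corollary:odd-degree-curves}, any $A$-invariant irreducible rational curve of odd anticanonical degree forces an $A$-fixed point, which is excluded, so $Z$ must have even degree and this sharply limits the numerical possibilities for $Z$. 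I expect the main obstacle to be the case analysis itself: the three families have genuinely different geometry, so I would need a separate (though structurally identical) Zariski-decomposition computation for each, and the delicate part is verifying that the general-point estimate for $S\big(W^S_{\bullet,\bullet};F\big)$ stays below the threshold forcing $\delta_p(X)>1$ precisely when $Z$ has the few degrees left open by the divisibility and intersection-theoretic constraints. Pinning down which low-degree curves $Z$ survive all the constraints, and ruling each out either by the fixed-point lemmas or by the local $\delta$-estimate, is where the real work lies.
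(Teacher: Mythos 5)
Your setup (Lemma~\ref{lemma:F}, the center $Z$ being an $A$-invariant irreducible curve, equivariance of the contractions via Lemma~\ref{lemma:Prokhorov}, and the use of the del Pezzo fibration $\phi\colon X\to\mathbb{P}^1$ to produce a surface through a general point of $Z$) matches the paper's proof. But there is a genuine gap in how you plan to close the argument. You propose to win by a purely numerical estimate, showing $\delta_p(X)>1$ at a general $p\in Z$ via Theorem~\ref{theorem:Hamid-Ziquan-Kento}. That cannot work on its own here: the Abban--Zhuang-type bound for a fiber $S$ of $\phi$ only gives $\delta_p(X)\geqslant\min\{\tfrac{16}{11},\tfrac{16}{15}\delta(S)\}$, which exceeds $1$ only if $\delta(S)>\tfrac{15}{16}$, and the fiber containing $Z$ could a priori be a K-unstable singular del Pezzo surface. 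The paper's actual mechanism is a dichotomy you do not identify: if $\phi(Z)=\mathbb{P}^1$ then the fiber through a general $p\in Z$ is smooth, hence K-polystable, and the estimate forces $\delta(S)<1$ --- contradiction; so $Z$ must lie in a single fiber $S$, which is then $A$-invariant. At that point the hypothesis that $A$ has no fixed points is used in an essentially non-numerical way through Theorem~\ref{theorem:dP}: an $A$-invariant fiber with Du Val singularities is a del Pezzo surface with quotient singularities on which $A$ acts without fixed points, hence K-polystable, again contradicting $\delta(S)<1$. The one fiber type escaping this (a cubic cone over an elliptic curve, occurring only in Family \textnumero 2.5) is killed because its vertex would be an $A$-fixed point. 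Without Theorem~\ref{theorem:dP} your case analysis of ``low-degree curves $Z$'' has no way to exclude $Z$ sitting inside a genuinely K-unstable singular fiber.

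Two smaller omissions: the paper must first rule out $Z\subset E$ (the $\pi$-exceptional surface over the elliptic curve $C$), which it does by computing $S_X(E)=\tfrac{3}{8}$ and bounding $S(W^E_{\bullet,\bullet};Z)\leqslant\tfrac{11}{16}<1$, contradicting Theorem~\ref{theorem:Hamid-Ziquan-Kento-1}; your proposal does not address this case. Also, the parity constraint from Corollary~\ref{corollary:odd-degree-curves} that you lean on plays no role in this lemma (it is used in Lemma~\ref{lemma:pointless-1-9}), and a general member of $|-K_X|$ is not the right surface here --- the fibers of $\phi$ are.
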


\begin{proof}
In the~following diagram
\begin{equation}
\label{equation:2-5-link}
\xymatrix@R=1em{
&X\ar@{->}[ld]_{\pi}\ar@{->}[rd]^{\phi}&&\\%
V&&\mathbb{P}^1}
\end{equation}
$V$ is one of the~following 3-folds: a~smooth cubic 3-fold in $\mathbb{P}^4$ (if $X$ is contained in Family \textnumero 2.5),
a~smooth complete intersection of two quadrics in $\mathbb{P}^5$ (if $X$ is contained in Family \textnumero 2.10), or
a smooth divisor of degree $(1,1)$ in $\mathbb{P}^2\times\mathbb{P}^2$ (if $X$ is contained in Family \textnumero 3.7),
the morphism $\pi$ is the~blowup of an $A$-invariant smooth elliptic curve $C\subset V$,
and $\phi$ is a fibration into del Pezzo surfaces of degree $d$, where
$$
d=\left\{\aligned
&3\ \text{if $X$ contained in Family \textnumero 2.5}, \\
&4\ \text{if $X$ contained in Family \textnumero 2.10}, \\
&6\ \text{if $X$ contained in Family \textnumero 3.7}.
\endaligned
\right.
$$
Note that every fiber of $\phi$ is irreducible, reduced and normal.
Moreover, if $X$ is contained in Family \textnumero 2.10 or in Family \textnumero 3.7,
then all fibers of $\phi$ have at worst Du Val singularities.
Furthermore, if $X$ is contained in Family \textnumero 2.5 or Family~\textnumero 2.10,
then \eqref{equation:2-5-link} is $A$-equivariant by Lemma~\ref{lemma:Prokhorov}.
Finally, if $X$ is contained in Family~\textnumero 3.7, then \eqref{equation:2-5-link} is also $A$-equivariant,
which can be shown using the~description of the~Mori cone $\overline{\mathrm{NE}}(X)$ presented in \cite[\S~III.3]{Matsuki}.

Suppose that $X$ is not K-polystable. Let $\mathbf{F}$ be an $A$-invariant divisor whose existence is asserted by Lemma~\ref{lemma:F},
and let $Z$ be its center on $X$. Then, as explained above, $Z$ is an irreducible $A$-invariant curve.

Let $E$ be the~$\pi$-exceptional surface.
Then the~natural projection $E\to C$ and the~restriction $\phi\vert_{E}\colon E\to\mathbb{P}^1$ give isomorphism $E\simeq C\times\mathbb{P}^1$.
We claim that $Z\not\subset E$. Indeed, suppose that $Z\subset E$. Then $\pi(Z)=C$, since otherwise $\pi(Z)$ would be an $A$-fixed point which is impossible by Theorem~\ref{theorem:Kollar-Szabo}. Let $H$ be the~class of a divisor in $\mathrm{Pic}(V)$ such that $-K_{V}\sim 2H$,
and let $u$ be a~non-negative real number. Then
$$
-K_{X}-uE\sim_{\mathbb{R}}\pi^*(2H)-(1+u)E,
$$
which implies that the divisor $-K_{X}-uE$ is pseudoeffective if and only if  $u\leqslant 1$,
and for every $u\in[0,1]$, the~divisor $-K_{X}-uE$ is nef.
This gives $S_{X}(E)=\frac{1}{4d}\int\limits_{0}^{1}\big(-K_{X}-uE\big)^3du=\frac{1}{4d}\int\limits_{0}^{1}d(2u^3-6u+4)du=\frac{3}{8}$.
Then we have $S(W_{\bullet,\bullet}^{E}; Z)\geqslant 1$ by Theorem~\ref{theorem:Hamid-Ziquan-Kento-1}, where
$S\big(W_{\bullet,\bullet}^{E}; Z\big)=\frac{3}{4d}\int\limits_{0}^{1}\int\limits_0^\infty \mathrm{vol}\big((-K_{X}-uE)\vert_{S}-vZ\big)dvdu$.
Let $\mathbf{s}$ be a~fiber of the~restriction $\phi\vert_{E}\colon E\to\mathbb{P}^1$,
and let $\mathbf{f}$ be a~fiber of the~natural projection $E\to C$.
Then $Z\equiv a\mathbf{s}+b\mathbf{f}$
for some non-negative integers $a$ and $b$, where $a\geqslant 1$, since $\pi(Z)=C$.
This gives
\begin{multline*}
1\leqslant S\big(W_{\bullet,\bullet}^{E}; Z\big)\leqslant\frac{3}{4d}\int\limits_{0}^{1}\int\limits_0^\infty \mathrm{vol}\Big(\big(-K_{X}-uE\big)\big\vert_{S}-v\mathbf{s}\Big)dvdu=\\
=\frac{3}{4d}\int\limits_{0}^{1}\int\limits_{0}^{1+u}\big((1+u-v)\mathbf{s}+d(1-u)\mathbf{f}\big)^2dvdu=\frac{3}{4d} \int\limits_{0}^{1} \int\limits_{0}^{1+u} 2d(1-u)(1+u-v)dvdu=\frac{11}{16},
\end{multline*}
which is absurd. Hence, we conclude that $Z\not\subset E$.

Let $p$ be sufficiently general point in $Z$, and let $S$ be the~fiber of $\phi$ that contains $p$. Then $\delta_p(X)\leqslant 1$. Note that $p\not\in E$, since $Z\not\subset E$.
Thus, if $S$ has Du Val singularities, then
\begin{equation}
\label{equation:2-5}
1\geqslant\delta_p(X)\geqslant\mathrm{min}\Big\{\frac{16}{11},\frac{16}{15}\delta(S)\Big\}.
\end{equation}
Indeed, if $X$ is contained in Family \textnumero 2.5 or in Family \textnumero 2.10, this follows from \cite[Lemma 2.1]{CheltsovDenisovaFujita}.
Similarly, if $X$ is contained in Family \textnumero 3.7,
then \eqref{equation:2-5} follows from the~proof of \cite[Lemma 2.1]{CheltsovDenisovaFujita}.
Hence, if $S$ has Du Val singularities, then \eqref{equation:2-5} gives $\delta(S)<1$, so $S$ is K-unstable.
On the~other hand, if $\phi(Z)=\mathbb{P}^1$, then the~surface $S$ is smooth and it is known to be K-polystable \cite{Book}.
Hence, we conclude that $\phi(Z)$ is a point in $\mathbb{P}^1$, so $Z\subset S$, which implies that $S$ is $A$-invariant.
Moreover, if $S$ has Du Val singularities, then the~surface $S$ is K-polystable by Theorem~\ref{theorem:dP}, which is a contradiction.
Hence, we see that $X$ is contained in Family \textnumero 2.5, and $S$ is a cubic cone,
so its vertex is fixed by $A$, which is a contradiction.
\end{proof}

\begin{lemma}
\label{lemma:pointless-2-12}
Let $X$ be a smooth Fano 3-fold in Family \textnumero 2.12, and let $A$ be a finite abelian subgroup in $\mathrm{Aut}(X)$ that does not fix points on $X$.
Then $X$ is K-polystable.
\end{lemma}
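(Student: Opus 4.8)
The plan is to exploit the symmetric cubo--cubic structure of Family \textnumero 2.12. A smooth member $X$ is the blowup $\pi\colon X\to\mathbb{P}^3$ of a smooth curve $C$ of degree $6$ and genus $3$, and the web of cubic surfaces through $C$ is homaloidal: on $X$ it resolves to a second blowdown $\pi'\colon X\to\mathbb{P}^3$ contracting an exceptional divisor $E'$ onto a curve $C'$ that is again of degree $6$ and genus $3$. Writing $H=\pi^*\mathcal{O}_{\mathbb{P}^3}(1)$, $E$ for the $\pi$-exceptional divisor and $H'=(\pi')^*\mathcal{O}_{\mathbb{P}^3}(1)=3H-E$, one has the symmetric relation $-K_X=4H-E=H+H'$, and the two extremal rays of $\overline{\mathrm{NE}}(X)$ are contracted by $\pi$ and $\pi'$. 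Since $\mathrm{Aut}(X)$ may interchange these rays, Family \textnumero 2.12 is excluded from Lemma~\ref{lemma:Prokhorov}; however the estimates below are local near a general point, so I will only use that $A$ preserves the unordered pair $\{E,E'\}$ and hence fixes $-K_X$. Following the recipe of this section, I assume $X$ is not K-polystable and let $\mathbf{F}$ be the $A$-invariant prime divisor over $X$ supplied by Lemma~\ref{lemma:F}, so that $\delta(X)=A_X(\mathbf{F})/S_X(\mathbf{F})\leqslant 1$. As $A$ has no fixed point on $X$ and a surface center is ruled out by \cite[Theorem~3.17]{Book}, the center $Z=C_X(\mathbf{F})$ is an irreducible $A$-invariant curve with $\delta_p(X)\leqslant 1$ for all $p\in Z$; the goal is to contradict this by proving $\delta_p(X)>1$ for general $p\in Z$.

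First I would exclude $Z\subset E$, and, by the $E\leftrightarrow E'$ symmetry, $Z\subset E'$. If $Z\subset E$ then $\pi(Z)$ cannot be a point (it would be $A$-fixed), so $\pi(Z)=C$ and $Z$ is a multisection of the ruled surface $E\to C$; computing the Zariski decomposition of $-K_X-uE$ (threshold $\tau=\tfrac12$, nef for $u\leqslant\tfrac13$ and negative part $(3u-1)E'$ for $u\in[\tfrac13,\tfrac12]$) and feeding $S_X(E)$ together with $S\big(W^E_{\bullet,\bullet};Z\big)$ into Theorem~\ref{theorem:Hamid-Ziquan-Kento-1} contradicts $\delta_p(X)\leqslant 1$, exactly as in Lemma~\ref{lemma:pointless-2-5-2-10-3-7}. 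Thus $\pi(Z)$ and $\pi'(Z)$ are both curves. Now for a general $p\in Z$ let $S$ be the strict transform of a general plane $\Pi\subset\mathbb{P}^3$ through $\pi(p)$; then $S\in|H|$ is a smooth cubic del Pezzo surface $\mathrm{Bl}_6\mathbb{P}^2$ (as $\Pi$ meets $C$ in $6$ points). Here the Zariski decomposition of $-K_X-uH$ is transparent: $\tau=\tfrac43$, with $P(u)=(1-u)H+H'$ for $u\in[0,1]$ and $P(u)=(4-3u)H'$ for $u\in[1,\tfrac43]$, which gives
\[
S_X(S)=\frac{1}{20}\left(\frac{35}{4}+\frac{1}{12}\right)=\frac{53}{120}<1 .
\]
Consequently Theorem~\ref{theorem:Hamid-Ziquan-Kento} reduces the problem to showing that $\inf_F A_S(F)/S\big(W^S_{\bullet,\bullet};F\big)>1$, the infimum taken over prime divisors $F$ over $S$ centred at $p$, where $S\big(W^S_{\bullet,\bullet};F\big)$ is computed from the restriction $P(u)\big|_S$ of the decomposition above.

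For a genuinely general point of the cubic surface $S$ this refined quantity exceeds $1$, so $\delta_p(X)>1$ and we are done; the analysis breaks down only for the short list of curves $Z$ whose projection $\pi(Z)$ or $\pi'(Z)$ is a line $\ell$, for which the destabilizing configuration survives. These remaining curves I would eliminate using $A$-invariance: such a line $\ell$ is preserved by the index-at-most-two subgroup $A^{+}\subseteq A$ that fixes both rays, and analysing the induced action of $A^{+}$ on $\ell\cong\mathbb{P}^1$ and invoking Lemma~\ref{lemma:lift} and Corollary~\ref{corollary:odd-degree-curves} through one of the two projections should produce an $A$-fixed point, contradicting the standing hypothesis; when instead $Z$ lies on an $A$-invariant del Pezzo surface, Theorem~\ref{theorem:dP} gives the contradiction directly. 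I expect the main obstacle to be precisely this endgame: carrying out the restricted-volume computation of $S\big(W^S_{\bullet,\bullet};F\big)$ on the cubic surface cleanly enough to secure the strict inequality at a general point, and then ruling out the finitely many line-centred curves $Z$ in all configurations — verifying the liftability hypothesis of Lemma~\ref{lemma:lift} in each case, and in particular handling the non-equivariant situation where $A$ interchanges $\pi$ and $\pi'$, which must be treated without passing $\pi$ itself through the fixed-point lemmas.
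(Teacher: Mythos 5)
Your set-up is sound and your numerology checks out: the cubo--cubic structure $-K_X=H+H'$ with $H'=3H-E$, the pseudo-effective threshold $\tau=\tfrac43$ for $-K_X-uH$, the Zariski decomposition $P(u)=(1-u)H+H'$ on $[0,1]$ and $P(u)=(4-3u)H'$ on $[1,\tfrac43]$, and the value $S_X(S)=\tfrac{53}{120}$ are all correct, as is the decomposition of $-K_X-uE$ with threshold $\tfrac12$. However, the proposal has a genuine gap exactly where you anticipate it: the inequality $A_S(F)>S\big(W^S_{\bullet,\bullet};F\big)$ for all prime divisors $F$ over $S$ centred at a general point $p\in Z$ is asserted, not proved, and this single estimate is the entire content of the lemma. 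The same applies to the claimed exclusion of $Z\subset E$ (you write down the decomposition but never evaluate $S_X(E)$ or bound $S\big(W^E_{\bullet,\bullet};Z\big)$) and, more seriously, to the endgame for the residual curves $Z$ with $\pi(Z)$ or $\pi'(Z)$ a line: invoking Theorem~\ref{theorem:dP} there does not by itself contradict $\delta_p(X)\leqslant 1$ without an explicit inequality relating $\delta_p(X)$ to $\delta$ of the invariant surface, of the kind established in Lemma~\ref{lemma:pointless-2-5-2-10-3-7} via \eqref{equation:2-5}. As written, the argument is a programme rather than a proof.

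For comparison, the paper does none of this: it deduces the lemma in one line from \cite[Corollary~3]{CheltsovLiMauPinardin}, where the Abban--Zhuang analysis for blowups of $\mathbb{P}^3$ along sextic curves of genus $3$ is carried out in full and yields that \emph{every} smooth member of Family \textnumero 2.12 is K-polystable --- so the hypothesis that $A$ has no fixed point is not even needed here. If you want a self-contained argument, you must actually execute the restricted-volume computation on the cubic surface $S$ (including the case where $p$ lies on one of the finitely many lines of $S$ or on $E\cap S$, $E'\cap S$) and close out the line-centred cases with a genuine fixed-point contradiction; note also that because $\mathrm{Aut}(X)$ may swap the two extremal rays (Family \textnumero 2.12 is excluded from Lemma~\ref{lemma:Prokhorov}), your appeals to Lemma~\ref{lemma:lift} and Corollary~\ref{corollary:odd-degree-curves} must be routed through the index-two subgroup $A^{+}$, and you must then separately handle the possibility that the relevant fixed point is only $A^{+}$-fixed rather than $A$-fixed.
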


\begin{proof}
The required assertion follows from \cite[Corollary~3]{CheltsovLiMauPinardin}.
\end{proof}

\begin{lemma}
\label{lemma:pointless-2-16}
Let $X$ be a smooth Fano 3-fold in Family \textnumero 2.16, and let $A$ be a finite abelian subgroup in $\mathrm{Aut}(X)$ that does not fix points on $X$.
Then $X$ is K-polystable.
\end{lemma}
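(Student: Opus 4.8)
The plan is to exploit the two extremal contractions of a member $X$ of Family \textnumero 2.16: the blowup $\pi\colon X\to V_4$ of a conic $C$ on the smooth intersection of two quadrics $V_4\subset\mathbb{P}^5$, with exceptional divisor $E$, and the conic bundle $\phi\colon X\to\mathbb{P}^2$ associated with $|\pi^*H-E|$, where $H$ is the hyperplane class on $V_4$ and $-K_X\sim 2\pi^*H-E$. By Lemma~\ref{lemma:Prokhorov} both contractions are $A$-equivariant, so $A$ acts on $V_4$ and on $\mathbb{P}^2$, the conic $C$ is $A$-invariant, and the discriminant curve $\Delta\subset\mathbb{P}^2$ is an $A$-invariant quartic (a general line $\ell\subset\mathbb{P}^2$ meets $\Delta$ in $8-K_{S}^2=4$ points, where $S=\phi^{-1}(\ell)$ satisfies $-K_S\sim\pi^*H\vert_S$ and $K_S^2=4$, i.e. $S$ is a smooth del Pezzo surface of degree $4$). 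Since $\gcd(4,3)=1$, Lemma~\ref{lemma:lift} lets me lift $A$ to a subgroup of $\mathrm{GL}_3(\mathbb{C})$; as $A$ is abelian this representation diagonalises, producing both $A$-fixed points and $A$-invariant lines in $\mathbb{P}^2$. In particular there is an $A$-invariant line, hence an $A$-invariant surface $S_0=\phi^{-1}(\ell_0)$, a (possibly weak) del Pezzo surface of degree $4$ on which $A$ acts without fixed points.

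Assuming $X$ is not K-polystable, Lemma~\ref{lemma:F} and the discussion following it produce an $A$-invariant irreducible curve $Z\subset X$ with $\delta_p(X)\leqslant 1$ for every $p\in Z$. I would first dispose of the case $Z\subset E$: then $\pi(Z)=C$ (an $A$-fixed point on $C$ is excluded by Theorem~\ref{theorem:Kollar-Szabo}), and computing $S_X(E)$ together with $S(W^E_{\bullet,\bullet};Z)$ from the Zariski decomposition of $-K_X-uE$, exactly as in the proof of Lemma~\ref{lemma:pointless-2-5-2-10-3-7}, gives a numerical contradiction with Theorem~\ref{theorem:Hamid-Ziquan-Kento-1}. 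So $Z\not\subset E$, and I split on the image $\phi(Z)\subset\mathbb{P}^2$.

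If $\phi(Z)$ is a point $q$, then $q$ is $A$-fixed and $Z$ lies in the $A$-invariant conic $F_q=\phi^{-1}(q)$. A reduced reducible fibre is impossible, since its node would be an $A$-fixed point of $X$; hence $F_q$ is a smooth conic or a double line, and $Z$ is contained in the $A$-invariant surface $\phi^{-1}(\ell_0')$ for an $A$-invariant line $\ell_0'\ni q$. Since $A$ acts on this surface without fixing points, its del Pezzo model is K-polystable by Theorem~\ref{theorem:dP}, and feeding $\delta\geqslant 1$ into Theorem~\ref{theorem:Hamid-Ziquan-Kento} (using $S_X(\phi^{-1}(\ell))=\tfrac{13}{22}<1$, from $\tau=2$, $P(u)^3=22-24u+6u^2$ on $[0,1]$ and $P(u)=(2-u)\pi^*H$ on $[1,2]$) forces $\delta_p(X)>1$, a contradiction. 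If instead $\phi(Z)$ is a curve, I take a general $p\in Z$ and the smooth degree-$4$ del Pezzo surface $S=\phi^{-1}(\ell)$ for a general line $\ell\ni\phi(p)$; then $p$ is a general point of $S$, and Theorem~\ref{theorem:Hamid-Ziquan-Kento} combines $1/S_X(S)=\tfrac{22}{13}>1$ with the refined divisorial estimate over $S$ to give $\delta_p(X)>1$, again contradicting $\delta_p(X)\leqslant 1$.

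The hardest step will be this last quantitative estimate: computing $S(W^S_{\bullet,\bullet};F)$ for prime divisors $F$ over the del Pezzo surface $S$ centred at a general point and verifying $A_S(F)/S(W^S_{\bullet,\bullet};F)>1$, which is exactly where the genuine K-stability computation for del Pezzo surfaces of degree $4$ enters (paralleling \cite[Lemma~2.1]{CheltsovDenisovaFujita} and the treatment of this family in \cite{Book}). A secondary subtlety is ensuring that the $A$-invariant surface used in the point case is smooth enough to apply Theorem~\ref{theorem:Hamid-Ziquan-Kento}, and handling the borderline situation in which $Z$ is a double-line fibre (so $-K_X\cdot Z=1$); there I expect to need the extra input, coming from the fixed-point-free action on $F_q\cong\mathbb{P}^1$, that $A$ surjects onto $(\mathbb{Z}/2\mathbb{Z})^2$.
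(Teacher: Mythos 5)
Your overall strategy is the paper's: both extremal contractions are $A$-equivariant, the case $Z\subset E$ is excluded by the computation of $S_X(E)$ and $S(W^E_{\bullet,\bullet};Z)$, and the remaining cases are settled by Theorem~\ref{theorem:Hamid-Ziquan-Kento} applied to degree-$4$ del Pezzo surfaces in $|H-E|$ with $S_X(S)=\tfrac{13}{22}$. But two of your case analyses have genuine gaps. First, the double-line fibre: you leave this open, hoping that the fixed-point-free action of $(\mathbb{Z}/2\mathbb{Z})^2$ on $F_q\cong\mathbb{P}^1$ will somehow help, but that alone gives no contradiction. The missing observation is geometric: a double-line fibre sits over a \emph{singular} point of $\Delta_4$, and the image $f(Z)$ of its reduction is a line in $V$ that meets the blown-up conic $C$ in exactly one point (every fibre of the projection from the plane spanned by $C$ meets that plane, hence meets $V\cap\Pi=C$); that intersection point is $A$-fixed, contradiction. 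The paper in fact proves up front that $A$ fixes no point of $\Delta_4$ by exactly this argument (plus the node argument for smooth points of $\Delta_4$), which is what removes all singular fibres from consideration.

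Second, in the case where $\phi(Z)$ is a point with smooth fibre, your route through an $A$-invariant surface $\phi^{-1}(\ell_0')$ and Theorem~\ref{theorem:dP} does not close. Knowing $\delta(S)\geqslant 1$ only controls $\int_0^\infty\mathrm{vol}(-K_S-vF)\,dv$, whereas $S(W^S_{\bullet,\bullet};F)$ integrates $\mathrm{vol}(P(u)\vert_S-vF)$ with $P(u)\vert_S=-K_S+(1-u)\mathscr{C}$ on $[0,1]$; the crude comparison $D_u\leqslant(2-u)(-K_S)$ only yields $S(W^S_{\bullet,\bullet};F)\leqslant\tfrac{3}{22}\cdot 4\cdot 4A_S(F)=\tfrac{24}{11}A_S(F)$, nowhere near the bound $<A_S(F)$ you need. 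Moreover the invariant surface may be singular. Since $\delta_p(X)\leqslant 1$ is not an equivariant condition, there is no need for an invariant surface: the paper takes a \emph{general} smooth $S\in|H-E|$ through the fibre and invokes the refined pointwise estimates $\delta_p(S,D_u)$ of \cite[Lemma~23]{CheltsovFujitaKishimotoOkada}, exactly as in your curve case. Finally, in the curve case note that if $\phi(Z)$ is a component of $\Delta_4$ then a general $p\in Z$ lies on a line of a reducible fibre, hence on a $(-1)$-curve of $S$ and is \emph{not} a general point of $S$; you must run the ``reducible fibre'' branch of the estimate (yielding $\delta_p(X)\geqslant\tfrac{88}{85}$ rather than $\tfrac{176}{161}$), which your sketch glosses over by asserting genericity of $p$ in $S$.
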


\begin{proof}
Using Lemma~\ref{lemma:Prokhorov},
we see that there exists an $A$-equivariant birational morphism $f\colon X\to V$ such that $V$ is a smooth complete intersection of two quadrics in $\mathbb{P}^5$,
and $f$ is a blowup of a conic $C\subset V$.
Moreover, we have the~following $A$-equivariant commutative diagram:
$$
\xymatrix@R=1em{
&X\ar@{->}[ld]_{f}\ar@{->}[rd]^{\pi}&&\\%
V\ar@{-->}[rr]&&\mathbb{P}^2}
$$
where $\pi$ is a conic bundle whose discriminant curve is a (possibly singular) reduced quartic curve $\Delta_4\subset\mathbb{P}^2$,
and the~dashed arrow is the~projection from the~plane in $\mathbb{P}^5$ containing $C$.
Note that $A$ does not fix points in $V$ by Theorem~\ref{theorem:Kollar-Szabo}, but it follows from Lemma~\ref{lemma:lift} that $A$ fixes a point in $\mathbb{P}^2$.

We claim that $A$ does not fix points in the~curve $\Delta_4$. Indeed, suppose that $A$ fixes a point $p\in\Delta_4$.
Let $Z$ be the~fiber of the~conic bundle $\pi$ over $p$ with reduced structure. Then $Z$ is an $A$-invariant curve.
Moreover, if $p$ is a smooth point of the~curve $\Delta_4$, then $Z$ has one singular point, so this point must be fixed by $A$, which contradicts our assumption.
Thus, we see that $p$ is a singular point of the~curve $\Delta_4$.
Then $f(Z)$ is a line in $V$ that intersects  the conic $C$ by one point,
so the~intersection point $f(Z)\cap C$ must be fixed by $A$, which is impossible.
Thus, we see that $A$ does not fix points in $\Delta_4$.

Let $E$ be the~$f$-exceptional surface, and let $H$ be the~preimage on $X$ of a sufficiently general hyperplane section of the~complete intersection $V$.
Then $\pi$ is given by $|H-E|$.
We claim that $E\simeq\mathbb{P}^1\times\mathbb{P}^1$ or $E\simeq\mathbb{F}_2$.
Indeed, the~normal bundle of the~conic $C\simeq \mathbb{P}^1$ is $\mathcal{O}_{\mathbb{P}^1}(a)\oplus \mathcal{O}_{\mathbb{P}^1}(b)$, where $a\leqslant b$ and $a+b=2$.
Set $n=b-a$. Then $E\simeq\mathbb{F}_{n}$.  Let $\mathbf{l}$ be a fiber of the~natural projection $E\to C$, and let $\mathbf{s}$ be a section of this projection such that $\mathbf{s}^2=-n$.
Then $H\vert_{E}\sim 2\mathbf{l}$ and $-E\big\vert_{E}\sim\mathbf{s}+\frac{n-2}{2}\mathbf{l}$.
Thus, since $(H-E)\vert_{E}$ is nef, we get
$$
0\leqslant (H-E)\vert_{E}\cdot\mathbf{s}=\frac{2-n}{2},
$$
so $n=0$ or $n=2$ as claimed.
If $n=0$, the~restriction $\pi\vert_{E}\colon E\to\mathbb{P}^2$ is a double cover ramified in a conic.
Similarly, if $n=2$, then $\pi\vert_{E}$ is a contraction of the~$(-2)$-curve $\mathbf{s}$ followed by the~double cover ramified in a union of two distinct lines.

Now, we are ready to show that $X$ is K-polystable.
Assume the contrary and let $\mathbf{F}$ be an $A$-invariant divisor whose existence is asserted by Lemma~\ref{lemma:F},
and let $Z$ be its center on $X$. Then, as explained above, $Z$ is an $A$-invariant curve in $X$. We claim that $Z\not\subset E$.
Indeed, suppose that $Z\subset E$. Then $f(Z)=C$, since $A$ does not fix points in $C$.
Let $u$ be a non-negative real number. Then  $-K_X-uE$ is pseudoeffective if and only if $u\leqslant 1$.
Moreover, the~divisor $-K_X-uE$ is nef for $u\in[0,1]$.
Therefore, integrating $(-K_X-uE)^3=2u^3-6u^2-18u+22$, we obtain $S_X(E)=\frac{23}{44}$.
Then, it follows from Theorem~\ref{theorem:Hamid-Ziquan-Kento-1} that
\begin{equation}
\label{equation:2-16-E}
1\geqslant\frac{A_X(\mathbf{F})}{S_X(\mathbf{F})}\geqslant \min\left\{\frac{44}{23},\frac{1}{S(W_{\bullet,\bullet}^E;Z)}\right\}
\end{equation}
where $S\big(W_{\bullet,\bullet}^E; Z\big)=\frac{3}{22}\int\limits_{0}^{1}\int\limits_0^\infty \mathrm{vol}\big((-K_X-uE)\big\vert_{E}-vZ\big)dvdu$.
On the~other hand, we have
$$
(-K_X-uE)\big\vert_{E}\sim_{\mathbb{R}}\left\{\aligned
&(1+u)\mathbf{s}+(3-u)\mathbf{l}\  \text{if $E\cong\mathbb{P}^1\times\mathbb{P}^1$}, \\
&(1+u)\mathbf{s}+4\mathbf{l}\  \text{if $E\cong\mathbb{F}_2$},
\endaligned
\right.
$$
Hence, if  $E\cong\mathbb{P}^1\times\mathbb{P}^1$, then
\begin{multline*}
S\big(W^E_{\bullet,\bullet};Z\big)\leqslant\frac{3}{22}\int\limits_0^1\int\limits_0^\infty \mathrm{vol}\big((-K_X-uE)\big\vert_{E}-v\mathbf{s}\big)dvdu=\\
=\frac{3}{22}\int\limits_0^1\int\limits_0^{1+u}\big((1+u-v)\mathbf{s}+(3-u)\mathbf{l}\big)^2dvdu=\frac{3}{22}\int\limits_0^1\int\limits_0^{1+u}2(3-u)(1+u-v)dvdu=\frac{67}{88},
\end{multline*}
because $|Z-\mathbf{s}|\ne\varnothing$, since $f(Z)=C$. Similarly, if $E\cong\mathbb{F}_2$, we estimate $S(W^E_{\bullet,\bullet};Z)\leqslant\frac{41}{44}$.
In both cases, we have $S(W^E_{\bullet,\bullet};Z)<1$, which contradicts \eqref{equation:2-16-E}.
This shows that $Z\not\subset E$ as claimed.

Since $A$ does not fix points in $\Delta_4$, we see that $\pi(Z)$ is not a point in the~curve $\Delta_4$.
Then
\begin{enumerate}
\item either $\pi(Z)$ is a point in $\mathbb{P}^2$ and $Z$ is a smooth fiber of the~conic bundle $\pi$,
\item or $\pi(Z)$ is a curve in $\mathbb{P}^2$, which can be an irreducible component of the~curve $\Delta_4$.
\end{enumerate}
In both cases, let $p$ be a sufficiently general point in $Z$, let $\mathscr{C}$ be the~fiber of the~conic bundle $\pi$ that contains $p$,
and let $S$ be a general surface in $|H-E|$ that contains $\mathscr{F}$.
Then $\mathscr{C}$ is reduced, which implies that the~surface $S$ is smooth.
Note that $p\not\in E$, since $Z\not\subset E$.
Recall that $\delta_p(X)\leqslant 1$ by assumption.

We claim that $S$ is a smooth del Pezzo surface of degree $4$.
Indeed, since$-K_S\sim H\vert_{S}$ and $(-K_S)^2=4$, the~divisor $-K_S$ is nef and big,
and the~only curves that have trivial intersection with it are the~fibers of the~natural projection $E\to C$ contained in $S$.
On the~other hand, the~intersection $E\cap S$ is an irreducible curve that is not a fiber of the~projection $E\to C$ unless $E\simeq\mathbb{F}_2$ and $\mathscr{C}=\mathbf{s}$.
The~latter possibility cannot hold in our case, since $p\in\mathscr{C}$ and $p\not\in E$.
Thus, we see that $S$ is a smooth del Pezzo surface of degree $4$.

Let us apply Theorem~\ref{theorem:Hamid-Ziquan-Kento} to $S$ to estimate $\delta_p(X)$ from below.
Since $-K_X-uS\sim_{\mathbb{R}}(2-u)H-(1-u)E$,
the~divisor $-K_X-uS$ is pseudoeffective for $u\in[0,2]$, and it is not pseudoeffective for $u>2$.
For $u\in[0,2]$, let $P(u)$ be the~positive part of the~Zariski decomposition of the~divisor $-K_X-uS$
and let $N(u)$ be its negative part. Then
$$
P(u)=
\left\{\aligned
&(2-u)H-(1-u)E \ \text{if $0\leqslant u\leqslant 1$}, \\
&(2-u)H\ \text{if $1\leqslant u\leqslant 2$},
\endaligned
\right.
$$
and
$$
N(u)=\left\{\aligned
&0 \ \text{if $0\leqslant u\leqslant 1$}, \\
&(u-1)E\ \text{if $1\leqslant u\leqslant 2$}.
\endaligned
\right.
$$
Thus, integrating, we get $S_X(S)=\frac{1}{22}\int\limits_{0}^{2}\big(P(u)\big)^3du=\frac{1}{22}\int\limits_{0}^{1}6u^2-24u+22du+\frac{1}{22}\int\limits_{1}^{2}4(2-u)^3du=\frac{13}{22}$,
so it follows from Theorem~\ref{theorem:Hamid-Ziquan-Kento} that
\begin{equation}
\label{equation:flag-3-Abban-Zhuang}
\delta_p(X)\geqslant\min\Bigg\{\frac{22}{13},\inf_{\substack{F/S\\p\in C_S(F)}}\frac{A_S(F)}{S\big(W^S_{\bullet,\bullet};F\big)}\Bigg\},
\end{equation}
where $S\big(W^S_{\bullet,\bullet};F\big)=\frac{3}{22}\int\limits_0^2\int\limits_0^\infty \mathrm{vol}\big(P(u)\big\vert_{S}-vF\big)dvdu$, since $p\not\in E$,
and the~infimum in \eqref{equation:flag-3-Abban-Zhuang} is taken by all prime divisors over $S$ whose center on $S$ contains $p$.
To estimate this infimum, we note that
$$
P(u)\big\vert_{S}\sim_{\mathbb{R}}
\left\{\aligned
&-K_S+(1-u)\mathscr{C} \ \text{if $0\leqslant u\leqslant 1$}, \\
&(2-u)(-K_S)\ \text{if $1\leqslant u\leqslant 2$},
\endaligned
\right.
$$
we set $D_u=-K_S+(1-u)\mathscr{F}$, and we let
$$
\delta_p(S,D_u)=\inf_{\substack{F/S\\ p\in C_S(F)}}\frac{A_{S}(F)}{S_{D_u}(F)},
$$
where the~infimum is taken over all prime divisors $F$ over the~surface $S$ whose center on $S$ contains $p$,
and
$$
S_{D_u}(F)=\frac{1}{D_u^2}\int\limits_0^\infty \mathrm{vol}\big(D_u-vF\big)du=\frac{1}{8-4u}\int\limits_0^\infty \mathrm{vol}\big(D_u-vF\big)du.
$$
Then if follows from \cite[Lemma 23]{CheltsovFujitaKishimotoOkada} that
\begin{equation}
\label{equation:dP4-C-delta}
\delta_p(S,D_u)\geqslant
\left\{\aligned
&\frac{24}{u^2-10u+28}\ \text{if $\mathscr{C}$ is irreducible}, \\
&\frac{48-24u}{12u^2-54u+61}\ \text{if $\mathscr{C}$ is reducible},
\endaligned
\right.
\end{equation}
and it follows from \cite[Lemma 2.12]{Book} that $\delta_p(S,D_1)=\delta_p(S)\geqslant\delta(S)\geqslant\frac{4}{3}$.
Thus, if $\mathscr{C}$ is irreducible, then
\begin{multline*}
S\big(W^S_{\bullet,\bullet};F\big)=\frac{3}{22}\int\limits_0^1\int\limits_0^\infty \mathrm{vol}\big(-K_S+(1-u)C-vF\big)dvdu+\frac{3}{22}\int\limits_1^2\int\limits_0^\infty \mathrm{vol}\big((2-u)(-K_S)-vF\big)dvdu\leqslant\\
\leqslant\frac{3}{22}A_S(F)\int\limits_0^1\frac{(8-4u)(u^2-10u+28)}{24}du+\frac{3}{22}\int\limits_1^2\int\limits_0^\infty \mathrm{vol}\big((2-u)(-K_S)-vF\big)dvdu=\\
=\frac{13}{16}A_S(F)+\frac{3}{22}\int\limits_1^2\int\limits_0^\infty (2-u)^2\mathrm{vol}\big(-K_S-\frac{v}{2-u}F\big)dvdu=\\
=\frac{13}{16}A_S(F)+\frac{3}{22}\int\limits_1^2(2-u)^3\int\limits_0^\infty\mathrm{vol}\big(-K_S-vF\big)dvdu\leqslant\frac{13}{16}A_S(F)+\frac{3}{22}\int\limits_1^2(2-u)^3\frac{4}{\delta_p(S)}A_S(F)du\leqslant \\
\leqslant\frac{13}{16}A_S(F)+\frac{3}{22}\int\limits_1^23(2-u)^3A_S(F)du=\frac{13}{16}A_S(F)+\frac{9}{88}A_S(F)=\frac{161}{176}A_S(F)
\end{multline*}
for every prime divisor $F$ over $S$ such that $p\in C_S(F)$, so it follows from \eqref{equation:flag-3-Abban-Zhuang} that $\delta_p(X)\geqslant\frac{176}{161}$ in this case.
Similarly, if $\mathscr{C}$ is reducible, get $\delta_p(X)\geqslant\frac{88}{85}$.
So, $\delta_p(X)>1$ in both cases, which is a contradiction.
\end{proof}

\begin{lemma}
\label{lemma:pointless-2-21}
Let $X$ be a smooth Fano 3-fold in Family \textnumero 2.21, and let $A$ be a finite abelian subgroup in $\mathrm{Aut}(X)$ that does not fix points on $X$.
Then $X$ is K-polystable.
\end{lemma}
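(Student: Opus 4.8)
The plan is to follow the template of the proofs of Lemmas~\ref{lemma:pointless-2-5-2-10-3-7} and~\ref{lemma:pointless-2-16}, adapted to the fact that Family~\textnumero 2.21 is one of the four self-symmetric families excluded in Lemma~\ref{lemma:Prokhorov}. Recall that $X$ is the blowup $\pi\colon X\to Q$ of a smooth quadric $3$-fold $Q\subset\mathbb{P}^4$ along a smooth rational normal quartic curve $C$, with $(-K_X)^3=28$; writing $H=\pi^*\mathcal{O}_Q(1)$ and $E$ for the exceptional divisor, we have $-K_X\sim 3H-E$. The family is symmetric: there is a second divisorial contraction $\pi'\colon X\to Q'$ onto a smooth quadric with exceptional divisor $E'\sim 3H-2E$, the other nef generator is $H'\sim 2H-E$, and $H+H'\sim E+E'\sim -K_X$. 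Since Lemma~\ref{lemma:Prokhorov} does not apply, $A$ may \emph{a priori} interchange the two extremal rays; we will see this causes no real trouble. Assuming $X$ is not K-polystable, Lemma~\ref{lemma:F} produces an $A$-invariant prime divisor $\mathbf{F}$ with $A_X(\mathbf{F})/S_X(\mathbf{F})=\delta(X)\leqslant 1$ whose center $Z$ is, as explained at the start of this section, an $A$-invariant irreducible curve with $\delta_p(X)\leqslant 1$ for every $p\in Z$.

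First I would show $Z\not\subset E\cup E'$. As $E$ and $E'$ are interchanged by the symmetry, it suffices to treat $Z\subset E$. If $A$ preserves the rays, then $\pi$ is $A$-equivariant and $\pi(Z)=C$, since otherwise $\pi(Z)$ would be an $A$-fixed point of $Q$, contradicting Theorem~\ref{theorem:Kollar-Szabo}; if $A$ interchanges the rays, then $Z\subset E\cap E'$, and because $E\cap E'$ is a multisection of the ruling $E\to C$ we again get $\pi(Z)=C$. A Zariski-decomposition computation for $-K_X-uE=3H-(1+u)E$ (nef for $u\in[0,\tfrac12]$; positive part $(3-3u)H'$ and negative part $(2u-1)E'$ for $u\in[\tfrac12,1]$; pseudoeffective threshold $u=1$) gives $S_X(E)=\tfrac{19}{56}$, so $1/S_X(E)>1$. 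Then, exactly as in the proof of Lemma~\ref{lemma:pointless-2-16}, bounding $\mathrm{vol}\big((-K_X-uE)|_E-vZ\big)$ from above by the contribution of a section $\mathbf{s}$ of $E\to C$ (legitimate because $\pi(Z)=C$ forces $Z\cdot\mathbf{f}\geqslant 1$ on a fibre $\mathbf{f}$) yields $S(W^E_{\bullet,\bullet};Z)<1$, contradicting Theorem~\ref{theorem:Hamid-Ziquan-Kento-1}.

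Now fix a general point $p\in Z$, so $p\notin E\cup E'$, and let $S\in|H|$ be a general surface through $p$. Since $|H|=|\pi^*\mathcal{O}_Q(1)|$ and $C$ is a quartic spanning $\mathbb{P}^4$, $S$ is the blowup of a smooth quadric surface at the four points $C\cap S$, i.e.\ a smooth del Pezzo surface of degree $4$, with $(H|_S)^2=2$, $(-K_S)^2=4$ and $H|_S\cdot(-K_S)=4$. I would apply Theorem~\ref{theorem:Hamid-Ziquan-Kento} at $p$. The divisor $-K_X-uS=(3-u)H-E$ is pseudoeffective exactly for $u\in[0,\tfrac32]$, with positive part $P(u)=(3-u)H-E$ for $u\in[0,1]$ and $P(u)=(3-2u)H'$, $N(u)=(u-1)E'$ for $u\in[1,\tfrac32]$; integrating gives $S_X(S)=\tfrac{51}{112}$, whence $1/S_X(S)=\tfrac{112}{51}>1$. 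Restricting, $P(u)|_S\sim_{\mathbb{R}}-K_S+(1-u)\,H|_S$ for $u\in[0,1]$ and $P(u)|_S\sim_{\mathbb{R}}(3-2u)(-K_S)$ for $u\in[1,\tfrac32]$. As $p\notin E'$, the negative part contributes nothing, so for every prime divisor $F$ over $S$ with $p\in C_S(F)$,
\[
S\big(W^S_{\bullet,\bullet};F\big)=\frac{3}{28}\int_0^{3/2}\int_0^\infty\mathrm{vol}\big(P(u)|_S-vF\big)\,dv\,du .
\]
Writing $D_u=-K_S+(1-u)H|_S$ and estimating $S(W^S_{\bullet,\bullet};F)$ through $\delta_p(S,D_u)$ together with $\delta_p(S)\geqslant\delta(S)\geqslant\tfrac43$ for the smooth degree-$4$ del Pezzo surface, one obtains $S(W^S_{\bullet,\bullet};F)<A_S(F)$ for all such $F$. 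Hence $\delta_p(X)>1$ by Theorem~\ref{theorem:Hamid-Ziquan-Kento}, contradicting $\delta_p(X)\leqslant 1$ and proving that $X$ is K-polystable.

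The main obstacle is the final flag estimate. In the fibration families the relevant restriction $P(u)|_S$ is twisted by a conic-bundle fibre, a class of self-intersection $0$, so one may quote \cite[Lemma~23]{CheltsovFujitaKishimotoOkada} directly; here the twisting class $H|_S$ has positive self-intersection $(H|_S)^2=2$. Thus I would need to prove a new upper bound for $\int_0^\infty\mathrm{vol}(D_u-vF)\,dv$ on the degree-$4$ del Pezzo surface with $D_u=-K_S+(1-u)H|_S$, splitting into cases according to whether the curve through $p$ in the class $H|_S$ is irreducible, and then verify after integration in $u$ that the coefficient of $A_S(F)$ stays below $1$. A secondary, routine, point is the bookkeeping for the case where $A$ swaps the two extremal rays, which only shrinks the excluded locus to $E\cap E'$ and is absorbed by the symmetry $E\leftrightarrow E'$.
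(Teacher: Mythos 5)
Your reduction to Lemma~\ref{lemma:F}, the exclusion of $Z\subset E\cup E'$, and the numerical set-up are sound: I checked that $(-K_X)^3=28$, $E'\sim 3H-2E$, $H'\sim 2H-E$, $S_X(E)=\tfrac{19}{56}$, $S_X(S)=\tfrac{51}{112}$, and that a general $S\in|H|$ through a general $p\in Z$ is a smooth del Pezzo surface of degree $4$ with $P(u)|_S\sim_{\mathbb{R}}-K_S+(1-u)H|_S$. But the proof stops exactly where the difficulty begins: the inequality $S(W^S_{\bullet,\bullet};F)<A_S(F)$ for all $F$ over $S$ centred at $p$ is asserted, then conceded in your last paragraph to require ``a new upper bound'' that you do not supply. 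This is not a routine verification. Unlike the fibration cases, where the twisting class is a fibre with square zero and \cite[Lemma~23]{CheltsovFujitaKishimotoOkada} applies, here $H|_S$ has $(H|_S)^2=2$ and one cannot fall back on $\delta_p(S,D_u)\geqslant 1$: writing $S$ as the blowup of a quadric surface at the four points of $C\cap S$ and taking $F$ to be a ruling line through $p$, one finds $\int_0^\infty\mathrm{vol}(D_0-vF)\,dv\geqslant 16$ while $(D_0)^2=14$, so $\delta_p(S,D_0)\leqslant\tfrac{7}{8}<1$. Worse, carrying out the computation for $F=$ the exceptional divisor of the blowup of $p$ gives $\int_0^1\!\int_0^\infty\mathrm{vol}(D_u-v F)\,dv\,du=\int_2^3(2t^3-8t+\tfrac{16}{3})\,dt=\tfrac{107}{6}$ and hence $S(W^S_{\bullet,\bullet};F)=\tfrac{3}{28}(\tfrac{107}{6}+\tfrac{2}{3})=\tfrac{111}{56}\approx 1.982$ against $A_S(F)=2$ --- a margin under one percent. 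With the infimum ranging over quasi-monomial combinations of this valuation and the ruling lines through $p$, it is entirely unclear that the bound survives; establishing it would require a further Abban--Zhuang step with a flag $p\in\ell\subset S$, i.e.\ a substantial new computation whose outcome you have not verified. As written, the argument is a plausible programme, not a proof.

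For contrast, the paper's proof takes a completely different and much shorter route, outsourcing all estimates to Malbon's work on this family: from \cite{Malbon} one has $\mathrm{Aut}(X)\simeq\mathrm{Aut}(Q;C)\times\mathbb{Z}/2\mathbb{Z}$ together with the list of possible groups $\mathrm{Aut}(Q;C)$, and from \cite{Malbon2024} the criterion that $X$ is K-polystable if and only if $\mathrm{Aut}(Q;C)$ contains $(\mathbb{Z}/2\mathbb{Z})^2$; alternatively, \cite[Technical Theorems 1 and 2]{Malbon2024} already show that any destabilizing centre lies in $E\cup E'$ and is contracted to a point by $\pi$ or $\pi'$, whence the two extremal rays are $A$-invariant and $A$ fixes that point, contradicting Theorem~\ref{theorem:Kollar-Szabo}. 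Note that this second argument runs in the opposite direction to yours: rather than excluding $Z\subset E\cup E'$ and estimating $\delta_p$ off the exceptional locus, it uses the known localisation of destabilizing centres \emph{inside} $E\cup E'$ to manufacture a fixed point. If you want a self-contained argument, adapting that localisation (or simply citing Malbon's technical theorems) is far more economical than redoing the full flag computation.
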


\begin{proof}
The required assertion follows from \cite{Malbon,Malbon2024}.
Indeed, it follows from  \cite{Malbon} that there exists an involution $\tau\in\mathrm{Aut}(X)$ such that such that $\mathrm{Pic}(X)^{\langle\tau\rangle}=\mathbb{Z}[-K_X]$, and we have the following commutative diagram:
$$
\xymatrix@R=1em{
X\ar@{->}[d]_{\pi}\ar@{->}[rr]^{\tau}&&Q\ar@{->}[d]^{\pi}\\%
Q\ar@{-->}[rr]_{\iota}&&Q&}
$$
where $Q$ is a smooth quadric 3-fold in $\mathbb{P}^4$,
$\pi$ is the blowup of a smooth rational twisted quartic curve $C$,
and $\iota$ is a birational involution given by the linear subsystem in $|\mathcal{O}_Q(2)|$ consisting of surfaces passing through $C$. 
Moreover, since $\pi$ is $\mathrm{Aut}(Q;C)$-equivariant, we can identify $\mathrm{Aut}(Q;C)$ with a subgroup of the group $\mathrm{Aut}(X)$.
Then we can choose $\tau\in\mathrm{Aut}(X)$  such that $\tau$ commutes with every element in $\mathrm{Aut}(Q;C)$, 
so, in particular, we have
$$
\mathrm{Aut}(X)\simeq \mathrm{Aut}(Q;C)\times\mathbb{Z}/2\mathbb{Z}.
$$
The paper \cite{Malbon} also lists all possibilities for the group $\mathrm{Aut}(Q;C)$.
Furthermore, it follows from \cite{Malbon2024} that $X$ is K-pollystable if and only if $\mathrm{Aut}(Q;C)$ contains a subgroup isomorphic to $(\mathbb{Z}/2\mathbb{Z})^2$. Now, it is very easy to deduce from  \cite{Malbon}  that $X$ is K-polystable.

Alternatively, we can prove the required assertion using only technical results obtained in \cite{Malbon2024}.
Namely, suppose that $X$ is not is K-polystable. Let $\mathbf{F}$ be an $A$-invariant divisor provided by Lemma~\ref{lemma:F},
let~$Z$ be its center on $X$, and let $p$ be a general point in $Z$. Then, as explained above, $Z$ is an irreducible $A$-invariant curve,
and $\delta_p(X)\leqslant 1$.
Then it follows from \cite[Technical~Theorem 1]{Malbon2024} that $p$ is contained in the union of the~exceptional surfaces of the~blowups $\pi$ and $\pi^\prime$.
Without loss of generality, we may assume that $Z$ is contained in the~$\pi$-exceptional surface.
Then it follows from \cite[Technical~Theorem~2]{Malbon2024} that $\pi(Z)$ is a point,
which implies that $\pi^\prime(Z)$ is a line.
Hence, the~group $A$ preserves both extremal rays of the~Mori cone $\overline{\mathrm{NE}}(X)$,
and the~diagram above must be $A$-equivariant.
Then $A$ fixes the~point $\pi(Z)$, which is impossible by Theorem~\ref{theorem:Kollar-Szabo}.
\end{proof}

\begin{lemma}
\label{lemma:pointless-2-24}
Let $X$ be a smooth Fano 3-fold in Family \textnumero 2.24, and let $A$ be a finite abelian subgroup in $\mathrm{Aut}(X)$ that does not fix points on $X$.
Then $X$ is K-polystable.
\end{lemma}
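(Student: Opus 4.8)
The plan is to realise $X$ as a divisor of bidegree $(1,2)$ in $\mathbb{P}^2\times\mathbb{P}^2$, to exploit the conic bundle given by the first projection, and to run the stability–threshold estimate of Theorem~\ref{theorem:Hamid-Ziquan-Kento} against the quintic del Pezzo surfaces cut out over lines. Write $X\subset\mathbb{P}^2_x\times\mathbb{P}^2_y$ for the defining equation $\sum_i x_iq_i(y)=0$ with $q_i$ quadratic in $y$, let $H_1,H_2$ be the pullbacks of the two hyperplane classes, so that $-K_X\sim 2H_1+H_2$ and $(-K_X)^3=30$, and let $p_1\colon X\to\mathbb{P}^2_x$ be the first projection. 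This is a conic bundle whose discriminant $\Delta=\{\det(\sum_i x_iQ_i)=0\}$ (with $Q_i$ the symmetric matrix of $q_i$) is a cubic curve. For a line $\ell\subset\mathbb{P}^2_x$ put $S_\ell=p_1^{-1}(\ell)\in|H_1|$; by adjunction $-K_{S_\ell}\sim(H_1+H_2)|_{S_\ell}$ and $(-K_{S_\ell})^2=5$, so for general $\ell$ the surface $S_\ell$ is a smooth quintic del Pezzo surface, realised as a conic bundle over $\ell\cong\mathbb{P}^1$ with fibre class $f=H_1|_{S_\ell}$ and with a reducible fibre over each of the three points of $\ell\cap\Delta$.

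Assume that $X$ is not K-polystable. By Lemma~\ref{lemma:F} there is an $A$-invariant prime divisor $\mathbf{F}$ over $X$ with $\delta(X)=A_X(\mathbf{F})/S_X(\mathbf{F})\le 1$, whose centre $Z$ is, as explained after Lemma~\ref{lemma:F}, an irreducible $A$-invariant curve with $\delta_p(X)\le 1$ for every $p\in Z$; here the hypothesis that $A$ fixes no point on $X$ is exactly what excludes $Z$ being a point. It therefore suffices to prove that $\delta_p(X)>1$ for a general point $p\in Z$. Fix such a $p$, set $x=p_1(p)$, and choose a general line $\ell\ni x$; then $S:=S_\ell$ is a smooth quintic del Pezzo surface containing $p$, and the fibre $\mathscr{C}$ of $p_1|_S\colon S\to\ell$ through $p$ is a conic, irreducible when $x\notin\Delta$ and a pair of $(-1)$-curves when $x\in\Delta$. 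Note that $S$ need not be $A$-invariant: Theorem~\ref{theorem:Hamid-Ziquan-Kento} bounds $\delta_p(X)$ from below through \emph{any} smooth surface containing $p$, so no analysis of the $A$-action on $\mathbb{P}^2_x$ is needed, and the failure of coprimality in Lemma~\ref{lemma:lift} for the cubic $\Delta$ never enters.

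The computation now runs exactly parallel to that of Lemma~\ref{lemma:pointless-2-16}. Since the line–fibres of the second projection sweep out $X$, the class $-K_X-uS\sim(2-u)H_1+H_2$ is nef for $u\in[0,2]$ and fails to be pseudoeffective for $u>2$; hence $P(u)=(2-u)H_1+H_2$, $N(u)=0$, and
\[
S_X(S)=\frac{1}{30}\int_0^2\big((2-u)H_1+H_2\big)^3\,du=\frac{1}{30}\int_0^2\big(6(2-u)^2+3(2-u)\big)\,du=\frac{11}{15}.
\]
Restricting, $P(u)|_S\sim_{\mathbb{R}}-K_S+(1-u)f$, which is nef on $S$ throughout $[0,2]$ and satisfies $\big(P(u)|_S\big)^2=9-4u$. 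As $N(u)=0$, Theorem~\ref{theorem:Hamid-Ziquan-Kento} gives
\[
\delta_p(X)\ge\min\Bigg\{\frac{15}{11},\ \inf_{\substack{F/S\\ p\in C_S(F)}}\frac{A_S(F)}{S(W^S_{\bullet,\bullet};F)}\Bigg\},\qquad S(W^S_{\bullet,\bullet};F)=\frac{1}{10}\int_0^2(9-4u)\,S_{D_u}(F)\,du,
\]
where $D_u=-K_S+(1-u)f$. Bounding $S_{D_u}(F)\le A_S(F)/\delta_p(S,D_u)$ reduces the flag term to the estimate for $\delta_p(S,D_u)$ on a quintic del Pezzo surface, the degree–five analogue of \cite[Lemma~23]{CheltsovFujitaKishimotoOkada}, treated separately according as $\mathscr{C}$ is irreducible or reducible. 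One has $\delta_p(S,D_1)=\delta_p(S)\ge\delta(S)=\tfrac{15}{13}>1$ for general $p$, and at the other endpoint $D_2=H_2|_S$ is the pullback of $\mathcal{O}_{\mathbb{P}^2_y}(1)$ under the birational contraction $p_2|_S\colon S\to\mathbb{P}^2_y$, so that $\delta_p(S,D_2)=3$ at a general $p$; since $D_0=-K_S+f$ is yet more positive, one expects $\delta_p(S,D_u)>1$ throughout $[0,2]$ and hence $\int_0^2(9-4u)/\delta_p(S,D_u)\,du<\int_0^2(9-4u)\,du=10$, forcing the flag term to exceed $1$ and therefore $\delta_p(X)>1$, a contradiction.

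The main obstacle is precisely this last input: establishing an explicit lower bound for $\delta_p(S,D_u)$ on the quintic del Pezzo surface, uniform in $u\in[0,2]$ and valid in both the irreducible and the reducible configuration of $\mathscr{C}$, and then verifying that the weighted integral stays below $10$ (so that the flag term, and never $\tfrac{15}{11}$, is the binding constraint). As in Lemma~\ref{lemma:pointless-2-16} the reducible case — where $p$ lies on one component of a degenerate fibre over a point of the cubic $\Delta$ — is the tightest; a little extra care is then needed to ensure that a general $p\in Z$ still yields a smooth $S$ with $p$ away from the node, including in the special situation $p_1(Z)\subseteq\Delta$.
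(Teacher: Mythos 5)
Your proposal takes a completely different route from the paper, and as written it has a genuine gap at its crux. The paper does not estimate any local $\delta$-invariants for Family \textnumero 2.24 at all: it quotes the classification in \cite[\S~4.7]{Book}, according to which a non-K-polystable smooth member is one of two explicit divisors \eqref{equation:2-24-unstable-finite-Aut} or \eqref{equation:2-24-unstable-infinite-Aut}, and then shows directly that for each of these the abelian group $A$ must fix a point --- by proving first that $A$ cannot fix a point of the discriminant cubic $\Delta_3$ of $\mathrm{pr}_1$ (a fixed smooth point of $\Delta_3$ gives a fixed node of the fibre; a fixed node of $\Delta_3$ gives an $A$-invariant line in $\mathbb{P}^2_{u,v,w}$ and hence a fixed point via Lemma~\ref{lemma:lift}), and then observing that in case \eqref{equation:2-24-unstable-finite-Aut} the cubic $\Delta_3$ is irreducible with a unique node, which $A$ must fix, while in case \eqref{equation:2-24-unstable-infinite-Aut} one has $\Delta_3=L+C$ and Lemma~\ref{lemma:lift} forces a fixed point on the line $L$. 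No Abban--Zhuang computation is needed.

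The gap in your approach is not merely that the key input --- a lower bound $\delta_p(S,D_u)>1$, uniform in $u\in[0,2]$ and valid in both the irreducible and reducible configurations of $\mathscr{C}$ --- is deferred (you flag it yourself as ``the main obstacle''); it is that the strategy, as set up, cannot close. Family \textnumero 2.24 contains smooth members that are \emph{not} K-polystable, and by \cite[Theorem~3.17]{Book} their optimal destabilizing centres are not surfaces; so the purely local statement ``$\delta_p(X)>1$ for a general point $p$ of every irreducible curve $Z$ (outside some small bad locus)'' is false for those members. Your argument uses the hypothesis that $A$ fixes no point only to exclude a zero-dimensional centre, and you explicitly discard all equivariance afterwards (``no analysis of the $A$-action on $\mathbb{P}^2_x$ is needed''); an argument with that shape would prove every member of the family K-polystable, which is wrong. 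Any successful version must either re-engage the $A$-action to rule out the specific destabilizing curves (which live over $\Delta$, exactly the locus you admit is ``the tightest'' and where $S_\ell$ degenerates), or, as the paper does, first reduce to the two explicit non-K-polystable models and contradict the fixed-point-freeness there.
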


\begin{proof}
Suppose that $X$ is not K-polystable.
Then it follows from \cite[\S~4.7]{Book} that $X$ is a smooth divisor of degree $(1,2)$ in $\mathbb{P}^2_{x,y,z}\times\mathbb{P}^2_{u,v,w}$
such that either
\begin{equation}
\label{equation:2-24-unstable-finite-Aut}
X=\big\{\big(vw+u^2\big)x+\big(uw+v^2\big)y+w^2z=0\big\},
\end{equation}
or
\begin{equation}
\label{equation:2-24-unstable-infinite-Aut}
X=\big\{\big(vw+u^2\big)x+v^2y+w^2z=0\big\}.
\end{equation}
Let $\mathrm{pr}_1\colon X\to \mathbb{P}^2$ and $\mathrm{pr}_2\colon X\to \mathbb{P}^2$ be the~projections to the~first and the~second factors, respectively.
Then $\mathrm{pr}_1$ is an $A$~equivariant conic bundle, and $\mathrm{pr}_2$ is an~$A$-equivariant $\mathbb{P}^1$-bundle.
Let $\Delta_3$ be the~discriminant curve of the~conic bundle $\mathrm{pr}_1$. Then  $\Delta_3$ is a~cubic curve with at worst nodal singularities.

We claim that $A$ does not fix points in $\Delta_3$. Indeed, suppose $A$ fixes a point $p\in\Delta_3$.
Let $F$ be the~fiber of the~conic bundle $\mathrm{pr}_1$ over $p$ taken with reduced structure.
Then $F$ is $A$-invariant. Thus, if $p$ is a smooth point of $\Delta_3$, then $F$ has one singular point, and this point must be fixed by $A$.
Similarly, if $p\in\mathrm{Sing}(\Delta_3)$, then $\mathrm{pr}_2(F)$ is a $A$-invariant line in $\mathbb{P}^2_{u,v,w}$,
so Lemma~\ref{lemma:lift} implies that $A$ fixes a point in $\mathrm{pr}_2(F)$,
so that $A$ fixes a point in $F$, since $\mathrm{pr}_2$ induces an $A$-equivariant isomorphism $F\to\mathrm{pr}_2(F)$.
Thus, in both cases the group $A$ fixes a point in $F$ which is impossible by assumption.
Hence, we conclude that $A$ does not fix points in $\Delta_3$. If $X$ is given by \eqref{equation:2-24-unstable-finite-Aut}, then $\Delta_3$ is an irreducible cubic curve with one singular point,
so $A$ fixes the~singular point of $\Delta_3$, which is impossible.
Similarly, if $X$ is given by \eqref{equation:2-24-unstable-infinite-Aut}, then $\Delta_3=L+C$, where $L$ is a line, and $C$ is a smooth conic.
In this case, applying Lemma~\ref{lemma:lift} to $\mathbb{P}^2_{x,y,z}$, we see that $A$ fixes a point in $p\in L$,
which we proved to be impossible.
\end{proof}

\begin{lemma}
\label{lemma:pointless-3-2}
Let $X$ be a smooth Fano 3-fold in Family \textnumero 3.2, and let $A$ be a finite abelian subgroup in $\mathrm{Aut}(X)$ that does not fix points on $X$.
Then $X$ is K-polystable.
\end{lemma}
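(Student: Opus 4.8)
The plan is to follow the template already used in this section, adapted to the geometry of Family \textnumero 3.2. First I would recall the relevant extremal contractions of $X$ from the classification in \cite{Matsuki} and \cite{IsPr99}: the structure I want to exploit is a conic bundle $\pi\colon X\to S$ onto a rational surface $S$ together with its discriminant curve $\Delta\subset S$. Because Family \textnumero 3.2 is deliberately \emph{absent} from the list in Lemma~\ref{lemma:Prokhorov}, the first genuine issue is $A$-equivariance: a priori $A$ could permute two extremal rays of $\overline{\mathrm{NE}}(X)$. I would settle this exactly as Family \textnumero 3.7 is handled inside Lemma~\ref{lemma:pointless-2-5-2-10-3-7}, by inspecting the Mori cone in \cite{Matsuki}: the ray generating the conic bundle $\pi$ is numerically distinguished from the contractions that $A$ might interchange, so that ray is $A$-invariant and $\pi$ is $A$-equivariant. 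In particular $A$ acts on $S$ and leaves $\Delta$ invariant.

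With this set up, I would run the standard argument. Assuming $X$ is not K-polystable, Lemma~\ref{lemma:F} yields an $A$-invariant prime divisor $\mathbf{F}$ over $X$ with $A_X(\mathbf{F})/S_X(\mathbf{F})=\delta(X)\le 1$; by the discussion following Lemma~\ref{lemma:F} its center $Z$ is an irreducible $A$-invariant curve and $\delta_p(X)\le 1$ for general $p\in Z$. Next I would prove that $A$ fixes no point of $\Delta$: if $A$ fixed $q\in\Delta$, the reduced fibre $\pi^{-1}(q)$ would be an $A$-invariant singular conic, whose node (or whose image under a second $A$-equivariant projection of the fibre) produces an $A$-fixed point of $X$, contrary to hypothesis. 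This is the verbatim analogue of the discriminant analysis in Lemmas~\ref{lemma:pointless-2-16} and \ref{lemma:pointless-2-24}. Hence $\pi(Z)$ is not a point of $\Delta$, leaving two cases: $Z$ is a smooth fibre of $\pi$ over a point off $\Delta$, or $\pi(Z)$ is a curve.

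For the positional analysis I would first exclude $Z$ lying in any special invariant surface $E$ (a section of $\pi$, or an exceptional surface of a birational contraction) by a Zariski-decomposition estimate: compute $S_X(E)$ by integrating $(-K_X-uE)^3$ over the nef range of $u$, then bound $S(W^E_{\bullet,\bullet};Z)$ via Theorem~\ref{theorem:Hamid-Ziquan-Kento-1} using the restriction $(-K_X-uE)|_E$, and check that the outcome is strictly below $1$, contradicting $\delta(X)\le 1$; this is precisely the mechanism applied to the $\pi$-exceptional surface in Lemma~\ref{lemma:pointless-2-16}. With $Z$ placed away from such loci, I would take a general point $p\in Z$ and a general smooth surface $S$ in the appropriate linear system containing the fibre $\mathscr{C}=\pi^{-1}(\pi(p))$ (genericity together with $p\notin\Delta$ forcing smoothness), and apply Theorem~\ref{theorem:Hamid-Ziquan-Kento}: compute the positive part $P(u)$ of $-K_X-uS$, evaluate $S_X(S)$, and estimate the flag term $S(W^S_{\bullet,\bullet};F)$ through the known $\delta$-bounds for the del Pezzo surface $S$ with boundary $(1-u)\mathscr{C}$, as in \cite{CheltsovFujitaKishimotoOkada,CheltsovDenisovaFujita}, aiming at $\delta_p(X)>1$. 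The sole remaining possibility, that $Z$ lies in an $A$-invariant del Pezzo fibre $S$ with at worst Du Val singularities, is closed directly: since $A$ fixes no point of $S$, Theorem~\ref{theorem:dP} makes $S$ K-polystable, contradicting the instability forced by $\delta_p(X)\le 1$.

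The main obstacle is twofold. First, the $A$-equivariance: with Lemma~\ref{lemma:Prokhorov} unavailable, I must argue from the numerical type of the extremal rays that the conic bundle $\pi$ cannot be moved by $A$, and this is exactly where Family \textnumero 3.2 is delicate, since it carries an extra symmetry interchanging two of its contractions. Second, the quantitative core is the Abban--Zhuang estimate: making $S(W^S_{\bullet,\bullet};F)$ small enough to force $\delta_p(X)>1$ demands the correct Zariski chamber decomposition of $-K_X-uS$ together with a sharp del Pezzo $\delta$-bound for the pair $(S,(1-u)\mathscr{C})$; as in Lemma~\ref{lemma:pointless-2-16}, the reducible-fibre subcase of $\mathscr{C}$ should give the weakest inequality and so be the decisive computation.
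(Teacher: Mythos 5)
Your strategy is not the one the paper follows, and as written it has a real gap at its quantitative core. The paper does not use the conic bundle over $\mathbb{P}^1\times\mathbb{P}^1$ at all. It realizes $X$ as the trigonal model $T_{11}$ of \cite{ChPrSh}: a divisor in $|3M-4F|$ on $V=\mathbb{F}(2,2,1,1)$, equipped with the fibration $\pi\colon X\to\mathbb{P}^1_{t_1,t_2}$ into cubic surfaces and the distinguished $A$-invariant surface $S=\{x_1=x_2=0\}\cong\mathbb{P}^1\times\mathbb{P}^1$ with normal bundle $\mathcal{O}_{\mathbb{P}^1\times\mathbb{P}^1}(-1,-1)$, which cuts a line on every fiber. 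Lemma~\ref{lemma:lift} applied to a fiber (a cubic surface in $\mathbb{P}^3$, degree $3$ coprime to $4$) together with Corollary~\ref{corollary:odd-degree-curves} applied to the invariant line cut by $S$ shows that $A$ has no fixed point on the base $\mathbb{P}^1_{t_1,t_2}$; hence the center $Z$ of the divisor supplied by Lemma~\ref{lemma:F} dominates $\mathbb{P}^1_{t_1,t_2}$, and the inequality $\delta_p(X)>1$ at a general point of such a horizontal curve is then quoted directly from \cite[Lemmas~6.1 and 6.2]{BelousovLoginov2024}. No new Abban--Zhuang computation is performed in the paper.

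The gap in your plan is that everything after ``aiming at $\delta_p(X)>1$'' is the entire content of the lemma, and you neither carry out the estimates nor identify a surface through a general point of $Z$ for which the flag computation is known, or even plausibly checkable, to close. For this family ($-K_X^3=14$, with only general members previously known to be K-polystable) the estimates are delicate, and your case analysis does not reduce $Z$ to a tractable list: after excluding $\pi(Z)\subset\Delta$, the curve $Z$ can still be an arbitrary multisection of the conic bundle, and you never return to the remaining case where $Z$ is a smooth conic fiber over a point off $\Delta$ to derive a contradiction there. The equivariance worry you raise is real but is the easy part. The paper's device of switching to the cubic-surface fibration is precisely what lets it (i) force $Z$ to be horizontal over a one-dimensional base using only Lemma~\ref{lemma:lift} and Corollary~\ref{corollary:odd-degree-curves}, and (ii) import the hard local estimates wholesale from \cite{BelousovLoginov2024}. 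To complete your route you would essentially have to reprove the substance of those two lemmas for the conic-bundle presentation, which is not sketched here.
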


\begin{proof}
It is well-known that $-K_X$ is very ample and gives $A$-equivariant embedding $X\hookrightarrow\mathbb{P}^9$,
and quadrics in $\mathbb{P}^9$ containing the~image cuts out a 4-fold $V\subset\mathbb{P}^9$ such that
$V=\mathbb{P}\big(\mathcal{O}_{\mathbb{P}^1}(2)\oplus\mathcal{O}_{\mathbb{P}^1}(2)\oplus\mathcal{O}_{\mathbb{P}^1}(1)\oplus\mathcal{O}_{\mathbb{P}^1}(1)\big)$,
and $X$ is contained in the~linear system~$|3M-4F|$,
where $M$ is the~tautological line bundle on $V$,
and $F$ is a fiber of the~natural projection~$V\to\mathbb{P}^1$.
In the~notation of \cite[\S2]{Reid}, we have $V=\mathbb{F}(2,2,1,1)$, and $X$ is given by the~following equation:
\begin{multline*}
\quad\quad\quad\alpha^1_2(t_1,t_2)x_1^3+\alpha^2_2(t_1,t_2)x_1^2x_2+\alpha^1_1(t_1,t_2)x_1^2x_3+\alpha^2_1(t_1,t_2)x_1^2x_4+\\
+\alpha^3_2(t_1,t_2)x_1x_2^2+\alpha^3_1(t_1,t_2)x_1x_2x_3+\alpha^4_1(t_1,t_2)x_1x_2x_4+\alpha^1_0(t_1,t_2)x_1x_3^2+\\
+\alpha^2_0(t_1,t_2)x_1x_3x_4+\alpha^3_0(t_1,t_2)x_1x_4^2+\alpha^4_2(t_1,t_2)x_2^3+\alpha^5_1(t_1,t_2)x_2^2x_3+\\
+\alpha^6_1(t_1,t_2)x_2^2x_4+\alpha^4_0(t_1,t_2)x_2x_3^2+\alpha^5_0(t_1,t_2)x_2x_3x_4+\alpha^6_0(t_1,t_2)x_2x_4^2=0,\quad\quad\quad
\end{multline*}
where each $\alpha^i_d(t_1,t_2)$ is a polynomial of degree $d$.

Let $\pi\colon X\to \mathbb{P}^1_{t_1,t_2}$ be the~morphism induced by the~natural projection~$V\to\mathbb{P}^1$,
and let $S$ be the~surface in $X$ that is given by $x_1=x_2=0$.
Then $\pi$ is $A$-equivariant, fibers of $\pi$ are cubic surfaces in $\mathbb{P}^3$,
the surface $S$ is $A$-invariant, $S\cong\mathbb{P}^1\times\mathbb{P}^1$,
the normal bundle of $S$ in~$X$ is $\mathcal{O}_{\mathbb{P}^1\times\mathbb{P}^1}(-1,-1)$,
and $S$ cuts a line on every fiber of $\pi$.
Moreover, by Lemma~\ref{lemma:lift} and Corollary~\ref{corollary:odd-degree-curves}, $A$ acts without fixed points on $\mathbb{P}^1_{t_1,t_2}$.

Suppose that $X$ is not K-polystable and let $\mathbf{F}$ be an $A$-invariant divisor whose existence is asserted by Lemma~\ref{lemma:F},
and let $Z$ be its center on $X$. Then, as explained above, $Z$ is an irreducible $A$-invariant curve,
and $\pi(Z)=\mathbb{P}^1_{t_1,t_2}$, since $A$ does not fix points in $\mathbb{P}^1_{t_1,t_2}$.
Applying \cite[Lemma~6.1]{BelousovLoginov2024} and \cite[Lemma~6.2]{BelousovLoginov2024},
we see that $\delta_p(X)>1$ for a general point $p\in Z$, which contradicts the existence of $\mathbf{F}$.
\end{proof}

\begin{lemma}
\label{lemma:pointless-3-5}
Let $X$ be a smooth Fano 3-fold in Family \textnumero 3.5, and let $A$ be a finite abelian subgroup in $\mathrm{Aut}(X)$ that does not fix points on $X$.
Then $X$ is K-polystable.
\end{lemma}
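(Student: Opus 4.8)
The plan is to exploit the two fibration structures carried by a member $X$ of Family \textnumero 3.5. Recall that $X$ is the blowup $\pi\colon X\to\mathbb{P}^1\times\mathbb{P}^2$ of a smooth rational curve $C$ of bidegree $(5,2)$, with exceptional divisor $E$; writing $p_1,p_2$ for the two projections of $\mathbb{P}^1\times\mathbb{P}^2$, the composition $\phi=p_1\circ\pi\colon X\to\mathbb{P}^1$ is a del Pezzo fibration of degree $4$ (its fibre over $t$ is the blowup of $\{t\}\times\mathbb{P}^2\cong\mathbb{P}^2$ at the five points $C\cap(\{t\}\times\mathbb{P}^2)$), while $p_2\circ\pi\colon X\to\mathbb{P}^2$ is a conic bundle with discriminant the conic $p_2(C)$. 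By Lemma~\ref{lemma:Prokhorov} all three morphisms are $A$-equivariant. The crucial geometric point is that the five points cut out on each fibre lie on the smooth conic $p_2(C)$, so no three of them are collinear; hence every fibre of $\phi$ is a del Pezzo surface of degree $4$ with at worst Du Val singularities, and the general fibre is a smooth del Pezzo surface of degree $4$.

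I would then run the standard argument of Section~\ref{section:pointless-3-folds}. Assuming $X$ is not K-polystable, Lemma~\ref{lemma:F} produces an $A$-invariant divisor $\mathbf{F}$ over $X$ whose center $Z$ is an irreducible $A$-invariant curve with $\delta_p(X)\leqslant 1$ for general $p\in Z$. First I would show $Z\not\subset E$. If $Z\subset E$ then $\pi(Z)=C$ (otherwise $\pi(Z)$ would be an $A$-fixed point of $\mathbb{P}^1\times\mathbb{P}^2$, impossible by Theorem~\ref{theorem:Kollar-Szabo}), and, exactly as in the proofs of Lemmas~\ref{lemma:pointless-2-5-2-10-3-7} and~\ref{lemma:pointless-2-16}, one computes $S_X(E)$ and estimates $S(W^E_{\bullet,\bullet};Z)$ on the Hirzebruch surface $E\cong\mathbb{F}_n$ (with $n$ determined by the splitting type of $N_{C/\mathbb{P}^1\times\mathbb{P}^2}$) via Theorem~\ref{theorem:Hamid-Ziquan-Kento-1}, obtaining $S(W^E_{\bullet,\bullet};Z)<1$, which contradicts $\delta_p(X)\leqslant 1$.

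With $Z\not\subset E$ in hand, fix a general point $p\in Z$ and let $S$ be the fibre of $\phi$ through $p$; then $p\notin E$. Applying Theorem~\ref{theorem:Hamid-Ziquan-Kento} to the flag $p\in S$ and computing $S_X(S)$ from the Zariski decomposition of $-K_X-uS$, I expect a bound of the shape $\delta_p(X)\geqslant\min\{c_1,\,c_2\,\delta(S)\}$ with $c_1,c_2>1$, as in \eqref{equation:2-5}; since $\delta_p(X)\leqslant 1$ this forces $\delta(S)<1$, i.e. $S$ is K-unstable. Now two cases arise. If $\phi(Z)=\mathbb{P}^1$, then $S$ is a general, hence smooth, del Pezzo surface of degree $4$, which is K-polystable, a contradiction. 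If $\phi(Z)$ is a point, then $Z\subset S$ and $S$ is $A$-invariant; since $A$ has no fixed point on $X$ it has none on $S$, and as $S$ has at worst Du Val (hence quotient) singularities, Theorem~\ref{theorem:dP} shows $S$ is K-polystable, again contradicting $\delta(S)<1$.

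The main obstacle is the quantitative input of the previous paragraph: establishing the inequality $\delta_p(X)\geqslant\min\{c_1,c_2\delta(S)\}$ with both constants strictly above $1$. Unlike Families \textnumero 2.5, \textnumero 2.10, \textnumero 3.7, here the blown-up curve $C$ is a genuine quintuple section of $\phi$ rather than lying in the base locus of the fibration, so $E\cong\mathbb{F}_n$ meets each fibre of $\phi$ along five $(-1)$-curves and the fibre class $S=\pi^{*}p_1^{*}\mathcal{O}_{\mathbb{P}^1}(1)$ does not contain $E$; consequently one cannot simply quote \cite[Lemma~2.1]{CheltsovDenisovaFujita} but must redo the Zariski-decomposition and flag computations in this geometry. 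A secondary technical point is pinning down the splitting type of $N_{C/\mathbb{P}^1\times\mathbb{P}^2}$, hence the Hirzebruch surface $E\cong\mathbb{F}_n$, needed for the $Z\subset E$ exclusion. Everything else is a routine transcription of the arguments already used for Families \textnumero 2.5, \textnumero 2.10, \textnumero 3.7 and \textnumero 2.16.
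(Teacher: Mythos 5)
Your proposal is not a proof but a programme, and its decisive step is missing. The paper's actual argument for Family \textnumero 3.5 is entirely soft and never touches the Abban--Zhuang machinery: by Lemma~\ref{lemma:Prokhorov} the blowup $\pi$ and both projections are $A$-equivariant, so the conic $\mathrm{pr}_2(C)$ is $A$-invariant; Lemma~\ref{lemma:lift} (applied to a plane conic, $\gcd(2,3)=1$) then forces $A$ to fix a point of $\mathbb{P}^2$, and since $A$ fixes no point of $\mathbb{P}^1\times\mathbb{P}^2$ by Theorem~\ref{theorem:Kollar-Szabo}, it follows that $A$ acts on the base $\mathbb{P}^1$ of the del Pezzo fibration without fixed points. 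At that point the paper simply invokes \cite[Corollary~5.70]{Book}, which gives K-polystability for members of this family admitting such an action. You had all the ingredients for this in hand --- you even identified $p_2(C)$ as the ($A$-invariant) discriminant conic --- but did not draw the consequence; note that fixed-point-freeness on $\mathbb{P}^1$ would also have killed your case ``$\phi(Z)$ is a point'' outright, making the appeal to Theorem~\ref{theorem:dP} unnecessary.

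The genuine gap is the inequality $\delta_p(X)\geqslant\min\{c_1,c_2\,\delta(S)\}$ with usable constants, which you state as an expectation and yourself flag as the main obstacle. This cannot be waved through by analogy with Lemma~\ref{lemma:pointless-2-5-2-10-3-7}: there the fibration is cut out by a pencil through the blown-up curve, $E$ dominates the base, and $-K_X-uS$ stays nef on the whole relevant range, which is what makes \cite[Lemma~2.1]{CheltsovDenisovaFujita} quotable. In Family \textnumero 3.5 the blown-up curve is a quintuple section of $\phi$, and the strict transform $\ell'$ of a ruling $\mathbb{P}^1\times\{q\}$ with $q\in p_2(C)$ satisfies $(-K_X-uS)\cdot\ell'=1-u$, so $-K_X-uS$ fails to be nef already for $u>1$ and a nontrivial negative part supported on the strict transform of $\mathbb{P}^1\times p_2(C)$ enters the Zariski decomposition. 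Until that decomposition is computed and the resulting constants are checked (together with the $\mathbb{F}_n$-analysis excluding $Z\subset E$, which also depends on the undetermined splitting type of $N_{C/\mathbb{P}^1\times\mathbb{P}^2}$), the argument establishes nothing; and there is no guarantee in advance that the constants come out favourably. As written, the proposal defers exactly the part of the proof that would require work, while the paper's route shows that no such work is needed.
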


\begin{proof}
Set $S=\mathbb{P}^1\times\mathbb{P}^1$ and let $C\subset S$ be a~smooth curve of degree $(5,1)$ and construct $X$ as follows.
Consider the~embedding $S\hookrightarrow\mathbb{P}^1\times\mathbb{P}^2$ given by $([u:v],[x:y])\mapsto([u:v],[x^2:xy:y^2])$,
and identify $S$ and $C$ with their images in $\mathbb{P}^1\times\mathbb{P}^2$ using this embedding.
Then there exists a birational morphism $\pi\colon X\to\mathbb{P}^1\times\mathbb{P}^2$ that blows up $C$.
Moreover, it follows from Lemma~\ref{lemma:Prokhorov} that $\pi$ is $A$-equivariant, both $C$ and $S$ are $A$-invariant,
projection to the~first factor $\mathrm{pr}_1\colon\mathbb{P}^1\times\mathbb{P}^2 \to \mathbb{P}^1$ and projection to the~second factor $\mathrm{pr}_2\colon\mathbb{P}^1\times\mathbb{P}^2 \to \mathbb{P}^2$ are both $A$-equivariant.
Note that $A$ does not fix points in $\mathbb{P}^1\times\mathbb{P}^2$ by Theorem~\ref{theorem:Kollar-Szabo}.
On the~other hand, applying Lemma~\ref{lemma:lift} to $\mathbb{P}^2$ and the~conic $\mathrm{pr}_2(S)$, we see that $A$ fixes a point in $\mathbb{P}^2$.
Then $A$ does not fix points in $\mathbb{P}^1$, so $X$ is K-polystable by \cite[Corollary 5.70]{Book}.
\end{proof}

\begin{lemma}
\label{lemma:pointless-3-6}
Let $X$ be a smooth Fano 3-fold in Family \textnumero 3.6, and let $A$ be a finite abelian subgroup in $\mathrm{Aut}(X)$ that does not fix points on $X$.
Then $X$ is K-polystable.
\end{lemma}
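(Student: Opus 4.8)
The plan is to run the del~Pezzo fibration argument already used in this section for Families \textnumero 2.5, \textnumero 2.10, \textnumero 3.7 and \textnumero 2.16 (see Lemmas~\ref{lemma:pointless-2-5-2-10-3-7} and~\ref{lemma:pointless-2-16}). First I would record the $A$-equivariant geometry of $X$: by Lemma~\ref{lemma:Prokhorov} the extremal contractions of $X$ are $A$-equivariant, so $X$ comes with an $A$-equivariant fibration $\phi\colon X\to\mathbb{P}^1$ whose general fibre is a smooth del~Pezzo surface, together with an $A$-equivariant divisorial contraction $\pi\colon X\to V$ whose exceptional divisor $E$ is ruled over its centre $C$. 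Supposing $X$ is not K-polystable, Lemma~\ref{lemma:F} yields an $A$-invariant prime divisor $\mathbf{F}$ over $X$ with $\delta(X)=A_X(\mathbf{F})/S_X(\mathbf{F})\leqslant 1$; as in the preamble to this section its centre $Z$ is neither a surface (by \cite[Theorem~3.17]{Book}) nor a point (since $A$ fixes no point of $X$), so $Z$ is an irreducible $A$-invariant curve and $\delta_p(X)\leqslant 1$ for all $p\in Z$.

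The first step is to rule out $Z\subset E$. For this I would determine the pseudo-effective threshold of $-K_X-uE$, compute $S_X(E)$ from the corresponding Zariski decomposition, and then estimate $S\big(W^E_{\bullet,\bullet};Z\big)$ from above by integrating $\mathrm{vol}\big((-K_X-uE)\vert_E-v\,\mathbf{s}\big)$ against a ruling $\mathbf{s}$ of $E$. Since $Z\subset E$ would force $\pi(Z)=C$---otherwise $\pi(Z)$ is an $A$-fixed point, impossible by Theorem~\ref{theorem:Kollar-Szabo}---the class of $Z$ dominates the base of the ruling, so $|Z-\mathbf{s}|\neq\varnothing$ and the estimate produces $S\big(W^E_{\bullet,\bullet};Z\big)<1$, contradicting Theorem~\ref{theorem:Hamid-Ziquan-Kento-1}. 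This is the same bookkeeping as in Lemma~\ref{lemma:pointless-2-16}, and I expect it to carry over once the isomorphism type of $E$ is identified.

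Granting $Z\not\subset E$, I would take a general $p\in Z$ with fibre $S=\phi^{-1}(\phi(p))$ and distinguish two cases. If $\phi(Z)=\mathbb{P}^1$ then $S$ is a smooth del~Pezzo surface, hence K-polystable by \cite{Book}, and combining Theorem~\ref{theorem:Hamid-Ziquan-Kento} with the refined fibrewise estimate of \cite[Lemma~2.1]{CheltsovDenisovaFujita} gives $\delta_p(X)>1$, contradicting $\delta_p(X)\leqslant 1$. If $\phi(Z)$ is a point then $Z\subset S$, so $S$ is $A$-invariant; if $S$ has at worst Du~Val singularities then it is a del~Pezzo surface with quotient singularities on which $A$ has no fixed point, whence $S$ is K-polystable by Theorem~\ref{theorem:dP}, a contradiction, while any remaining degenerate fibre is a cone whose vertex is $A$-fixed, again contradicting the hypothesis.

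I expect the generically-fibred case to be the crux. It requires the full Zariski decomposition of $-K_X-uS$ (with negative part supported on $E$ once $S$ ceases to be nef), the resulting description of $P(u)\vert_S$ as $-K_S$ plus a multiple of a distinguished curve class $\mathscr{C}\subset S$, and then the two-parameter integral in Theorem~\ref{theorem:Hamid-Ziquan-Kento}; exactly as in Family \textnumero 2.16 the delicate point is tracking whether $\mathscr{C}$ is irreducible or reducible, since the two cases yield different bounds for the local threshold of $S$. A secondary technical point is the classification of the singular fibres of $\phi$: I would need to verify that every fibre which fails to be K-polystable carries an $A$-fixed point, so that no non-K-polystable fibre can host the curve $Z$ without violating the hypothesis on $A$.
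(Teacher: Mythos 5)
Your overall strategy --- Lemma~\ref{lemma:F}, the reduction to an irreducible $A$-invariant curve $Z$, a del Pezzo fibration, and the Abban--Zhuang estimates of Theorems~\ref{theorem:Hamid-Ziquan-Kento} and~\ref{theorem:Hamid-Ziquan-Kento-1} --- is the same template the paper uses throughout this section, and the paper's own proof of this lemma is indeed organised around the quintic del Pezzo fibration $\varphi_L$ of diagram~\eqref{equation:3-6}. But the step you yourself identify as the crux is where the proposal has a genuine gap. The inequality of the shape $\delta_p(X)\geqslant\min\{c_1,c_2\,\delta_p(S)\}$ that you propose to import from \cite[Lemma~2.1]{CheltsovDenisovaFujita} is proved there for the Picard-rank-two families \textnumero 2.5 and \textnumero 2.10 (and adapted to \textnumero 3.7), where $X$ is the blowup of a del Pezzo 3-fold along a single elliptic curve. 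A member of Family \textnumero 3.6 is the blowup of $\mathbb{P}^3$ along a disjoint line and elliptic quartic: it has Picard rank $3$, \emph{two} exceptional divisors, two del Pezzo fibrations (quintic and sextic) and a conic bundle, so the Zariski decomposition of $-K_X-uS$, the description of $P(u)\vert_S$, and the resulting constants are all genuinely different, and there is no reason to expect the \textnumero 2.16-style bookkeeping to "carry over". The paper does not redo this computation either: it invokes the dedicated reference \cite{Cheltsov2024} on Family \textnumero 3.6 (the condition $(\bigstar)$ and Corollary~2 there), which supplies exactly the statement that $\delta_p(X)>1$ at a general point of any curve dominating the base of $\varphi_L$. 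Without that input, or an equivalent explicit computation, your horizontal case does not close.

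The second gap is in your vertical case. You assert that every non-Du-Val fibre of the fibration is a cone with $A$-fixed vertex; for the quintic del Pezzo fibration of \textnumero 3.6 the degenerate fibres are strict transforms of planes through $L$ blown up along the length-four scheme cut out by the quartic curve, and your claim about them is not justified. The paper avoids the vertical case altogether by an observation you should add: an $A$-fixed point of the base of $\varphi_L$ would make the corresponding fibre of the $\mathbb{P}^2$-bundle $\phi_L$ invariant, i.e.\ would produce an $A$-invariant plane in $\mathbb{P}^3$, hence an $A$-fixed point of $\mathbb{P}^3$ by Lemma~\ref{lemma:lift} and of $X$ by Theorem~\ref{theorem:Kollar-Szabo}, contrary to hypothesis. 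Thus $\varphi_L(Z)=\mathbb{P}^1$ automatically, only the horizontal case needs treatment, and the preliminary exclusion of $Z$ from the exceptional divisors becomes unnecessary in the paper's formulation. Choosing the quintic fibration here matters: for the sextic fibration $\varphi_C$ an $A$-fixed point of the base only yields an $A$-invariant quadric, which gives no contradiction, so your unspecified choice of fibration is not innocuous.
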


\begin{proof}
Let us use the commutative diagram \eqref{equation:3-6} together with all notations introduced in that diagram.
Note~that \eqref{equation:3-6} is $A$-equivariant by Lemma~\ref{lemma:Prokhorov}.
Moreover, $A$ does not fix points in the~left $\mathbb{P}^1$ in \eqref{equation:3-6}.
Indeed, if $A$ were to fix a point in the~left $\mathbb{P}^1$ in \eqref{equation:3-6},
the~fiber of $\phi_L$ over this point would be $A$-invariant, so its image in $\mathbb{P}^3$ would be an $A$-invariant plane,
which would imply that $A$ also fixes a point in $\mathbb{P}^3$ by Lemma \ref{lemma:lift}, so $A$ would fix a point in $X$ by Theorem~\ref{theorem:Kollar-Szabo}, contradicting our assumption.

Now, the~K-polystability of $X$ follows from \cite{Cheltsov2024}.
Indeed, suppose that $X$ is not K-polystable.
Let $\mathbf{F}$ be an $A$-invariant divisor whose existence is asserted by Lemma~\ref{lemma:F},
and let $Z$ be its center on $X$. Then $Z$ is an irreducible $A$-invariant curve with $\varphi_L(Z)=\mathbb{P}^1$, since $A$ does not fix points in the~left $\mathbb{P}^1$ in \eqref{equation:3-6}.
Now, applying \cite[$(\bigstar)$]{Cheltsov2024} and \cite[Corollary~2]{Cheltsov2024} to a general point $p\in Z$, we obtain
$$
1\geqslant\delta(X)=\frac{A_X(\mathbf{F})}{S_X(\mathbf{F})}>1,
$$
which is absurd. This completes the~proof of the~lemma.
\end{proof}

\begin{lemma}
\label{lemma:pointless-3-10}
Let $X$ be a smooth Fano 3-fold in Family \textnumero 3.10, and let $A$ be a finite abelian subgroup in $\mathrm{Aut}(X)$ that does not fix points on $X$.
Then $X$ is K-polystable.
\end{lemma}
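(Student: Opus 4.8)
The plan is to use the description of $X$ as the blowup $\pi\colon X\to Q$ of a smooth quadric threefold $Q\subset\mathbb{P}^4$ along a disjoint pair of conics $C_1,C_2$, and to exploit the conic bundle produced by their two spanning planes. First I would note that the contraction of $E_1+E_2$ (the two exceptional divisors together) is the canonically distinguished morphism $\pi$, hence $\mathrm{Aut}(X)$-equivariant; thus $A$ acts on $Q$ permuting $\{C_1,C_2\}$, and since $A$ fixes no point of $X$ it fixes no point of $Q$ by Theorem~\ref{theorem:Kollar-Szabo}. In particular $A$ is not cyclic by Remark~\ref{remark:cyclic-Lefschetz} and $A\not\simeq(\mathbb{Z}/2\mathbb{Z})^2$ by Lemma~\ref{lemma:Klein-quadric}. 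The essential feature of this family, and the reason it is absent from Lemma~\ref{lemma:Prokhorov}, is that $A$ may interchange $C_1$ and $C_2$, so neither of the divisorial blowdowns of $E_1$ or $E_2$ to a blowup of $Q$ along a single conic is $A$-equivariant; I would therefore avoid them.

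Writing $\Pi_i$ for the plane spanned by $C_i$, the point $p_0=\Pi_1\cap\Pi_2$ is an $A$-fixed point of $\mathbb{P}^4$ lying off $Q$, and the two planar projections $\phi_i\colon X\to\mathbb{P}^1$ from $\Pi_i$ assemble into an $A$-equivariant morphism $\Phi=(\phi_1,\phi_2)\colon X\to\mathbb{P}^1\times\mathbb{P}^1$ whose general fibre is the conic $Q\cap\mathbb{P}^2_{s,t}$; thus $\Phi$ is a conic bundle, with $A$ possibly swapping the two rulings, and each $\phi_i$ is a del Pezzo fibration of degree $6$ (a smooth quadric surface blown up at the two points where the other conic meets the fibre). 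Let $\Delta\subset\mathbb{P}^1\times\mathbb{P}^1$ be its discriminant. As in the proofs of Lemmas~\ref{lemma:pointless-2-16} and~\ref{lemma:pointless-2-24}, I would show that $A$ fixes no point of $\Delta$: a fixed $p\in\Delta$ makes the degenerate conic over $p$ an $A$-invariant reducible or non-reduced curve, whose node, or whose $\pi$-image (a line meeting one of the $C_i$), yields an $A$-fixed point, a contradiction.

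Now assume $X$ is not K-polystable. Lemma~\ref{lemma:F} supplies an $A$-invariant $\mathbf{F}$ whose center $Z$ is, as explained at the start of this section, an irreducible $A$-invariant curve with $\delta_p(X)\leqslant 1$ for general $p\in Z$. I would first exclude $Z\subset E_1\cup E_2$: this is automatic when $A$ swaps the divisors (as $Z$ is irreducible), and when $A$ preserves them a flag computation on the Hirzebruch surface $E_i$ via Theorem~\ref{theorem:Hamid-Ziquan-Kento-1}, computing $S_X(E_i)$ and estimating $S(W^{E_i}_{\bullet,\bullet};Z)$ exactly as in Lemma~\ref{lemma:pointless-2-16}, gives $S(W^{E_i}_{\bullet,\bullet};Z)<1$, contradicting the choice of $\mathbf{F}$. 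Hence $Z$ is not in the exceptional locus and $\Phi(Z)$ avoids $\Delta$. For a general $p\in Z$ I would then take $S$ to be the $\phi_1$-fibre through $p$, a del Pezzo surface of degree $6$, and apply the degree-$6$ analogue of \cite[Lemma~2.1]{CheltsovDenisovaFujita} together with Theorem~\ref{theorem:Hamid-Ziquan-Kento} to obtain a bound $\delta_p(X)\geqslant\min\{c_1,\,c_2\,\delta(S)\}$ with constants $c_1,c_2>1$. If $\phi_1(Z)=\mathbb{P}^1$, then $S$ is a general, hence smooth, sextic del Pezzo surface with $\delta(S)\geqslant 1$, forcing $\delta_p(X)>1$, a contradiction.

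The remaining, and main, difficulty is the case in which $Z$ lies in a single fibre $S$. When $A$ preserves the fibration, $S$ is an $A$-invariant del Pezzo surface of degree $6$; the bound above forces $\delta(S)<1$, so $S$ is K-unstable, yet if $S$ has at worst quotient singularities Theorem~\ref{theorem:dP} applied to the fixed-point-free $A$-action shows $S$ is K-polystable, a contradiction, while if $S$ degenerates to a surface with a worse (cone-like) singularity that point is $A$-fixed, again impossible. When $A$ swaps the rulings, $A$-invariance of $Z$ together with $\phi_1(Z)=\mathrm{pt}$ forces $\phi_2(Z)=\mathrm{pt}$, so $Z$ is a smooth conic fibre of $\Phi$, which I would treat by the same flag estimate applied to a (not necessarily $A$-invariant) $\phi_1$-fibre through $p$. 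Thus every case is excluded and $X$ is K-polystable. I expect the delicate points to be the precise description of the degenerate fibres of $\Phi$ needed to rule out fixed points on $\Delta$, the $E_i$-exclusion in the swapping regime, and securing the constants $c_1,c_2$ in the degree-$6$ flag estimate strictly above $1$.
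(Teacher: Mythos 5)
Your overall strategy is genuinely different from the paper's, and in its present form it has a real gap. The paper does not attempt any $\delta$-estimates for Family \textnumero 3.10 at all: it begins by quoting the classification in \cite[\S 5.17]{Book}, which says that a smooth member of this family can fail to be K-polystable only if $Q$ and the two conics are given by one of two \emph{explicit} equations, namely \eqref{equation:C} or \eqref{equation:B}. The entire proof then consists of showing that neither of these two concrete 3-folds admits a fixed-point-free finite abelian action: one writes down the $(2,2)$ discriminant curve $\Delta$ of the conic bundle $\eta\colon X\to\mathbb{P}^1\times\mathbb{P}^1$ explicitly, proves (as you also do) that $A$ cannot fix a point of $\Delta$, and then reads off a forced fixed point from the explicit equations --- the node of the irreducible discriminant in case \eqref{equation:B}, or the vertex of the quadric cone $\pi_*(\eta^*(L))$ coming from the $(0,1)$-component $L$ of $\Delta$ in case \eqref{equation:C}. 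Your identification of the conic bundle $\Phi=\eta$ and your argument that $A$ fixes no point of $\Delta$ coincide with the paper's; but you never use the explicit equations, so you cannot reach the paper's punchline, and instead you must prove K-polystability from scratch.

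That is where the gap lies. The core of your argument is the inequality $\delta_p(X)\geqslant\min\{c_1,\,c_2\,\delta(S)\}$ for a degree-$6$ del Pezzo fibre $S$ of $\phi_1$, together with the exclusion of $Z\subset E_1\cup E_2$ via $S(W^{E_i}_{\bullet,\bullet};Z)<1$. Neither estimate is established: there is no ``degree-$6$ analogue of \cite[Lemma~2.1]{CheltsovDenisovaFujita}'' available for this fibration (that lemma concerns del Pezzo fibrations obtained by blowing up an elliptic curve $H_1\cap H_2$ on a Fano 3-fold of index $2$, a different geometry with different Zariski decompositions and hence different constants), and a smooth del Pezzo surface of degree $6$ has $\delta(S)=1$, so your conclusion requires $c_2>1$ \emph{strictly}, which is exactly the kind of computation that cannot be waved at --- you acknowledge this yourself in your final sentence. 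Carrying these computations out would essentially reproduce the K-stability analysis of \cite[\S 5.17]{Book}, which is the hard content the paper deliberately imports rather than reproves. Two smaller inaccuracies: if $\Phi(Z)$ is a curve it necessarily meets $\Delta$ (a $(2,2)$-curve meets every curve of positive bidegree), so the correct statement is only that $Z$ is not contained in a fibre over a point of $\Delta$; and in the ruling-swapping case your reduction of $Z$ to a smooth conic fibre of $\Phi$ still leaves you needing $\delta_p(S)>1$ at a point of a $(-1)$-free position on a smooth dP$_6$, which does not follow from $\delta(S)=1$. Until the constants are actually computed and shown to exceed $1$, the proof does not close.
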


\begin{proof}
Suppose that the~3-fold $X$ is not K-polystable. It follows from \cite[\S 5.17]{Book} that there is a birational morphism $\pi\colon X\to Q$ such that $Q$ is a smooth quadric 3-fold in $\mathbb{P}^4$,
the morphism $\pi$ is a blowup of two disjoint smooth conics $C_1=\{x=0,y=0,w^2+zt=0\}$ and $C_2=\{z=0,t=0,w^2+xy=0\}$,
and either
\begin{equation}
\label{equation:C}
Q=\big\{w^2+xy+zt+xt+xz=0\big\},
\end{equation}
or
\begin{equation}
\label{equation:B}
Q=\big\{w^2+xy+zt+a(xt+yz)+xz=0\big\}
\end{equation}
for some $a\in\mathbb{C}$ with $a\ne\pm 1$.

The description of the~Mori cone $\overline{\mathrm{NE}}(X)$ is given in \cite[\S~III.3, p.80-p.81]{Matsuki}.
Using this description, we see that the~morphism $\pi$ is $A$-equivariant, and we have the~following $A$-equivariant commutative diagram:
$$
\xymatrix@R=1em{
&X\ar@{->}[ld]_{\pi}\ar@{->}[rd]^{\eta}&&\\%
Q\ar@{-->}[rr]_{\chi}&&\mathbb{P}^1_{x,y}\times\mathbb{P}^1_{z,t}}
$$
where $\chi$ is a rational map given by $[x:y:z:t:w]\mapsto([x:y],[z:t])$,
and $\eta$ is a (standard) conic bundle. Here, and below, we use $([x:y],[z:t])$ as coordinates on $\mathbb{P}^1_{x,y}\times\mathbb{P}^1_{z,t}$,
which may create a minor ambiguity.

Let $\Delta$ be the~discriminant curve of the~conic bundle $\eta$. Then $\Delta$ is an $A$-invariant curve of degree $(2,2)$,
which has at worst nodal singularities.
Moreover, if we are in case \eqref{equation:C}, then
$$
\Delta=\big\{x(t^2x+2txz-4tyz+xz^2)=0\big\}\subset \mathbb{P}^1_{x,y}\times\mathbb{P}^1_{z,t}.
$$
Similarly, if we are in case \eqref{equation:B}, then
$$
\Delta=\big\{a^2t^2x^2+(2a^2-4)xyzt+2atzx^2+a^2y^2z^2+2ayz^2x+z^2x^2=0\big\}\subset \mathbb{P}^1_{x,y}\times\mathbb{P}^1_{z,t},
$$
If $a\ne 0$, this curve is irreducible with a node at $([0:1],[0:1])$.
If $a=0$, then $\Delta=\{zx(zx-4yt)=0\}$, and the~curve $\Delta$ splits as a~union of curves of degrees $(0,1)$, $(1,0)$, $(1,1)$. We claim that $A$ does not fix points in $\Delta$.
Indeed, suppose that it does and let $F$ be the~fiber of the~conic bundle $\eta$ over an $A$-fixed point in $\Delta$. Then $F$ is singular.
If $F$ is reduced, then it has one singular point, and $A$ must fix this point, which contradicts our assumption.
If $F$ is not reduced, then $F=2\ell$ for a smooth $A$-invariant curve $\ell$ with $\pi(\ell)$ an $A$-invariant line in $Q$.
On the~other hand, it follows from Lemma~\ref{lemma:lift} that the~action of the~group $A$ on the~quadric $Q$ is induced by a five-dimensional faithful representation of the~group $A$,
so the~line $\pi(\ell)$ corresponds to a two-dimensional subrepresentation, which must split as a sum of two one-dimensional representations, since $A$ is abelian.
This shows that $A$ fixes a point in $\pi(\ell)$, so $A$ fixes a point in $X$ by Theorem~\ref{theorem:Kollar-Szabo},
which contradicts our assumption.

Hence, we see that $A$ does not fix point in $\Delta$. This implies that we are not in case \eqref{equation:B}.
Thus, we are in case \eqref{equation:C}. In this case, we have $\Delta=L+Z$, where $L$ is a curve of degree $(0,1)$, and $Z$ is an irreducible curve of degree $(2,1)$.
Set $S=\pi_*(\eta^*(L))$. Then $S$ is a hyperplane section of the~quadric $Q$, which is cut out on $Q$ by the~hyperplane $\{x=0\}$.
One can check that $S$ is a quadric cone with one singular point, so the~vertex of $S$ is fixed by $A$, which contradicts our assumption.
\end{proof}

\begin{lemma}
\label{lemma:pointless-3-12}
Let $X$ be a smooth Fano 3-fold in Family \textnumero 3.12, and let $A$ be a finite abelian subgroup in $\mathrm{Aut}(X)$ that does not fix points on $X$.
Then $X$ is K-polystable.
\end{lemma}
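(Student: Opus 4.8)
The plan is to follow the template of Lemma~\ref{lemma:pointless-3-10}, since Family \textnumero 3.12 likewise has non-K-polystable members and an explicit conic bundle model. Assume $X$ is not K-polystable. The first move is to make the geometry explicit: the non-K-polystable smooth members of Family \textnumero 3.12 are classified in \cite{Book}, and I would import from there the explicit description of $X$, realizing it simultaneously as a blowup $\pi\colon X\to Y$ of an explicit center (with $Y$ either $\mathbb{P}^3$ or a smooth quadric $Q\subset\mathbb{P}^4$) and as the total space of a conic bundle $\eta\colon X\to B$ over a rational surface $B$ with a reduced discriminant curve $\Delta\subset B$ of low degree. Because $X$ lies in the list of Lemma~\ref{lemma:Prokhorov}, every extremal ray of $\overline{\mathrm{NE}}(X)$ is $A$-invariant, so both $\pi$ and $\eta$ are $A$-equivariant and $\Delta$ is $A$-invariant.

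The core step is to prove that $A$ fixes no point of $\Delta$. Suppose $A$ fixes a point $p\in\Delta$, and let $F$ be the fiber of $\eta$ over $p$ with its reduced structure; then $F$ is an $A$-invariant singular conic. If $F$ is reduced, it is a union of two lines meeting at a single node, and this node must then be $A$-fixed, contradicting the hypothesis. If $F=2\ell$ is non-reduced, then $\pi(\ell)$ is an $A$-invariant line in $Y$; since $A$ is abelian, Lemma~\ref{lemma:lift} shows its action on $Y$ comes from a genuine linear representation, so $\pi(\ell)$ corresponds to a two-dimensional subrepresentation that splits into one-dimensional pieces, whence $A$ fixes a point of $\pi(\ell)$ and therefore of $X$ by Theorem~\ref{theorem:Kollar-Szabo}. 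Either way we reach a contradiction, exactly as in the treatment of Family \textnumero 3.10.

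It then remains to contradict the explicit shape of $\Delta$ on the finitely many non-K-polystable members. I expect that on each such member $\Delta$ is singular or reducible in an $A$-invariant way: an $A$-invariant node of $\Delta$, or an $A$-invariant component of small degree (which via Lemma~\ref{lemma:lift} either forces an $A$-fixed point on the base or produces an $A$-invariant hyperplane section of $Y$ that is a cone with $A$-fixed vertex), contradicts that $A$ has no fixed point. Packaging this case analysis is the main obstacle: it requires the precise equations of the special members from \cite{Book} and a verification that in every case the combinatorics of the $A$-invariant pieces of $\Delta$ force a fixed point. Should this direct approach prove too delicate, the fallback is the destabilizing-divisor method of Lemmas~\ref{lemma:F} and~\ref{lemma:pointless-3-6}: the center $Z$ of the $A$-invariant divisor $\mathbf{F}$ is an irreducible $A$-invariant curve with $\delta_p(X)\le 1$, and after ruling out that $\eta(Z)$ is a single $A$-fixed point (using Corollary~\ref{corollary:odd-degree-curves} to control $-K_X\cdot Z$) one applies the flag estimates of Theorems~\ref{theorem:Hamid-Ziquan-Kento} and~\ref{theorem:Hamid-Ziquan-Kento-1} to a general point $p\in Z$ to obtain $\delta_p(X)>1$, a contradiction.
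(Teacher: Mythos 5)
There is a genuine gap here: the decisive input for Family \textnumero 3.12 is not a discriminant-curve analysis but Denisova's characterization \cite{Denisova} of exactly which members are K-polystable, and your proposal never reaches it. The paper's argument runs as follows: writing $\pi\colon X\to\mathbb{P}^3$ as the blowup of a disjoint line $L$ and twisted cubic $C$, one first blows up only $L$ to get $V=\mathrm{Bl}_L\mathbb{P}^3$, which carries a $\mathbb{P}^2$-bundle $g\colon V\to\mathbb{P}^1$; the strict transform $\widetilde{C}$ is a trisection, giving a degree-$3$ cover $\omega\colon\widetilde{C}\to\mathbb{P}^1$. By \cite{Denisova}, $X$ fails to be K-polystable if and only if $\omega$ has a \emph{unique} ramification point of index $3$. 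That point is then canonically determined, hence fixed by $A$ (all the relevant contractions being $A$-equivariant by Matsuki's description of $\overline{\mathrm{NE}}(X)$), contradicting Theorem~\ref{theorem:Kollar-Szabo}. The proof is three lines once this dichotomy is in hand.

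Your primary route, modelled on Lemma~\ref{lemma:pointless-3-10}, would stall. Family \textnumero 3.12 does admit a conic bundle $\eta\colon X\to\mathbb{P}^2$ (compose with the secant-line projection of the twisted cubic), but its discriminant is the isomorphic image of $L$, a \emph{smooth} conic, for every smooth member of the family -- K-polystable or not. So your expectation that ``on each such member $\Delta$ is singular or reducible in an $A$-invariant way'' is false here, and the fixed-point-on-$\Delta$ argument gives nothing: a non-cyclic abelian group such as $(\mathbb{Z}/2\mathbb{Z})^2$ acts on a smooth conic $\simeq\mathbb{P}^1$ without fixed points, so no contradiction arises even though every singular fiber is reduced with a unique node. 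The invariant that actually separates the non-K-polystable members -- the ramification profile of the trisection $\widetilde{C}\to\mathbb{P}^1$ -- is invisible to the discriminant. Your fallback via Lemma~\ref{lemma:F} and the flag estimates of Theorems~\ref{theorem:Hamid-Ziquan-Kento} and~\ref{theorem:Hamid-Ziquan-Kento-1} is in principle viable but is left entirely unexecuted, and carrying it out would amount to redoing the computations of \cite{Denisova}; as written, neither branch of the proposal closes the argument.
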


\begin{proof}
There is a birational morphism $\pi\colon X\to \mathbb{P}^3$ which is the~blowup of a line $L$ and a twisted cubic~$C$, which are disjoint. 
Let $f\colon V\to \mathbb{P}^3$ be the~blowup of the~curve $L$, and let $\widetilde{C}$ be the~strict transform of the~curve $C$ on  $V$.
Then there exists a Sarkisov link:
$$
\xymatrix@R=1em{
&V\ar@{->}[ld]_{f}\ar@{->}[rd]^{g}&&\\%
\mathbb{P}^3&&\mathbb{P}^1}
$$
where $g$ is a $\mathbb{P}^2$-bundle over $\mathbb{P}^1$.
Moreover, the~map $g$ induces a finite morphism $\omega\colon \widetilde{C}\to \mathbb{P}^1$ of degree $3$. Note that the description in \cite[\S~III.3, p.83-p.84]{Matsuki} implies that the blowup $X ={\rm Bl}_{\widetilde{C}}(V) \to V$ is $A$-equivariant, and both $f$ and $g$ are $A$-equivariant.
Furthermore, it follows from \cite{Denisova} that $X$ is not K-polystable if and only if the~triple cover $\omega\colon \widetilde{C}\to \mathbb{P}^1$  has a unique ramification point of ramification index $3$.
Thus, if $X$ is not K-polystable, then the~ramification point of index $3$ of the~finite morphism $\omega$ is fixed by $A$.
Hence, if $X$ is not K-polystable, then $A$ fixes a point in $X$ by Theorem~\ref{theorem:Kollar-Szabo}, which contradicts our assumption.
\end{proof}

\begin{lemma}
\label{lemma:pointless-3-13}
Let $X$ be a smooth Fano 3-fold in Family \textnumero 3.13, and let $A$ be a finite abelian subgroup in $\mathrm{Aut}(X)$ that does not fix points on $X$.
Then $X$ is K-polystable.
\end{lemma}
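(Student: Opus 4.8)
The plan is to argue exactly as in the adjacent Lemmas~\ref{lemma:pointless-3-10} and~\ref{lemma:pointless-3-12}: assume $X$ is not K-polystable, use the explicit classification of the non-K-polystable members of Family~\textnumero 3.13 from \cite{Book} (and \cite{Denisova}), and show that each such member carries a distinguished point or fibre that $A$ is forced to fix, contradicting the hypothesis that $A$ fixes no point of $X$.

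The first point needing care is $A$-equivariance. Family~\textnumero 3.13 is \emph{not} on the list of Lemma~\ref{lemma:Prokhorov}, so I cannot assume the extremal rays of $\overline{\mathrm{NE}}(X)$ are individually $A$-invariant --- $\mathrm{Aut}(X)$ may interchange a symmetric pair of rays. Instead, as in the proofs of Lemmas~\ref{lemma:pointless-2-5-2-10-3-7} and~\ref{lemma:pointless-3-10}, I would read off the Mori cone from \cite[\S III.3]{Matsuki} and check that the \emph{combined} divisorial contraction $\pi\colon X\to V$ and the induced Mori fibration $\eta\colon X\to T$ are $A$-equivariant even when $A$ swaps two rays, so that the discriminant locus $\Delta$ of $\eta$ is $A$-invariant.

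Next I would show $A$ fixes no point of $\Delta$ by the singular-fibre argument of Lemmas~\ref{lemma:pointless-2-16}, \ref{lemma:pointless-2-24} and~\ref{lemma:pointless-3-10}: an $A$-fixed $p\in\Delta$ makes the reduced fibre $\eta^{-1}(p)$ $A$-invariant, so its node is $A$-fixed, while a non-reduced fibre $2\ell$ forces an $A$-fixed point on $\pi(\ell)$ after linearising the action with Lemma~\ref{lemma:lift}; in every case $A$ would fix a point of $X$. Finally I would substitute the explicit equations of the non-K-polystable members from \cite{Book} and \cite{Denisova}: K-instability forces $\Delta$ to degenerate (becoming reducible, or producing a cone fibre of $\pi$ as happens in Lemma~\ref{lemma:pointless-3-10}), and this degeneration singles out a unique $A$-invariant subvariety --- a distinguished component of $\Delta$, or the vertex of a cone --- on which Lemma~\ref{lemma:lift} again yields an $A$-fixed point, the desired contradiction. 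The main obstacle is this last step: matching the K-unstable locus in moduli with the correct discriminant/fibre degeneration and extracting from each its canonical $A$-fixed feature, the non-reduced-fibre subcase being the most delicate. If for some unstable member the discriminant stays irreducible and reduced, I would instead take $Z=C_X(\mathbf{F})$ from Lemma~\ref{lemma:F} --- which must dominate the base of $\eta$ since $A$ fixes no point of $\Delta$ --- and bound $\delta_p(X)$ at a general $p\in Z$ through a general fibre via Theorems~\ref{theorem:Hamid-Ziquan-Kento} and~\ref{theorem:Hamid-Ziquan-Kento-1}, reaching $\delta_p(X)>1$.
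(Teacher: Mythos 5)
Your plan never actually engages with what the non-K-polystable members of Family \textnumero 3.13 look like, and that is where the whole difficulty (and the whole solution) lives. The paper's proof is a one-liner: by \cite[Lemma~5.97]{Book}, \cite[Lemma~5.98]{Book} and \cite[Proposition~5.99]{Book}, every non-K-polystable smooth member of this family has $\mathrm{Aut}(X)\simeq\mathbb{G}_a\rtimes\mathfrak{S}_3$. Since $\mathbb{G}_a$ is torsion-free, every finite subgroup embeds into $\mathfrak{S}_3$, so every finite abelian subgroup $A$ is cyclic (trivial, $\mathbb{Z}/2\mathbb{Z}$ or $\mathbb{Z}/3\mathbb{Z}$), and Remark~\ref{remark:cyclic-Lefschetz} then gives a fixed point --- contradicting the hypothesis. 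No Mori cone analysis, no discriminant curves, no $\delta$-estimates are needed. Your proposal misses this structural input entirely, and without it you are left with a research programme rather than a proof.

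Concretely, the steps you leave conditional are exactly the ones that would be hard to execute. First, the equivariance issue is not resolved by saying you would ``check that the combined contraction is equivariant'': a member of Family \textnumero 3.13 carries three conic bundle structures that $\mathrm{Aut}(X)$ can permute cyclically (the $\mathfrak{S}_3$ above does so), so there need not be any single $A$-equivariant conic bundle $\eta\colon X\to T$ with an $A$-invariant discriminant $\Delta$ to run your singular-fibre argument on. Second, your claim that ``K-instability forces $\Delta$ to degenerate'' and that the degeneration singles out an $A$-fixed feature is asserted by analogy with Lemma~\ref{lemma:pointless-3-10}, not established; you would need the explicit equations of the unstable members and a case check that you do not supply. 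Third, the fallback $\delta_p(X)>1$ estimate at a general point of $Z$ is stated with no computation at all, and nothing in the paper provides it for this family. So the proposal is not a correct alternative proof; the decisive fact you need is the automorphism group of the unstable members, after which everything else you propose becomes unnecessary.
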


\begin{proof}
Suppose that $X$ is not K-polystable. Then it follows from \cite[Lemma~5.97]{Book}, \cite[Lemma~5.98]{Book} and
\cite[Proposition~5.99]{Book} that $\mathrm{Aut}(X)\simeq\mathbb{G}_a\rtimes\mathfrak{S}_3$,
which implies that either $A$ is trivial,  $A\simeq\mathbb{Z}/2\mathbb{Z}$, or $A\simeq\mathbb{Z}/3\mathbb{Z}$, which is impossible by Remark~\ref{remark:cyclic-Lefschetz}.
\end{proof}

\begin{lemma}
\label{lemma:pointless-4-13}
Let $X$ be a smooth Fano 3-fold in Family \textnumero 4.13, and let $A$ be a finite abelian subgroup in $\mathrm{Aut}(X)$ that does not fix points on $X$.
Then $X$ is K-polystable.
\end{lemma}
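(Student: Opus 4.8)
The plan is to run the standard machine of this section. Suppose $X$ is not K-polystable. By Lemma~\ref{lemma:F} there is an $A$-invariant prime divisor $\mathbf{F}$ over $X$ with
$$
1\geqslant\delta(X)=\frac{A_X(\mathbf{F})}{S_X(\mathbf{F})},
$$
and, as explained at the start of the section, its center $Z\subset X$ is an irreducible $A$-invariant curve: it is not a surface by \cite[Theorem~3.17]{Book}, and not a point because $A$ fixes no point of $X$; moreover $\delta_p(X)\leqslant 1$ for every $p\in Z$. The goal is to produce either an $A$-fixed point (contradicting the hypothesis on $A$ via Theorem~\ref{theorem:Kollar-Szabo}) or a general point $p\in Z$ with $\delta_p(X)>1$.

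The first genuinely family-specific step is to make the extremal contractions of $X$ into $A$-equivariant morphisms. Since Family \textnumero 4.13 is \emph{not} among the cases covered by Lemma~\ref{lemma:Prokhorov}, equivariance cannot be quoted; instead I would read off the structure of $\overline{\mathrm{NE}}(X)$ from \cite[\S III.3]{Matsuki} and check that $A$ cannot permute the distinguished extremal rays non-trivially, as they are separated by the type of their contraction and by the numerical data of the loci they contract (this is exactly the device used for Families \textnumero 3.7 and \textnumero 3.10). This renders every Mori fibre space structure on $X$ $A$-equivariant, and in particular fixes an $A$-equivariant fibration $\eta\colon X\to B$ onto a rational base, with $A$ acting without fixed points on $B$ by Theorem~\ref{theorem:Kollar-Szabo}.

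I would then exploit $\eta$. If $\eta$ is a conic bundle with discriminant $\Delta\subset B$, then $\Delta$ is $A$-invariant and I would show $A$ fixes no point of $\Delta$, arguing as in Lemmas~\ref{lemma:pointless-3-10} and \ref{lemma:pointless-2-24}: over a smooth point of $\Delta$ the reducible fibre has a node that would be $A$-fixed, while over a singular point of $\Delta$ the reduced fibre produces an $A$-invariant line (or $(-1)$/$(-2)$-curve) whose image in a suitable model is $A$-linearizable, hence carries an $A$-fixed point by Lemma~\ref{lemma:lift} and Corollary~\ref{corollary:odd-degree-curves}. This forces $\eta(Z)$ to dominate a positive-dimensional part of $B$ rather than collapse to a fixed point. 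One also rules out $Z$ lying in an exceptional surface $E$ of a divisorial ray by a Zariski-decomposition estimate: computing $S_X(E)$ and bounding $S(W^{E}_{\bullet,\bullet};Z)$ through Theorem~\ref{theorem:Hamid-Ziquan-Kento-1} gives $S(W^{E}_{\bullet,\bullet};Z)<1$, a contradiction, exactly as in Lemma~\ref{lemma:pointless-2-16}.

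For the final contradiction I would pass to a general point $p\in Z$ and the fibre (or fibral surface) $S$ through $p$. If the image of $Z$ under a del Pezzo-type fibration dominates the base, then $S$ is a general fibre, hence smooth and K-polystable (or at worst Du Val), and combining the known $\delta$-estimate for $S$ with Theorem~\ref{theorem:Hamid-Ziquan-Kento} and the Zariski decomposition of $-K_X-uS$ yields $\delta_p(X)>1$, contradicting $\delta_p(X)\leqslant 1$; this is the pattern of Lemma~\ref{lemma:pointless-2-5-2-10-3-7}. Otherwise the image of $Z$ is a point, so $S$ is $A$-invariant: if $S$ is Du Val it is K-polystable by Theorem~\ref{theorem:dP}, and if $S$ degenerates into a cone its vertex is an $A$-fixed point, both impossible. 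The main obstacle is this last step: because Family \textnumero 4.13 is K-polystable for general members but K-unstable for special ones, the inequality $\delta_p(X)>1$ is quantitatively tight, so one cannot escape with a purely qualitative fixed-point argument and must instead identify precisely which singular fibres occur among the non-K-polystable members and carry out the flag $\delta$-estimates of Theorems~\ref{theorem:Hamid-Ziquan-Kento} and \ref{theorem:Hamid-Ziquan-Kento-1} with the correct Zariski decompositions.
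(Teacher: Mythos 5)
Your proposal correctly sets up the standard reduction of this section (Lemma~\ref{lemma:F}, the center $Z$ of the destabilizing valuation is an irreducible $A$-invariant curve, equivariance of the relevant contractions read off from \cite[\S~III.3]{Matsuki} since Family \textnumero~4.13 is outside the scope of Lemma~\ref{lemma:Prokhorov}), but it does not reach a proof, and the missing step is exactly the one the paper's argument is built on. The paper runs no $\delta$-estimate for this family at all. It quotes the classification in \cite[\S~5.22]{Book}: a smooth member of Family \textnumero~4.13 is the blowup $\pi\colon X\to\mathbb{P}^1\times\mathbb{P}^1\times\mathbb{P}^1$ along a smooth curve $C$ of degree $(1,1,3)$, and the non-K-polystable members are precisely those for which, after a coordinate change, $C=\{x_0y_1-x_1y_0=x_0^3z_0+x_1^3z_1+x_0x_1^2z_0=0\}$. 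For this curve the triple cover $C\to\mathbb{P}^1_{z_0,z_1}$ induced by the third projection has a \emph{unique} ramification point, of index $3$; since $\pi$ and that projection are $A$-equivariant by \cite[\S~III.3, p.84--p.85]{Matsuki}, this point is fixed by $A$, and Theorem~\ref{theorem:Kollar-Szabo} gives the contradiction. This is the same device as in Lemma~\ref{lemma:pointless-3-12}, where the analogous dichotomy is supplied by \cite{Denisova}.

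The gap in your route is not merely that the computation is unfinished: the final step you propose, namely $\delta_p(X)>1$ at a general point of $Z$, cannot succeed for this family, because the non-K-polystable member genuinely attains $\delta(X)\leqslant 1$ on a curve that is canonical (hence $A$-invariant for every $A$), so no volume estimate will exclude it. The contradiction must instead come from exhibiting an $A$-fixed point on that special $X$, and you never identify one; your closing remark that one ``must identify precisely which singular fibres occur among the non-K-polystable members'' is an accurate diagnosis of what is missing, but that identification --- the explicit equation of $C$ from \cite[\S~5.22]{Book} and the observation that its triple cover of $\mathbb{P}^1_{z_0,z_1}$ is totally ramified over exactly one point --- is the entire content of the proof. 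Once it is invoked, the conic-bundle discriminant analysis, the $S(W^{E}_{\bullet,\bullet};Z)$ bounds, and the Abban--Zhuang flag estimates you outline are all unnecessary. As written, the proposal is an outline of the section's general strategy rather than a proof for this family.
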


\begin{proof}
In this case, there exists is a birational morphism $\pi\colon X\to\mathbb{P}^1_{x_0,x_1}\times\mathbb{P}^1_{y_0,y_1}\times\mathbb{P}^1_{z_0,z_1}$ 
which is the~blowup of a smooth rational curve $C$ such that $C$ is a curve of degree $(1,1,3)$. 
Suppose that $X$ is not K-polystable. Then it follows from \cite[\S~5.22]{Book} that, up to a change of coordinates, we have 
$$
C=\big\{x_0y_1-x_1y_0=x_0^3z_0+x_1^3z_1+x_0x_1^2z_0=0\big\}.
$$
Moreover, it follows from the~description of the~extremal rays of the~Mori cone $\overline{\mathrm{NE}}(X)$ given in \cite[\S~III.3, p.84-p.85]{Matsuki}
that $\pi$ and the~projection $X\to \mathbb{P}^1_{z_0,z_1}$ given by $([x_0,x_1],[y_0,y_1],[z_0,z_1])\mapsto[z_0,z_1]$ are both $A$-equivariant, so $C$ is $A$-invariant.
On the~other hand, the~projection $X\to \mathbb{P}^1_{z_0,z_1}$ induces a triple cover $C\to\mathbb{P}_{z_0,z_1}$,
which has a unique ramification point with ramification index $3$,
so this point must be fixed by $A$, which contradicts out assumption.
\end{proof}

\section{Examples of smooth Fano 3-folds not satisfying Condition~$\mathbf{(A)}$}
\label{section:pointless}

\setlength{\epigraphwidth}{0.8\textwidth}
\epigraph{What is all this games stuff, anyway? In a way, it's easy enough to see what happens to winners who stay on Earth. They get all the~juicy jobs; they become governors and such.}{\emph{The World of Null-A}, A.\ E.\ van Vogt, 1948}

In this section, we provide examples of smooth Fano 3-folds that do not satisfy Condition~$\mathbf{(A)}$.

\subsection{K-unstable smooth Fano 3-folds not satisfying Condition~$\mathbf{(A)}$.}

\begin{example}
\label{example:0}
Let $X=\mathbb{P}^1\times S$, where $S$ is a blow up of $\mathbb{P}^2$ in one or two points.
Then $\mathrm{Aut}(X)$ contains a subgroup isomorphic to $(\mathbb{Z}/2\mathbb{Z})^2$ that acts trivially on the~second factor, and does not fix points in $X$.
\end{example}

\begin{example}
\label{example:1}
Let $X$ be the~blowup of a smooth quadric 3-fold in $Q\subset\mathbb{P}^4$ along a smooth quartic elliptic curve $C_4$.
Choosing appropriate coordinate on $\mathbb{P}^4$, we may assume that $Q=\{x_1^2+x_2^2+x_3^2+x_4^2+x_5^2=0\}$,
and the~curve $C_4$ is given by the~following equations:
$$
x_5=\sum_{i=1}^{5}x_i^2=\sum_{i=1}^{5}\lambda_ix_i^2=0
$$
for some $\lambda_1,\ldots,\lambda_5\in\mathbb{C}$. Let $A$ be the~abelian subgroup in $\mathrm{Aut}(Q)$ consisting of projective transformations that change signs of coordinates $x_1,\ldots,x_5$. Then $A\simeq(\mathbb{Z}/2\mathbb{Z})^4$, the~group $A$ does not fix points in $Q$, and the~curve $C_4$ is $A$-invariant, so the~action of the~group $A$ lifts to $X$, and $A$ does not fix points in $X$.
\end{example}

\begin{example}
\label{example:2}
Let $Q$ be the~quadric cone in $\mathbb{P}^4$ with one singular point, and let $f\colon X\to Q$ be the~blowup of its vertex.
Then choosing appropriate coordinate on $\mathbb{P}^4$, we may assume that $Q=\{x_1^2+x_2^2+x_3^2+x_4^2=0\}$.
Let $A$ be the~abelian subgroup in $\mathrm{Aut}(Q)$ consisting of projective transformations that change signs of coordinates $x_1,\ldots,x_5$.
Then $A\simeq(\mathbb{Z}/2\mathbb{Z})^4$, the~only $A$-fixed point in $Q$ is its vertex $[0:0:0:0:1]$,
and the~action of the~group $A$ lifts to $X$.
Note that there exists $A$-equivariant commutative diagram
$$
\xymatrix@R=1em{
&X\ar@{->}[ld]_{f}\ar@{->}[rd]^{\pi}&&\\%
Q\ar@{-->}[rr]_{\chi}&&S}
$$
where $S$ is the~quadric surface in $\mathbb{P}^3$ given by $x_1^2+x_2^2+x_3^2+x_4^2=0$ equipped with the~same action of the group $A$,
$\pi$ is a $\mathbb{P}^1$-bundle, and $\chi$ is given by $[x_1:x_2:x_3:x_4:x_5]\mapsto [x_1:x_2:x_3:x_4]$.
Since $A$ does not fix points in $S$, we see that $A$ does not fix points in $X$ either.
\end{example}

\begin{example}
\label{example:3}
Let $X$ be the~blowup of $\mathbb{P}^1_{x_1,y_1}\times \mathbb{P}^1_{x_2,y_2}\times\mathbb{P}^1_{x_3,y_3}$ along a smooth curve $C$ of degree $(0,1,1)$.
Then, changing coordinates if necessarily, we may assume that $C=\{y_1=0,x_2y_3-x_3y_2=0\}$.
Let $A$ be the~subgroup in $\mathrm{Aut}(\mathbb{P}^1_{x_1,y_1}\times \mathbb{P}^1_{x_2,y_2}\times\mathbb{P}^1_{x_3,y_3})$ generated by the~following transformations:
\begin{align*}
([x_1:y_1],[x_2:y_2],[x_3:y_3])&\mapsto([x_1:y_1],[x_2:-y_2],[x_3:-y_3]),\\
([x_1:y_1],[x_2:y_2],[x_3:y_3])&\mapsto([x_1:y_1],[y_2:x_2],[y_3:x_3]).
\end{align*}
Then $A\simeq (\mathbb{Z}/2\mathbb{Z})^2$, it does not fix points in  $\mathbb{P}^1_{x_1,y_1}\times \mathbb{P}^1_{x_2,y_2}\times\mathbb{P}^1_{x_3,y_3}$,
and it leaves the~curve $C$ invariant.
Hence, the~action of the~group $A$ lifts to $X$, and $A$ does not fix points in $X$.
\end{example}

\begin{example}
\label{example:4}
Let $A$ be the subgroup in $\mathrm{Aut}(\mathbb{P}^3)$ generated by 
\begin{align*}
[x_1:x_2:x_3:x_4]&\mapsto[x_1:-x_2:x_3:-x_4],\\
[x_1:x_2:x_3:x_4]&\mapsto[x_2:x_1:x_4:x_3].
\end{align*}
Then $A\simeq(\mathbb{Z}/2\mathbb{Z})^2$ and it  does not fix points in $\mathbb{P}^3$, and $A$ leaves invariant the~lines $L_1=\{x_1=x_2=0\}$ and
$L_2=\{x_3=x_4=0\}$. Blowing up $\mathbb{P}^3$ along $L_1$, we obtain the~exceptional case (6) in Section~\ref{subsection:Main-Theorem}.
Blowing up the~points $[0:0:1:0]$ and $[0:0:0:1]$, and then blowing up the~strict transform of the~line $L_1$, we obtain the~exceptional case (7).
Finally, blowing up $[0:0:1:0]$ and $[0:0:0:1]$, and then blowing up the~strict transforms of the~lines $L_1$ and $L_2$, we obtain the~exceptional case (8). In each case, the~action of the~group $A$ lifts to the~resulting Fano 3-fold, and $A$ does not fix points in it.
\end{example}

\subsection{K-polystable smooth Fano 3-folds not satisfying Condition~$\mathbf{(A)}$.}

\begin{example}
\label{example:1-9}
Let $X_3$ be the~cubic 3-fold $\{x_1x_2x_3+(x_1+x_2+x_3)x_4x_5+2025(x_4^3+x_5^4)=0\}\subset\mathbb{P}^4$,
and let $A$ be the~subgroup in $\mathrm{Aut}(X_3)$ generated by the following transformations:
\begin{align*}
[x_1:x_2:x_3:x_4:x_5]&\to [x_2:x_3:x_1:x_4:x_5],\\
[x_1:x_2:x_3:x_4:x_5]&\to [x_1:x_2:x_3:\zeta_3x_4:\zeta_3^2x_5],
\end{align*}
where $\zeta_3$ is a primitive cube root of unity.
Then $X_3$ has three ordinary double point singularities (nodes) at $[1:0:0:0:0]$, $[0:1:0:0:0]$, $[0:0:1:0:0]$,
$A\simeq(\mathbb{Z}/3\mathbb{Z})^2$, and the group $A$ does not fix points in $X_3$.
Moreover, it follows from \cite[Section 4]{CheltsovTschinkelZhang2024} that we have the following $A$-equivariant commutative diagram:
$$
\xymatrix@R=1em{
&\widetilde{X_3}\ar@{->}[rd]_{\alpha}\ar@{->}[ldd]_{\varphi}\ar@{-->}[rr]^{\zeta}&&\widetilde{V}\ar@{->}[rdd]^{\pi}\ar@{->}[ld]^{\beta} &\\%
&&V&&\\
X_3\ar@{-->}[rrrr]^{\chi}&&&&\mathbb{P}^1_{x_4:x_5}}
$$
where $\varphi$ is the blowup of the nodes of $X_3$,
$\alpha$ is the small birational morphism that contracts the~strict transform of three lines in $X_{3}$ that contain two nodes of $X_3$,
$\zeta$ is a pseudoautomorphism that flops curves contracted by $\alpha$,
$V$ is a Fano 3-fold with three nodes such that $\mathrm{Pic}(V)\simeq\mathbb{Z}$ and $(-K_V)^3=18$,
$\beta$ is a small birational morphism,
$\pi$ is a fibration into del Pezzo surfaces of degree $6$,
and $\chi$ is a rational map that is given by $[x_1:x_2:x_3:x_4:x_5]\mapsto[x_4:x_5]$.
Moreover, the abelian group $A$ does not fix points in~$V$.
Furthermore, by Namikawa \cite{Namikawa97}, we have $H^2(V,T_V)=0$, so $V$ is smoothable.
Infinitesimal deformations are parametrized by the~complex vector space $H^1(V,T_V)$ on which $A$ acts linearly.
As $A$ is abelian, this representation splits and we can choose an $A$-equivariant smoothing by \cite{Rim}, cf. \cite[Lemma~3.7]{FantechiPardini}. Let $X$ be a general such smoothing. Then $A$ does not fix points on $X$ as well, and $X$ is a smooth Fano 3-fold belonging to Family \textnumero 1.9 which does not satisfy Condition~$\mathbf{(A)}$.
\end{example}

\begin{example}[{\cite[Example~4.25]{Book}}]
\label{example:2-5}
Consider the~smooth cubic 3-fold $V=\{x_1^3+x_2^3+x_3^3+x_4^3+x_5^3=0\}\subset\mathbb{P}^4$,
let $C$ be the~smooth cubic curve in $V$ that is cut out by the~plane $\{x_1=x_2=0\}$,
and let $\pi\colon X\to V$ be the~blowup of the~curve $C$,
where $x_1$, $x_2$, $x_3$, $x_4$, $x_5$ are coordinates on $\mathbb{P}^4$.
Then $X$ is a~smooth Fano $3$-fold in Family \textnumero 2.5.
Let~$A$ be the~abelian subgroup in $\mathrm{Aut}(V)$ that is generated by transformations
$$
[x_1:x_2:x_3:x_4:x_5]\mapsto [\zeta_3^{a_1}x_1:\zeta_3^{a_2}x_2:\zeta_3^{a_3}x_3:\zeta_3^{a_4}x_4:x_5],
$$
where $\zeta_3$ is a primitive cube root of unity, and each $a_i\in\{0,1,2\}$.
Then $A\simeq(\mathbb{Z}/3\mathbb{Z})^4$, and $A$ does not fix points in $V$.
Moreover, since $C$ is $A$-invariant, the~action of the~group $A$ lifts to $X$, so we can identify $A$ with a subgroup in $\mathrm{Aut}(X)$.
Then $A$ does not fix points in $X$ by Theorem~\ref{theorem:Kollar-Szabo},
\end{example}

\begin{example}
\label{example:2-10}
In $\mathbb{P}^5$, we consider the following complete intersection of two quadrics:
$$
V=\Big\{\sum_{i=1}^6 x_i^2 = \sum_{i=1}^6 a_i x_i^2 =0\Big\}\subset\mathbb{P}^5,
$$
where $a_1,a_2,a_3,a_4,a_5,a_6$ are general complex numbers. Let $C=\{x_1=x_2=0\}\cap V$,
and let $A$ be the~subgroup in $V$ generated by the following involutions:
\begin{align*}
[x_1:x_2:x_3:x_4:x_5:x_6]&\mapsto [-x_1:x_2:x_3:x_4:x_5:x_6],\\
[x_1:x_2:x_3:x_4:x_5:x_6]&\mapsto [x_1:-x_2:x_3:x_4:x_5:x_6],\\
[x_1:x_2:x_3:x_4:x_5:x_6]&\mapsto [x_1:x_2:-x_3:x_4:x_5:x_6],\\
[x_1:x_2:x_3:x_4:x_5:x_6]&\mapsto [x_1:x_2:x_3:-x_4:x_5:x_6],\\
[x_1:x_2:x_3:x_4:x_5:x_6]&\mapsto [x_1:x_2:x_3:x_4:-x_5:x_6].
\end{align*}
Then $V$ is smooth, $C$ is smooth and $A$-invariant, $A\simeq (\mathbb{Z}/2\mathbb{Z})^5$, the group $A$ does not fix points in $V$.
Now, let $\pi\colon X\to V$ be the~blowup of the~curve $C$.
Then $X$ is a smooth Fano 3-fold in Family \textnumero 2.10 on which the~lifted action of $A$ does not fix points.
\end{example}

\begin{example}
\label{example:2-12}
In \cite[Section~2.1]{CheltsovLiMauPinardin}, there is an explicit example of a smooth sextic curves $C_6\subset\mathbb{P}^3$ of genus $3$ such that $\mathrm{Aut}(\mathbb{P}^3,C_6)$ contains a subgroup $G\simeq\mathfrak{S}_4$. The generators of the~subgroup $G$ are listed in the~proof of
\cite[Lemma~10]{CheltsovLiMauPinardin}. Let $A$ be the~unique subgroup in $G$ that is isomorphic to $(\mathbb{Z}/2\mathbb{Z})^2$. Then $A$ does not fix points in $\mathbb{P}^3$. By blowing up $\mathbb{P}^3$ along $C_6$ we obtain a smooth Fano 3-fold $X$ in Family \textnumero 2.12 with $\mathrm{Aut}(X)$ containing a subgroup  isomorphic to $(\mathbb{Z}/2\mathbb{Z})^2$ that does not fix points in $X$.
\end{example}

\begin{example}
\label{example:2-16}
Let $C=\{x_1^2+x_2^2+x_3^2=0,x_4=0,x_5=0,x_6=0\}\subset \mathbb{P}^5$,
let $V$ be the~complete intersection of two quadrics in $\mathbb{P}^5$ that is given by
$$
\left\{\aligned
&x_1^2+x_2^2+x_3^2+x_4^2+x_5^2+x_6^2=0, \\
&1983x_1x_4+1973x_2x_5+1967x_3x_6=0,
\endaligned
\right.
$$
and let $A$ be the~subgroup in $\mathrm{Aut}(V)$ that is generated by the~following involutions:
\begin{align*}
[x_1:x_2:x_3:x_4:x_5:x_6]&\mapsto [-x_1:x_2:x_3:-x_4:x_5:x_6],\\
[x_1:x_2:x_3:x_4:x_5:x_6]&\mapsto [x_1:-x_2:x_3:x_4:-x_5:x_6],\\
[x_1:x_2:x_3:x_4:x_5:x_6]&\mapsto [x_2:x_1:x_3:x_5:x_4:x_6].
\end{align*}
Then $C\subset V$, both $C$ and $V$ are smooth,
$A\simeq(\mathbb{Z}/2\mathbb{Z})^3$, $A$ does not fix points in $V$, and $C$ is $A$-invariant.
Let $\pi\colon X\to V$ be the~blowup of the~conic $C$.
Then $X$ is a smooth Fano 3-fold in Family \textnumero 2.16,
the~action of the~group $A$ lifts to $X$, and $A$ does not fix points in $X$.
\end{example}

\begin{example}
\label{example:2-21}
Let $Q$ be the~quadric 3-fold in $\mathbb{P}^4$ given by $4x_2x_4-x_1x_5-3x_3^2=0$,
and let $C$ be the twisted quartic curve in $\mathbb{P}^4$ that is given in the~parametric form as $[u:v]\mapsto[u^4:u^3v:u^2v^2:uv^3:v^4]$,
where $[u:v]\in\mathbb{P}^1$.
Then $Q$ is smooth and contains $C$.
Let $\pi\colon X\to Q$ be the~blowup of the~curve~$C$.
Then $X$ is a smooth Fano 3-fold in Family \textnumero 2.21,
and it follows from \cite[\S~5.9]{Book} or from \cite{Malbon} that
the~group $\mathrm{Aut}(X)$ contains an~involution $\sigma$ creating the~following commutative diagram:
$$
\xymatrix@R=1em{
X\ar@{->}[d]_{\pi}\ar@{->}[rr]^{\sigma}&&X\ar@{->}[d]^{\pi}\\%
Q\ar@{-->}[rr]^{\tau}&&Q&}
$$
where $\tau$ is a~birational involution of the~quadric 3-fold $Q$ that is given by
$$
[x_1:x_2:x_3:x_4:x_5]\mapsto\big[4(x_1x_3-x_2^2): 2(x_1x_4-x_2x_3):(x_1x_5-x_3^2):2(x_2x_5-x_3x_4):4(x_3x_5-x_4^2)\big].
$$
Now, we let $A$ be the~subgroup in $\mathrm{Aut}(X)$ generated by $\sigma$ and the~lifts of the~following two transformations:
\begin{align*}
[x_1:x_2:x_3:x_4:x_5]&\mapsto [x_5:x_4:x_3:x_2:x_1],\\
[x_1:x_2:x_3:x_4:x_5]&\mapsto [x_1:-x_2:x_3:-x_4:x_5].
\end{align*}
Then $A\simeq(\mathbb{Z}/2\mathbb{Z})^3$, and $A$ does not fix points in $X$.
\end{example}


\begin{example}
\label{example:2-24}
Let $X$ be the~divisor in $\mathbb{P}^2_{x,y,z}\times\mathbb{P}^2_{u,v,w}$ that is given by the~following equation:
$$
xu^2+yv^2+zw^2+\mu\big(xvw+yuw+zuv\big)=0,
$$
where $\mu\in \mathbb{C}$ such that $\mu^3\ne -1$. Then $X$ is a smooth Fano 3-fold in Family \textnumero 2.24,
and every K-polystable smooth member in Family \textnumero 2.24 can be obtained in this way \cite[\S~4.7]{Book}.
Let $A$ be the~subgroup in $\mathrm{Aut}(X)$ that is generated by the~following transformations:
\begin{align*}
\big([x:y:z],[u:v:w]\big)&\mapsto \big([y:z:x],[v:w:u)\big]\big),\\
\big([x:y:z],[u:v:w]\big)&\mapsto \big([\zeta_3 x:\zeta_3^2 y:z],[\zeta_3 u:\zeta_3^2 v:w]\big).
\end{align*}
Then $A\simeq(\mathbb{Z}/3\mathbb{Z})^2$ and $A$ does not fix points in $X$.
\end{example}

\begin{example}
\label{example:3-2}
Let $V=\mathbb{P}\big(\mathcal{O}_{\mathbb{P}^1}(2)\oplus\mathcal{O}_{\mathbb{P}^1}(2)\oplus\mathcal{O}_{\mathbb{P}^1}(1)\oplus\mathcal{O}_{\mathbb{P}^1}(1)\big)$,
let $M$ be the~tautological line bundle on $V$,
and let $F$ be a fiber of the~natural projection $V\to\mathbb{P}^1$.
In the~notations used in \cite[\S2]{Reid}, let $X$ be the~3-fold in the~linear system $|3M-4F|$ that is given by the~following equation:
\begin{multline*}
\quad \quad \quad \quad \quad x_1x_3x_4+3x_2(x_3^2+x_4^2)+1007t_1t_2x_1^3-301t_1t_2x_1x_2^2+\\
+(t_1^2+t_2^2)(71x_1^2x_2-54x_2^3)+1111x_1^2(t_2x_3+t_1x_4)-99x_2^2(t_2x_3+t_1x_4)=0. \quad \quad \quad \quad\quad \quad
\end{multline*}
Then $X$ is a smooth Fano 3-fold in the~family \textnumero 3.2. This family is denoted by $T_{11}$ in \cite{ChPrSh}.
Note that the~natural projection $V\to\mathbb{P}^1$ induces a morphism $\pi\colon X\to\mathbb{P}^1_{t_1,t_2}$ whose general fiber is a smooth cubic surface in $\mathbb{P}^3$.
Let $A$ be the~subgroup in $\mathrm{Aut}(V)$ that is generated by the~following transformations:
\begin{align*}
(t_1,t_2;x_1,x_2,x_3,x_4)&\mapsto(t_2,t_1;x_1,x_2,x_4,x_3),\\
(t_1,t_2;x_1,x_2,x_3,x_4)&\mapsto(t_1,-t_2;x_1,-x_2,x_3,-x_4).
\end{align*}
Then $A\simeq(\mathbb{Z}/2\mathbb{Z})^2$, the~3-fold $X$ is $A$-invariant, the~group $A$ acts faithfully on $X$,
and the~morphism $\pi$ is $A$-equivariant. Moreover, the~group $A$ does not fix points in $\mathbb{P}^1_{t_1,t_2}$, so that it does not fix points in $X$.
\end{example}

\begin{example}
\label{example:3-5}
Let $S=\mathbb{P}^1_{u,v}\times\mathbb{P}^1_{x,y}$, and let $A$ be the subgroup in $\mathrm{Aut}(S)$ generated by
\begin{align*}
([u:v],([x:y])&\mapsto([v:u],[y:x]),\\
([u:v],([x:y])&\mapsto([-u:v],[-x:y]).
\end{align*}
Then it follows from \cite[Lemma~A.54]{Book} that $S$ contains an $A$-invariant smooth curve $C$ of degree $(5,1)$.
Now, we consider the~$A$-equivariant embedding $S\hookrightarrow\mathbb{P}^1\times\mathbb{P}^2$ given by
$$
\big([u:v],[x:y]\big)\mapsto\big([u:v],[x^2:xy:y^2]\big).
$$
We identify $S$ and $C$ with their images in $\mathbb{P}^1\times\mathbb{P}^2$,
and we identify $A$ with a~subgroup in $\mathrm{Aut}(\mathbb{P}^1\times\mathbb{P}^2)$.
Then $A$ does not fix points in $\mathbb{P}^1\times\mathbb{P}^2$.
Let $\pi\colon X\to\mathbb{P}^1\times\mathbb{P}^2$ be the~blow up along $C$.
Then $X$ is a~smooth Fano 3-fold in Family~\textnumero~3.5,
the~action of the~group $A$ lifts to $X$, and $A$ does not fix points in $X$.
\end{example}

\begin{example}
\label{example:3-6}
Let $L$ be the~line $\{x_1-x_3=x_2-x_4=0\}\subset\mathbb{P}^3$,
let $C$ be the~curve
$$
\big\{x_1^2+x_2^2+2025(x_3^2+x_4^2)=0, 2025(x_1^2-x_2^2)+x_3^2-x_4^2=0\big\}.
$$
Then $L\cap C=\varnothing$, and $C$ is a smooth elliptic curve.
Let $\pi\colon X\to\mathbb{P}^3$ be the~blow up along $L$ and $C$.
Then $X$ is a~smooth Fano 3-fold in Family \textnumero 3.6.
Let $A$ be the~subgroup in $\mathrm{Aut}(\mathbb{P}^3)$  generated by
\begin{align*}
[x_1:x_2:x_3:x_4]&\mapsto[x_2:x_1:x_4:x_3],\\
[x_1:x_2:x_3:x_4]&\mapsto[x_1:-x_2:x_3:-x_4].
\end{align*}
Then $A\simeq(\mathbb{Z}/2\mathbb{Z})^2$, and both curves $L$ and $C$ are $A$-invariant, so the~action of the~group $A$ lifts to $X$.
Moreover, the~group $A$ does not fix points in $\mathbb{P}^3$, so that it does not fix points in $X$.
\end{example}

\begin{example}
\label{example:3-7}
Let $A$ be the~subgroup in $\mathrm{Aut}(\mathbb{P}^2\times\mathbb{P}^2)$ that is generated by the~following transformations:
\begin{align*}
([x_1:x_2:x_3],[y_1:y_2:y_3])&\mapsto([x_2:x_3:x_1],[y_2:y_3:y_1]),\\
([x_1:x_2:x_3],[y_1:y_2:y_3])&\mapsto([\zeta_3^2x_1:\zeta_3x_2:x_3],[\zeta_3y_1:\zeta_3^2y_2:y_3]),
\end{align*}
where $\zeta_3$ is a primitive cube root of unity. Then $A\simeq(\mathbb{Z}/3\mathbb{Z})^2$, and $A$ acts on $\mathbb{P}^2\times\mathbb{P}^2$ without fixed points.
Let $W$, $W^\prime$, $W^{\prime\prime}$ be divisors of degree $(1,1)$ in $\mathbb{P}^2\times\mathbb{P}^2$ that are given by
\begin{gather*}
x_1y_1+x_2y_2+x_3y_3=0,\\
\zeta_3x_1y_1+\zeta_3^2x_2y_2+x_3y_3=0,\\
x_1y_2+x_2y_3+x_3y_1=0,
\end{gather*}
respectively. Then they are smooth and $A$-invariant. Set $C=W\cap W^\prime\cap W^{\prime\prime}$. Then $C$ is a smooth elliptic curve.
Let $\pi\colon X\to W$ be the~blowup of the~curve $C$.
Then $X$ is a smooth Fano 3-fold in Family \textnumero 3.7, and the~action of the~group $A$ lifts to $X$,
and $A$ does not fix points in $X$.
\end{example}

\begin{example}
\label{example:3-10}
Let $Q$ be the~smooth quadric 3-fold in $\mathbb{P}^4$ that is given by $x_1^2+x_2^2+x_3^2+x_4^2+x_5^2=0$,
let $\Pi_1=\{x_1=x_2=0\}$ and $\Pi_2=\{x_2=x_4=0\}$,
and let $A$ be the~subgroup in $\mathrm{Aut}(Q)$ generated by
\begin{align*}
[x_1:x_2:x_3:x_4:x_5]&\mapsto [-x_1:x_2:x_3:x_4:x_5],\\
[x_1:x_2:x_3:x_4:x_5]&\mapsto [x_1:-x_2:x_3:x_4:x_5],\\
[x_1:x_2:x_3:x_4:x_5]&\mapsto [x_1:x_2:-x_3:x_4:x_5],\\
[x_1:x_2:x_3:x_4:x_5]&\mapsto [x_1:x_2:x_3:-x_4:x_5].
\end{align*}
Then $A\simeq (\mathbb{Z}/2\mathbb{Z})^4$, and $A$ does not have fix points on $Q$,
planes $\Pi_1$ and $\Pi_2$ are $A$-invariant.
Let $\pi\colon X\to Q$ be the~blowup of the~disjoint conics $Q\cap\Pi_1$ and $Q\cap\Pi_2$.
Then $X$ is a smooth Fano 3-fold in Family \textnumero 3.10, the~action of the~group $A$ lifts to $X$,
and $A$ does not fix points in~$X$.
\end{example}

\begin{example}
\label{example:3-12}
Let $L$ be the line $\{x_1=x_4=0\}\subset\mathbb{P}^3$, let $C$ be the twisted cubic $\varphi(\mathbb{P}^1)$ for $\varphi\colon \mathbb{P}^1\to \mathbb{P}^3$ given by $[u:v]\mapsto [v^3:uv^2:u^2v:u^3]$.
Then $L\cap C=\varnothing$.
Let $\pi\colon X\to\mathbb{P}^3$ be the~blowup along $L$ and $C$.
Then $X$ is a smooth Fano 3-fold in Family \textnumero 3.12.
Let $A$ be the~subgroup in $\mathrm{Aut}(\mathbb{P}^3)$ generated by
\begin{align*}
[x_1:x_2:x_3:x_4]&\mapsto[x_4:x_3:x_2:x_1],\\
[x_1:x_2:x_3:x_4]&\mapsto[x_1:-x_2:x_3:-x_4].
\end{align*}
Then $A\simeq(\mathbb{Z}/2\mathbb{Z})^2$, both curves $L$ and $C$ are $A$-invariant, so the~action of the~group $A$ lifts to $X$.
Moreover, the~group $A$ does not fix points in $\mathbb{P}^3$, hence it does not fix points in $X$.
\end{example}

\begin{example}
\label{example:3-13}
In $\mathbb{P}^2_{x_1,x_2,x_3}\times\mathbb{P}^2_{y_1,y_2,y_3}\times\mathbb{P}^2_{z_1,z_2,z_3}$, we let
$$
X=\big\{x_1y_1+x_2y_2+x_3y_3=0,x_1z_1+x_2z_2+x_3z_3=0,y_1z_1+y_2z_2+y_3z_3=0\big\}.
$$
Then $X$ is a smooth Fano 3-fold in Family \textnumero 3.13, and $\mathrm{Aut}(X)\simeq\mathrm{PGL}_2(\mathbb{C})\times\mathfrak{S}_3$ by \cite[Remark~5.100]{Book}.
Let $A$ be the subgroup in $\mathrm{Aut}(X)$ generated by
\begin{align*}
\big([x_1:x_2:x_3],[y_1:y_2:y_3],[z_1:z_2:z_3]\big)&\mapsto \big([-x_1:x_2:x_3],[-y_1:y_2:y_3],[-z_1:z_2:z_3]\big),\\
\big([x_1:x_2:x_3],[y_1:y_2:y_3],[z_1:z_2:z_3]\big)&\mapsto \big([x_1:-x_2:x_3],[y_1:-y_2:y_3],[z_1:-z_2:z_3]\big),\\
\big([x_1:x_2:x_3],[y_1:y_2:y_3],[z_1:z_2:z_3]\big)&\mapsto \big([y_1:y_2:y_3],[x_1:x_2:x_3],[z_1:z_2:z_3]\big).
\end{align*}
Then $A\simeq(\mathbb{Z}/2\mathbb{Z})^3$, and $A$ does not fix points in $X$.
\end{example}

\begin{example}[{\cite[\S~5.22]{Book}}]
\label{example:4-13}
Let $V=\mathbb{P}^1_{x_1,x_2}\times\mathbb{P}^1_{y_1,y_2}\times\mathbb{P}^1_{z_1,z_2}$, and let $A$ be the~subgroup of $\operatorname{Aut}(V)$ generated by the~transformations
\begin{align*}
\big([x_1 :x_2],[y_1 :y_2],[z_1 :z_2]\big)&\mapsto \big([x_1 :-x_2],[y_1 :-y_2],[z_1 :-z_2]\big),\\
\big([x_1 :x_2],[y_1 :y_2],[z_1 :z_2]\big)&\mapsto \big([x_2 :x_1],[y_2 :y_1],[z_2 :z_1]\big), \\
\big([x_1 :x_2],[y_1 :y_2],[z_1 :z_2]\big)&\mapsto \big([y_1 :y_2],[x_1 :x_2],[z_1 :z_2]\big).
\end{align*}
Then $A\simeq (\mathbb{Z}/2\mathbb{Z})^3$, and $A$ does not fix points in $V$.
Let  $C$ be the curve of degree $(1,1,3)$ in $V$ given by
$$
x_1y_2 - x_2y_1 = x_1^3z_1 + x_2^3z_2 + \lambda(x_1x_2^2z_1 + x_1^2x_2z_2) = 0,
$$
where $\lambda\in\mathbb{C}\setminus\{\pm 1,\pm 3\}$.
Then $C$ is smooth and $A$-invariant.
Let $\pi\colon X \to V$ be the~blowup of $V$ along $C$.
Then $X$ is a smooth Fano 3-fold in Family \textnumero 4.13,
and it follows from  \cite[\S~5.22]{Book} that every K-polystable smooth Fano 3-fold in Family \textnumero 4.13 can be obtained in this way.
Since $C$ is $A$-invariant, the~action of the~group $A$ lifts to $X$,
and $S$ does not fix points in~$X$.
\end{example}

\medskip
\noindent
\textbf{Acknowledgements.}
We thank Michel Brion, Paolo Cascini, Alexander Duncan and Pham Tiep for fruitful discussions. This work bloomed during a two-week CIRM RIR program in Luminy which was part of a semester-long \emph{Morlet Chair}. We thank the~staff of CIRM for facilitating high quality working environments for research.
Ivan Cheltsov has been supported by Simons Collaboration grant \emph{Moduli of Varieties}. Hamid Abban has been supported by EPSRC grant EP/Y033450/1 and a Royal Society International Collaboration Award ICA$\backslash$1$\backslash$231019. Takashi Kishimoto has been supported by JSPS KAKENHI Grant Number 23K03047.

\end{document}